\numberwithin{equation}{subsubsection}
\newtheorem{introthm}[subsection]{Theorem}
\newtheorem{prop}[subsubsection]{Proposition}
\newtheorem{thm}[subsubsection]{Theorem}
\newtheorem*{thm*}{Theorem}
\newtheorem{lemma}[subsubsection]{Lemma}
\newtheorem*{lem*}{Lemma}
\newtheorem{cor}[subsubsection]{Corollary}
\theoremstyle{definition}
\theoremstyle{remark}
\newtheorem*{assump*}{Assumption}
\theoremstyle{definition}
\newtheorem{definition}[subsubsection]{Definition}
\theoremstyle{remark}
\newtheorem{remark}[subsubsection]{Remark}
\newtheorem{remarks}[subsubsection]{Remarks}
\newcommand{\R}{\mathbb{R}}  
\newcommand{\C}{\ensuremath{\mathbb{C}}}
\newcommand{\Z}{\ensuremath{\mathbb{Z}}}
\newcommand{\D}{\ensuremath{\mathbb{D}}}
\newcommand{\A}{\ensuremath{\mathbb{A}}}
\newcommand{\F}{\ensuremath{\mathbb{F}}}
\newcommand{\der}{\ensuremath{\mathrm{der}}}
\newcommand{\ab}{\ensuremath{\mathrm{ab}}}
\newcommand{\ad}{\ensuremath{\mathrm{ad}}}
\newcommand{\Sh}{\ensuremath{\mathrm{Sh}}}
\newcommand{\Q}{\ensuremath{\mathbb{Q}}}
\newcommand{\xbar}{\ensuremath{{\overline{x}}}}
\newcommand{\Ok}{\ensuremath{\mathcal{O}}}
\renewcommand{\O}{\ensuremath{\mathcal{O}}}
\newcommand{\G}{\ensuremath{\mathcal{G}}}
\newcommand{\GG}{\ensuremath{\mathbb{G}}}
\newcommand{\uG}{\ensuremath{\underline{G}}}
\newcommand{\ucalG}{\ensuremath{\underline{\mathcal{G}}}}
\newcommand{\pdiv}{\ensuremath{\mathscr{G}}}
\newcommand{\Spf}{\ensuremath{\mathrm{Spf}}}
\newcommand{\Res}{\ensuremath{\mathrm{Res}}}
\newcommand{\GL}{\ensuremath{\mathrm{GL}}}
\newcommand{\bGL}{\ensuremath{\mathbf{GL}}}
\newcommand{\GSp}{\ensuremath{\mathrm{GSp}}}
\newcommand{\bGSp}{\ensuremath{\mathbf{GSp}}}
\newcommand{\SL}{\ensuremath{\mathrm{SL}}}
\newcommand{\SU}{\ensuremath{\mathrm{SU}}}
\newcommand{\Gr}{\ensuremath{\mathrm{Gr}}}
\newcommand{\ur}{\ensuremath{\mathrm{ur}}}
\newcommand{\hW}{\ensuremath{\widehat{W}}}
\newcommand{\Adm}{\ensuremath{\mathrm{Adm}}}
\newcommand{\Gal}{\ensuremath{\mathrm{Gal}}}
\newcommand{\fka}{\ensuremath{\mathfrak{a}}}
\newcommand{\fkf}{\ensuremath{\mathfrak{f}}}
\newcommand{\fkm}{\ensuremath{\mathfrak{m}}}
\newcommand{\fks}{\ensuremath{\mathfrak{s}}}
\newcommand{\fkM}{\ensuremath{\mathfrak{M}}}
\newcommand{\fkS}{\ensuremath{\mathfrak{S}}}
\newcommand{\bbA}{\ensuremath{\mathbb{A}}}
\newcommand{\bbC}{\ensuremath{\mathbb{C}}}
\newcommand{\bbD}{\ensuremath{\mathbb{D}}}
\newcommand{\bbF}{\ensuremath{\mathbb{F}}}
\newcommand{\bbG}{\ensuremath{\mathbb{G}}}
\newcommand{\bbH}{\ensuremath{\mathbb{H}}}
\newcommand{\bbL}{\ensuremath{\mathbb{L}}}
\newcommand{\bbM}{\ensuremath{\mathbb{M}}}
\newcommand{\bbN}{\ensuremath{\mathbb{N}}}
\newcommand{\bbQ}{\ensuremath{\mathbb{Q}}}
\newcommand{\bbR}{\ensuremath{\mathbb{R}}}
\newcommand{\bbS}{\ensuremath{\mathbb{S}}}
\newcommand{\bbZ}{\ensuremath{\mathbb{Z}}}
\newcommand{\bfC}{\ensuremath{\mathbf{C}}}
\newcommand{\bfE}{\ensuremath{\mathbf{E}}}
\newcommand{\bfG}{\ensuremath{\mathbf{G}}}
\newcommand{\bfH}{\ensuremath{\mathbf{H}}}
\newcommand{\bfL}{\ensuremath{\mathbf{L}}}
\newcommand{\bfT}{\ensuremath{\mathbf{T}}}
\newcommand{\bfZ}{\ensuremath{\mathbf{Z}}}
\newcommand{\bfu}{\ensuremath{\mathbf{u}}}
\newcommand{\tG}{\ensuremath{\widetilde{\mathcal{G}}}}
\newcommand{\ta}{\ensuremath{\widetilde{\alpha}}}
\newcommand{\rmB}{\ensuremath{\mathrm{B}}}
\newcommand{\rmE}{\ensuremath{\mathrm{E}}}
\newcommand{\rmF}{\ensuremath{\mathrm{F}}}
\newcommand{\rmH}{\ensuremath{\mathrm{H}}}
\newcommand{\rmK}{\ensuremath{\mathrm{K}}}
\newcommand{\rmM}{\ensuremath{\mathrm{M}}}
\newcommand{\scrA}{\ensuremath{\mathscr{A}}}
\newcommand{\scrG}{\ensuremath{\mathscr{G}}}
\newcommand{\scrP}{\ensuremath{\mathscr{P}}}
\newcommand{\scrS}{\ensuremath{\mathscr{S}}}
\newcommand{\calA}{\ensuremath{\mathcal{A}}}
\newcommand{\calB}{\ensuremath{\mathcal{B}}}
\newcommand{\calC}{\ensuremath{\mathcal{C}}}
\newcommand{\calF}{\ensuremath{\mathcal{F}}}
\newcommand{\calG}{\ensuremath{\mathcal{G}}}
\newcommand{\calH}{\ensuremath{\mathcal{H}}}
\newcommand{\calI}{\ensuremath{\mathcal{I}}}
\newcommand{\calK}{\ensuremath{\mathcal{K}}}
\newcommand{\calL}{\ensuremath{\mathcal{L}}}
\newcommand{\calM}{\ensuremath{\mathcal{M}}}
\newcommand{\calO}{\ensuremath{\mathcal{O}}}
\newcommand{\calS}{\ensuremath{\mathcal{S}}}
\newcommand{\calT}{\ensuremath{\mathcal{T}}}
\newcommand{\calU}{\ensuremath{\mathcal{U}}}
\newcommand{\calV}{\ensuremath{\mathcal{V}}}
\newcommand{\calY}{\ensuremath{\mathcal{Y}}}
\newcommand{\calZ}{\ensuremath{\mathcal{Z}}}
\newcommand{\Mloc}{\ensuremath{\mathbb{M}^{\mathrm{loc}}}}
\newcommand{\brE}{\ensuremath{\breve{E}}}
\newcommand{\brF}{\ensuremath{\breve{F}}}
\newcommand{\brQ}{\ensuremath{\breve{\mathbb{Q}}_p}}
\newcommand{\brZ}{\ensuremath{\breve{\mathbb{Z}}_p}}
\newcommand{\et}{\ensuremath{\mathrm{\acute{e}t}}}
\newcommand{\Hom}{\ensuremath{\mbox{Hom}}}
\newcommand{\Fr}{\ensuremath{\mathrm{Fr}}}
\newcommand{\Conj}{\ensuremath{\mathrm{Conj}}}
\newcommand{\sa}{\ensuremath{s_{\alpha,0}}}
\newcommand{\po}{\ar@{}[dr]|{\text{\pigpenfont R}}}
\newcommand{\pb}{\ar@{}[dr]|{\text{\pigpenfont J}}}
\begin{document}



\title[Independence of $\ell$ for Frobenius conjugacy classes]{Independence of $\ell$ for Frobenius conjugacy classes attached to Abelian varieties}
\author{Mark Kisin and Rong Zhou}
\begin{abstract}
Let $A$ be an abelian variety over a number field $\rmE\subset \bbC$ and let $\bfG$ denote the Mumford--Tate group of $A$. 
After replacing $\rmE$ by a finite extension, the action of the absolute Galois group $\mathrm{Gal}(\overline{\rmE}/\rmE)$ on the $\ell$-adic cohomology $\rmH^1_{\mathrm{\acute{e}t}}(A_{\overline{\rmE}},\bbQ_\ell)$  factors through $\bfG(\bbQ_\ell).$ 
We show that for $v$ an odd prime of $\rmE$ where $A$ has good reduction, the conjugacy class of Frobenius $\mathrm{Frob}_v$ in 
$\bfG(\bbQ_\ell)$ is independent of $\ell$. Along the way, we prove that under certain hypotheses, every point in the $\mu$-ordinary locus of the special fiber of Shimura varieties has a special point lifting it.
\end{abstract}

\address{Department of Mathematics, Harvard University, Cambridge, USA, MA 02138}
\email{kisin@math.harvard.edu}

\address{Department of Pure Mathematics and Mathematical Statistics, University of Cambridge, Cambridge, UK, CB3 0WA}
\email{rz240@dpmms.cam.ac.uk}

\maketitle
\tableofcontents
\section{Introduction}

Let $A$ be an abelian variety over a number field $\rmE\subset \bbC$ and $\overline{\rmE}$ the algebraic closure of $\rmE$ in $\bbC$.
We fix a prime $p$  and $v|p$ a place of $\rmE$ where $A$ has good reduction. Then for any prime $\ell\neq p$, the action of $\mathrm{Gal}(\overline{\rmE}/\rmE)$ 
on the $\ell$-adic cohomology $\rmH^1_{\et}(A_{\overline{\rmE}},\bbQ_\ell)$ is unramified at $v$, and the characteristic polynomial $P_{v,\ell}(t)$ of a geometric Frobenius $\mathrm{Frob}_v\in \mathrm{Gal}(\overline{\rmE}/\rmE)$ has coefficients in $\mathbb Z,$ and is independent of $\ell.$ The aim of this paper is to 
prove a refinement of this statement for the image of $\mathrm{Frob}_v$ in the {\em Mumford--Tate} group of $A.$

Recall that the Mumford--Tate group $\bf G$ of $A$ is a reductive group over $\bbQ$, defined as the Tannakian group of the  $\bbQ$-Hodge structure 
given by the Betti cohomology $V_B:=\rmH^1_B(A(\bbC),\bbQ).$ It may also be defined as the stabilizer in $\mathbf{GL}(V_B)$ of all Hodge cycles of type (0,0)  on the tensor spaces $V_B^{\otimes r}\otimes (V_B^\vee)^{\otimes r}$ for $r\in \bbZ_{\geq0}$. A fundamental result of Deligne \cite{De1} asserts that there exists a finite extension $\rmE'/\rmE$ in $\overline{\rmE}$ such that for any prime $\ell,$ the action of $\mathrm{Gal}(\overline{\rmE}/\rmE')$ on $\rmH^1_\et(A_{\overline{\rmE}},\bbQ_{\ell})$ is induced by a representation 
$$\rho^{\bfG}_\ell:\mathrm{Gal}(\overline{\rmE}/\rmE')\rightarrow \bfG(\bbQ_\ell).$$
It is not hard to see that for any finite extension $\rmE'/\rmE$, if $\rho^{\bfG}_\ell$ exists for one $\ell,$ then it exists for all $\ell.$ 
Moreover there is a minimal such extension $\rmE'.$ 
The existence of $\rho^{\bfG}_{\ell}$ is in fact predicted by the (in general still unproved) Hodge conjecture for $A$; Deligne's result on absolute Hodge cycles \cite{De1} provides a reasonable substitute in this case so that the existence is unconditional. Upon replacing $\rmE$ by $\rmE'$, we assume there is a map $\rho^{\bfG}_\ell:\mathrm{Gal}(\overline{\rmE}/\rmE)\rightarrow \bfG(\bbQ_\ell)$.

For any reductive group $\bfH$ over $\bbQ$ we write $\mathrm{Conj}_{\bfH}$ for the variety of semisimple 
conjugacy classes of $\bfH$ (cf. \S\ref{subsec: Conj_H})  and $\chi_{\bfH}:\bfH\rightarrow \mathrm{Conj}_{\bfH}$ for the natural projection map which sends an element of $\bfH$ to the associated conjugacy class of its semisimple part. We thus obtain a well-defined element $$\gamma_\ell = \gamma_{\ell}(v) : = \chi_{\bfG}(\rho_\ell^{\bfG}(\mathrm{Frob}_v))\in\Conj_{\bfG}(\bbQ_\ell),$$ the conjugacy class of $\ell$-adic Frobenius at $v$. Our main theorem is the following.

\begin{introthm}\label{introthm: main}
Let $p>2$ and $v|p$  a prime of $\rmE$ where $A$ has good reduction. Then there exists $\gamma\in \mathrm{Conj}_{\bfG}(\bbQ)$ such that $$\gamma=\gamma_\ell\in\mathrm{Conj}_{\bfG}(\bbQ_\ell), \ \forall\ell\neq p.$$
\end{introthm}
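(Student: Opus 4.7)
The plan is to realize $A$ as a point on a Hodge-type Shimura variety, reduce modulo $v$, and produce a CM (special) point whose reduction lies in the same $\bfG$-isogeny class as that of $A$; Shimura--Taniyama at the special point then provides a $\bbQ$-rational representative of the Frobenius conjugacy class. Concretely, I first replace $\rmE$ by a finite extension so that $A$ is realized as a point $x\in\Sh_K(\bfG,X)(\rmE)$ of a Hodge-type Shimura variety, where $X$ is the Mumford--Tate domain and the level $K=K_pK^p$ is chosen with $K_p\subset\bfG(\bbQ_p)$ hyperspecial. Kisin's integral canonical model $\scrS_K$ over $\O_{\rmE_v}$ extends $x$, producing a reduction $\bar x\in\scrS_K(\overline{\bbF}_p)$. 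The action of $\mathrm{Frob}_v$ on $\rmH^1_\et(\bar A_{\overline{\bbF}_p},\bbQ_\ell)$ preserves the Hodge tensors cutting out $\bfG$, so $\gamma_\ell(v)$ depends only on the $\bfG$-isogeny class of $\bar x$ in the sense of Langlands--Rapoport.

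Next I reduce the theorem to the existence of a special point $x'\in\Sh_K(\bfG,X)(\overline{\bbQ})$ whose reduction at some prime above $p$ lies in the same $\bfG$-isogeny class as $\bar x$, with matching residue field cardinality (possibly after replacing Frobenius by an appropriate power). At such an $x'$ the Mumford--Tate group is a torus $\bfT\subset\bfG$, and by Shimura--Taniyama the Frobenius is realized by an explicit element $\gamma\in\bfT(\bbQ)\subset\bfG(\bbQ)$ determined from the CM type and the splitting of $p$. Combined with the isogeny-class invariance from the previous step, $\gamma_\ell(v)$ is the image in $\Conj_\bfG(\bbQ_\ell)$ of the class of $\gamma$, which is manifestly $\bbQ$-rational and independent of $\ell$.

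The main obstacle is producing such a special lift in every Newton stratum. On the $\mu$-ordinary stratum this is precisely the lifting theorem advertised in the abstract. For other strata I would combine this lifting with Kisin's description of isogeny classes in $\scrS_K(\overline{\bbF}_p)$ (a Langlands--Rapoport style parametrization) together with a Honda--Tate input asserting that every Kottwitz triple arising from the Shimura datum is realized by a special point of $\Sh_K(\bfG,X)$. The $\mu$-ordinary lifting furnishes a base case from which this classification can be anchored, so the heart of the argument is upgrading the lifting from the generic Newton stratum to all others; by comparison, the passage from the existence of the special lift to the stated rationality of $\gamma_\ell(v)$ is essentially formal.
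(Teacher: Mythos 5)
Your proposal is essentially the strategy of \cite{Ki3}, which the paper explicitly states cannot be extended to the present situation, and it breaks down at several concrete points. First, you cannot choose $\rmK_p$ hyperspecial: the Mumford--Tate group $\bfG_{\bbQ_p}$ may be ramified, in which case no hyperspecial subgroup exists, and this is exactly the case the theorem is about (the unramified case is already known). Worse, the element $\rho_p^{\bfG}(\mathrm{Frob}_v)$ need not lie in any prescribed parahoric. The paper handles this by passing to an auxiliary datum built from $\mathrm{Res}_{\rmF/\bbQ}\bfG$ for a suitable totally real $\rmF$, so that the group at $p$ becomes quasi-split and the Frobenius lands in a \emph{very special parahoric} (Lemma \ref{lem:elementlevelstr}, Proposition \ref{prop: reduction to acceptable case}); one then needs integral models and local models at parahoric level beyond the tame case, which is a large part of the paper.

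Second, the heart of your argument --- that $\gamma_\ell(v)$ depends only on the $\bfG$-isogeny class of $\bar x$ and that every isogeny class contains a point admitting a special lift --- is deferred to a Langlands--Rapoport-style parametrization and a Honda--Tate input that are not available at parahoric level for ramified groups; you have not supplied them, and supplying them is at least as hard as the theorem. The paper's actual mechanism for getting off the $\mu$-ordinary locus is entirely different: it uses Lafforgue's theorem on compatible local systems over smooth curves, applied to explicitly constructed curves in the special fiber passing through the given point and meeting a higher Kottwitz--Rapoport stratum, together with a descending induction on strata and Chebotarev. Finally, your caveat ``possibly after replacing Frobenius by an appropriate power'' concedes the theorem: the statement concerns $\mathrm{Frob}_v$ itself, and the paper goes to considerable trouble (constructing curves with an $\bbF_q$-rational point mapping to $x$ with the same residue field) precisely to avoid proving only the weaker statement for a power of Frobenius.
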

	Explicitly, $\gamma$ is a $\mathrm{Gal}(\overline\bbQ/\bbQ)$-stable $\bfG(\overline\bbQ)$-conjugacy class whose associated $\bfG(\overline{\bbQ}_\ell)$-conjugacy class contains $\rho_\ell^\bfG(\mathrm{Frob}_v)$ for all $\ell\neq p$. 
 Since $P_{v,\ell}(t)$ is independent of $\ell,$ the image of $\gamma_\ell$ in $\mathrm{Conj}_{\bfG\bfL(V_B)}(\bbQ_\ell)$ is defined over $ \bbQ$ and independent of $\ell$. However, in general the map $\mathrm{Conj}_{\bfG}(\bbQ)\rightarrow \mathrm{Conj}_{\bfG\bfL(V_B)}(\bbQ)$ is not injective, so the theorem gives more information than the $\ell$-independence of $P_{v,\ell}(t)$.
We remark that $\bfG$ depends on the chosen embedding $\rmE \subset \C$; in general changing the embedding has the effect of replacing $\bfG$ by an inner form. However, the variety $\mathrm{Conj}_{\bfG}$ and the elements $\gamma_{\ell}$ do not 
depend on the choice of this embedding, and so neither does the statement of the theorem. 
When $\bfG$ is quasi-split at $p$ it is expected that $\gamma$ can be lifted to an element $\gamma_0 \in \bfG(\Q),$ which is 
$\bfG(\Q_{\ell})$-conjugate to $\rho_\ell^{\bfG}(\mathrm{Frob}_v).$ We prove a slightly weaker version of this result, when $\bfG^{\der}$ 
is simply connected; see \S\ref{sec: liftability} below for this and other potential refinements of the theorem.

An analogue of the above theorem for any algebraic variety (or more generally motive) over a number field was conjectured by Serre in \cite[12.6]{Serre}, but in general one does not even know the analogue of Deligne's theorem on the existence of $\rho^{\bfG}_{\ell}.$

Previously proved cases of our theorem include a result of Noot who showed a version of this theorem where $\mathrm{Conj}_{\bfG}$ is replaced by a certain quotient $\mathrm{Conj}_{\bfG_{A}}'$ and under the additional assumption that the Frobenius element $\gamma_\ell$ is weakly neat \cite{Noot}.  
More recently, one of us \cite[Corollary 2.3.1]{Ki3} proved the Theorem when $\bfG$ is unramified at $p$.  In fact, \cite{Ki3} proves the stronger result that $\gamma$ lifts to $\gamma_0\in\bfG(\bbQ)$ and is $\bfG(\bbQ_\ell)$-conjugate to $\rho_\ell^{\bfG}(\mathrm{Frob}_v)$.
Noot's argument uses the independence of $\ell$ of $P_{v,\ell}(t),$ together with group theoretic arguments to analyze the map 
$\mathrm{Conj}_{\bfG}\rightarrow \mathrm{Conj}_{\bfG\bfL(V_B)}.$ The result of \cite{Ki3} is proved by showing that, on the Shimura variety associated to $\bfG,$ the isogeny class corresponding to $A$ contains a point which admits a CM lift.

For the rest of the introduction we assume $p>2$. Our proof of Theorem \ref{introthm: main} makes use of families of abelian varieties with Mumford--Tate group contained in $\bfG$, 
and especially the structure of their mod $p$ reductions. These families are parameterized by a Shimura variety $\mathrm{Sh}_{\rmK}(\bfG, X)$ associated to $\bfG,$ and defined over a number field (its reflex field) $\bfE\subset \bbC$ which is contained in $\rmE$. We take $\rmK = \rmK_p\rmK^p$ with $\rmK_p \subset \bfG(\Q_p)$ a parahoric subgroup and $\rmK^p \subset \bfG(\bbA_f^p)$ a compact open subgroup. 
Let $w$ be the restriction of $v$ to  $\bfE.$ Write $\bfE_w$  for the completion of $\bfE$ at $w,$ $\O_{\bfE_w}$ for the ring of integers of $\bfE_w$ and $\kappa(w)$ for its residue field. 
Under some mild conditions, $\mathrm{Sh}_{\rmK}(\bfG,X)$ has an integral model $\scrS_{\rmK}(\bfG,X)$ over 
$\O_{\bfE_w},$ which is smoothly equivalent to a ``local model", defined as the closure of an orbit of $\bfG$ acting on a certain Grassmannian. The construction of these models, which extends results of the first author and Pappas \cite{KP}, originally appeared in a previous version of this article, but is now contained in \cite{KPZ}. 

For each prime $\ell \neq p,$ $\scrS_{\rmK}(\bfG,X)$ is equipped with a  $\bfG(\bbQ_\ell)$-torsor $\bbL_{\ell}.$ 
In particular, for any finite extension $\kappa/\kappa(w)$ and 
$x \in \scrS_{\rmK}(\bfG,X)(\kappa),$ the $q = |\kappa|$-Frobenius acting on the geometric fiber of $\bbL_{\ell}$ at $x,$ 
gives rise to an element $\gamma_{x,\ell}\in\mathrm{Conj}_{\bfG}(\bbQ_\ell).$ 
We say $x$ has the  property ($\ell$-ind), or the \textit{$\ell$-independence property}, if there exists an element $\gamma\in \Conj_{\bfG}(\bbQ)$  such that $$\gamma=\gamma_{x,\ell}\in\Conj_{\bfG}(\bbQ_\ell), \forall   \ell\neq p.$$ 

Now suppose that $(\bfG,X)$ satisfies the conditions needed to guarantee the existence of  $\scrS_{\rmK}(\bfG,X)$; the general case of Theorem \ref{introthm: main} is eventually reduced to this one. 
Then for a suitable choice of $\rmK$, our abelian variety $A$ corresponds to a point $\tilde{x}_A\in \mathrm{Sh}_{\rmK}(\bfG,X)(\rmE)$ and its mod $v$ reduction is a point $x_{A}$ of the special fiber $\calS_{\rmK} := \scrS_{\rmK}(\bfG,X)\otimes_{\calO_{\bfE_w}}\kappa(w).$ Moreover there is an equality $\gamma_\ell(v)=\gamma_{x_A,\ell}$ as elements of $\mathrm{Conj}_{\bfG}(\bbQ_\ell)$. 
Thus in order to show Theorem
\ref{introthm: main}, it suffices to prove 
\begin{equation}\tag{$\dagger$} 
\text{If  $\kappa/\kappa(w)$ is finite and $x\in \calS_{\rmK}(\kappa),$ then $x$ satisfies ($\ell$-ind). }
\end{equation}

By considering $A$ as a point on a larger Shimura variety related to a group of the form $\mathrm{Res}_{\rmF/\bbQ}\bfG$ where $\rmF$ is a suitably chosen totally real field, one can show that Theorem \ref{introthm: main} follows from the following special case of $(\dagger)$. 

\begin{introthm}\label{introthm: l indep full}Let $(\bfG,X,\rmK_p)$ be a strongly  acceptable triple. Then for any  $\kappa/\kappa(w)$ finite and $x\in \calS_{\rmK}(\kappa),$ $x$ satisfies ($\ell$-ind).
\end{introthm}
The condition of strong acceptability of the triple $(\bfG,X,\rmK_p)$ is a technical one, and we refer the reader to \S\ref{sec: integral models abelian type} for the definition. We only mention here that the condition implies that $\rmK_p$ is a certain type of maximal compact subgroup of $\bfG(\bbQ_p)$ known as a very special parahoric. These conditions are needed to ensure the integral models satisfy some desirable properties as we explain below.

As a first step towards Theorem \ref{introthm: l indep full}, we prove the following analogue of Serre--Tate theory, which  allows us to show that under the assumptions of Theorem \ref{introthm: l indep full}, ($\ell$-ind) holds on a dense, Zariski open subset of $\calS_{\rmK}$.

\begin{introthm}\label{introthm: can lift for Shimura var} Assume  the triple  $(\bfG,X,\rmK_p)$ is strongly acceptable. Then
\begin{enumerate} 
	\item The $\mu$-ordinary locus  $\calS_{\rmK,[b]_{\mu}}\subset \calS_{\rmK}$  is Zariski open and dense in $\calS_{\rmK}$.
	\item Any closed point  $x$ lying in $\calS_{\rmK,[b]_{\mu}}$ admits a lifting to a special point 
	$\widetilde{x}\in\mathrm{Sh}_{\rmK}(\bfG,X).$

\end{enumerate}
\end{introthm}
The $\mu$-odinary locus in  (1) is the group theoretic generalization of the ordinary locus in the moduli space of principally polarized abelian varieties, and the density follows from results about the local structure of $\scrS_{\rmK}(\bfG,X)$ and \cite[Corollary 1.3.16]{KPS}. 
The lifting constructed in (2) is then the analogue in our setting of  the canonical lift for ordinary abelian varieties and had been considered for Shimura varieties with good reduction in previous work of Moonen \cite{Mo} and Shankar and the second author \cite{SZ}. For these points, the Frobenius lifts to an automorphism of the associated CM abelian variety, and we obtain the desired element $\gamma\in \mathrm{Conj}_{\bfG}(\bbQ)$ by considering the induced action on Betti cohomology.  

To prove Theorem \ref{introthm: l indep full}, one considers a smooth curve $\calC$ with a map $\pi:\calC \rightarrow \calS_{\rmK}.$ Using a theorem of L. Lafforgue \cite[Th\'eor\`eme VII.6]{Laf} on the  existence of compatible local systems on smooth curves, we show that if the property ($\ell$-ind) holds for a dense open subset of points on $\calC$ then it holds for all points of $\calC.$ The results in \cite[\S7]{KPZ} on the structure of the integral models $\scrS_{\rmK}(\bfG,X)$ imply that $\calS_{\rmK}$ is equipped with a certain combinatorially described stratification, 
the Kottwitz--Rapoport stratification.  The stratum of maximal dimension is the smooth locus of $\calS_{\rmK}.$ A theorem of 
Poonen \cite{Poonen} shows that $\pi$ can be chosen so that its image intersects  $\calS_{\rmK,[b]_{\mu}}$ and 
any point $x$ of the maximal stratum. The $\mu$-ordinary case explained above then implies that any such $x$ satisfies ($\ell$-ind). We now argue by induction on the codimension of the strata; for a closed point $x$ in some stratum of $\calS_{\rmK},$ we show that $\pi$ can be chosen so that its image contains $x,$ and also meets some higher dimensional stratum. 

In fact, using general arguments with ampleness, it is not hard to construct a map $\pi$ from a smooth curve whose image contains any closed point $x\in \calS_{\rmK},$  and meets the 
$\mu$-ordinary locus. This would appear to avoid the induction on strata above. However, this argument would only allow us to prove  the $\ell$-independence result for some power of the Frobenius. To prove Theorem \ref{introthm: l indep full} in full, one needs the existence of a $y \in \calC,$ with $\pi(y) = x,$ such that $\pi$ induces an isomorphism of residue fields $\kappa(x) \simeq \kappa(y).$ To construct such curves, we first construct a sequence of smooth curves which are {\em subschemes} of the local model associated to $\scrS_{\rmK}(\bfG,X),$ using the explicit group theoretic description of this local model.  These are then pulled back to $\scrS_{\rmK}(\bfG,X)$ via the local model diagram. The assumption that $\rmK_p$ is very special is key to our argument, as this not only guarantees the density of $\calS_{\rmK,[b]_{\mu}}$, but also that the Kottwitz--Rapoport stratification on the local model has a particularly simple description (cf. \S\ref{sec: KR stratification v special}) which is used in the construction of $\pi$. 

The induction argument would also be unnecessary if one could show a conjecture of Deligne \cite[Conjecture 1.2.10]{De3} on the existence of compatible local systems on a normal variety. For smooth schemes Deligne's conjecture has been proved by Drinfeld 
\cite{Drinfeld}, but the special fiber $\calS_{\rmK}$ is not smooth, so Drinfeld's theorem does not suffice for our purposes. 

We now give some details about the geometric properties of the integral models $\scrS_{\rmK}(\bfG,X)$  that we use.
The two main results we need about these models are the existence of a local model diagram as predicted in \cite{Ra}, 
which relates the models to an orbit closure on a Grassmannian, and the analogue of Serre--Tate theory at $\mu$-ordinary points, 
already mentioned in Theorem \ref{introthm: can lift for Shimura var} above.  

Under some mild assumptions, \cite{KPZ} proves the existence of a version of the local model diagram for abelian type Shimura varieties where the torsor is for the parahoric of the adjoint group. The strategy follows that of \cite{KP} which proved the result under an additional tameness hypothesis on the group $\bfG$. In \cite{KPZ}, the local model diagram is first constructed in some special Hodge type cases; the general case is eventually reduced to this via Deligne's formalism of connected Shimura varieties. Both the reduction step and the construction in the Hodge-type case make crucial use of the notion of $R$-smoothness for tori introduced in \S2.4. This is related to the failure of a closed immersion of tori  to extend to a closed immersion of lft N\'eron models, a phenomenon which does not occur in the tamely ramified case. For us, this notion is needed to prove certain functoriality properties of our integral models in \S4.3.

In the special Hodge type case, the construction of the local model diagram is intertwined with a result (Proposition \ref{prop: formal nbd Shimura}) which gives  a description of the formal neighborhood of the Shimura variety in terms of the deformation theory of a $p$-divisible group equipped with a collection of tensors in its crystalline cohomology. For this we use the constructions in \cite[\S3]{KP}, as generalized in \cite{KPZ}. This result is used as a key input in proving the existence of canonical liftings in these cases,

The special Hodge-type cases  considered above are actually not enough for applications to proving $\ell$-independence for abelian varieties. This is due to pathologies in the local models when $p||\pi_1(\bfG^{\der})|$. In order to prove the results concerning  integral models in the required level of generality, we consider the Hodge-type Shimura datum of interest as a datum of abelian type. The results concerning the local model diagram and canonical liftings are then transferred to the integral model using a different, auxiliary, Hodge-type datum which does satisfy the required properties.  The price of this indirect approach is that we have to do some work to prove that the integral model constructed in this way maps to an appropriate moduli space of abelian varieties.
This is needed in order to define the $\mu$-ordinary locus, and prove Theorem \ref{introthm: can lift for Shimura var}.

We now explain the organization of the paper. In \S 2-4 we prove the geometric result concerning integral models of Shimura varieties we need. 
These are then used to prove Theorem \ref{introthm: main} in \S 5,6. In \S3, we recall results concerning local models and review the deformation theoretic results of \cite[\S3]{KP} and \cite{KPZ}, which are then used to show the existence of canonical deformations for $\mu$-ordinary $p$-divisible groups in \S3.4. The latter uses a generalization to general parahorics of a result of Wortmann on $\mu$-ordinary $\sigma$-conjugacy classes, 
which is proved in \S 2.3. In \S 4, we recall the construction of the local model diagram and prove Theorem \ref{introthm: can lift for Shimura var},
first in some special Hodge-type cases in \S4.1, then in general in \S4.2--4.4. 
 
 In \S 5,  we prove Theorem \ref{introthm: l indep full} following the strategy outlined above and in \S 6 we prove Theorem \ref{introthm: main} using  Theorem \ref{introthm: l indep full}. Finally we remark that for technical reasons related to the level structure on $A$, we actually  work with Shimura stacks (i.e. Shimura varieties where the level structure is not neat) in \S 4-6.

\textit{Acknowledgments:} M.K. was supported by NSF grant  DMS-1902158. 
R.Z. was supported by NSF grant  DMS-1638352  through membership of the Institute for Advanced Study, and  by the European Research Council (ERC) under the European Union’s Horizon 2020 research and innovation programme (grant agreement No. 804176). We would like to thank the referees for many suggestions which greatly improved the paper. We also thank 
Yihang Zhu for useful discussions.

\section{Group theoretic results}\label{sec: group theoretic}

In this section, we prove some group theoretic results which will be used  in \S\ref{sec: integral models for Shimura + canonical lifts}. \S\ref{subsec: sigma straight elements}--\S\ref{sec: parahoric group schemes} contains the results needed for the construction of canonical liftings in \S\ref{sec: M-adapted} and \S\ref{sec: canonical liftings}. In \S\ref{sec: Neron models} we study properties of N\'eron models of tori needed  to study embeddings of parahorics.

\subsection{$\sigma$-straight elements} \label{subsec: sigma straight elements}

\subsubsection{}

Let $F$ be a non-archimedean local field with ring of integers $\Ok_F$. We fix a uniformizer $\varpi_F\in\calO_F$ and we let $k_F$ denote the residue field of $\calO_F$. We let $\brF$ denote the completion of the maximal unramified extension of $F$ and $\Ok_{\brF}$ its ring of integers, and we fix $\overline{F}$ an algebraic closure of $F$. We let $k$ be the residue field of $\calO_{\breve F}$ which is an algebraic closure of $k_F$. We write $\Gamma$ for the absolute Galois group $\mathrm{Gal}(\overline{F}/F)$ of $F$ and $I$ for the inertia subgroup, which is identified with $\text{Gal}(\overline{\brF}/\brF)$. We let $\sigma$ denote the Frobenius element of $\text{Aut}(\brF /F)$. 

Let $S$ be a scheme. If $X$ is a scheme over $S$ and $S'\rightarrow S$ is a morphism of schemes,  we write $X_{S'}$ for the base change of $X$ along $S'\rightarrow S$.

\subsubsection{}Let $G$ be a reductive group\footnote{Our convention is that all reductive groups are connected.}  over $F$. Let $S$ be a maximal $\brF$-split torus of $G$ defined over $F$ and $T$ its centralizer (cf. \cite[1.10]{Ti1}  for the existence of $S$). By Steinberg's Theorem, $G$ is quasi-split over $\brF$ and $T$ is a maximal torus of $G$.  We let $\calB(G,F)$ (resp. $\calB(G,\brF)$) denote the (extended) Bruhat--Tits building of $G$ over $F$ (resp. $\brF$). Let $\mathfrak{a}$ denote a $\sigma$-invariant alcove in the apartment $V:=\calA(G,S,\brF)$ over $\brF$ associated to $S$; we write $\calI$ for the corresponding Iwahori group scheme over $\calO_F$.
The relative Weyl group $W_0$ and the Iwahori Weyl group are defined as \begin{equation}\label{eqn: exact sequence Iwahori Weyl}W_0=N(\brF)/T(\brF),\ \ \ W=N(\brF)/\mathcal{T}_0(\mathcal{O}_{\brF}),\end{equation}
where $N$ is the normalizer of $T$ and $\mathcal{T}_0$ is the connected  N\'eron model for $T$. These are related by an exact sequence 
\[\xymatrix{0\ar[r]& X_*(T)_I\ar[r]& W\ar[r]& W_0\ar[r]& 0.}\]

For an element $\lambda\in X_*(T)_I$ we write $t_\lambda$ for the corresponding element in $W$; such elements will be called translation elements. We will sometimes write $W_G$ or $W_{G_{\brF}}$ for $W$ if we want to specify the group that we are working with.

\subsubsection{}We also fix a special vertex $\fks$ lying in the closure of $\fka$. Such a vertex induces a splitting of the exact sequence (\ref{eqn: exact sequence Iwahori Weyl}) and gives an identification \begin{equation}
\label{eqn: id apartment}V\cong X_*(T)_{I}\otimes_{\bbZ}\bbR.\end{equation} 

Let $\mathrm{Aff}(V)$ denote the group of affine transformations of $V$. Then we have an identification $\mathrm{Aff}(V)\cong V\rtimes \GL(V).$ The Frobenius $\sigma$ acts on $V$ via affine transformations and we write $\varsigma\in \GL(V)$ for the linear part of this action.
The identification (\ref{eqn: id apartment}) also determines a dominant chamber $C_+\subset X_*(T)_{I}\otimes_{\bbZ}\bbR$; namely by taking the one containing $\fka$, and we write $B$ for the corresponding Borel subgroup defined over $\brF$. We write $\sigma_0$ 
for the automorphism of $X_*(T)_{I}\otimes_{\bbZ}\bbR$ defined by $\sigma_0:=w_0\circ\varsigma$ where $w_0\in W_0$ is the unique element such that $w_0\circ \varsigma (C_+)=C_+$. We call this the $L$-action on $X_*(T)_I\otimes_{\bbZ}\bbR$; by definition it preserves $C_+$.

\subsubsection{}\label{subsubsec:bruhatorder}
Let $\mathbb{S}$ denote the set of simple reflections about the walls of $\mathfrak{a}$. We let $W_a$ denote the affine Weyl group; it is the subgroup of $W$ generated by the reflections in $\mathbb{S}$. Then $(W_a,\bbS)$ has the structure of a Coxeter group, and  hence we have a notion of length and Bruhat order. The Iwahori Weyl group and affine Weyl group are related via the following exact sequence
\begin{equation}\label{eqn: exact sequence affine Weyl}\xymatrix{0\ar[r]& W_a\ar[r]&W\ar[r]& \pi_1(G)_I\ar[r]& 0.}\end{equation}
The choice of $\mathfrak{a}$ induces a splitting of this exact sequence and $\pi_1(G)_I$ can be identified with the subgroup $\Omega\subset W$ consisting of elements which preserve $\fka$. The length function $\ell$ and Bruhat order  $\leq$ extend to $W$ via this choice of splitting and $\Omega$ is identified with the set of length $0$ elements.

We let $\widetilde{\kappa}_G(w)$ denote the image of $w \in W$ in $\pi_1(G)_I$ and $\kappa_G(w)$ its projection to $\pi_1(G)_\Gamma$. For $w\in W$, there is an integer $n$ such that $\sigma^n$ acts trivially on $W$ and $w\sigma(w)\dotsc\sigma^{n-1}(w)=t_\lambda$ for some $\lambda\in X_*(T)_I$. We define the (non-dominant) Newton cocharacter $\nu_w\in X_*(T)_{I,\bbQ}\cong X_*(T)^I_{\bbQ}$ to be $\frac{1}{n}\lambda$, which is easily seen to be independent of $n$. We let $\overline{\nu}_w\in X_*(T)^{I,+}_{\bbQ}$ be the dominant representative of $\nu_w$.

\subsubsection{}\label{sec: affine Weyl group}
Let $G^{\der}$ be the derived group of $G$ and let $T^{\mathrm{sc}}$ denote the preimage of $T$ in the simply connected covering $G^{\mathrm{sc}}$ of $G^{\der}$. Then $W_a$ is the Iwahori Weyl group for $G^{\mathrm{sc}}$  and we have the following exact sequence \[\xymatrix{0\ar[r]& X_*(T^{\mathrm{sc}})_I\ar[r]& W_a\ar[r] &W_0\ar[r]& 0.}\]Since the action of $I$ permutes the set of absolute coroots, $X_*(T^{\mathrm{sc}})_I$  is torsion free and there is an inclusion $X_*(T^{\mathrm{sc}})_I\hookrightarrow X_*(T)_I$. By \cite{HaRa}, there exists a reduced root system $\Sigma$ such that 
$$W_a \simeq Q^\vee(\Sigma)\rtimes W(\Sigma)$$
where $Q^\vee(\Sigma)$ and $W(\Sigma)$ denotes the coroot lattice and Weyl group of $\Sigma$, respectively, and there is a canonical isomorphism $W(\Sigma)\cong W_0$. The roots of $\Sigma$ are proportional to the roots of the relative root system for $G_{\brF}$; however the root systems themselves may not be proportional.

 As explained in \cite[p7]{HaRa}, we may consider elements of $\Sigma$ as functions on $X_*(T)_I\otimes_{\bbZ}\bbR$, and we write $\langle\ ,\ \rangle$ for the induced pairing between $X_*(T)_I\otimes_{\bbZ}\bbR$ and the root lattice associated to $\Sigma$.  We let $\rho$ denote the half sum of all positive roots in $\Sigma$. Then for any $\lambda\in X_*(T)_I$ we have the equality \begin{equation}\label{eqn: length of translation element}\ell(t_\lambda)=\langle\overline{\lambda},2\rho\rangle, \end{equation} where $\overline{\lambda}\in W_0\cdot\lambda$ is the dominant representative of $\lambda$, i.e. \!the image of $\overline{\lambda}$ in $X_*(T)_I\otimes_{\bbZ}\bbR$ lies in $C_+$.

\subsubsection{} We say that an element $w\in W$ is $\sigma$-\textit{straight} if for any $n\in \bbN$,  we have $$\ell(w\sigma(w)\dotsc\sigma^{n-1}(w))=n\ell(w).$$
It is straightforward to check that this is equivalent to the condition $\ell(w)=\langle \overline{\nu}_w,2\rho\rangle$.

In this paper, we are particularly interested in translation elements $t_{\mu'}$ which are also $\sigma$-straight; the key property of these elements that we will need is that they are central for some Levi subgroup of $G$ defined over $F$.

For any $v\in X_*(T)_I\otimes_{\bbZ}\bbR$, we  let $\Phi_{v,0}$  be the set of relative roots $\alpha$ for $G_{\brF}$ such that $\langle v,\alpha\rangle=0$. We  may then associate to $v$ the semi-standard Levi subgroup $M_v\subset G_{\brF}$  generated by $T$ and the  root subgroups $U_\alpha$ corresponding to $\alpha\in\Phi_{v,0}$. If in addition $v$ is fixed by $\varsigma$, then $M_v$ is defined over $F$. We say $\lambda\in X_*(T)_I$ is central in $G$ if it pairs with any relative root (equivalently any root in $\Sigma$) to give $0$.

\begin{lemma}\label{lemma: cochar central}Let $\mu'\in X_*(T)_I$ such that $t_{\mu'}$ is a $\sigma$-straight element and let $M:=M_{\nu_{t_{\mu'}}}$ be the semi-standard Levi subgroup of $G$ associated to the Newton cocharacter $\nu_{t_{\mu'}}$. Then $M$ is defined over $F$ and ${\mu'}$ is central in $M$.	

\end{lemma}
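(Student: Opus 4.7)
The claim splits into two assertions: (i)~$M$ is defined over $F$, and (ii)~$\mu'$ is central in $M$. For (i), the computation
\[
t_{\mu'}\sigma(t_{\mu'})\cdots\sigma^{n-1}(t_{\mu'}) \;=\; t_{\mu'+\varsigma(\mu')+\cdots+\varsigma^{n-1}(\mu')}
\]
shows that $\nu_{t_{\mu'}} = \tfrac{1}{n}\sum_{i=0}^{n-1}\varsigma^{i}(\mu')$ is a $\varsigma$-invariant average; hence the set of relative roots on which it vanishes is $\sigma$-stable, and $M$ descends to~$F$.

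For (ii), the plan is to reformulate both sides of the $\sigma$-straight identity $\ell(t_{\mu'}) = \langle\overline{\nu}_{t_{\mu'}},2\rho\rangle$ as sums over positive roots of $\Sigma$. The elementary identity $\langle\overline{\lambda},2\rho\rangle = \sum_{\alpha\in\Sigma^{+}}|\langle\lambda,\alpha\rangle|$ (both sides are $W_{0}$-invariant in $\lambda$ and agree when $\lambda$ is dominant), combined with \eqref{eqn: length of translation element}, gives $\ell(t_{\mu'}) = \sum_{\alpha\in\Sigma^{+}}|\langle\mu',\alpha\rangle|$. This sum is moreover $\varsigma$-invariant in~$\mu'$: although $\varsigma$ may fail to preserve $\Sigma^{+}$, the assignment $\alpha \mapsto \pm\varsigma^{-1}(\alpha)\in\Sigma^{+}$ (sign chosen so that one lands in $\Sigma^{+}$) is a bijection of $\Sigma^{+}$, which is what we need. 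Averaging over the $\varsigma$-orbit of $\mu'$ therefore yields
\[
n\cdot\ell(t_{\mu'}) \;=\; \sum_{\alpha\in\Sigma^{+}}\sum_{i=0}^{n-1}\bigl|\langle\varsigma^{i}(\mu'),\alpha\rangle\bigr|, \qquad n\cdot\langle\overline{\nu}_{t_{\mu'}},2\rho\rangle \;=\; \sum_{\alpha\in\Sigma^{+}}\Bigl|\sum_{i=0}^{n-1}\langle\varsigma^{i}(\mu'),\alpha\rangle\Bigr|.
\]

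The $\sigma$-straight hypothesis equates the two expressions above, so the triangle inequality must be saturated for each $\alpha\in\Sigma^{+}$. Equivalently, for every positive root $\alpha$ the real numbers $\langle\varsigma^{i}(\mu'),\alpha\rangle$, $i=0,\ldots,n-1$, share a common sign. Now if $\alpha$ is a root of $M$ then $\langle\nu_{t_{\mu'}},\alpha\rangle = 0$, so these common-sign summands sum to zero and must all vanish; in particular $\langle\mu',\alpha\rangle = 0$. Invoking the proportionality between $\Sigma$ and the relative root system of $G_{\breve F}$ then gives the centrality of $\mu'$ in $M$. The only subtle point is the $\varsigma$-invariance of $\sum_{\alpha\in\Sigma^{+}}|\langle\cdot,\alpha\rangle|$ in the presence of the nontrivial ``rotation'' $w_{0}$; once that is in hand, everything else is just the equality case of the triangle inequality.
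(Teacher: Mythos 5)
Your proposal is correct and follows essentially the same strategy as the paper: both arguments exploit the equality case in $\ell(t_{\mu'})=\langle\overline{\nu}_{t_{\mu'}},2\rho\rangle$ to conclude that, for each root $\alpha\in\Sigma$, the quantities $\langle\varsigma^i(\mu'),\alpha\rangle$ share a common sign, after which the vanishing of $\langle\nu_{t_{\mu'}},\alpha\rangle$ for $\alpha\in\Sigma_M$ forces each term (in particular $\langle\mu',\alpha\rangle$) to vanish. The only difference is cosmetic: you extract the sign coherence from saturation of the triangle inequality root by root, whereas the paper deduces it by showing each $u\varsigma^i(\mu')$ is dominant and hence that all $\varsigma^i(\mu')$ lie in a single Weyl chamber.
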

\begin{proof}For any $\lambda\in X_*(T)_I$, and for sufficiently divisible $n$ we have $$n\nu_{\sigma(t_\lambda)}=\sigma(t_\lambda)\dotsc\sigma^n(t_\lambda)=t_{\lambda}^{-1}n\nu_{t_\lambda} t_\lambda=n\nu_{t_{\lambda}}.$$	Note that $\sigma(t_{\lambda})=t_{\varsigma(\lambda)}$; it follows that $\nu_{\sigma(t_{\lambda})}=\varsigma(\nu_{t_{\lambda}})$ and hence $\nu_{t_{\lambda}}$ is fixed by $\varsigma$. Therefore $M$ is defined over $F$.
	
 We let $u\in W_0$ be such that $u(\nu_{t_{\mu'}})=\overline{\nu}_{t_{\mu'}}$. 
	For a sufficiently divisible $n$, we  have 
	$$\ell(t_{\mu'})=\langle\overline{\nu}_{t_{\mu'}},2\rho\rangle=\frac{1}{n}\sum_{i=0}^{n-1}\langle u\varsigma^i(\mu'),2\rho\rangle$$
where the first equality follows from the $\sigma$-straightness of $t_{\mu'}$.	Now $\langle u\varsigma^i(\mu'),2\rho\rangle\leq \ell(t_{\mu'})$ with equality if and only if $u\varsigma^i(\mu')$ is dominant. Therefore $u\varsigma^i(\mu')$ is  dominant for all $i$ and hence $\varsigma^i(\mu')$ is contained in the translate $C'$ of the dominant chamber $C_+$ by $u^{-1}$ for all $i$.
	
	Now $M$ corresponds to a sub-root system $\Sigma_M$ of $\Sigma$ consisting of the roots $\alpha\in \Sigma$ such that $\langle \nu_{t_{\mu'}},\alpha\rangle=0$. Then $\Sigma_M$ is also the reduced root system associated to the affine Weyl group for $M$ as in \S\ref{sec: affine Weyl group}. We must show for all $\alpha\in \Sigma_M$, we have $\langle{\mu'},\alpha\rangle=0$.
	Let $\alpha\in \Sigma_M$ be a root, then since $\varsigma^i(\mu')$ is contained in a single Weyl chamber for all $i$, it follows that $\langle  \varsigma^i(\mu'),\alpha\rangle$ have the same sign for all $i$. 
	
	Without loss of generality, assume $\langle \varsigma^i(\mu'),\alpha\rangle\geq 0, \forall i.$
	Then we have \begin{equation}
	\begin{split}0&=\langle \nu_{t_{\mu'}},\alpha\rangle =\frac{1}{n}\sum_{i=0}^{n-1}\langle\varsigma^i(\mu'),\alpha\rangle
	\end{split}.
	\end{equation}
	Since all  terms in the sum are non-negative, they must be $0$. Hence $\mu'$ is central in $M$.
\end{proof}

\subsubsection{}Now let $\{\mu\}$ be a geometric conjugacy class of cocharacters of $G$. Let $\mu\in X_*(T)_I$ denote the image of a dominant (with respect to the choice of Borel $B$ defined above) representative $\widetilde{\mu}\in X_*(T)$ of $\{\mu\}$.

\begin{lemma}\label{lemma: cochar central 2}
		Let  $w\in W_0$ such that  for $\mu':=w({\mu})$, $t_{\mu'}$ is a $\sigma$-straight element. Let $\widetilde{\lambda}:=w(\widetilde{\mu})\in X_*(T)$. Then $\widetilde{\lambda}$ is central in $M:=M_{\nu_{t_{\mu'}}}$. Here, we consider $W_0$ as a subgroup of the absolute Weyl group for $G$.
\end{lemma}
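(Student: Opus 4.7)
My plan is to deduce the statement from the preceding lemma by showing that $\widetilde{\lambda}$ is $I$-invariant, i.e., lies in $X_*(T)^I$. Once this is established, the absolute pairing $\langle \widetilde{\lambda},\tilde{\alpha}\rangle$ with any absolute root $\tilde{\alpha}$ of $M$ factors through the restriction $\tilde{\alpha}|_S$, which is a relative root lying in $\Phi_{v,0}$ where $v=\nu_{t_{\mu'}}$. Under the canonical rational identification $X_*(T)^I\otimes\bbQ\cong X_*(T)_I\otimes\bbQ$, the class of $\widetilde{\lambda}$ corresponds to $\mu'$, and the previous lemma gives $\langle\mu',\tilde{\alpha}|_S\rangle=0$ (using that $\Sigma_M$ and the relative roots $\Phi_{v,0}$ of $M$ are proportional, so pair trivially with the same cocharacters).

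To verify $I$-invariance of $\widetilde{\lambda}=w(\widetilde{\mu})$, I would argue that each factor is $I$-fixed. The inclusion of $W_0$ into the absolute Weyl group realizes $W_0$ as the $I$-fixed subgroup $(W_0^{\mathrm{abs}})^I$ since $G_{\brF}$ is quasi-split, so $w$ is $I$-fixed. For $\widetilde{\mu}$: the Borel $B$ is defined over $\brF$, hence the absolute dominant chamber $X_*(T)^+$ is $I$-stable, and since $\{\mu\}$ is $\Gamma$-stable in the setup of interest (where $\{\mu\}$ descends to the reflex field contained in $F$), the unique dominant representative $\widetilde{\mu}\in \{\mu\}\cap X_*(T)^+$ is $I$-fixed.

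Next, I would identify $X_*(T)^I=X_*(S)$, which holds because $S$ is the maximal $\brF$-split subtorus of $T$, so every $I$-fixed cocharacter of $T$ automatically factors through $S$. This puts $\widetilde{\lambda}\in X_*(S)$, and its image in $X_*(T)_I$ under the canonical quotient is $\mu'$ by construction. Since $G_{\brF}$ is quasi-split, every absolute root restricts nontrivially to $S$, so the absolute roots of $M_v$ with respect to $T$ are precisely those $\tilde{\alpha}$ with $\tilde{\alpha}|_S\in\Phi_{v,0}$. Then $\langle\widetilde{\lambda},\tilde{\alpha}\rangle=\langle\widetilde{\lambda},\tilde{\alpha}|_S\rangle$ as pairings of $X_*(S)$ with $X^*(S)$, and substituting $\widetilde{\lambda}\leftrightarrow\mu'$ rationally completes the argument.

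I do not expect any serious obstacle here; the argument is essentially formal, amounting to bookkeeping with the chain of identifications $X_*(T)^I=X_*(S)$, $X_*(T)^I\otimes\bbQ\cong X_*(T)_I\otimes\bbQ$, and the compatibility of the $X_*(T)\times X^*(T)$ pairing with restriction to $S$. The one point that needs care is the $I$-invariance of $\widetilde{\mu}$, which implicitly relies on $\{\mu\}$ being $I$-stable; this is the standing assumption in the intended applications.
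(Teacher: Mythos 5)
There is a genuine gap: your argument rests on the claim that $\widetilde{\lambda}=w(\widetilde{\mu})$ lies in $X_*(T)^I$, which you deduce from $I$-stability of $\{\mu\}$. But $\{\mu\}$ is an arbitrary geometric conjugacy class here, and even in the intended applications its minimal field of definition is the reflex field $E$, which is a finite extension \emph{of} $F$ (not a subfield of $F$, as you write), and this extension may well be ramified. When $E\cdot\brF\neq\brF$, the class $\{\mu\}$ is not $I$-stable, the dominant representative $\widetilde{\mu}$ is not $I$-fixed, and $\widetilde{\lambda}$ does not land in $X_*(T)^I=X_*(S)$. The whole point of the paper is to treat ramified groups, so this is precisely the case that cannot be excluded; your reduction of the absolute pairing $(\widetilde{\lambda},\widetilde{\alpha})$ to a pairing on $X_*(S)$, and hence to the previous lemma, breaks down at the first step. (The parts of your argument that are fine — $W_0\subset (W^{\mathrm{abs}})^I$ for the quasi-split group $G_{\brF}$, and the identification $X_*(T)^I=X_*(S)$ — do not rescue this.)

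The paper's proof is designed to avoid exactly this issue. It takes a positive root $\alpha\in\Sigma_M$ for the ordering determined by the chamber $C_M\supset w(C_+)$, lifts it to an absolute root $\widetilde{\alpha}\in X^*(T)$, and writes
$$0=\langle\mu',\alpha\rangle=c\sum_{\tau\in\mathrm{Gal}(K/\brF)}(\widetilde{\lambda},\tau(\widetilde{\alpha})),\qquad c>0,$$
where the left-hand vanishing is the previous lemma. Each $\tau$ preserves $C_M$, so each $\tau(\widetilde{\alpha})$ is again positive for $M$ and each summand is $\geq 0$; hence every summand vanishes, and running over all $\alpha$ one gets that $\widetilde{\lambda}$ kills every absolute root of $M$. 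This sum-over-the-Galois-orbit positivity argument is the missing idea: it proves centrality of the (generally non-$I$-invariant) cocharacter $\widetilde{\lambda}$ directly, rather than first descending $\widetilde{\lambda}$ to $X_*(S)$. If you want to keep your structure, you would have to replace the $I$-invariance claim by such an averaging argument over the inertia orbit of $\widetilde{\alpha}$.
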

\begin{proof}	
		Let $w(C_+)\subset X_*(T)_I\otimes_{\bbZ}\bbR$ be the translate of the dominant chamber by $w$. Then $w(C_+)$ determines a chamber $C_M$ for $M$ (it is the unique chamber for $M$ such that $w(C_+)\subset C_M$) and we have $\mu'\in C_M$. The chamber $C_M$ determines an ordering of the root system $\Sigma_M$. Let ${\alpha}$ be a positive root for $\Sigma_M$ and $\widetilde{\alpha}\in X^*(T)$ an (absolute) root lifting ${\alpha}$; such a lift exists by the construction of $\Sigma$, see eg. \cite[VI, 2.1]{Bourbaki}. We let $(\ ,\ ):X_*(T)\times X^*(T)\rightarrow \bbZ$ denote the natural pairing.
	
	Let $K/\brF$ be a finite Galois extension over which $T$ splits. We have by definition of $\Sigma_M$ $$0=\langle \mu',{\alpha}\rangle=c\sum_{\tau\in\mathrm{Gal}(K/\brF)}( \widetilde{\lambda},\tau(\widetilde{\alpha}))$$for some positive $c\in \mathbb{R}$, where the first equality follows since $\mu'$ is central in $M$.
	For any $\tau\in \mathrm{Gal}(K/\brF)$, $C_M$ is preserved by $\tau$ and hence $\tau(\widetilde{\alpha})$ is a positive root for $M$. Therefore $(\widetilde{\lambda},\tau(\widetilde{\alpha}))\geq 0$, and hence $(\widetilde{\lambda},\tau(\widetilde{\alpha}))=0$ for all $\tau$. Applying this to every relative root $\alpha$ for $M$, we see that $\widetilde\lambda$ is central in $M$.
	\end{proof}
\subsection{$\mu$-ordinary $\sigma$-conjugacy classes}

\subsubsection{}\label{sec: mu admissible set}Let $\{\mu\}$ be a geometric conjugacy class of cocharacters of $G$; we let $\widetilde{\mu}\in X_*(T)$ and $\mu\in X_*(T)_I$ as above. The $\mu$-admissible set is defined to be $$\Adm(\{\mu\})=\{w\in W|w\leq t_{x({\mu})} \text{ for some }x\in W_0\}.$$
It has a unique minimal element denoted  $\tau_{\{\mu\}},$ which is also the unique element of $\Adm(\{\mu\})\cap\Omega$. 

For $b\in G({\brF})$, we let $[b]$ denote the set $\{g^{-1}b\sigma(g)|g\in G({\brF})\}$, the $\sigma$-conjugacy class of $b$. The set of $\sigma$-conjugacy classes $B(G)$ has been classified by Kottwitz in \cite{Ko2} and \cite{Ko1}. For $b\in G({\brF})$, we let $\nu_b:\bbD\rightarrow G_{\brF}$ denote its Newton cocharacter and $$\overline{\nu}_b\in X_*(T)_{I,\Q}^+\cong X_*(T)_{\Q}^{I,+}$$  the dominant representative for $\nu_b$; it is known that $\overline{\nu}_b$ is invariant under the action of $\sigma_0$. We let $\widetilde{\kappa}_G:G(\brF)\rightarrow \pi_1(G)_I$ denote the Kottwitz homomorphism and we write $$\kappa_G:G({\brF})\rightarrow \pi_1(G)_\Gamma$$  for the composition of $\widetilde{\kappa}_G$ and the projection map $\pi_1(G)_I\rightarrow \pi_1(G)_\Gamma$. This induces a well-defined map $B(G)\rightarrow \pi_1(G)_\Gamma$, also denoted $\kappa_G$. Then there is an injective map \begin{equation}\label{eqn: Kottwitz classification}B(G)\xrightarrow{(\kappa_G,b\mapsto \overline{\nu}_b)}\pi_1(G)_\Gamma\times (X_*(T)_{\Q}^{I,+})^{\sigma_0}.\end{equation}

\subsubsection{}
There is a more explicit description of this map using the Iwahori Weyl group $W$. For $w\in W$, its $\sigma$-conjugacy class is the set $\{u^{-1}w\sigma(w)| u\in W\}$. We let $B(W,\sigma)$ denote the set of $\sigma$-conjugacy  classes in $W$. For $w\in W$, we let $\dot{w}\in N(\brF)$ be a lift of $w$. Then to $w\in W$, we associate the $\sigma$-conjugacy class of $\dot{w}$; by Lang's theorem this does not depend on the choice of representative $\dot{w}$. We write $$\Psi:B(W,\sigma)\rightarrow B(G)$$ for the map induced by $w\mapsto [\dot{w}]$.

By \cite[Theorem 3.7]{He1}, $\Psi$ is surjective and we have a commutative diagram
\begin{equation}
\label{eqn: B(G) Iwahori Weyl}
\xymatrix{B(W, \sigma) \ar@{->>}[rr]^{\Psi} \ar[dr]_{(\overline\nu,\kappa_G)} & & B(G) \ar@{^{(}->}[ld]^{(\overline\nu,\kappa_G)} \\ & (X_*(T)^{I,+}_ \bbQ) \times \pi_1(G)_\Gamma &}.\end{equation}
The map $\Psi$ is not injective in general, however it is proved in \cite[Theorem 3.7]{He1} that  its restriction to the set of $\sigma$-straight $\sigma$-conjugacy classes is a bijection. Here, a $\sigma$-conjugacy class in $W$ is said to be $\sigma$-straight if it contains a $\sigma$-straight element.

\subsubsection{} Note that there is a partial order on the set  $X_*(T)_\bbQ^+$; for $\lambda,\lambda'\in
X_*(T)_\bbQ^+$, we write $\lambda\leq\lambda'$ if $\lambda'-\lambda$ is a non-negative rational linear combination of positive  roots. For $\{\mu\}$ as above, we write $\mu^\natural$ for the common image of an element of $\{\mu\}$ in $\pi_1(G)_\Gamma$ and we define $$\mu^\diamond=\frac{1}{N}\sum_{i=1}^N\sigma_0^i(\mu)\in X_*(T)^+_{I,\bbQ}\cong X_*(T)^{I,+}_\bbQ.$$
where $N$ is the order of the element $\sigma_0$ giving rise to the $L$-action on $X_*(T)_I\otimes_{\bbZ}\bbQ$. We set $$B(G,\{\mu\})=\{[b]\in B(G):\kappa_G(b)=\mu^\natural,\overline{\nu}_b\leq\mu^\diamond\}.$$

Note that for $[b_1],[b_2]\in B(G,\{\mu\})$, we have $[b_1]=[b_2]$  if and only if $\overline{\nu}_{[b_1]}=\overline{\nu}_{[b_2]}$, since $[b_1]$ and $[b_2]$ have common image $\mu^\natural$ under $\kappa_G$.

\begin{definition}\label{def: mu ordinary}
	Suppose there  exists a class $[b]\in B(G,\{\mu\})$ such that $\overline{\nu}_{[b]}=\mu^\diamond$ (such a class is necessarily unique if it exists by the above remark). We write $[b]_{\mu}$ for this class; it is called the $\mu$-ordinary $\sigma$-conjugacy class.
\end{definition}

\begin{remark}	\cite[Theorem 1.1]{HeNie2} have shown that $B(G,\{\mu\})$ always contains a  unique
maximal element with respect to the partial order $\leq$. When $G$ is quasi-split, this class is just $[b]_{\mu}$. However if $G$ is not quasi-split, there may be no $[b]\in B(G,\{\mu\})$ such that $\overline{\nu}_{[b]}=\mu^\diamond$.
\end{remark}

\begin{lemma}\label{lemma: ord class rep by straight elt}Assume there exists $[b]_{\mu}\in B(G,\{\mu\})$ with $\overline{\nu}_{[b]_{\mu}}=\mu^\diamond$. There exists $\mu'\in W_0\cdot{\mu}$ such that $t_{\mu'}$  is $\sigma$-straight and $\dot{t}_{\mu'}\in [b]_{\mu}.$ 
\end{lemma}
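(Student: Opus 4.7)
My plan is to reduce the problem to a single Newton--point condition and then construct $\mu'$ using the Levi $M := M_{\mu^\diamond}$.

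First, for any $\mu'\in W_0\cdot\mu$ formula \eqref{eqn: length of translation element} gives $\ell(t_{\mu'})=\langle\mu,2\rho\rangle$, since $\mu$ is the dominant representative of $\mu'$. Because $\sigma_0$ preserves the dominant chamber $C_+$, it permutes the positive roots of $\Sigma$, so $\langle\sigma_0^i(\mu),2\rho\rangle=\langle\mu,2\rho\rangle$ for each $i$ and hence $\langle\mu^\diamond,2\rho\rangle=\langle\mu,2\rho\rangle$. The Mazur--Rapoport--Richartz inequality applied to $\dot t_{\mu'}$ yields $\overline\nu_{t_{\mu'}}\leq\mu^\diamond$ in the dominance order, so $\langle\overline\nu_{t_{\mu'}},2\rho\rangle\leq\ell(t_{\mu'})$. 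Hence $t_{\mu'}$ is $\sigma$-straight if and only if the non-negative $\bbQ$-combination of positive coroots $\mu^\diamond-\overline\nu_{t_{\mu'}}$ pairs trivially with $2\rho$, i.e.\ iff $\overline\nu_{t_{\mu'}}=\mu^\diamond$. Since $W_0$ acts trivially on $\pi_1(G)_\Gamma$ one has $\kappa_G(t_{\mu'})=\mu^\natural$ automatically, so by the injectivity in \eqref{eqn: Kottwitz classification} the equality $\overline\nu_{t_{\mu'}}=\mu^\diamond$ also forces $[\dot t_{\mu'}]=[b]_\mu$.

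The problem thus reduces to exhibiting $\mu'\in W_0\cdot\mu$ with $\overline\nu_{t_{\mu'}}=\mu^\diamond$. For this I would use the Levi $M:=M_{\mu^\diamond}$, which is defined over $F$ because $\mu^\diamond$ is dominant and $\sigma_0$-invariant, and in which $\mu^\diamond$ is central by construction. Since the Newton point $\mu^\diamond$ of $[b]_\mu$ is central in $M$, the class $[b]_\mu$ descends to a basic class in $B(M)$, and basic classes in $B(M)$ are represented by length-zero elements $\tau\in\Omega_M\subset W_M$. Among these length-zero elements, the translations are precisely $t_\lambda$ for $\lambda\in X_*(T)_I$ central in $M$, and they are classified inside the basic locus of $B(M)$ by $\kappa_M(\tau)\in\pi_1(M)_\Gamma$, which must lift $\mu^\natural$.

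The heart of the argument, and the main obstacle, is to show that this $\tau$ can be realised as $t_{\mu'}$ with $\mu'\in W_0\cdot\mu$: one must produce an element of the $G$-orbit $W_0\cdot\mu$ which is central in $M$ and has the prescribed image in $\pi_1(M)_\Gamma$. My approach here would combine the existence of a $\sigma$-straight representative of $[b]_\mu$ in $\mathrm{Adm}(\{\mu\})$ guaranteed by \cite[Theorem 3.7]{He1} with an explicit Lang-type adjustment in the finite group $W_0$ under the action of $\sigma_0$, to pass from the dominant representative $\mu$ to a suitably ``$\sigma_0$-balanced'' conjugate $\mu'$. In the unramified case ($\varsigma=\sigma_0$, $w_0=1$) no adjustment is needed and $\mu'=\mu$ works directly, recovering Wortmann's construction; the general case requires tracking the twist by $w_0$ through the straightening procedure.
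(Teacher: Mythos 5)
Your reduction in the first two paragraphs is sound and closely parallels the paper's computation: $\ell(t_{\mu'})=\langle\mu,2\rho\rangle$ for every $\mu'\in W_0\cdot\mu$, $\langle\mu^\diamond,2\rho\rangle=\langle\mu,2\rho\rangle$ because $\sigma_0$ preserves $C_+$, and $\sigma$-straightness of $t_{\mu'}$ together with $[\dot t_{\mu'}]=[b]_\mu$ is equivalent to $\overline\nu_{t_{\mu'}}=\mu^\diamond$ via the injectivity of the Kottwitz classification. The problem is the last paragraph: the step you yourself flag as ``the heart of the argument'' is not actually carried out. Producing an element of $W_0\cdot\mu$ that is central in $M_{\mu^\diamond}$ with prescribed image in $\pi_1(M)_\Gamma$ is essentially equivalent to the lemma itself (centrality of $\mu'$ in $M_{\nu_{t_{\mu'}}}$ is Lemma \ref{lemma: cochar central}, a \emph{consequence} of having a $\sigma$-straight $t_{\mu'}$), and the proposed ``Lang-type adjustment in $W_0$ tracking the twist by $w_0$'' is a gesture, not an argument. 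So as written the proof is incomplete.

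What closes the gap is a one-line length argument that you already have all the ingredients for. By \cite[Theorem 4.1]{He3} there is a $\sigma$-straight $w\in\Adm(\{\mu\})$ with $\dot w\in[b]_\mu$; the commutative diagram (\ref{eqn: B(G) Iwahori Weyl}) gives $\overline\nu_w=\mu^\diamond$, so $\sigma$-straightness yields
\[
\ell(w)=\langle\overline\nu_w,2\rho\rangle=\langle\mu^\diamond,2\rho\rangle=\langle\mu,2\rho\rangle=\ell(t_\mu).
\]
Since $w\le t_{x(\mu)}$ for some $x\in W_0$ and $\ell(t_{x(\mu)})=\ell(t_\mu)$, the equality of lengths forces $w=t_{x(\mu)}$, i.e.\ $w$ is already a translation $t_{\mu'}$ with $\mu'\in W_0\cdot\mu$. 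No passage through the basic locus of $B(M)$, length-zero elements of $\Omega_M$, or any Lang-type correction in $W_0$ is needed; once you invoke the existence of a $\sigma$-straight representative inside the admissible set, maximality of its length within $\Adm(\{\mu\})$ does all the work.
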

\begin{proof}Since $[b]_{\mu}\in B(G,\{\mu\})$, there exists a $\sigma$-straight element $w\in \Adm(\{\mu\})$  such that $\dot{w}\in [b]_{\mu}$ by \cite[Theorem 4.1]{He3}. The commutativity of diagram (\ref{eqn: B(G) Iwahori Weyl}) implies that $\overline{\nu}_w=\mu^\diamond$. Since $w$ is $\sigma$-straight, we have
	$$\ell(w)=\langle\overline{\nu}_w,2\rho\rangle=\langle\mu^\diamond,2\rho\rangle=\langle{\mu},2\rho\rangle=\ell({t_\mu}),$$
	where the third equality follows from the fact $\rho$ is invariant under $\sigma_0$, and the final equality uses (\ref{eqn: length of translation element}) and the fact that $\mu$ is dominant. 
		Since $w\in\Adm(\{\mu\})$, $\ell(w)\leq \ell(t_{{\mu}})$ with equality if and only if $w=t_{\mu'}$ for some $\mu'\in W_0\cdot {\mu}$.
	\end{proof}
	
	\subsubsection{} Now let $G'$ be another reductive group over $F$ and $f: G\rightarrow G'$ a group scheme morphism which induces an isogeny $G^{\der}\rightarrow G'^{\der}$. We write $\{\mu'\}$ for the $G'$-conjugacy class of cocharacters induced by $\{\mu\}$.
We have the following relationship between $\mu$-ordinary $\sigma$-conjugacy classes for $G$ and $G'$.

\begin{lemma}\label{lemma: mu-ordinary class change of groups}\begin{enumerate}\item There exists $[b]_{\mu}\in B(G,\{\mu\})$ with $\overline{\nu}_{[b]_{\mu}}=\mu^\diamond$ if and only if there exists $[b']_{\mu'}\in B(G',\{\mu'\})$ with $\overline{\nu}_{[b']_{\mu'}}=\mu'^\diamond$.

	\item	Let $[b]\in B(G,{\mu})$ and $[b']:=[f(b)]\in B(G',\{\mu'\})$. Then $[b]=[b]_{\mu}$ if and only if $[b']= [b']_{\mu'}$.
	\end{enumerate}
\end{lemma}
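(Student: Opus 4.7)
Both parts rest on the compatibility of the various combinatorial invariants under $f$. Since $f$ induces an isogeny $G_{\der}\to G'_{\der}$, after choosing compatible maximal tori $T\subset G$, $T'\subset G'$ and compatible alcoves it induces isomorphisms $W_0\cong W_0'$ and $W_a\cong W_a'$, and a bijection between the simple coroots of $\Sigma$ and $\Sigma'$; in particular the induced map $X_*(T_{\der})^I_{\bbQ}\to X_*(T'_{\der})^I_{\bbQ}$ is an isomorphism. Since $f$ is defined over $F$ it commutes with $\varsigma$ and hence with $\sigma_0$, preserves dominant chambers (so $f(\overline{\lambda})=\overline{f(\lambda)}$), preserves the partial order $\le$, and satisfies $f(\mu^\diamond)=\mu'^\diamond$ and $\langle\lambda,2\rho\rangle=\langle f(\lambda),2\rho'\rangle$; consequently the induced map $f_*\colon B(G)\to B(G')$ sends $B(G,\{\mu\})$ into $B(G',\{\mu'\})$ and satisfies $\overline{\nu}_{f(b)}=f(\overline{\nu}_b)$.

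The plan is to prove (2) first. The direction $[b]=[b]_\mu\Rightarrow [b']=[b']_{\mu'}$ is immediate from $\overline{\nu}_{f(b)}=f(\overline{\nu}_b)=f(\mu^\diamond)=\mu'^\diamond$. Conversely, assume $\overline{\nu}_{f(b)}=\mu'^\diamond$, so that $f(\mu^\diamond-\overline{\nu}_b)=0$. Since $[b]\in B(G,\{\mu\})$, both $\overline{\nu}_b$ and $\mu^\diamond$ map to $\mu^\natural\otimes\bbQ$ in $\pi_1(G)_{\Gamma,\bbQ}$, so their difference lies in $X_*(T_{\der})^I_{\bbQ}$. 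Using $\overline{\nu}_b\le\mu^\diamond$, write $\mu^\diamond-\overline{\nu}_b=\sum_i c_i\alpha_i^\vee$ with $c_i\ge 0$ and $\alpha_i^\vee$ the simple coroots of $\Sigma$; applying $f$ yields $\sum_i c_i f(\alpha_i^\vee)=0$. Since the $f(\alpha_i^\vee)$ are the simple coroots of $\Sigma'$, they are linearly independent in $X_*(T'_{\der})^I_{\bbQ}$, forcing $c_i=0$ for all $i$ and hence $\overline{\nu}_b=\mu^\diamond$.

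For (1), the direction $(\Rightarrow)$ is immediate from (2) applied to a representative $b$ of $[b]_\mu$. For $(\Leftarrow)$, I would apply Lemma \ref{lemma: ord class rep by straight elt} to $G'$ to write $[b']_{\mu'}\ni \dot t_{\widetilde{\mu}'}$ with $t_{\widetilde{\mu}'}$ $\sigma$-straight and $\widetilde{\mu}'=w'(\mu')\in W_0'\cdot\mu'$. Lift $w'$ via the identification $W_0\cong W_0'$ to $w\in W_0$ and set $\widetilde\mu:=w(\mu)\in W_0\cdot\mu$, so that $f(\widetilde\mu)=\widetilde{\mu}'$. The compatibility of length and the $\varsigma$-action under $f$ shows that $t_{\widetilde\mu}$ is $\sigma$-straight in $W_G$; set $[b]:=[\dot t_{\widetilde\mu}]$. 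One computes $\kappa_G(b)=\mu^\natural$ and $f(\overline{\nu}_b)=\overline{\nu}_{t_{\widetilde{\mu}'}}=\mu'^\diamond=f(\mu^\diamond)$. Since the first equality forces $\overline{\nu}_b-\mu^\diamond\in X_*(T_{\der})^I_{\bbQ}$, and $f$ is an isomorphism on this space, we obtain $\overline{\nu}_b=\mu^\diamond$, so $[b]=[b]_\mu$ and its existence is established.

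The main obstacle is the converse direction of (1): lifting a $\sigma$-straight element from $W_{G'}$ back to $W_G$ cleanly. This relies on the observation that length, Bruhat order, Newton point, and $\varsigma$-action all depend only on the adjoint root datum and are therefore preserved under the isogeny assumption on $f$; once compatible choices of tori, Borels, and alcoves have been fixed, the remaining argument reduces to the linear-algebra computation used in (2).
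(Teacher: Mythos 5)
Your proof is correct and follows essentially the same route as the paper's: part (2) comes down to the fact that $\mu^\diamond-\overline{\nu}_b$ is a non-negative combination of positive coroots killed by $f$ (the paper phrases this as the difference lying in $X_*(\ker f)^I$, you phrase it as linear independence of the images of the simple coroots, which is the same observation since $f$ is an isogeny on derived groups), and the converse of part (1) is obtained exactly as in the paper by applying Lemma \ref{lemma: ord class rep by straight elt} to $G'$ and transporting the resulting $\sigma$-straight translation element back to $W_G$ via the identification of relative Weyl groups. The only difference is cosmetic: you prove (2) first and deduce one implication of (1) from it, whereas the paper treats (1) first.
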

\begin{proof} (1) Note that we have a commutative diagram \[\xymatrix{  B(G)\ar[r]\ar[d]&(X_*(T)^{I,+}_ \bbQ) \times \pi_1(G)_\Gamma\ar[d]\\ B(G')\ar[r]& (X_*(T')^{I,+}_ \bbQ) \times \pi_1(G')_\Gamma}\]
	where $T'$ is the centralizer of a maximal $\brF$-split torus of $G'$ containing $f(T)$.  Thus one direction of (1) is clear.
	
For the converse, suppose there exists $[b']_{\mu'}\in B(G',\{\mu'\})$. Note that by assumption, there is an identification of relative Weyl groups for $G$ and $G'$. Then by Lemma \ref{lemma: ord class rep by straight elt}, there exists $w_0\in W_0$ such that $t_{w_0(\mu')}$ is a $\sigma$-straight element of the Iwahori Weyl group for $G'$ and $\dot{t}_{w_0(\mu')}\in [b']_{\mu}$. Then it is easy to check that  $t_{w_0(\mu)}$ is a $\sigma$-straight element of the Iwahori Weyl group for $G$ and that $\overline{\nu}_{t_{w_0(\mu)}}=\mu^\diamond$. It follows that $[\dot{t}_{w_0(\mu)}]=[b]_{\mu}\in B(G,\{\mu\})$. 
 
(2) One direction is clear. Suppose then that $[b']=[b']_{\mu'}.$ It follows that $\overline{\nu}_{[b]}=\mu^\diamond +\alpha$ for some $\alpha\in X_*(\ker(G\rightarrow G'))^I$. But $[b]\in B(G,\{\mu\})$ and hence $\mu^\diamond-\overline{\nu}_{[b]}$ is a rational linear combination of positive coroots. Thus $\alpha=0$ and $[b]=[b]_{\mu}$.
\end{proof}

\subsection{Parahoric group schemes}\label{sec: parahoric group schemes}
\subsubsection{}

Recall that $\calB(G,F)$ and $\calB(G,\brF)$ denote the extended Bruhat--Tits buildings  associated to $G$. For a non-empty bounded subset $\Xi\subset \calB(G,F)$ which is contained in an apartment, we let $G(F)_{\Xi}$  (resp. $G(\brF)_\Xi$) denote the subgroup of $G(F)$  (resp. $G(\brF)$) which fixes $\Xi$ pointwise. By the main result of \cite{BT2}, there exists a smooth affine group scheme $\widetilde{\G}_{\Xi}$ over $\Ok_F$ with generic fiber $G$ which is uniquely characterized by the property $\widetilde{\G}_{\Xi}(\Ok_{\brF})=G(\brF)_\Xi$. As in \cite[\S 1.1.2]{KP}, we  will call such a group scheme  the Bruhat--Tits stabilizer scheme associated to $\Xi$. If $\Xi=\{x\}$ is a point we write $G(F)_x$  (resp. $\widetilde{\G}_x$) for $G(F)_{\{x\}}$ (resp. $\widetilde{\G}_{\{x\}}$). 

For $
\Xi\subset \calB(G,F)$, we write $\calG_\Xi$ for the “connected stabilizer” $\Xi$ (cf. \cite[\S4]{BT2}). We caution the reader that our convention differs from \cite{KP}, where $\calG_{\Xi}$ is used for the Bruhat--Tits stabilizer scheme and $\calG_{\Xi}^\circ$ for the connected stabilizer.
We are mainly interested in the cases where $\Xi$ is a point $x$ or an open facet $\fkf$. In this case, $\calG_x$  (resp. $\calG_{\fkf}$) is  the parahoric group scheme associated to $x$ (resp. $\fkf$). By \cite{HaRa}, $\G_{\Xi}(\Ok_{\brF})= \widetilde\G_{\Xi}(\Ok_{\brF})\cap\ker\widetilde{\kappa}_G.$ It follows that $\calG_\Xi(\calO_F)=\widetilde{\calG}_\Xi(\calO_F)\cap\ker \widetilde{\kappa}_G$. 

We may also consider the corresponding objects over $\brF$ and we use the same notation in this case. When it is understood which  point of $\calB(G,F)$ or $\calB(G,\brF)$ we are referring to, we  simply write  $\widetilde{\G}$ and $\G$ for the corresponding group schemes.

An important case that is needed for applications is when $\calG_x=\tG_x$, i.e. the parahoric is equal to the Bruhat--Tits stabilizer. When this happens, we necessarily  have that  $\tG_x=\tG_{\fkf}$, where $\fkf$ is the facet which contains $x$, and $x\in \fkf$ is a point which is ``in general position.'' We say that $\calG$ is a \emph{connected parahoric} if there exists a point $x\in \calB(G,F)$ such that $\tG_x=\calG$.

Let $G'$ be another reductive group and assume there is an identification $G^{\ad}\cong G'^{\ad}$  between their respective adjoint groups. Then there are surjective maps of buildings $\calB(G,F)\rightarrow \calB(G^{\ad},F)$ and $\calB(G',F)\rightarrow \calB(G'^{\ad},F)$ which are equivariant for $G(F)$ and $G'(F)$ respectively. Let $\calG=\calG_x$ be a parahoric group scheme of $G$ corresponding to $x\in \calB(G,F)$, and let $x^{\ad}\in\calB(G^{\ad},F)$ denote the image of $x$.  Then for any $x'\in \calB(G',F)$  lifting $x^{\ad}$, the parahoric $\calG'=\calG'_{x'}$ of $G'$  is independent of the choice of $x'$ lifting $x^{\ad}$. Thus $\calG
$ determines a parahoric  $\calG'$ of $G'$; in this case we say that $\G$ and $\G'$ are \emph{associated}.
\subsubsection{} 
Now let $J\subset \mathbb{S}$ be a subset  and we write $W_J$ for the subgroup of $W$ generated by $J$. If $W_J$ is finite, $J$ corresponds to a parahoric group scheme $\calG$ over $\calO_{\brF}$; such parahorics are called \emph{standard} (with respect to $\fka$).  We let $W^J$ (resp. $^JW$) denote the set of minimal length representatives of the cosets $W/W_J$ (resp  $W_J\backslash W)$.

We recall the  Iwahori decomposition. For $w\in W$, the map $w\mapsto \dot{w}$ induces a bijection
$$W_J\backslash W / W_J \cong \G(\Ok_{\brF})\backslash G({\brF})/\G(\Ok_{\brF}).$$

We now assume $J$ is $\sigma$-stable; in this case the parahoric group scheme $\calG$ is defined over $\calO_F$. For the rest of \S\ref{sec: parahoric group schemes}, we fix a geometric conjugacy class of cocharacters $\{\mu\}$ of $G$ and assume the existence of $[b]_{\mu}\in B(G,\{\mu\})$.
We define $\Adm(\{\mu\})_J$ to be the image of $\Adm(\{\mu\})$ in $W_J\backslash W/W_J.$  We sometimes write $\Adm_G(\{\mu\})_J$ if we want to specify the group $G$ we are working with. The following is the key group theoretic result needed to prove the existence of canonical liftings in \S\ref{sec: canonical liftings}.

\begin{prop}\label{prop: F-crystal basis}
	Let $b\in\left( \bigcup_{w\in\Adm(\{\mu\})_J}\G(\Ok_{\brF})\dot{w}\G(\Ok_{\brF})\right)\cap [b]_{\mu}$.  Then 
	
	\begin{enumerate}\item $b\in \mathcal{G}(\Ok_{\brF})\dot{t}_{\mu'}\G(\Ok_{\brF})$ for some $\sigma$-straight element $t_{\mu'}$ .
	
	\item There exists $g\in \G(\Ok_{\brF})$ such that $g^{-1}b\sigma(g)=\dot{t}_{\mu'}$, for $t_{\mu'}$ as in (1).
\end{enumerate}

 \end{prop}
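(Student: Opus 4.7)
The plan is to combine the classification of $\sigma$-straight $\sigma$-conjugacy classes with the Levi structure attached to a $\sigma$-straight translation element via Lemma \ref{lemma: cochar central}. Part (1) is a purely combinatorial statement about the admissible set, and (2) is a Lang--contraction argument inside the parahoric.

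For (1), Lemma \ref{lemma: ord class rep by straight elt} provides $\mu'\in W_0\cdot \mu$ with $t_{\mu'}$ $\sigma$-straight and $\dot{t}_{\mu'}\in [b]_{\mu}$; in particular $t_{\mu'}\in \Adm(\{\mu\})$. By hypothesis $b\in \calG(\calO_{\brF})\dot{w}\calG(\calO_{\brF})$ for some $w\in \Adm(\{\mu\})$ with $[\dot w]=[b]_{\mu}$ in $B(G)$, so it suffices to show that $W_J w W_J = W_J t_{\mu'} W_J$. For this I would invoke the partial-conjugation/reduction machinery of He (cf.\ \cite{He1,He3}): within the double coset $W_J w W_J$ one can locate a minimal-length $\sigma$-straight representative, and by the injectivity of $\Psi$ on $\sigma$-straight classes (diagram (\ref{eqn: B(G) Iwahori Weyl})) any such representative is $\sigma$-conjugate to $t_{\mu'}$; length considerations together with $\ell(t_{\mu'})=\langle \mu,2\rho\rangle$ then force the two double cosets to coincide.

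For (2), set $M=M_{\nu_{t_{\mu'}}}$, so that by Lemma \ref{lemma: cochar central} $M$ is defined over $F$ and $\mu'$ is central in $M$. Let $P=MN$ be the parabolic of $G_{\brF}$ with Levi $M$ determined by $\nu_{t_{\mu'}}$, and $P^-=MN^-$ its opposite; write $\calM,\calN,\calN^-$ for the integral structures induced from $\calG$. Since $\langle \nu_{t_{\mu'}},\alpha\rangle>0$ for every root $\alpha$ of $N$, a suitable iterate of $\sigma$-conjugation by $\dot{t}_{\mu'}$ strictly contracts $\calN(\calO_{\brF})$, while its inverse strictly contracts $\calN^-(\calO_{\brF})$. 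Applying the Iwahori decomposition of $\calG(\calO_{\brF})$ with respect to $P$, write $b=n^- m n$ with $n^-\in \calN^-(\calO_{\brF})$, $m\in \calM(\calO_{\brF})\dot{t}_{\mu'}\calM(\calO_{\brF})$, $n\in \calN(\calO_{\brF})$. Since $\mu'$ is central in $M$, an application of Lang's theorem on the reductive quotient of $\calM$, followed by iteration through the filtration by the pro-unipotent radical, $\sigma$-conjugates $m$ to $\dot{t}_{\mu'}$ by an element of $\calM(\calO_{\brF})$. With $b$ now of the form $n^-\dot{t}_{\mu'}n$, a Banach fixed-point argument applied successively on $\calN$ and $\calN^-$, exploiting the strict contraction above, eliminates the $n$ and $n^-$ factors and yields the desired $g\in \calG(\calO_{\brF})$.

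The main obstacle is (1): the $\sigma$-straight representative of $[b]_{\mu}$ is handed to us directly by Lemma \ref{lemma: ord class rep by straight elt}, but pinning down the $W_J$-double coset of the given $w$ rests on the finer combinatorics of $\Adm(\{\mu\})_J$ and He's classification of minimal-length elements in $\sigma$-conjugacy classes meeting it. Once (1) is established, (2) is more standard, although the pro-unipotent iteration and the Iwahori decomposition in a possibly non-split parahoric require careful bookkeeping.
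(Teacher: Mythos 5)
Your part (1) contains a genuine gap. The step ``within the double coset $W_J w W_J$ one can locate a minimal-length $\sigma$-straight representative'' is not justified, and as stated it essentially assumes the conclusion: most double cosets $W_J w W_J$ contain no $\sigma$-straight element at all, and the assertion that the one containing $b$ does is precisely what has to be proved. More seriously, even granting such a representative $x$, nothing in your sketch connects the $\sigma$-conjugacy class of the group element $b$ (which merely lies in $\G(\Ok_{\brF})\dot{w}\G(\Ok_{\brF})$) to the $\sigma$-conjugacy class of the lift $\dot{x}$; a parahoric double coset generally meets many $\sigma$-conjugacy classes. The missing link is the non-emptiness theory of affine Deligne--Lusztig varieties, which is the engine of the paper's proof: one first uses He--Rapoport (Theorem 6.1(b) of [HR]) to $\sigma$-conjugate $b$ by an element of $\G(\Ok_{\brF})$ into an Iwahori double coset $\calI(\Ok_{\brF})\dot{w}\calI(\Ok_{\brF})$ with $w\in{}^JW\cap\Adm(\{\mu\})$, then invokes the non-emptiness criterion of He--Zhou (Theorem 4.1 of [HZ]) to produce a $\sigma$-straight $x\leq w$ in the \emph{Bruhat order} (not in the double coset) with $[b]_{\mu}\cap\calI(\Ok_{\brF})\dot{x}\calI(\Ok_{\brF})\neq\emptyset$, whence $\dot{x}\in[b]_{\mu}$ and $x=t_{\mu'}$ by the length computation of Lemma \ref{lemma: ord class rep by straight elt}; finally $\ell(x)=\ell(t_{\mu})\geq\ell(w)\geq\ell(x)$ together with $x\leq w$ forces $w=t_{\mu'}$. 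Your appeal to the injectivity of $\Psi$ on straight classes only identifies the straight $\sigma$-conjugacy class abstractly; it cannot by itself place $b$ in the correct double coset.

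Part (2), by contrast, is a legitimate alternative and genuinely different from the paper's route: the paper reduces to Iwahori level and quotes Proposition 4.5 of [He1], whereas you give the classical contraction argument (Iwahori decomposition of $\G(\Ok_{\brF})$ with respect to $P=MN$, twisted Lang's theorem on $\calM$ using that $t_{\mu'}$ has length zero in $W_M$ so that $\calM(\Ok_{\brF})\dot{t}_{\mu'}\calM(\Ok_{\brF})=\calM(\Ok_{\brF})\dot{t}_{\mu'}$, then successive approximation on $\calN$ and $\calN^-$), in the style of the hyperspecial case in [SZ]. This buys self-containedness at the cost of the bookkeeping you acknowledge: one needs $\langle\mu',\alpha\rangle\geq 0$ for every root $\alpha$ of $N$ with strictly positive average over the $\sigma$-orbit, which does follow from the chamber argument in the proof of Lemma \ref{lemma: cochar central}. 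Note, however, that your (2) takes as input that $b\in\G(\Ok_{\brF})\dot{t}_{\mu'}\G(\Ok_{\brF})$, so it does not repair the gap in (1).
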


\begin{proof}By \cite[Theorem 6.1 (b)]{HR}, there exists $h\in \G(\Ok_{\brF})$ such that $h^{-1}b\sigma(h)\in \mathcal{I}(\Ok_{\brF})\dot{w}\mathcal{I}(\Ok_{\brF})$ for some $w\in {^JW}$. 
Thus $w\in {^JW}\cap W_J\Adm(\{\mu\})W_J$ and hence lies in ${^J}W\cap\Adm(\{\mu\})$ by \cite[Theorem 6.1]{He3}. Thus upon replacing $b$ by $h^{-1}b\sigma(h)$, we may assume $b\in\mathcal{I}(\Ok_{\brF})\dot{w}\mathcal{I}(\Ok_{\brF})$. By \cite[Theorem 4.1]{HZ}, there exists a $\sigma$-straight element $x\leq w$ such that $[b]_{\mu}\cap\calI(\Ok_{\brF})\dot{x}\calI(\Ok_{\brF})\neq\emptyset$ (the Theorem in {\em loc.~cit.} proves the non-emptiness of the affine Deligne--Lusztig variety $X_x(b)$, which is equivalent to this statement). By the proof of \cite[Theorem 3.5]{He1}, we have $\dot{x}\in [b]_{\mu}$ and  by the same argument as in Lemma \ref{lemma: ord class rep by straight elt} we have $x=t_{\mu'}$ for some $\mu'\in W_0\cdot{\mu}$. Since $x\leq w$ and $w\in \Adm(\{\mu\})$, we have $w=t_{\mu'}$. This proves (1).
	
	For  (2), the above argument shows that we may assume $b\in \calI(\Ok_{\brF})\dot{t}_{\mu'}\calI(\Ok_{\brF})$ for $t_{\mu'}$ a $\sigma$-straight element. By \cite[Proposition 4.5]{He1}, there exists $i\in\calI(\Ok_{\brF})$ such that $i^{-1}b\sigma(i)=\dot{t}_{\mu'}$; the result follows.
\end{proof}

\begin{remark}
	This result is a generalization to general parahorics of \cite[Proposition 2.5]{SZ} which is due to Wortmann. In the case when $\G$ is a hyperspecial parahoric,  this result is the group theoretic analogue of the fact that there is exactly one isomorphism class of ordinary $F$-crystal over $\calO_{\brF}$.
\end{remark}

\subsection{N\'eron models of tori}\label{sec: Neron models}

\subsubsection{} In this subsection, we introduce the notion of $R$-smooth tori and discuss some consequences for Bruhat--Tits group schemes. 

Let $T$ be a torus over a non-archimedean  local field $F$; recall we have defined $\calT_0$ to be the connected N\'eron model of $T$. We let $\calT$ (resp. $\calT_{\mathrm{ft}}$) denote the lft N\'eron model (resp. finite type N\'eron model) for $T$. Then we have $\calT(\calO_{\brF})=T(\brF)$ and  $\calT_{\mathrm{ft}}$ is characterized by the condition  $\calT_{\mathrm{ft}}(
	\calO_{\brF})=\{t\in T(\brF)|\widetilde\kappa_T(t)\in X_*(T)_{I,\mathrm{tors}}\}$ where $X_*(T)_{I,\mathrm{tors}}$ is the torsion subgroup of $X_*(T)_I$. Alternatively, by \cite[n$^\circ$1]{Ra} the connected components of the special fiber of $\calT$ are parameterized by $X_*(T)_I$ and $\calT_{\mathrm{ft}}$ is the unique smooth subgroup scheme of $\calT$ whose special fiber is given by the set of connected components corresponding to the subgroup $X_*(T)_{I,\mathrm{tors}}$ of $X_*(T)_{I}$.

\subsubsection{}\label{sssec: R-smooth}Let $\widetilde{F}/F$ be a finite Galois extension over which $T$ splits and 
$\calT_{\calO_{\widetilde{F}}}$ denote the lft N\'eron model of $T_{\widetilde{F}}.$ \footnote{We are abusing notation here since $\calT_{\calO_{\widetilde{F}}}$ is not necessarily the base change to $\calO_{\widetilde{F}}$ of the N\'eron model $\calT$ of $T$ over $\calO_{F}$.} By \cite[\S7.6, Proposition 6]{BLR}, $\mathrm{Res}_{\calO_{\widetilde{F}}/\calO_F}\calT_{\calO_{\widetilde{F}}}$ is  the lft  N\'eron model over $\calO_{F}$ for $\mathrm{Res}_{\widetilde{F}/F}T_{\widetilde{F}}$. There is a natural map $T\rightarrow \mathrm{Res}_{\widetilde{F}/F}T_{\widetilde{F}}$ and we define $\calT^c$ to be the Zariski closure of $T$ inside $\mathrm{Res}_{\calO_{\widetilde{F}}/\calO_{F}}\calT_{\calO_{\widetilde{F}}}$. As in \cite[\S4.4.8]{BT2}, $\calT^c$ does not depend on the choice of splitting field  $\widetilde{F}$. 

\begin{definition}\label{def: R-smooth}
	We say a torus  $T$ is \textit{R-smooth} if $\calT^c$ is smooth.
\end{definition} 

Since $\calT^c$ satisfies the N\'eron mapping property (see \cite[Proof of Theorem 4.2]{Ed} for the proof in the case of abelian varieties which also works for tori), we have $\calT\cong\calT^c$ if $T$ is $R$-smooth. 

We can similarly define a notion of $R$-smoothness for tori over $\brF$. It is easy to see using compatibility of N\'eron models with base change along $\calO_F\rightarrow \calO_{\brF}$ that a torus over $F$ is $R$-smooth if and only if $T_{\brF}$ is $R$-smooth.

\begin{lemma}\label{lem: R smooth torus property}
	Suppose we have a closed immersion $f:T_1\rightarrow T_2$ between tori where $T_1$ is $R$-smooth. Then \begin{enumerate}\item $f$ extends to a closed immersion $\calT_1\rightarrow \calT_2$ of lft N\'eron models.
		\item $f$ extends to a closed immersion $\calT_{1,\mathrm{ft}}\rightarrow \calT_{2,\mathrm{ft}}$ of finite type N\'eron models.

		\end{enumerate} 
\end{lemma}
\begin{proof}(1) Let $\widetilde{F}/F$ be a finite  Galois splitting field for both $T_1$ and $T_2$. Then since $T_{1,\widetilde{F}}$ and $T_{2,\widetilde{F}}$ are products of multiplicative group schemes, the map $T_{1,\widetilde{F}}\rightarrow T_{2,
	\widetilde{F}}$ extends to a closed immersion  of lft N\'eron models $\calT_{\calO_{\widetilde{F}}}\rightarrow \calT_{2,\calO_{\widetilde{F}}}$ over $\calO_{\widetilde{F}}$. We obtain a diagram \[\xymatrix{\calT_1\ar[r]^f\ar[d]_g&\calT_2\ar[d]^h\\ \mathrm{Res}_{\calO_{\widetilde{F}}/\calO_F}\calT_{1,\calO_{\widetilde{F}}}\ar[r]^i&\mathrm{Res}_{\calO_{\widetilde{F}}/\calO_F}\calT_{2,\calO_{\widetilde{F}}}}\]
where $i$ is a closed immersion since it is obtained via restriction of scalars of a closed immersion, and $g$ is a closed immersion since $T_1$ is $R$-smooth.  It follows that $h\circ f=i\circ g$ is a closed immersion, and hence $f$ is a closed immersion.

(2) By (1), we have a closed immersion $\calT_1\rightarrow \calT_2$ of lft N\'eron models. We let $\phi:X_*(T_1)_I\rightarrow X_*(T_2)_I$ denote the  morphism on the targets of the Kottwitz homomorphism. Using that $\ker(\phi)$ is torsion, one sees that $$\phi^{-1}(X_*(T_2)_{I,\mathrm{tors}})=X_*(T_1)_{I,\mathrm{tors}}.$$
As the finite type N\'eron models $\calT_{1,\mathrm{ft}}$ and $\calT_{2,\mathrm{ft}}$ correspond to the subschemes of $\calT_1$ and $\calT_2$ whose special fibers are  given by the connected components parameterized by $X_*(T_1)_{I,\mathrm{tors}}$ and $X_*(T_2)_{I,\mathrm{tors}}$ respectively, it follows that $\calT_1\rightarrow \calT_2$ induces a closed immersion $\calT_{1,\mathrm{ft}}\rightarrow \mathrm{\calT}_{2,\mathrm{ft}}$ as desired.
\end{proof}

\subsubsection{}The proof of \cite[Theorem 4.2]{Ed} shows that if $T$ splits over a tamely ramified extension of $F$, then $T$ is $R$-smooth. In addition, the main examples of $R$-smooth tori that we will consider are given by the following proposition.

\begin{prop}\label{prop: examples of R smooth torus}\begin{enumerate}\item Let $T=\prod_{i=1}^s\mathrm{Res}_{K_i/F}S_i$, where $K_i$ are finite separable extensions of $F$ and $S_i$ are $K_i$-tori which split over a tamely ramified extension of $K_i$. Then $T$ is $R$-smooth.

		\item Let $T$ be a torus which is an extension of an $R$-smooth torus by an $R$-smooth torus. Then $T$ is $R$-smooth.
		
	\end{enumerate} 
\end{prop}

\begin{proof}(1) We will make use of the following result which follows from  \cite[\S7.6 Proposition 6]{BLR}: If $S$ is a torus over a finite separable extension $K$ of $F$ with lft N\'eron model $\calS$ over $\calO_K$, then $\mathrm{Res}_{\calO_K/\calO_F} \calS$ is the lft N\'eron model for $\mathrm{Res}_{K/F}S$.
	
	We may reduce to the  case $s=1$, in which case we write $T=\mathrm{Res}_{K/F}S$ for $S$ a tamely ramified torus over $K$. Let $\widetilde{F}/F$ be a finite Galois splitting field of $T$ which necessarily contains $K$. For any $F$-morphism $\tau:K\rightarrow \widetilde{F}$, the base change of $S$ along $\tau$ is split. Since $S$ is $R$-smooth, it follows that we have a closed immersion  of $\calO_K$-group schemes $$\calS\rightarrow\mathrm{Res}_{\calO_{\widetilde{F}}/\calO_K}\calS_{\calO_{\widetilde{F}}},$$ where $\calS$ (resp. $\calS_{\calO_{\widetilde{F}}}$) is the lft N\'eron model for $S$ (resp. $S_{\widetilde{F}}$).
	
	Applying $\mathrm{Res}_{\calO_K/\calO_F}$ we obtain a closed immersion
	$$\mathrm{Res}_{\calO_{K}/\calO_F}\calS\rightarrow\mathrm{Res}_{\calO_{\widetilde{F}}/\calO_F}\calS_{\calO_{\widetilde{F}}}.$$ 
	Taking the product over all $\tau:K\rightarrow \widetilde{F}$ we obtain a closed immersion 
	$$\mathrm{Res}_{\calO_{K}/\calO_F}\calS\rightarrow\prod_{\tau:K\rightarrow \widetilde{F}}\mathrm{Res}_{\calO_{\widetilde{F}}/\calO_F}\calS_{\calO_{\widetilde{F}}}\cong\mathrm{Res}_{\calO_{\widetilde{F}}/\calO_F}\calT_{\calO_{\widetilde{F}}}.$$
	
	Since $\mathrm{Res}_{\calO_K/\calO_F}\calS$ is the lft N\'eron model $\calT$ for $T$, it follows that $\calT$ is the closure of its generic fiber inside $\mathrm{Res}_{\calO_{\widetilde{F}}/\calO_F}\calT_{\calO_{\widetilde{F}}}$ and hence $T$ is $R$-smooth.
	
	(2) We may assume $F=\brF$.  Assume we have an exact sequence 
		\[\xymatrix{1\ar[r]& T_1\ar[r]^f &T\ar[r]^g &T_2\ar[r]& 1}\] 
		where $T_1$ and $T_2$ are $R$-smooth. Since $T_1$ is $R$-smooth, $f$ extends to a closed immersion of lft N\'eron models $\calT_1\rightarrow \calT,$ by Lemma \ref{lem: R smooth torus property}.  The quotient $\calT/\calT_1$ is a smooth group scheme with generic fiber $T_2$, and by Steinberg's theorem  it has the same $\calO_{\breve F}$-points as the lft N\'eron model $\calT_2$ for $T_2$. Thus by \cite[Proposition 1.7.6]{BT2}, $\calT/\calT_1\cong \calT_2$ and we have an exact sequence of group schemes 
			\[\xymatrix{1\ar[r]  &\calT_1\ar[r] &\calT\ar[r] &\calT_2\ar[r] &1}.\]

	Let ${\widetilde{F}}/F$ be a finite Galois extension over which $T_1,T_2$ and $T$ split. We obtain a commutative diagram with exact rows:
	\[\xymatrix{1\ar[r]&\calT_1\ar[r]\ar[d]&\calT\ar[r]\ar[d]^j&\calT_2\ar[d]\ar[r] & 1\\1\ar[r]&\mathrm{Res}_{\calO_{\widetilde{F}}/\calO_F}\calT_{1,\calO_{\widetilde{F}}}\ar[r]& \mathrm{Res}_{\calO_{\widetilde{F}}/\calO_F}\calT_{\calO_{\widetilde{F}}}\ar[r]&\mathrm{Res}_{\calO_{\widetilde{F}}/\calO_F}\calT_{2,\calO_{\widetilde{F}}}\ar[r]& 1}\]
	
Let $\calT^c$ be the Zariski closure of $T$ in $\mathrm{Res}_{\calO_{\widetilde{F}}/\calO_F}\calT_{\calO_{\widetilde{F}}}.$ 
By $R$-smoothness of $T_2$ and  $T_1$, the two outer vertical maps in the diagram above are closed immersions. 
Hence, $\calT_1$ is closed in $\calT^c,$ and the composite 	
$$ \calT_2 \simeq \calT/\calT_1 \rightarrow \calT^c/\calT_1 \rightarrow \mathrm{Res}_{\calO_{\widetilde{F}}/\calO_F}\calT_{2,\calO_{\widetilde{F}}}$$
is a closed immersion. 
Thus, $\calT/\calT_1$ is closed in $\calT^c/\calT_1.$ As these are two $\O_F$-flat schemes with the same generic fiber, it follows 
that $ \calT/\calT_1 \simeq \calT^c/\calT_1,$ and hence $\calT \simeq \calT^c.$ 
Hence $j$ is a closed immersion and $T$ is $R$-smooth.
\end{proof}

\subsubsection{}
	
The previous results have the following consequences for Bruhat--Tits group schemes. Let $G$ be a reductive group over $F$ and $\widetilde{\calG}$ a Bruhat--Tits stabilizer scheme corresponding to $x\in \calB(G,F)$.
Let 
$\beta:G\hookrightarrow G'$ be a closed immersion of reductive groups over $F,$ which induces an isomorphism on derived groups. As in \cite[\S1.1.3]{KP},  $x$  determines a point $x'\in \calB(G',F)$ and we write $\widetilde{\calG}'$ for the corresponding stabilizer scheme of $G'$; then $\beta$ extends to a group scheme morphism $\beta:\widetilde{\calG}\rightarrow \widetilde{\calG}'$.

\begin{prop}\label{prop: closed immersion of BT schemes}
	Assume that there exists a maximal $\brF$-split torus in $G$ whose centralizer is an $R$-smooth torus.  Then $\beta:\widetilde{\calG}\rightarrow \widetilde{\calG}'$ is a closed immersion.

\end{prop}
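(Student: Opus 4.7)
The plan is to reduce the assertion to a statement about open cells in $\widetilde{\calG}$ and $\widetilde{\calG}'$, where the identification of derived groups handles the affine root group parts and the $R$-smoothness hypothesis handles the centralizer torus via Lemma \ref{lem: R smooth torus property}. Since $\calO_F \to \calO_{\brF}$ is faithfully flat and $T_{\brF}$ remains $R$-smooth, it suffices to work over $\calO_{\brF}$. Choose a maximal $\brF$-split torus $S \subset G_{\brF}$ whose apartment contains $x$, and let $T = Z_{G_{\brF}}(S)$. Since $\beta$ is a closed immersion inducing an isomorphism on derived groups, one finds a compatible maximal $\brF$-split torus $S' \subset G'_{\brF}$ with $S \subset S'$ whose centralizer $T' = Z_{G'_{\brF}}(S')$ contains $T$ as a closed subtorus; moreover $\calA(G_{\brF}, S)$ and $\calA(G'_{\brF}, S')$ project compatibly onto the common apartment of $G_{\ad} = G'_{\ad}$, so that the same affine root data govern $\widetilde{\calG}$ and $\widetilde{\calG}'$ at $x$ and $x'$.

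Next I invoke the Bruhat--Tits open cell decomposition: there is an open subscheme $\calU_x^- \times_{\calO_{\brF}} \calN \times_{\calO_{\brF}} \calU_x^+ \hookrightarrow \widetilde{\calG}$, where $\calU_x^\pm$ are ordered products of the smooth integral models of the affine root groups attached to $x$, and $\calN$ is the open and closed subscheme of the lft N\'eron model $\calT$ of $T$ whose $\calO_{\brF}$-points realize $T(\brF) \cap \widetilde{\calG}(\calO_{\brF})$. The analogous decomposition holds for $\widetilde{\calG}'$, and $\beta$ respects the two decompositions. On the $\calU_x^\pm$-factors $\beta$ is an isomorphism, because the affine root groups depend only on the common derived group; on the torus factor, $R$-smoothness of $T$ together with Lemma \ref{lem: R smooth torus property} give a closed immersion $\calT \hookrightarrow \calT'$ of lft N\'eron models, whose restriction to $\calN$ is a closed immersion $\calN \hookrightarrow \calN'$. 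Therefore $\beta$ restricts to a closed immersion on the open cell.

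Finally I extend this conclusion to all of $\widetilde{\calG}$. Since the open cell is schematically dense in $\widetilde{\calG}$ and its translates by $\widetilde{\calG}(\calO_{\brF})$ cover $\widetilde{\calG}$, and $\beta$ is a group scheme homomorphism, the schematic image $Z \subset \widetilde{\calG}'$ of $\beta$ is a flat closed subgroup scheme sharing the generic fiber of $\widetilde{\calG}$ and meeting every connected component of its special fiber (genericity of $x$ in its facet ensures that the open cell surjects onto the component group), so $\widetilde{\calG} \to Z$ is an isomorphism. The main obstacle is precisely this last step --- controlling the component structure of $\widetilde{\calG}$ globally, not just near the identity --- which is exactly why $R$-smoothness must be imposed on the full centralizer torus $T$ rather than just on the connected N\'eron model $\calT_0$: only then does the embedding $\calT \hookrightarrow \calT'$ detect and embed the non-identity components of $\widetilde{\calG}$.
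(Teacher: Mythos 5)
Your reduction to $\brF$, the big-cell decomposition, the observation that the affine root groups match because $\beta$ is an isomorphism on derived groups, and the use of Lemma \ref{lem: R smooth torus property} together with $R$-smoothness to get a closed immersion on the torus factor (restricted to the finite-type N\'eron models $\calT_{\mathrm{ft}}\rightarrow\calT'_{\mathrm{ft}}$) are exactly the paper's argument. The divergence, and the gap, is in your last step. The inference ``$Z$ is a flat closed subgroup scheme with the same generic fiber as $\widetilde{\calG}$ and meeting every component of the special fiber, so $\widetilde{\calG}\rightarrow Z$ is an isomorphism'' is not valid: a flat closed subscheme of $\widetilde{\calG}'$ with generic fiber $G$ can perfectly well be non-smooth (e.g.\ non-normal) along its special fiber, in which case the induced map from the smooth scheme $\widetilde{\calG}$ is a finite birational morphism that is not an isomorphism. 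Ruling this out is precisely the content of the proposition, so it cannot be assumed. Moreover you never invoke the point-level identity $G(\brF)\cap\widetilde{\calG}'(\calO_{\brF})=\widetilde{\calG}(\calO_{\brF})$ coming from the construction of $x'$ from $x$; without it, nothing prevents $Z$ from having \emph{more} components over the special fiber than $\widetilde{\calG}$, and your parenthetical about genericity of $x$ does not address this.

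The correct completion --- which is how the paper closes the argument --- is available from what you have already proved: the schematic closure $Z=\widehat{\calG}$ of $G$ in $\widetilde{\calG}'$ contains the big open cell $\prod_\alpha\calU_{-\alpha}\times\calT_{\mathrm{ft}}\times\prod_\alpha\calU_\alpha$, which is smooth, so by \cite[Corollary 2.2.5]{BT2} the group scheme $\widehat{\calG}$ is smooth. Then the identity $\widehat{\calG}(\calO_{\brF})=G(\brF)\cap\widetilde{\calG}'(\calO_{\brF})=\widetilde{\calG}(\calO_{\brF})$, together with the uniqueness of the smooth affine model with prescribed $\calO_{\brF}$-points (the defining property of $\widetilde{\calG}_\Xi$), forces $\widehat{\calG}\cong\widetilde{\calG}$, whence $\beta$ is a closed immersion. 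Your translation argument is not wasted --- it is essentially the mechanism behind \cite[Corollary 2.2.5]{BT2} --- but you must route it through smoothness of $Z$ and the $\calO_{\brF}$-points identity rather than through a dimension/component count.
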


\begin{proof}	As all maximal $\brF$-split tori are $\brF$-conjugate, the centralizer of any maximal $\brF$-split torus is $R$-smooth if there exists one such centralizer which is $R$-smooth. Therefore we may assume that all such centralizers are $R$-smooth. Moreover, since the  construction of Bruhat--Tits stabilizer schemes is compatible with unramified base extensions, it is enough to prove the result in the case $F=\brF$. 
	
Let $S$ be a maximal $\brF$-split torus in $G$ such that  $x$ lies in $\calA(G,S,\brF)$. Let $T$ be the centralizer of $S$ which by assumption is an $R$-smooth torus.  
		Let $S'$ be a maximal $\brF$-split torus of $G'$ such that $S'\cap G=S$ and $T'$  the centralizer of $S'$.  
		
		 By the construction of Bruhat--Tits stabilizer schemes in \cite[\S4.6]{BT2}, the Zariski closure of $T$ (resp. $T'$) inside $\widetilde{\calG}$ (resp. $\widetilde{\calG}'$) can be identified with the finite type N\'eron model $\calT_{\mathrm{ft}}$ (resp. $\calT'_{\mathrm{ft}}$). 
By Lemma \ref{lem: R smooth torus property}, the natural map $T\rightarrow T'$ extends to a closed immersion $\calT_{\mathrm{ft}}\rightarrow \calT'_{\mathrm{ft}}$ of finite type N\'eron models.

	 For any relative root $\alpha$, the map $G\rightarrow G'$ induces an isomorphism between the root subgroups $U_\alpha$ and $U'_{\alpha}$. If we let $\calU_{\alpha}$ and $\calU_{\alpha}'$ denote the corresponding schematic closures, then by the construction of $\widetilde{\calG}$ and $\widetilde{\calG}'$  in \cite[\S4.6]{BT2}, the map $G\rightarrow G'$ also induces an isomorphism $\calU_{\alpha}\rightarrow\calU_{\alpha}'$. Thus as in \cite[Theorem 2.2.3]{BT2} the schematic closure $\widehat{\calG}$ of $G$ in $\widetilde{\calG}'$ contains the smooth big open cell
$$\prod_{\alpha}\calU_{-\alpha}\times{\calT}_{\mathrm{ft}}\times\prod_{\alpha}\calU_\alpha;$$ 
hence by \cite[Corollary 2.2.5]{BT2}, $\widehat{\calG}$ is smooth. Since $\widehat{\calG}(\calO_{\brF})=G(\brF)\cap\widetilde{\calG}'(\calO_{\brF})$, it follows that $\widehat{\calG}\cong \widetilde{\calG}$,  and hence we obtain a closed immersion $\widetilde{\calG}\hookrightarrow \widetilde{\calG}'$ as desired.
	\end{proof}

\subsubsection{}\label{sec: embedding Weil restricted form} Now let $K/F$ be a finite separable extension. There is a natural embedding of buildings $\calB(G,F)\rightarrow \calB(G,K)$ and the image of $x$ in $\calB(G,K)$ determines a  Bruhat--Tits stabilizer scheme $\widetilde{\calG}_0$ over $\calO_K$.  Then by \cite[p. 172]{Prasad}, there is an identification of buildings $\calB(G,K)\cong \calB(\mathrm{Res}_{K/F}G_K,F)$ and the stabilizer scheme for $\mathrm{Res}_{K/F}G_K$ corresponding to $x$ can be identified with $\mathrm{Res}_{\calO_K/\calO_F}\tG_0$ (see eg. \cite[\S4.2]{HaRi}). By \cite[\S1.7.6]{BT2}, we obtain a natural morphism $i:\widetilde{\calG}\rightarrow \mathrm{Res}_{\calO_K/\calO_F}\widetilde{\calG}_0$ of $\calO_F$-group schemes.
A similar argument to Proposition \ref{prop: closed immersion of BT schemes} gives the following proposition which generalizes \cite[Prop. 1.3.9]{KP} (cf. \cite[Cor. 5.26]{FHLR}).

\begin{prop}\label{prop: closed immersion BT schemes Weil Res}
Assume $p>2$ and that the centralizer of a maximal $\breve F$-split torus in $G$ is $R$-smooth. Then  $i:\widetilde{\calG}\rightarrow \mathrm{Res}_{\calO_K/\calO_F}\widetilde{\calG}_0$ is a closed immersion.
\end{prop}
\begin{proof}We may assume $F=\breve F$. It suffices to prove the result for $K$ a field over which $G$ splits. Indeed, if $K'/K$ is an extension over which $G$ splits and $\tG_0'$ is the Bruhat--Tits stabilizer scheme over $\calO_{K'}$ corresponding to $x$, then $\tG\rightarrow \mathrm{Res}_{\calO_K/\calO_F}\tG_0$ is a closed immersion if the composition  $\tG\rightarrow \mathrm{Res}_{\calO_K/\calO_F}\tG_0\rightarrow \mathrm{Res}_{\calO_{K'}/\calO_F}\tG_0'$ is a closed immersion.

The same argument as in Proposition \ref{prop: closed immersion of BT schemes} shows that we can reduce to proving the following two statements:

	\begin{enumerate}\item $i|_{\calT_{\mathrm{ft}}}$ is a closed immersion, where $T$ is the centralizer of a maximal $\brF$-split torus $S$ whose apartment contains $x$.
			
			\item $i|_{\calU_{\alpha}}$ is a closed immersion, where $\alpha$ is a relative root for $G$ and $\calU_{\alpha}$ is the schematic closure of the root subgroup $U_{\alpha}$ inside $\tG$.
		\end{enumerate}
The first  follows from Lemma \ref{lem: R smooth torus property} (2) applied to the map $T\rightarrow T'$, where $T'$ is the centralizer in $ \mathrm{Res}_{K/F} G_K$ of a maximal $\breve F$-split torus containing $S$.

For the second statement, let $\alpha$ be a relative root and let $G_\alpha$ denote the simply-connected cover of the subgroup of $G$ generated by the root subgroups corresponding to relative roots which are proportional to $\alpha$. Then $G_{\alpha}$ is isomorphic to either

\begin{enumerate} \item $\mathrm{Res}_{L/F}\SL_2$ for $L/F$ a  finite separable extension.
	
	\item $\mathrm{Res}_{L/F}\mathrm{SU}_3$, where $\mathrm{SU}_3$ is the special unitary group over $L$ associated to a hermitian  space over a separable quadratic extension $L'/L$.	\end{enumerate}

Let $G_\alpha'$ denote the subgroup of $G$ generated by the image of $G_\alpha$ and $T$; then $G_\alpha'$ contains the maximal $\breve F$-split torus $S$ of $G$. By the main result of \cite{Landvogt}, the inclusion $G'_\alpha\rightarrow G$ induces a $G_{\alpha}'(\breve F)$-equivariant embedding of buildings, which restricts to  an  identification of apartments $\calA(G_\alpha',S,\breve F)\cong \calA(G,S,\breve F)$. The point $x\in\calA(G,S,\breve F)$ corresponding to $\tG$ determines a Bruhat--Tits stabilizer scheme of $G_\alpha'$, and  since $G_\alpha $ and $G_\alpha'$ have the same adjoint group, we obtain a stabilizer scheme $\tG_\alpha$ of $G_\alpha$ via the choice of a lift $x_\alpha\in \calB(G_\alpha,\brF)$ of the image of $x$ in $\calB(G_\alpha'^{\ad},\brF)$.

We have a commutative diagram \[\xymatrix{\tG_{\alpha}\ar[r]\ar[d]&\mathrm{Res}_{\calO_K/\calO_F}\tG_{\alpha,0}\ar[d]\\
\tG\ar[r] &\mathrm{Res}_{\calO_K/\calO_F}\tG_0,}\] where  $\tG_{\alpha,0}$ denotes the parahoric for $G_{\alpha,K}$ corresponding to $x_\alpha\in \calB(G_\alpha,K)$. The natural morphism $\tG_\alpha\rightarrow \tG$ induces an isomorphism on the integral root subgroups $\calU_\alpha$ and similarly for  the morphism $\tG_{\alpha,0}\rightarrow \tG_0$. It therefore suffices to prove the result for $G=G_\alpha$. Note that since we have assumed $p>2$, $G_\alpha$ is the Weil-restriction of a tamely ramified group. Thus it suffices to prove the proposition in this case, which we now do.

We first consider the case that  $G$ itself splits over a tamely ramified extension $K^t/F$. 
We may assume $K$ contains $K^t$. Let $\tG_0^t$  denote the Bruhat--Tits stabilizer scheme of $G_{K^t}$ corresponding to $\tG$. Then $i$ factors as $$\tG\rightarrow \mathrm{Res}_{\calO_{K^t}/\calO_F}\tG_0^t\rightarrow \mathrm{Res}_{\calO_K/\calO_F}\tG_0.$$ The first morphism is a closed immersion by \cite[Proposition 1.3.9]{KP}. The second morphism is obtained from 
$\widetilde{\calG}_0^t\rightarrow \mathrm{Res}_{\calO_K/\calO_{K^t}}\widetilde{\calG}_0$ 
by applying Weil-restriction. Since $G_{K^t}$ is split, we can reduce to the case $G_{K^t}=\SL_2$, as above, and this 
follows from Lemma \ref{lem: SL2 closed immersion} below.

Now assume $G=\mathrm{Res}_{L/F}H$ where $H$ is a group which splits over a tame extension of $L$ and that $K$ contains $L$. Then $G\rightarrow \mathrm{Res}_{K/F}G_K$ 
arises from Weil-restriction of a morphism $H\rightarrow \mathrm{Res}_{K/L} G_K$, which is given by a product of the diagonal morphisms $H\rightarrow \mathrm{Res}_{K/L} H_K$. Hence the result in this case follows from the tame case proved in the previous paragraph. The proposition follows.
\end{proof}

\begin{lemma}\label{lem: SL2 closed immersion}
	Let $G=\SL_2$.  Then the morphism $i:\tG\rightarrow \mathrm{Res}_{\calO_K/\calO_F}\tG_0$ is a closed immersion.
\end{lemma}

\begin{proof}We may assume $\tG$ corresponds to a point in the apartment for the diagonal torus $T$; let $U$ be a root subgroup for $T$. Since $T$  is split,  hence $R$-smooth, it suffices as above to show $U\rightarrow\mathrm{Res}_{K/F} U_K$  extends to  a closed immersion 
$\calU\rightarrow \mathrm{Res}_{\calO_K/\calO_F}\calU_0$, where $\calU$ (resp. $\calU_0$) is the Zariski closure of $U$  in $\tG$ (resp. $U_K$ in $\tG_0$). 
	The morphism $U\rightarrow \mathrm{Res}_{K/F}U_K$ can be identified with the diagonal morphism $\bbG_a\rightarrow \mathrm{Res}_{K/F}\bbG_a$. 
	
	Let $\varpi_F$ (resp. $\varpi_K$) be a uniformizer for $F$ (resp. $K$), and let $e$ denote the ramification index of $K/F$. By the construction of the stabilizer schemes in \cite{BT2}, $\calU_0$ is the $\calO_K$-group scheme cordeoresponding to the  $\calO_K$-submodule $\varpi_K^{ne-k}\calO_K$ of $K=\bbG_a(K)$, for some $n\in \bbZ$ and $k\in \{0,\dotsc,e-1\},$ which depend on the choice of $x\in \calB(G,F)$. Then $\calU$ corresponds to the $\calO_F$-submodule $\varpi_F^n\calO_F$  of $F=\bbG_a(F)$. We can extend $\varpi_F^n$ to an $\calO_F$-basis for $\varpi_K^{ne-k}\calO_K$ considered as an $\calO_F$-module, and this induces an identification $\mathrm{Res}_{\calO_K/\calO_F}\calU_0\cong \bbA^m$ where $m=[K:F]$.  The map $\calU\rightarrow \mathrm{Res}_{\calO_K/\calO_F}\calU_0$ is then identified with the closed immersion $\bbA^1\rightarrow \bbA^m$ taking $a$ to $(a,0,\dotsc,0)$.
	\end{proof}
	
\subsubsection{}Now let $\beta:G\rightarrow G'$ be a central extension between reductive groups with kernel $Z$ and $\calG$ the parahoric group scheme associated to some $x\in \calB(G,F)$. We let $\calG'$ denote the parahoric of $G'$ corresponding to $\calG.$ As above, 
$\beta$ extends to a group scheme homomorphism $\calG\rightarrow \calG'$.

\begin{prop}\label{prop: exact sequence parahorics}Assume $Z$ is an $R$-smooth torus. Then the Zariski closure $\widetilde{\calZ}$ of $Z$ inside $\calG$ is smooth and there is an (fppf) exact sequence \begin{equation}\label{eqn: exact sequence of parahorics}\xymatrix{0\ar[r]&\widetilde{\calZ}\ar[r]&\calG\ar[r]^\beta&\calG'\ar[r]&0}\end{equation}of group schemes over $\calO_F$. 
	
\end{prop}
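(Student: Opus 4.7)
The argument has two main parts: showing that $\widetilde{\calZ}$ is smooth, and then establishing the fppf exactness.

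First, for smoothness of $\widetilde{\calZ}$, the centrality of $Z$ in $G$ means $Z$ lies in every maximal $\brF$-torus of $G$. Choose a maximal $\brF$-split torus $S$ of $G$ whose apartment contains the image of $x$, and let $T$ be its centralizer; the schematic closure of $T$ in $\calG_{\calO_{\brF}}$ is the connected N\'eron model $\calT_0$. By $R$-smoothness of $Z$ and Lemma \ref{lem: R smooth torus property}, the inclusion $Z \hookrightarrow T$ extends to a closed immersion $\calZ \hookrightarrow \calT$ of lft N\'eron models over $\calO_{\brF}$. Pulling back along the open-closed immersion $\calT_0 \hookrightarrow \calT$ gives an open-closed, hence smooth, subgroup scheme $\calZ' \subset \calZ$ realized as a closed subgroup scheme of $\calT_0 \subset \calG_{\calO_{\brF}}$ with generic fiber $Z$. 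Flatness of $\calZ'$ over $\calO_{\brF}$ together with the irreducibility of its generic fiber forces $\calZ'$ to be irreducible and to coincide with the Zariski closure of $Z$ in $\calG_{\calO_{\brF}}$. Hence $\widetilde{\calZ}_{\calO_{\brF}} = \calZ'$ is smooth, and smoothness over $\calO_F$ follows by faithfully flat descent.

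Next, since $Z$ is central in $G$, its closure $\widetilde{\calZ}$ is central, hence normal, in $\calG$, and the fppf quotient $\calH := \calG / \widetilde{\calZ}$ exists as a smooth affine $\calO_F$-group scheme equipped with a natural morphism $\calH \to \calG'$, an isomorphism on generic fibers. To promote this to an isomorphism over $\calO_F$, one compares $\calO_{\brF}$-points using the equalities $\calG(\calO_{\brF}) = \widetilde{\calG}_x(\calO_{\brF}) \cap \ker \widetilde{\kappa}_G$ and $\calG'(\calO_{\brF}) = \widetilde{\calG}'_{x'}(\calO_{\brF}) \cap \ker \widetilde{\kappa}_{G'}$, together with the compatibility $\widetilde{\kappa}_{G'} \circ \beta = \beta_\ast \circ \widetilde{\kappa}_G$ with the induced map $\beta_\ast \colon \pi_1(G)_I \to \pi_1(G')_I$. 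Given $g' \in \calG'(\calO_{\brF})$, one lifts it to some $g \in G(\brF)$ stabilizing a point of the building above $x'$ and then adjusts by an element of $Z(\brF)$ to achieve $\widetilde{\kappa}_G(g) = 0$, using surjectivity of $X_*(Z)_I$ onto $\ker(\pi_1(G)_I \to \pi_1(G')_I)$.

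The principal obstacle is the fppf surjectivity of $\beta\colon \calG \to \calG'$, equivalently the identification $\calH \cong \calG'$. Because $\widetilde{\calZ}$ may have a disconnected special fiber---its component group is controlled by $\ker(X_*(Z)_I \to \pi_1(G)_I)$---naive \'etale-cohomological vanishing does not immediately supply a lift, and one must exploit the central structure of the extension together with the lifting properties of the Kottwitz homomorphism on parahoric level. Once $\calH \cong \calG'$ is established, the fppf exact sequence follows automatically from the construction of $\calH$ as the fppf quotient of $\calG$ by $\widetilde{\calZ}$.
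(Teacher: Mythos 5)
Your first part is essentially the paper's own argument and is correct: $R$-smoothness gives the closed immersion $\calZ\hookrightarrow\calT$ of lft N\'eron models via Lemma \ref{lem: R smooth torus property}, the preimage of the open subgroup $\calT_0\subset\calT$ is a smooth open subgroup scheme of $\calZ$ sitting as a closed subscheme of $\calG_{\calO_{\brF}}$, and flatness identifies it with the schematic closure of $Z$. One small correction: the component group of $\widetilde{\calZ}_k$ is $\ker(X_*(Z)_I\to X_*(T)_I)$, as in your first paragraph, not $\ker(X_*(Z)_I\to\pi_1(G)_I)$ as you assert later.

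The second part has a genuine gap at exactly the decisive step. You correctly isolate what must be proved --- fppf surjectivity of $\calG\to\calG'$, equivalently that the natural map $\calH:=\calG/\widetilde{\calZ}\to\calG'$ is an isomorphism --- but you then declare this ``the principal obstacle'' and say only that one must ``exploit the central structure\dots together with the lifting properties of the Kottwitz homomorphism,'' which is a description of a task, not an argument. Concretely, two things are missing. First, the existence of the fppf quotient $\calH$ as a smooth \emph{affine scheme} over $\calO_F$ is asserted without justification; this is not automatic for quotients by positive-dimensional flat normal subgroup schemes over a discrete valuation ring and requires a reference (Anantharaman / SGA~3). Second, even granting your surjectivity of $\calG(\calO_{\brF})\to\calG'(\calO_{\brF})$, you never explain how to upgrade an equality of $\calO_{\brF}$-points and an isomorphism of generic fibers to an isomorphism of schemes $\calH\cong\calG'$; this is the step where one must invoke the uniqueness statement of Bruhat--Tits (Proposition 1.7.6 of their second paper) for smooth schemes with prescribed $\calO_{\brF}$-points. (Your worry about \'etale-cohomological vanishing for the disconnected $\widetilde{\calZ}$ is, incidentally, misplaced: the residue field of $\calO_{\brF}$ is algebraically closed, so torsors under any smooth group scheme over the henselian ring $\calO_{\brF}$ are trivial regardless of connectedness.)

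The paper closes the argument by a different, more economical route, which you should compare with yours. It first establishes the exact sequence at the level of connected N\'eron models of the maximal tori, $1\to\widetilde{\calZ}\to\calT_0\to\calT_0'\to 1$: the surjectivity on $\calO_{\brF}$-points is exactly your Kottwitz-homomorphism lifting argument (this is [PR, Lemma 6.7]), and the scheme-theoretic identification $\calT_0'\cong\calT_0/\widetilde{\calZ}$ is then [BT, 1.7.6]. It then propagates this to the parahorics by the big-cell argument of [KP, Proposition 1.1.4]: since $\beta$ identifies the root subgroup schemes of $\calG$ and $\calG'$, smooth surjectivity of $\calG\to\calG'$ holds on the dense open big cell $\prod_\alpha\calU_{-\alpha}\times\calT_0\times\prod_\alpha\calU_\alpha$ as soon as it holds for $\calT_0\to\calT_0'$, and hence everywhere by translation. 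This sidesteps both the representability of the quotient and the point comparison on all of $\calG'(\calO_{\brF})$. Your strategy can likely be completed, but only after supplying the two missing inputs above.
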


\begin{proof}As before, it suffices to prove the proposition when $F=\brF$. Let $S$ be a maximal $\brF$-split torus of $G$ such that $x$ lies in $\calA(G,S,\brF)$. Let $T$ be the centralizer of $S$ and we let $T'$ be the corresponding maximal  torus of $G'$. 
	
	Assume  there exists an fppf exact sequence \begin{equation}\label{eqn: exact sequence of tori}\xymatrix{1\ar[r]&\widetilde\calZ\ar[r]&\calT_{0}\ar[r]&\calT'_{0}\ar[r]&1}\end{equation}where $\calT_{0}$ and $\calT'_0$ are the connected N\'eron models of $T$ and $T'$ respectively. Then we may argue as in \cite[Proposition 1.1.4]{KP} to obtain the desired exact sequence (\ref{eqn: exact sequence of parahorics}). 

	It remains to exhibit the exact sequence (\ref{eqn: exact sequence of tori}); we follow the argument of \cite[Lemma 6.7]{PR}.	By Lemma \ref{lem: R smooth torus property} we obtain a closed immersion between lft N\'eron models $\calZ\rightarrow \calT$. We let $\widetilde{\calZ}'$ denote the subgroup scheme of $\calZ$ with generic fiber $Z,$ and special fiber corresponding to the connected  components of the special fiber of $\calZ$ parameterized by $\ker(X_*(Z)_I\rightarrow X_*(T)_I)$. Then $\widetilde{\calZ}'$ is smooth and we have a closed immersion $\widetilde{\calZ}'\rightarrow \calT_0$. It follows that $\widetilde{\calZ}'$ coincides with $\widetilde\calZ$ and we obtain a closed immersion $\widetilde\calZ\rightarrow \calT_0$. As in \cite[Lemma 6.7]{PR} we have an exact sequence:
	\[\xymatrix{1\ar[r]&\widetilde{\calZ}(\calO_{\brF})\ar[r]&\calT_0(\calO_{\brF})\ar[r]&\calT'_0(\calO_{\brF})\ar[r]&1}\]
	The quotient $\calT_0/\widetilde\calZ$ is a smooth affine commutative group scheme with the same generic fiber as $\calT_0'$ and with the same $\calO_{\brF}$-points; hence by \cite[Proposition 1.7.6]{BT2} we have $\calT_0'\cong\calT_0/\widetilde{\calZ}$. The result follows.
\end{proof}

\section{Deformation theory of $p$-divisible groups}\label{sec: deformation theory}
In this section we prove  the deformation theoretic results needed for the study of integral models of Shimura varieties in \S\ref{sec: integral models for Shimura + canonical lifts}. In \S\ref{sec: embedding into Grassmannian}, we discuss properties of local models and their embeddings in Grassmannians. \S\ref{sec: adapted liftings}-\S\ref{sec: etale tensors}  contains the results needed to describe the formal neighborhood of Shimura varieties, and in \S\ref{sec: M-adapted}, we apply this to the case  of $\mu$-ordinary $p$-divisible groups to construct an analogue of the Serre--Tate canonical lift.
\subsection{Local models and good embeddings}\label{sec: embedding into Grassmannian}

\subsubsection{}\label{sec: local model triple} Let $(G,\{\mu\},\G)$ be a local model triple over $F$ as in \cite[\S2.1]{HPR}. Thus 
\begin{itemize}
	\item $G$ is a reductive group scheme over $F$.
	\item $\{\mu\}$ is a geometric conjugacy class of minuscule cocharacters of $G$.
	\item $\G=\calG_x$ for some $x\in \calB(G,F)$ is a parahoric group scheme.
\end{itemize}

A morphism of local model triples $(G,\{\mu\},\G) \rightarrow (G',\{\mu'\},\G')$ is a morphism $\G \rightarrow \G'$ 
taking $\{\mu\}$ to $\{\mu'\}.$  
We denote by $E$ the reflex field of the pair $(G, \{\mu\})$. It is a subfield of $\bar F$ containing $F$.  

We consider local model triples which satisfy the following property.
\begin{definition}\label{def: acceptable group}
	A  reductive group $G$ over $F$ is said to be \emph{acceptable} if $G^{\ad}\cong\prod_{i=1}^r\mathrm{Res}_{{K_i}/F}H^{\ad}_i$  where $K_i/F$ is a finite extension and $H^{\ad}_i$ is an adjoint group over $K_i$ which splits over a tame extension of $K_i$.\footnote{In \cite{PRshtukas} and \cite{KPZ}, these groups are called essentially tamely tamified.}
	
		A local model triple $(G,\{\mu\},\calG\})$ is said to be acceptable if $G$ is acceptable.
\end{definition}

\begin{remark}If $p> 3$, there are no automorphisms of a connected Dynkin diagram of order divisible by $p$, hence  any such reductive group is acceptable. Moreover, for $p=3$, any reductive group which has no factors of type $D_4$ is acceptable, as this is the only connected Dynkin diagram with an automorphism of order 3.

\end{remark}

	Let $(G,\{\mu\},\calG)$ be a local model triple. 
An embedding $\rho: G\rightarrow \GL(V)$ is called a \emph{local Hodge embedding} if the following conditions are satisfied:
		\begin{itemize}
			\item $\rho(G)$ contains the scalars.
			
			\item $\rho$ is a minuscule representation.
			
			\item $\rho\circ\mu $ is conjugate to a standard minuscule cocharacter of $\GL(V)$.
		\end{itemize}
We say that $(G,\{\mu\},\calG)$ is of \emph{local Hodge type} if it admits a local Hodge embedding.

As explained in \cite[Remark 3.1.5]{KPZ}, if  $(G,\{\mu\},\calG)$ arises from completion at $p>2$ of a global Shimura datum $(\bfG,X)$ of Hodge type, then $(G,\{\mu\},\calG)$ will be acceptable of local Hodge type.

\subsubsection{}\label{sec: construction local model}
In the rest of this section we assume $p>2$. We write $\Mloc_{\calG,\{\mu\}}$ for the local model associated to the local model triple $(G,\{\mu\},\calG)$. By definition, $\Mloc_{\calG,\{\mu\}}$ is the unique, up to unique isomorphism, proper flat $\O_E$-scheme with $\calG$-action, with generic fiber $G/P_\mu$ and reduced special fiber, which represents the $v$-sheaf ${\rm M}^v_{\calG,\mu}$ over ${\rm Spd}(\O_E)$ defined in \cite{SW2} (this is denoted by ${\rm Gr}_{\calG,{\rm Spd}(\O_E),\mu}$ in \cite[Lect. 21]{SW2}). 

The existence of $\Mloc_{\calG,\{\mu\}}$ was conjectured by Scholze-Weinstein \cite[Conj. 21.4.1]{SW2} and is shown in \cite{AGLR} under mild assumptions, and in general in \cite{GL}. If $(G,\{\mu\},\calG)$ is acceptable and of local Hodge type,  which is our main case of interest, a simpler proof of the existence of  is given in \cite[Theorem 3.2.15]{KPZ}. Under these assumptions, the construction of $\Mloc_{\calG,\{\mu\}}$ in \cite[\S3.2.12]{KPZ} shows that it is identified with the scheme $\rmM_{\calG',\{\mu'\}}$ constructed in \cite{Levin} for an auxiliary local model triple $(G',\{\mu'\},\calG')$ with $p\nmid|\pi_1(G'^{\der})|$ and $G'^{\ad}\cong G^{\ad}$. In particular, $\Mloc_{\calG,\{\mu\}}\otimes_{\calO_E}k$ admits a stratification indexed by the $\mu$-admissible set $\Adm(\{\mu\})_J$, where $J$ is the set of affine reflections corresponding to $\calG$, and its irreducible components are normal and Cohen--Macaulay; see \cite[Theorem 3.2.9]{KPZ}.

\subsubsection{}The following  notion will be needed for applications in  \S\ref{sec: integral models for Shimura + canonical lifts}.

\begin{definition}\label{def: good embedding}
	
	Let $(G,\{\mu\},\calG)$ be an acceptable local model triple of local Hodge  type. \begin{enumerate}
		\item 
		An integral Hodge embedding for $(G,\{\mu\},\calG)$ is a closed immersion $\rho:\tG\rightarrow \GL(\Lambda)$, where $\tilde{\calG}$ is a stabilizer scheme for $G$ with associated parahoric $\calG$ and  $\Lambda\subset V$ is an $\calO_F$-module in a finite dimensional vector space $V$, such that the induced map $G\rightarrow \GL(V)$ on generic fibers is a local Hodge embedding.
		
		\item	An integral Hodge embedding $\rho:\tG\rightarrow \GL(\Lambda)$ is said to be \emph{good} if there is a closed immersion of local models 
		$$\bbM^{\mathrm{loc}}_{\G,\{\mu\}}\hookrightarrow \mathrm{Gr}(\Lambda)\otimes_{\calO_F}\calO_E$$  
		extending the natural map on the generic fiber. Here $\mathrm{Gr}(\Lambda)$ is the smooth Grassmannian of 
		subspaces $\mathcal{F}\subset \Lambda$ of rank $d$, where $d\in \bbZ_{\geq 0}$ is such that $\{\rho\circ\mu\}$ is the conjugacy class of  $a\mapsto\mathrm{diag}(1^{(n-d)},(a^{-1})^{(d)})$.
		
	\end{enumerate}
	
	A local Hodge embedding $G\rightarrow \GL(V)$ is \emph{good}, if it extends to a good integral local Hodge embedding $\tG\rightarrow \GL(\Lambda)$ for some $\tG$ and $\Lambda\subset V$.
\end{definition}

\begin{remark}
	\label{rem: SILHE}
	
	If we assume in addition that 
	$\widetilde{\calG}=\calG$, then Definition \ref{def: good embedding} (2) recovers the definition of a strongly integral local Hodge embedding for  $(\calG,\bbM^{\mathrm{loc}}_{\calG,\{\mu\}})$ as in \cite[\S3.1.4]{Pappascanonical}.
\end{remark}

\subsubsection{} The following result gives a strengthening of \cite[Theorem 3.3.25]{KPZ}.

\begin{prop}\label{prop: good embedding GL}Let $p>2$ and $(G,\{\mu\},\calG)$  an acceptable local model triple of local Hodge type. Assume $p\nmid|\pi_1({G^{\der}})|$ and that the centralizer of a maximal $\breve F$-split torus in $G$ is $R$-smooth.
	Then $(G,\{\mu\},\calG)$ admits a good local Hodge embedding $\rho:G\rightarrow \mathrm{GL}(W)$. 	\end{prop}
\begin{proof}   Let $K/F$ be a finite extension over which $G$ splits and let $\rho':G\rightarrow \GL(V)$ be a local Hodge embedding. We let $W$ denote the underlying $F$ vector space of $V_K:=V\otimes_F K$. 
Then we have a faithful embedding  $\rho:G\rightarrow \GL(W)$ given by the composition
	\begin{equation}\label{eqn: composition of morphism}G\rightarrow \mathrm{Res}_{K/F}G_{K}\rightarrow\Res_{K/F}\GL(V_K)\rightarrow \GL(W)\end{equation}
	where the morphism $\mathrm{Res}_{K/F}G_{K}\rightarrow\Res_{K/F}\GL(V_K)$ arises from Weil restriction of the morphism $\rho'_K:G_K\rightarrow \GL(V_K)$ given by the  base change of $\rho'$ to $K$ and the last morphism is given by restriction of structure.
	Then $\rho$ is (geometrically) isomorphic to a direct sum of the representations $\rho'$ and hence is also local Hodge embedding.
	
		Let $\{\mu_{K/F}\}$ denote the conjugacy class of cocharacters of $\mathrm{Res}_{K/F}G_K$ induced by $\{\mu\}$. Let $\G_0$  (resp. $\tG_0$) denote the parahoric (resp. stabilizer) group scheme over $\calO_K$ corresponding to the image of $x$ in $\calB(G,K)$. We set $$\calG_{K/F}=\Res_{\calO_K/\calO_F}\calG_0, \ \ \  \tG_{K/F}=\Res_{\calO_K/\calO_F}\tG_0.$$ Then we obtain an acceptable local model triple $(\mathrm{Res}_{K/F}G_K, \{\mu_{K/F}\},\calG_{K/F})$ and we  let $E'$ be its local reflex field. The morphism $$\mathrm{Res}_{K/F}G_{K}\rightarrow\Res_{K/F}\GL(V_K)\rightarrow \GL(W)$$ obtained above is  a local Hodge embedding for  $(\mathrm{Res}_{K/F}G_K, \{\mu_{K/F}\},\calG_{K/F})$ and satisfies the assumptions in \cite[Theorem 3.3.25]{KPZ}. Thus by \emph{loc. cit.}, upon possibly replacing $x$ by a different point and $\rho$ by a direct sum, we obtain a good integral local Hodge embedding $\tG_{K/F}\rightarrow \GL(\Lambda)$ for $\Lambda\subset W$ an $\calO_F$-module. Note that our assumption that $p\nmid|\pi_1(G^{\der})|$ implies that the scheme $\rmM_{\calG_{K/F},\mu_{K/F}}$ in \cite[Theoremn 3.3.25]{KPZ} is isomorphic to $\Mloc_{\calG_{K/F},\{\mu_{K/F}\}}$, cf. \cite[Proof of Theorem 3.2.15]{KPZ}. We obtain closed immersions: 
		$$\tG_{K/F}\rightarrow \GL(\Lambda), \ \ \ \  \Mloc_{\calG_{K/F},\{\mu_{K/F}\}}\rightarrow \Gr(\Lambda)\otimes_{\calO_F}\calO_{E'}.$$
	
	By Proposition \ref{prop: closed immersion BT schemes Weil Res} and Lemma \ref{lem:FHLR} below, we have closed immersions $$\tG\rightarrow \tG_{K/F}, \ \ \ \ \bbM^{\mathrm{loc}}_{\calG,\{\mu\}}\rightarrow \bbM^{\mathrm{loc}}_{\calG_{K/F},\{\mu_{K/F}\}}\otimes_{\calO_{E'}}\calO_E,$$
	and hence composing with the above, we obtain a good integral local Hodge embedding $\tG\rightarrow \GL(\Lambda)$.
	\end{proof}

\begin{lemma}\label{lem:FHLR}With the notation and assumptions of the previous proposition, there is a closed immersion \begin{equation}\label{eqn: embedding local models Weil restriction}\bbM^{\mathrm{loc}}_{\calG,\{\mu\}}\rightarrow \bbM^{\mathrm{loc}}_{\calG_{K/F},\{\mu_{K/F}\}}\otimes_{\calO_{E'}}\calO_E.\end{equation}
\end{lemma}
\begin{proof}This follows from \cite[Lemma 5.27]{FHLR} and \cite[Theorem 7.21]{AGLR}. More precisely, \cite[Theorem 7.21]{AGLR} shows that the models constructed in \cite[Lemma 5.27]{FHLR} agree with our $\bbM^{\mathrm{loc}}_{\calG,\{\mu\}}$. \cite[Lemma 5.27]{FHLR}  then shows the existence of the closed immersion noting that Hypotheses 2.1 and 5.24 of \emph{loc. cit.} are satisfied by our assumptions of acceptability and that $p>2$.
\end{proof}

\subsubsection{}We prove  a slight variant of Proposition \ref{prop: good embedding GL} in the presence of an alternating form. Let $\rho:G\rightarrow \mathrm{GSp}(V)$ be a faithful symplectic representation where $V$ is a $2n$-dimensional  vector space over $F$ equipped with a perfect alternating bilinear form $\Psi$. We assume that $\rho\circ \mu$ is conjugate to the standard minuscule coweight $a\mapsto \mathrm{diag} (1^{(n)},(a^{-1})^{(n)})$ and that $\rho(G)$ contains the scalars. We call such an embedding a local symplectic Hodge embedding.  We say $\rho$ is good if the corresponding representation $G\rightarrow \GL(V)$ is good.

\begin{prop}\label{prop: regular triples}Let $p>2$ and $(G,\{\mu\},\calG)$  an acceptable local model triple of local Hodge type. Assume $p\nmid|\pi_1({G^{\der}})|$, the centralizer of a maximal $\breve F$-split torus in $G$ is $R$-smooth and that $G$ admits a local symplectic  Hodge embedding $\rho:G\rightarrow \mathrm{GSp}(V)$.
	Then $(G,\{\mu\},\calG)$ admits a good local symplectic Hodge embedding $\rho':G\rightarrow \mathrm{GSp}(W)$. 	\end{prop}
\begin{proof} We apply the construction in Proposition \ref{prop: good embedding GL} to obtain a good Hodge embedding $G\rightarrow \GL(W)$. By construction, there is a finite extension $K/F$ such that $W\cong V_K^r$ considered as an $F$-vector space. Let $\Psi:V\times V\rightarrow F$ be the alternating form  on $V$. We define $\Psi':V_K^r\times V_K^r\rightarrow F$ to be the alternating form given by $\Psi'=\sum_{i=1}^r\mathrm{Tr}_{K/F}\circ( \Psi\otimes_FK)$. Then  $G\rightarrow \GL(W)$ factors through a good local symplectic Hodge embedding $\rho':G\rightarrow \GSp(W)$ as desired.
	\end{proof}

\subsubsection{}\label{subsec: admissible set mixed char}Now let $(G,\{\mu\},\calG)$ be an acceptable local model triple of local Hodge type and  $\rho:\tG\rightarrow \GL(\Lambda)$  a good integral local Hodge embedding extending $\rho:G\rightarrow \GL(W)$. We assume that $\tG=\calG$. We finish this section by giving a more explicit description of the embedding $\bbM^{\mathrm{loc}}_{\calG,\{\mu\}}\rightarrow \Gr(\Lambda)\otimes_{\calO_F}\calO_E$ on the level of $k$-points which will be needed in \S\ref{sec: M-adapted}. We assume $\calG$ is a standard parahoric corresponding to a subset $J\subset \bbS$.

As explained in \cite[\S3.6]{Z}, we may identify the $k$-points of $\mathrm{Gr}(\Lambda)$ with a subset of $\GL_{W}(\brF)/\mathcal{GL}_{W}(\calO_{\brF})$, where $\calG\calL_{W}:=\GL(\Lambda)$. The convention in \emph{loc. cit.} is that $g\in\GL_{W}(\brF)/\mathcal{GL}_{W}(\calO_{\brF})\cap \mathrm{Gr}(\Lambda)(k)$ corresponds to the subspace of $\Lambda\otimes_{\calO_F}k$ induced by the reduction mod $\varpi_F$ of the lattice $\varpi_F g\Lambda$.   We thus obtain an inclusion $\bbM^{\mathrm{loc}}_{\G,\{\mu\}}(k)\subset \GL_{W}(\brF)/\mathcal{GL}_{W}(\calO_{\brF})$.

\begin{prop}\label{cor: mu admissible mixed char}Assume $p>2$ and  $\tilde{\calG}=\calG$. Let $g\in G(\brF)$ with $$g\in \G(\calO_{\brF})\dot{w}\G(\calO_{\brF})$$ for some $w\in W_J\backslash W/W_J$.  Then the image of $\rho(g)$ in $\GL_{W}(\brF)/\mathcal{GL}_{W}(\calO_{\brF})$ lies in $\bbM^{\mathrm{loc}}_{\G,\{\mu\}}(k)$ if and only if $w\in \Adm(\{\mu\})_J$.
\end{prop}
\begin{proof}By \cite[Theorem 7.23]{AGLR}, the inclusion $\bbM^{\mathrm{loc}}_{\G,\{\mu\}}(k)\subset \GL_{W}(\brF)/\mathcal{GL}_{W}(\calO_{\brF})$ lifts to an inclusion $\bbM^{\mathrm{loc}}_{\G,\{\mu\}}(k)\subset G(\brF)/\mathcal{G}(\calO_{\brF})$ which identifies $\bbM^{\mathrm{loc}}_{\calG,\{\mu\}}(k)$ with the $\mu$-admissible locus in $G(\breve F)/\G(\calO_{\breve F})$ (i.e. elements of the form $\G(\calO_{\brF})\dot{w}\G(\calO_{\brF})/\G(\calO_{\brF})$ for $w\in \Adm(\{\mu\})_J$). 
	By our assumption that $\G=\tG$,  the morphism $G(\breve F)/\calG(\calO_{\breve F})\rightarrow \GL_{W}(\brF)/\mathcal{GL}_{W}(\calO_{\brF})$ induced by $\rho$  is injective, and the result follows. 
\end{proof}
\begin{remark}
	The reason for the convention  in \cite[\S3.6]{Z} is as follows. Let $\mu$ be the standard minuscule cocharacter of $\GL_n$ given by $a\mapsto\mathrm{diag}(1^{(n-d)},(a^{-1})^{(d)})$. Then on the generic fiber, $\mu$ corresponds to the subspace of $W$ where it acts by weight $-1$. The specialization of this point in $\Gr(\Lambda)(k)$ is the subspace of $\Lambda\otimes_{\calO_F}k$ given by the reduction mod $\varpi_F$ of $\varpi_F \mu(\varpi_F)\Lambda$. Thus with this convention, $\Gr(\Lambda)(k)$ is identified with the $\mu$-admissible locus of $\GL_{W}(\brF)/\mathcal{GL}_{W}(\calO_{\brF})$.
\end{remark} 

\subsection{The versal deformation space with tensors}\label{sec: adapted liftings}\subsubsection{}

We assume $p>2$ and we work over the base field $\bbQ_p$ so that $\brQ=W(k)[\frac{1}{p}]$, where $W(k)$ denotes the Witt vectors of $k$. For any ring $R$ and an $R$-module $M$, we let $M^\otimes$ denote the direct sum of all $R$-modules obtained from $M$ by taking duals, tensor products, symmetric and exterior products. If $R$ is a complete local ring with residue field of positive characteristic and $\scrG$ is a $p$-divisible group over $R$, we write $\bbD(\scrG)$ for its (contravariant) Dieudonn\'e crystal.

\subsubsection{}\label{subsec: deformation space with crystalline tensors} Let $\scrG_0$ be a $p$-divisible group over $k$ and set $\bbD:=\bbD(\scrG_0)(\brZ)$. We write $\varphi$ for the Frobenius on $\bbD$ and $\bbD_1\subset \bbD$ the preimage of the filtration on $\bbD(\scrG_0)(k)$.
Let $(s_{\alpha,0})\subset\bbD^\otimes$ be  a collection of $\varphi$-invariant tensors whose image in $\bbD(\scrG_0)(k)^\otimes$ lie in $\mathrm{Fil}^0$. We assume that there exists a $\bbZ_p$-module $U$ and an isomorphism \begin{equation}
\label{eqn: triv Dieudonne}
U\otimes_{\bbZ_p}\brZ\cong\bbD\end{equation} such that $s_{\alpha,0}\in U^\otimes$. Write $\widetilde{\G}\subset \GL(U)$ for the pointwise stabilizer of $\{s_{\alpha,0}\}_{\alpha}$ so that $\widetilde{\G}_{\brZ}$ can be identified with the stabilizer of $s_{\alpha,0}$ in $\GL(\bbD)$. 
We assume  that the generic fiber $G:=\widetilde{\calG}_{\otimes_{\bbZ_p}}\bbQ_p$ is a reductive group and that $\widetilde{\G}=\widetilde{\calG}_x$ for some $x\in \calB(G,\bbQ_p)$. We write $\G$ for the associated parahoric group scheme. 

 Let $P\subset \GL(\D)$ be a parabolic subgroup lifting the parabolic $P_0$ corresponding to the filtration on $\D(\pdiv_0)(k)$. Write $\bbM^{\mathrm{loc}}=\GL(\D)/P$ and $\mbox{Spf}A=\widehat{\bbM}^{\mathrm{loc}}$ the completion 
of $\bbM^{\mathrm{loc}}$ at the identity;  then $A$ is isomorphic to a power series ring over $\brZ$. Let $K'/\brQ$ be a finite extension and $y:A\rightarrow K'$ a continuous map such that $\sa\in \mathrm{Fil}^0\D^\otimes\otimes_{\brZ}K'$ for the filtration induced by $y$ on $\D^\otimes\otimes_{\brZ}K'$. By \cite[Lemma 1.4.5]{Ki2}, the filtration corresponding to $y$ is induced by a $G$-valued cocharacter $\mu_y$ (by convention $\mu_y$ has weights $(0,1)$).  Let $G.y$ be the orbit of $y$ in $\bbM^{\mathrm{loc}}\otimes_{\brZ}K'$ which is defined over a finite extension $\brE/\brQ$, and we write $\bbM^{\mathrm{loc}}_{\calG}$ for the closure of this orbit in $\bbM^{\mathrm{loc}}$.

\subsubsection{}  Let $R$ be a complete local ring with maximal ideal $\mathfrak{m}$ and residue field $k$. We let $W(R)$ denote the Witt vectors of $R$. Recall \cite{Zi1} we have a subring $$\widehat{W}(R)=W(k)\oplus\mathbb{W}(\mathfrak{m})\subset W(R),$$ where $\mathbb{W}(\mathfrak{m})\subset W(R)$ consists of Witt vectors $(w_i)_{i\geq1}$ with $w_i\in\mathfrak{m}$ and $w_i\rightarrow 0$ in the $\mathfrak{m}$-adic topology. The Frobenius of $W(R)$ induces a map $\varphi:\widehat{W}(R)\rightarrow \widehat{W}(R)$, and we write $I_R$ for the kernel of the projection $\widehat{W}(R)\rightarrow R$.
We recall the following definition, which is \cite[Definition 4.6]{Z} in the case that $G$ splits over a tamely ramified extension of $\bbQ_p$.  

\begin{definition}\label{def: G-adapted}
	Let $K/\brQ$ be a finite extension. Let $\pdiv$ be a $p$-divisible group over $\Ok_K$ whose special fiber is isomorphic to $\pdiv_0$. We say $\pdiv$ is $(\widetilde{\G},\mu_y)$-adapted if the tensors $\sa$ extend to Frobenius invariant tensors $\widetilde{s}_\alpha\in\D(\pdiv)(\hW(\Ok_K))^\otimes$ such that the following two conditions hold:
	
	\begin{enumerate}
		\item 	There is an isomorphism $\D(\pdiv)(\hW(\Ok_K))\cong \D\otimes_{\brZ}\hW(\Ok_K)$ taking $\widetilde{s}_\alpha$ to $\sa$.
		
		\item Under the canonical identification $$\D(\pdiv)(\calO_K)\otimes_{\calO_K}K\cong \D\otimes_{\brZ}K$$ given by \cite[Lemma 3.1.17]{KP}, the filtration on $\D\otimes_{\brZ}K$ is induced by a $G$-valued cocharacter conjugate to $\mu_y$. 
	\end{enumerate}
\end{definition}

\subsubsection{}\label{sssec: very good}   Consider the local model triple $(G,\{\mu_y^{-1}\},\calG)$ with reflex field $E$.
We  assume in addition that the following  conditions are satisfied:
\begin{enumerate}
	\item[(A)]$(G,\{\mu_y^{-1}\},\calG)$ is acceptable and of local Hodge type.
	\item[(B)]The embedding $\rho:\tG\rightarrow \GL(U)$ is  a good local integral Hodge embedding. 
\end{enumerate}

Under assumption (B),  property (3) of Definition \ref{def: good embedding} implies that  the definition of $\bbM^{\mathrm{loc}}_{\G}$ above agrees with the  local model $\bbM^{\mathrm{loc}}_{\G,\{\mu_y^{-1}\}}\otimes_{\calO_E}\calO_{\breve E}$.
We write  $\widehat{\bbM}^{\mathrm{loc}}_{\G}\cong \mathrm{Spf}A_{\widetilde{\G}}$ for the completion of $\bbM^{\mathrm{loc}}_{\G}$ at the point $\overline{y}\in \Mloc_{\calG}(k)$ corresponding to the identity element. Then $A_{\tG}$ is normal and we have a natural surjective map $A\otimes_{\brZ}\calO_{\breve E}\rightarrow A_{\tG}$ corresponding to the closed immersion $\widehat{\mathbb{M}}^{\mathrm{loc}}_{\G}\subset \widehat{\bbM}^{\mathrm{loc}}\otimes_{\brZ}\calO_{\breve E}$.

We also make the following assumption \begin{enumerate}\item[(C)] The embedding $\tG\rightarrow \GL(U)$ is very good at the point $\overline{y}\in \Mloc_{\calG}(k)$ in the sense of \cite[Definition 5.2.5]{KPZ}.
	\end{enumerate}
We briefly recall this notion, which was erroneously omitted in \cite{KP} and previous versions of this manuscript.	We set $M=U\otimes_{\bbZ_p}\widehat{W}(A)$. Let $\overline{M}_1\subset M/I_AM$ be the universal direct summand of $\widehat{\bbM}_{\calG}^{\mathrm{loc}}$ and $M_1\subset M$ the preimage of $\overline{M}_1$.  Let $\widetilde{M}_1$ denote the image of the map $$M_1\otimes_{\widehat{W}(A),\varphi}\widehat{W}(A)\rightarrow M\otimes_{\widehat{W}(A),\varphi}\widehat{W}(A).$$ 
	
	By the argument of \cite[Corollary 3.2.11]{KP}, we have $s_{\alpha,0}\in \widetilde{M}_1^\otimes\otimes_{\widehat{W}(A)} \widehat{W}(A_{\tG})$, and the scheme  \begin{equation}\label{eqn: Dieudonne torsor}\calT=\underline{\mathrm{Isom}}_{s_{\alpha,0}}(\widetilde{M}_1\otimes_{\widehat{W}(A)}\widehat{W}(A_{\tG}),M\otimes_{\widehat{W}(A)}\widehat{W}(A_{\tG}))\end{equation} of isomorphisms which preserve the tensors $s_{\alpha,0}$ is a trivial $\widetilde\calG$-torsor. 
	
	Let $\fkm_{A_{\tG}}$ denote the maximal ideal in $A_{\tG}$ and set $\fka_{\tG}:=\fkm_{A_E}^2+\pi_EA_E$, where $\pi_E\in E$ is a unifomizer. We also let  $U_1\subset U\otimes_{\bbZ_p}\brZ$ denote the preimage of the filtration corresponding to $\overline{y}$ (this corresponds to the submodule $\bbD_1$ under the identification with $U\otimes_{\bbZ_p}\brZ$), and we let $\widetilde{U}_1$ denote the image of $\varphi^*(U_1)\rightarrow \varphi^*(U\otimes_{\bbZ_p}\brZ)$.  By \cite[Lemma 5.1.3]{KPZ} (cf. \cite[Corollary 3.1.9]{KP}), there is a canonical isomorphism $$c:\widetilde{U}_1\otimes_{\brZ}\widehat{W}(A_{\tG}/\fka_{\tG})\xrightarrow{\sim}\widetilde{M}_1\otimes_{\widehat{W}(A)}\widehat{W}(A_{\tG}),$$ and the embedding $\tG\rightarrow \GL(U)$ is said to be very good at $\overline{y}$ if we can choose a collection of $s_{\alpha,0}$ cutting out $\tG\subset \GL(U)$ such that $c(s_{\alpha,0}\otimes 1)=s_{\alpha,0}\otimes 1$. This is equivalent the condition  that $c$ defines an isomorphism of $\tG$-torsors and hence is independent of the choice of $s_{\alpha,0}$. More generally, we say that the integral local Hodge embedding $\tG\rightarrow \GL(U)$ is very good, if it is very good at all points of $\Mloc_{\calG}(k)$.

\subsubsection{}

With the corrected assumptions above, we may now apply the construction in \cite[3.2]{KP}; the following is essentially \cite[Proposition 3.2.17]{KP}.

\begin{prop}\label{prop: versal deformation space tensors}
	There exists a versal $p$-divisible group $\scrG_A$ over $\mathrm{Spf} A\otimes_{\brZ}\calO_{\brE}$ deforming $\pdiv_0$ such that   for any $K/\brQ$ finite, a map $\varpi:A\otimes_{\brZ}\Ok_E\rightarrow K$ factors through $A_{\widetilde{\G}}$ if and only if the $p$-divisible group $\pdiv_{\varpi}$ given by the base change of $\scrG_A$ along $\varpi$ is $(\widetilde{\G},\mu_y)$-adapted. 
\end{prop}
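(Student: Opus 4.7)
The plan is to follow the strategy of \cite[Proposition 3.2.17]{KP}, substituting the regularity and goodness hypotheses (\ref{eqn: assumption 2})--(\ref{eqn: assumption 4}) for the tameness assumption of loc.\ cit. First I would construct the versal family $\scrG_A$ using Zink's theory of Dieudonn\'e displays: the Dieudonn\'e module $(\bbD,\varphi)$ of $\scrG_0$, together with the tautological filtration on $\widehat{\rmM}^{\mathrm{loc}} = \mathrm{Spf}\, A$, determines a display over $\widehat{W}(A)$ and hence a $p$-divisible group $\scrG_A$ over $\mathrm{Spf}\, A$ deforming $\scrG_0$. Versality follows because every deformation of $\scrG_0$ arises from such a choice of filtration and $A$ pro-represents the corresponding functor. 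The Frobenius-invariant tensors $s_{\alpha,0}$ extend uniquely to Frobenius-invariant tensors $\widetilde{s}_\alpha \in \bbD(\scrG_A)(\widehat{W}(A))^\otimes$ which specialize to $s_{\alpha,0}$ via (\ref{eqn: triv Dieudonne}).

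Next, for a continuous map $\varpi: A \otimes_{\brZ} \calO_E \to K$ with ring of integers $\calO_K$, I would characterize when $\scrG_\varpi$ is $(\widetilde{\G}, \mu_y)$-adapted. Condition (2) of Definition \ref{def: G-adapted} (the Hodge filtration over $K$ being induced by a cocharacter in $\{\mu_y\}$) is equivalent to the $K$-point of $\widehat{\rmM}^{\mathrm{loc}} \otimes_{\brZ} K$ determined by $\varpi$ lying in the generic fiber of $\widehat{\rmM}^{\mathrm{loc}}_{\G}$. Using the goodness assumption (\ref{eqn: assumption}) via Corollary \ref{cor: embedding of local model into Grassmannian}, together with the normality of $A_{\widetilde{\G}}$ (Theorem \ref{thm: Levin}, applicable since $p \nmid |\pi_1(G_{\der})|$), this is in turn equivalent to $\varpi$ factoring through $A_{\widetilde{\G}}$.

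For condition (1) of Definition \ref{def: G-adapted}, I would argue as in \cite[Proposition 3.2.17]{KP} that once $\varpi$ factors through $A_{\widetilde{\G}}$ the required trivialization exists. Over the generic fiber, such a trivialization is supplied by the canonical identification of \cite[Lemma 3.1.17]{KP}; the normality of $A_{\widetilde{\G}}$, combined with the fact that $\widetilde{\G} \hookrightarrow \GL(U)$ is a closed immersion (Proposition \ref{prop: closed immersion of BT schemes}, which uses the $R$-smoothness of the centralizer of a maximal $\brF$-split torus), allows one to descend this trivialization to an integral one over $\widehat{W}(\calO_K)$. Conversely, the existence of such a trivialization together with the cocharacter condition implies that the filtration lands in $\widehat{\rmM}^{\mathrm{loc}}_{\G}$, hence $\varpi$ factors through $A_{\widetilde{\G}}$.

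The main obstacle is extending the generic-fiber trivialization of condition (1) to an integral trivialization over $\widehat{W}(\calO_K)$. In the tame case treated in \cite{KP}, this is handled via explicit properties of local models dependent on tameness; in our broader setting we must instead exploit the normality of $A_{\widetilde{\G}}$ (Theorem \ref{thm: Levin}) combined with the closed immersion property of Bruhat--Tits stabilizer schemes under $R$-smoothness (Proposition \ref{prop: closed immersion of BT schemes}). Once these foundational ingredients are in hand, the remaining deformation-theoretic arguments of \cite[\S3.2]{KP}—in particular the analysis of the tensors on the versal display and the compatibility with the local model embedding via the good Hodge embedding $G \subset \GL(U_{\bbQ_p})$—go through essentially unchanged.
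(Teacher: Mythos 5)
Your overall route is the paper's: run the construction of \cite[\S3.2]{KP} to produce $\scrG_A$ and quote \cite[Proposition 3.2.17]{KP} for the converse direction. But there are two concrete problems. First, the step you wave through as "the remaining deformation-theoretic arguments of \cite[\S3.2]{KP} go through essentially unchanged" is precisely where they do \emph{not} go through unchanged: the KP construction requires the verification of conditions (3.2.2)--(3.2.4) of \emph{loc.~cit.}, and the crucial one is the purity/extension statement \cite[Proposition 1.4.3]{KP} (triviality of $\widetilde{\G}$-torsors over the punctured spectrum of a two-dimensional regular local ring), whose proof in \cite{KP} uses tameness. The paper replaces it by \cite[Proposition 10.3]{Anschutz}, which holds for arbitrary parahoric group schemes. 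The substitutes you propose --- normality of $A_{\widetilde{\G}}$ (which is indeed one of the conditions, supplied by Theorem \ref{thm: Levin}) and Proposition \ref{prop: closed immersion of BT schemes} --- do not address this torsor-extension step, so as written your construction of the tensors on the versal display has a gap exactly at the point the proposition is supposed to generalize.

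Second, your assertion that the $s_{\alpha,0}$ extend to Frobenius-invariant tensors $\widetilde{s}_\alpha\in\bbD(\scrG_A)(\widehat{W}(A))^\otimes$ over all of $\mathrm{Spf}\,A$ is false, and the falsity is the point of the proposition. The Frobenius of the versal display over $A$ is twisted by a universal element of $\GL$, and the constant tensors are $\varphi$-invariant only after restriction to $A_{\widetilde{\G}}$, where that element can be taken $\widetilde{\G}$-valued; this is how the paper obtains condition (1) of Definition \ref{def: G-adapted} for points factoring through $A_{\widetilde{\G}}$ (by base change from $\widehat{W}(A_{\widetilde{\G}})$), and it is why the converse implication is not automatic. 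If the tensors were $\varphi$-invariant over $\widehat{W}(A)$, condition (1) would hold for every deformation and the ``only if'' direction would have to be carried entirely by condition (2); your sketch of that direction then leans on an unexamined identification of the filtration of \cite[Lemma 3.1.17]{KP} with the tautological point of the local model, which is exactly the content of \cite[Proposition 3.2.17]{KP} (and of \cite[Proposition 4.7]{Z}, which the paper cites for condition (2)) rather than something one gets for free from normality.
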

\begin{proof} Under our assumptions and using \cite[Proposition 10.3]{Anschutz} (see also \cite[Proposition 5.3.2]{PRshtukas} for a different proof  which  applies in our setting) in place of \cite[Proposition 1.4.3]{KP}, we find that the conditions (3.2.2)-(3.2.4) of \cite{KP} are satisfied; we may thus apply the construction in \cite[\S3.2]{KP} to obtain $\scrG_A$. We briefly recall the construction.

Recall the trivial $\tG$-torsor $\calT=\underline{\mathrm{Isom}}_{s_{\alpha,0}}(\widetilde{M}_1\otimes_{\widehat{W}(A)}\widehat{W}(A_{\widetilde\calG}),M\otimes_{\widehat{W}(A)}\widehat{W}(A_{\widetilde\calG}))$ of tensor-preserving isomorphisms from the previous paragraph.  We let $$\Psi_{A_{\widetilde\calG}}:\widetilde{M}_1\otimes_{\widehat{W}(A)}\widehat{W}(A_{\widetilde\calG})\xrightarrow{\sim}M\otimes_{\widehat{W}(A)}\widehat{W}(A_{\widetilde\calG})$$ be a section of $\calT$ which is constant mod $\fka_E$ in the sense of \cite[\S3.1.11]{KP}; such a section exists by assumption (C). We then lift $\Psi_{A_{\widetilde{\calG}}}$ to an isomorphism $$\Psi:\widetilde{M}_1\otimes_{\widehat{W}(A)}\widehat{W}(A\otimes_{\breve\bbZ_p}\calO_{\breve E})\xrightarrow{\sim}M\otimes_{\widehat{W}(A)}\widehat{W}(A\otimes_{\breve\bbZ_p}\calO_{\breve E})$$ which is constant mod $\fka_E$. By \cite[Lemma 3.1.5]{KP}, this gives rise to a Dieudonn\'e display over $A\otimes_{\breve\bbZ_p}\calO_{\breve E}$, and hence to a $p$-divisible group $\scrG_A$ over $\Spf A\otimes_{\breve\bbZ_p}\calO_{\breve E}$ which is versal by \cite[Lemma 3.1.12]{KP}.

	By construction, the base change $\scrG_{A_{\tilde{\calG}}}:=\scrG_A\otimes _{A\otimes_{\brZ}\calO_{\breve E}}{A_{\tilde{\calG}}}$ is equipped with Frobenius invariant tensors $s_{\alpha,0,A_{\widetilde{\calG}}}\in \bbD(\scrG_{A_{\tilde{\calG}}})(\widehat{W}(A_{\tilde{\calG}}))^\otimes$.
It  is then  clear that for $\varpi:A_{\widetilde{\G}}\rightarrow K$, the tensors $s_{\alpha,0}$ extend to
 $$\widetilde{s}_\alpha\in \bbD(\scrG_{\varpi})(\widehat{W}(\Ok_K))^\otimes$$ 
 so that Definition \ref{def: G-adapted} (1) is  satisfied. Indeed the tensors $\widetilde{s}_\alpha$ are obtained from 
 $s_{\alpha,0,A_{\widetilde{\G}}}$ via base change. The argument in \cite[Proposition 4.8]{Z} shows that condition (2) is also satisfied, so that $\scrG_{\varpi}$ is $(\widetilde\G,\mu_y)$-adapted.
	
The converse is \cite[Proposition 3.2.17]{KP}.
\end{proof}

\subsection{Deformations with \'etale tensors}\label{sec: etale tensors}

\subsubsection{}
Let $K/\brQ$ be a finite extension and $\scrG$  a $p$-divisible group over $\calO_K$ with special fiber $\scrG_0$. We write $T_p\scrG$ for the $p$-adic Tate-module of $\scrG$ and $T_p\scrG^\vee$ its linear dual. We let $s_{\alpha,\et}\in T_p\scrG^{\vee \otimes}$ be a collection of tensors whose stabilizer $\widetilde{\calG}$ has reductive generic fiber $G$ and $\widetilde{\calG}=\widetilde{\calG}_x$ for some $x\in \calB(G,\bbQ_p)$. We write $\bbD:=\bbD(\scrG_0)(\brZ)$ and we let 
$$s_{\alpha,0}\in D_{\mathrm{cris}}(T_p\scrG^\vee\otimes_{\bbZ_p}\bbQ_p)^\otimes \simeq  \bbD^\otimes\otimes_{\brZ}\brQ$$ 
denote the $\varphi$-invariant tensors corresponding to the image of $s_{\alpha,\et}$ under the $p$-adic comparison isomorphism. 

\begin{prop}\label{prop: trivialization of Dieudonne}
\begin{enumerate}\item We have $s_{\alpha,0}\in \bbD^\otimes\subset \bbD^\otimes\otimes_{\brZ}\brQ. $ 
Moreover the $s_{\alpha,0}$ extend canonically to tensors $\widetilde{s}_\alpha\in\bbD(\scrG)(\widehat{W}(\Ok_K))^\otimes$ and there exists an isomorphism 
\begin{equation}
	\label{eqn: trivialization of display}
	T_p\scrG^\vee\otimes_{\brZ}\widehat{W}(\Ok_K)\cong\bbD(\scrG)(\widehat{W}(\Ok_K))	\end{equation} 
	taking $s_{\alpha,0}$ to $\widetilde{s}_\alpha$.
	\item There exists a $G$-valued cocharacter $\mu_y$ such that 
	\begin{enumerate}[label=(\roman*)]
	\item Under the canonical isomorphism 
	$$\gamma:\bbD\otimes_{\brZ} K\cong \bbD(\scrG)(\Ok_K)\otimes_{\calO_K} K,$$ 
	the filtration is induced by a $G$-valued cocharacter conjugate to $\mu_y$.
	\item The filtration on $\bbD\otimes_{\brZ} K$ induced by $\mu_y$ lifts the filtration on the module $\bbD(\scrG_0)\otimes_{\brZ}k$.
\end{enumerate}
Here we consider $G_{\brQ}\subset \GL(\bbD\otimes_{\brZ}\brQ)$ via  base change of (\ref{eqn: trivialization of display}) to $\brQ$.
	\end{enumerate}
\end{prop}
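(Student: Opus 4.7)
The plan is to follow the strategy of \cite[Proposition 1.4.3]{KP}, replacing the tameness hypothesis used there by the non-tame torsor-triviality theorem \cite[Proposition 10.3]{Anschutz}. This substitution is the only substantive change from loc.\ cit., and the setup in \S\ref{sec: adapted liftings} (in particular the $R$-smoothness hypothesis flowing into the regularity assumption) is arranged precisely to accommodate it.

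For part (1), I would first argue that $s_{\alpha,0}\in \bbD^\otimes$: the crystalline--\'etale comparison lands $s_{\alpha,\et}$ in $\bbD^\otimes\otimes\brQ$, and integrality follows from the integrality of the crystalline--\'etale comparison for $p$-divisible groups with good reduction (Breuil--Kisin theory, valid since $p>2$). Next I would run the comparison over the base $\widehat{W}(\Ok_K)$ to obtain, after inverting $p$, an isomorphism
\[
T_p\scrG^\vee\otimes_{\bbZ_p}\widehat{W}(\Ok_K)[1/p]\xrightarrow{\sim}\bbD(\scrG)(\widehat{W}(\Ok_K))[1/p],
\]
and define $\widetilde{s}_\alpha$ as the image of $s_{\alpha,0}$. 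The sheaf of integral trivializations taking $s_{\alpha,0}$ to $\widetilde{s}_\alpha$ is then (once the integrality of $\widetilde{s}_\alpha$ is verified) a $\widetilde{\calG}$-torsor over $\widehat{W}(\Ok_K)$, and \cite[Proposition 10.3]{Anschutz} asserts that any such torsor is trivial. Any section supplies the required isomorphism and, as a by-product, confirms the integrality of the $\widetilde{s}_\alpha$.

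For part (2), I would transport the Hodge filtration on $\bbD(\scrG)(\Ok_K)\otimes_{\Ok_K}K$ via the canonical isomorphism $\gamma$ of \cite[Lemma 3.1.17]{KP} to a filtration on $\bbD\otimes_{\brZ}K$. Because the $s_{\alpha,0}$ lie in $\mathrm{Fil}^0$, the stabilizer of this filtration is a parabolic subgroup of $G_K$, and by \cite[Lemma 1.4.5]{Ki2} it has the form $P_{\mu_y}$ for some $G$-valued cocharacter $\mu_y$, unique up to $G_K$-conjugacy; this gives (i). For (ii), reducing the trivialization of part (1) modulo the maximal ideal of $\widehat{W}(\Ok_K)$ identifies the mod-$k$ filtration determined by $\mu_y$ with the filtration on $\bbD(\scrG_0)\otimes_{\brZ}k$ via the canonical comparison.

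The hard part is the construction in part (1) without tameness: the integrality of $\widetilde{s}_\alpha$ and the triviality of the $\widetilde{\calG}$-torsor over $\widehat{W}(\Ok_K)$ are both controlled in \cite{KP} by the tame hypothesis, and here must be deduced from \cite[Proposition 10.3]{Anschutz}. Once that input is in hand, the rest of the argument is essentially formal and parallels \cite[\S1.4, \S3.1]{KP}.
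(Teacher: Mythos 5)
Your proposal matches the paper's proof, which simply states that the argument is that of \cite[Proposition 3.3.8, Corollary 3.3.10]{KP} with \cite[Proposition 10.3]{Anschutz} substituted for \cite[Proposition 1.4.3]{KP} — exactly the substitution you identify as the crux. The only cosmetic difference is that the relevant template in \cite{KP} is \S 3.3 rather than \S 1.4/\S 3.1, but the content of your sketch (integrality via Breuil--Kisin theory, triviality of the $\widetilde{\calG}$-torsor of trivializations over $\widehat{W}(\Ok_K)$ via Ansch\"utz, then reading off the filtration via $\gamma$ and \cite[Lemma 1.4.5]{Ki2}) is the intended argument.
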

\begin{proof} The argument is the same as \cite[Proposition 3.3.8, Corollary 3.3.10]{KP}, where  again we are using \cite[Proposition 10.3]{Anschutz} in place of \cite[Proposition 1.4.3]{KP}.
\end{proof}

\subsubsection{}The isomorphism (\ref{eqn: trivialization of display}) induces an isomorphism $$T_p\scrG^\vee\otimes_{\bbZ_p}\brZ\cong \bbD$$ taking $s_{\alpha,\et}$ to $s_{\alpha,0}$ which we now fix. Taking $T_p\scrG^\vee$ to be $U$, we place ourselves in the setting of \S\ref{subsec: deformation space with crystalline tensors}. Therefore  we have a notion of $(\widetilde{\G},\mu_y)$-adapted lifting where $\mu_y$ is as in Proposition \ref{prop: trivialization of Dieudonne}. Moreover it follows from the same proposition that $\scrG$ itself  is a $(\widetilde{\G},\mu_y)$-adapted lifting. The next proposition then follows immediately from Proposition \ref{prop: trivialization of Dieudonne} and the definition of $(\widetilde{\G},\mu_y)$-adapted liftings (cf. \cite[Proposition 3.3.13]{KP}).

\begin{prop}\label{prop: G-adapted etale}
	Let $K'/\brQ$ be a finite extension and let ${\scrG}'$ be a deformation of $\scrG_0$ to $\Ok_{K'}$ such that 
\begin{enumerate} \item The filtration on $\bbD\otimes_{\brZ} K'$ corresponding to $\scrG'$ is induced by a $G$-valued cocharacter conjugate to $\mu_y$.
	
\item The tensors $s_{\alpha,0}\in\bbD^\otimes$ correspond to tensors $s_{\alpha,\et}\in T_p\scrG'^{\vee\otimes}$ under the $p$-adic comparison isomorphism.

\end{enumerate}
	Then ${\scrG}'$ is $(\widetilde{\G},\mu_y)$-adapted lifting. 
\end{prop}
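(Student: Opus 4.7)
The plan is to deduce the proposition directly from Proposition \ref{prop: trivialization of Dieudonne} applied to $\scrG'$ in place of $\scrG$. Hypothesis (2) supplies the tensors $s_{\alpha,\et}\in T_p\scrG'^{\vee\otimes}$ together with the compatibility (via the $p$-adic comparison isomorphism) needed to place $\scrG'$ in the setting of \S4.2; in particular, the stabilizer of $\{s_{\alpha,\et}\}$ in $\mathrm{GL}(T_p\scrG'^{\vee})$ is canonically identified with $\widetilde{\calG}$ because comparison matches these tensors with $s_{\alpha,0}$, whose stabilizer is $\widetilde{\calG}$ by construction.

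First I would apply Proposition \ref{prop: trivialization of Dieudonne}(1) to $\scrG'$ to obtain Frobenius-invariant extensions $\widetilde{s}_\alpha\in\bbD(\scrG')(\hW(\calO_{K'}))^\otimes$ of the tensors $s_{\alpha,0}$ together with an isomorphism
$$T_p\scrG'^{\vee}\otimes_{\bbZ_p}\hW(\calO_{K'})\;\xrightarrow{\sim}\;\bbD(\scrG')(\hW(\calO_{K'}))$$
carrying $s_{\alpha,\et}$ to $\widetilde{s}_\alpha$. Composing with an identification of $(T_p\scrG'^{\vee},s_{\alpha,\et})$ with $(U,s_{\alpha,0})$ (regarded as $\bbZ_p$-modules equipped with tensors) yields the desired isomorphism
$$\bbD\otimes_{\brZ}\hW(\calO_{K'})\;\cong\;U\otimes_{\bbZ_p}\hW(\calO_{K'})\;\xrightarrow{\sim}\;\bbD(\scrG')(\hW(\calO_{K'}))$$
matching $s_{\alpha,0}$ with $\widetilde{s}_\alpha$, verifying condition~(1) of Definition \ref{def: G-adapted}.

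For condition~(2) of Definition \ref{def: G-adapted}, Proposition \ref{prop: trivialization of Dieudonne}(2) applied to $\scrG'$ produces a $G$-valued cocharacter $\mu_{y'}$ that induces the Hodge filtration on $\bbD\otimes_{\brZ}K'$ associated to $\scrG'$. Hypothesis~(1) of the present proposition says this same filtration is induced by a $G$-conjugate of $\mu_y$; hence $\mu_{y'}$ and $\mu_y$ are conjugate in $G$, so the filtration is induced by a $G$-conjugate of $\mu_y$ as required.

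The main subtle point, which I expect to be the chief obstacle, is the implicit trivialization in the second paragraph of the $\widetilde{\calG}$-torsor of isomorphisms between $(U,s_{\alpha,0})$ and $(T_p\scrG'^{\vee},s_{\alpha,\et})$. This torsor is Galois-equivariantly split after base change to a suitable period ring by the very comparison that defines $s_{\alpha,\et}$, and the reduction to a split $\widetilde{\calG}$-torsor over $\hW(\calO_{K'})$ follows by the same descent argument as in the proof of \cite[Proposition 3.3.13]{KP}, which is formal and uses only smoothness of $\widetilde{\calG}$ together with the compatibility of the comparison isomorphism with Frobenius and filtration; in particular it does not require $G$ to split over a tamely ramified extension.
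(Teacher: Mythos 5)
Your proposal is correct and follows essentially the same route as the paper, which disposes of this proposition in one line by applying Proposition \ref{prop: trivialization of Dieudonne} to $\scrG'$ and then checking the two conditions of Definition \ref{def: G-adapted} directly (condition (1) from the trivialization over $\hW(\calO_{K'})$ matching the \'etale and crystalline tensors, condition (2) being exactly hypothesis (1)). The ``subtle point'' you flag is handled just as you suggest: since $\scrG'$ and $\scrG$ share the special fiber $\scrG_0$ and both sets of \'etale tensors map to the same $s_{\alpha,0}\in\bbD^\otimes$, composing the two trivializations of $\bbD$ from Proposition \ref{prop: trivialization of Dieudonne} identifies $(T_p\scrG'^{\vee},s_{\alpha,\et})$ with $(U,s_{\alpha,0})$ after base change to $\brZ$, which is all that condition (1) of the definition requires.
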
\qed 
\subsection{Canonical liftings for $\mu$-ordinary $p$-divisible groups}\label{sec: M-adapted}
\subsubsection{}\label{sec: mu ordinary pdiv gp} 

We now study the deformation theory of $\mu$-ordinary $p$-divisible groups. The results in this subsection will be used in \S\ref{sec: canonical liftings} to prove our main result on CM (special) liftings for Shimura varieties.

We return to the setting of \S\ref{sec: adapted liftings}. Thus $\scrG_0$ is a $p$-divisible group over $k $ equipped with $s_{\alpha,0}\in \bbD^\otimes$. We fix a $\brZ$-linear isomorphism \begin{equation}
\label{eqn: trivialization Dieudonne}
U\otimes_{\Z_p}\brZ\cong\D(\pdiv_{0})
\end{equation} 
as in (\ref{eqn: triv Dieudonne}) so that $s_{\alpha,0}\in U^\otimes$ and assume that (A) and (B) are satisfied. In \S\ref{sec: M-adapted}, we will assume in addition  that  $\calG=\widetilde{\calG}$, so that we have a closed immersion $\calG\rightarrow \GL(U)$. Since the $s_{\alpha,0}$ are $\varphi$-invariant, the Frobenius is given by $b\sigma$ for an element $b\in G(\brQ)$, and modifying (\ref{eqn: trivialization Dieudonne}) by an element $h\in \G(\brZ)$ modifies $b$ by $b\mapsto h^{-1}b\sigma(h)$. Therefore $b$ is well-defined up to $\sigma$-conjugation by an element of $\G(\brZ)$ and in particular we obtain a well-defined class $[b]\in B(G)$. 

We choose a maximal $\brQ$-split torus $S$ of $G$ defined over $\Q_p$ such that $x\in \calA(G,S,\brQ)$ and we let $T$ be its centralizer. We fix a $\sigma$-stable alcove $\fka\subset \calA(G,S,\brQ)$ such that $x$ lies in the closure of $\fka$;  this determines a set of simple reflections $\bbS$ for $W$, and $\calG$ corresponds to the subset $J\subset \bbS$  of reflections which fix $x$.  We follow the notation of \S\ref{sec: group theoretic} and let $\widetilde{\mu}\in X_*(T)$ denote the dominant (with respect to a choice of Borel defined over $\brQ$) representative of the conjugacy class $\{\mu_y\}$; we write ${\mu}$ for its image in $X_*(T)_I$. We have a closed immersion of local models $$\bbM^{\mathrm{loc}}_{\G,\{\mu_y^{-1}\}}\hookrightarrow \mathrm{Gr}(U)\otimes_{\bbZ_p}\calO_E,$$ where $\mathrm{Gr}(U)$ classifies submodules of $U$ of rank $\mathrm{dim}_k\mathrm{Fil}^1\bbD\otimes_{\brZ} k.$ By definition, the filtration on $\bbD\otimes_{\brZ}k$ corresponds to an element of $\mathrm{Gr}(U)(k)$ which lies in $\bbM^{\mathrm{loc}}_{\G,\{\mu_y^{-1}\}}(k)$. This filtration  is by definition the kernel of $\varphi$. Thus its preimage in $\bbD$ is given by 
$\{v\in\bbD|b\sigma(v)\in p\bbD\}$, which  
 is just the $\brZ$-lattice $\sigma^{-1}(b^{-1})p\bbD$. It follows from Corollay \ref{cor: mu admissible mixed char} that $\sigma^{-1}(b^{-1})\in \G(\brZ)\dot{w}\G(\brZ)$ for some element $w\in\Adm(\{\mu_y^{-1}\})_J$, and hence that $$b\in\G(\brZ)\sigma(\dot{u})\G(\brZ)$$ for some $u\in \Adm(\{\mu_y\})_J$. In particular we have $[\sigma^{-1}(b)]\in B(G,\{\mu_y\})$ by \cite[Theorem 1.1]{He3}.  

\subsubsection{}\label{subsec: mu ordinary setting}Now assume the existence of $[b]_{\mu}\in B(G,\{\mu_y\})$ as in Definition \ref{def: mu ordinary}, and that  $\sigma^{-1}(b)\in [b]_{\mu}$. We will  construct a $(\calG,\mu_y)$-adapted (recall $\widetilde{\calG}=\calG$) deformation of $\pdiv_0$ which will be the analogue of the Serre--Tate canonical lift in this context.

By Proposition \ref{prop: F-crystal basis} applied to $\sigma^{-1}(b)$, there exists an element $h\in \G(\brZ)$ such that $h^{-1}b\sigma(h)=\sigma(\dot{t}_{\mu'})$ for some $\mu'\in W_0\cdot\mu$ with $t_{\mu'}$ $\sigma$-straight. Upon modifying the isomorphism (\ref{eqn: trivialization Dieudonne}), we may assume $b=\sigma(\dot t_{\mu'})$; we fix this choice of (\ref{eqn: trivialization Dieudonne}) from now on. Let $M$ be the semistandard Levi subgroup of $G$  corresponding to $\nu_{t_{\mu'}}=\nu_{\sigma(t_{\mu'})}$; then $t_{\mu'}$ is central in $W_M$ by Lemma \ref{lemma: cochar central}. 
Let $w\in W_0$ such that $w\cdot{\mu}=\mu'$ and write $\widetilde{\lambda}:=(w\cdot\widetilde{\mu})$; then by Lemma \ref{lemma: cochar central 2}, $\widetilde{\lambda}$ is central in $M$.

 Let $$\calM(\brZ):=M(\brQ)\cap\G(\brZ),$$ which is the $\brZ$-points of a parahoric group scheme $\calM$ of $M$ defined over $\bbZ_p$. Explicitly, we have an identification of apartments $\calA(G,S,\brQ)\cong\calA(M,S,\brQ)$  and hence we may consider $x$ as an element of $\calA(M,S,\brQ)$ which determines the parahoric $\calM=\calM_x$.  Since $\calM(\brZ)$ is stable under $\sigma$, $\calM$ is defined over $\bbZ_p$. 

 The kernel of the map $\pi_1(M)\rightarrow \pi_1(G)$ is freely generated by a subset of the roots of $G$ which are not roots of $M$, and which are stable under the action of $\Gamma$.  Hence $\ker(\pi_1(M)\rightarrow \pi_1(G))$ is  an induced module for the action of $\Gamma$ and $\pi_1(M)_I\rightarrow \pi_1(G)_I$ has torsion-free kernel. Since  $\tG=\G$,  it follows from this fact that the image of $\widetilde{\calM}(\brZ)$ in $\pi_1(M)_I$ is trivial, and hence $\widetilde{\calM}=\calM$.

\begin{lemma} 
	Let $K$ be the field of definition of $\widetilde{\lambda}$.
 The filtration induced by $\widetilde{\lambda}$ on $\bbD\otimes_{\brZ} K$ specializes to $\mathrm{Fil}^1\bbD\otimes_{\brZ} k$.

\end{lemma}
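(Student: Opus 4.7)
The plan is to prove the lemma by describing both filtrations explicitly as lattices (or their associated subquotients) inside $\bbD \otimes_{\brZ} K$, and verifying that their specializations to $\bbD \otimes_{\brZ} k$ coincide. The comparison is possible because $\widetilde{\lambda}$ is a specific lift of $\mu' \in X_*(T)_I$ to $X_*(T)$.

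First, I would recall from \S\ref{sec: mu ordinary pdiv gp} that the preimage of $\mathrm{Fil}^0 \bbD \otimes k$ under $\bbD \twoheadrightarrow \bbD \otimes k$ equals $\sigma^{-1}(b^{-1})p\bbD$; with the normalization $b = \sigma(\dot{t}_{\mu'})$ fixed in the text, this simplifies to the $\brZ$-lattice $\dot{t}_{\mu'}^{-1}p\bbD$. Thus the Hodge filtration is characterized as the image of $\dot{t}_{\mu'}^{-1}p\bbD/p\bbD$ in the Grassmannian $\mathrm{Gr}(U)(k)$, equivalently as the point of $\mathrm{Gr}(U)(k)$ determined by the coset $\dot{t}_{\mu'}^{-1}\calG(\brZ) \in G(\brQ)/\calG(\brZ)$.

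Second, I would describe the filtration induced by $\widetilde{\lambda}$. Since $\widetilde{\lambda}\colon \mathbb{G}_{m,K} \to T_K \subset G_K \subset \GL(\bbD \otimes_{\brZ} K)$ is minuscule (being in the class $\{\mu_y\}$), the filtration is a single weight subspace $F \subset \bbD \otimes_{\brZ}K$. The corresponding point of $\mathrm{Gr}(U)(K)$ is the image of $\widetilde{\lambda}(p) \in T(K) \subset G(K)$ under the natural map $G(K) \to \mathrm{Gr}(U)(K)$ sending $g$ to the lattice $g^{-1}\cdot p(\bbD \otimes_{\brZ}\calO_K)$ modulo $p(\bbD \otimes_{\brZ}\calO_K)$.

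Third, I would match the two descriptions. The element $\widetilde{\lambda}(p) \in T(K)$ represents the translation element $t_{\mu'}$ in the Iwahori Weyl group, because $\widetilde{\lambda}$ is a lift of $\mu'$ to $X_*(T)$ and the Kottwitz homomorphism $\widetilde{\kappa}_T$ sends $\widetilde{\lambda}(p)$ to $\mu' \in X_*(T)_I$. Since $t_{\mu'}$ is central in the Iwahori Weyl group of $M$ by Lemma \ref{lemma: cochar central}, we may choose the representative $\dot{t}_{\mu'}$ to lie in $T(\brQ)$, and then $\widetilde{\lambda}(p)\dot{t}_{\mu'}^{-1} \in T(K)$ lies in $\calT_0(\calO_K)$. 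Because $\calT_0(\calO_K)$ stabilizes the lattice $\bbD \otimes_{\brZ}\calO_K$ and acts trivially modulo the maximal ideal $\mathfrak{m}_K$, the specialization of the $\widetilde{\lambda}$-filtration to $\bbD \otimes_{\brZ} k$ coincides with the image of $\dot{t}_{\mu'}^{-1}p\bbD/p\bbD$, namely $\mathrm{Fil}^0\bbD \otimes_{\brZ} k$.

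The main obstacle is carrying out the third step cleanly, in particular making precise the identification of $\widetilde{\lambda}(p) \in T(K)$ with $\dot{t}_{\mu'} \in T(\brQ)$ up to an element of $\calT_0(\calO_K)$, since these representatives naturally live over different base fields. The requisite inputs are the $\sigma$-straightness of $t_{\mu'}$ (so that $t_{\mu'}$ is a pure translation, central in $W_M$), the centrality of $\widetilde{\lambda}$ in $M$ (so that the comparison can be performed inside the torus $T$), the minusculeness of $\{\mu_y\}$ (so that the $\widetilde{\lambda}$-filtration corresponds to an honest point of a Grassmannian), and the compatibility of $\widetilde{\kappa}_T$ with the quotient $X_*(T) \to X_*(T)_I$. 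Once these alignments are in place, the specialization statement reduces to a formal integrality check.
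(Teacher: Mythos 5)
Your step (1) is correct and matches the paper, but the ``formal integrality check'' in step (3) contains a genuine gap: the claim that $\widetilde{\lambda}(p)\dot{t}_{\mu'}^{-1}$ lies in $\calT_0(\calO_K)$ is false whenever $K\neq\brQ$, which is precisely the (ramified) case of interest. Compute with the Kottwitz homomorphism of $T_K$, over which $T$ splits: $\widetilde{\kappa}_{T_K}(\widetilde{\lambda}(p))=e_{K/\brQ}\,\widetilde{\lambda}$, whereas for $\dot{t}_{\mu'}\in T(\brQ)\subset T(K)$ one has $\widetilde{\kappa}_{T_K}(\dot{t}_{\mu'})=\sum_{\gamma\in I/I_K}\gamma(\widetilde{\lambda})$. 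These agree only if $\widetilde{\lambda}$ is $I$-invariant, i.e.\ defined over $\brQ$. Concretely, for $T=\mathrm{Res}_{L/\bbQ_p}\bbG_m$ with $L=\bbQ_p(\sqrt{p})$, $\widetilde{\lambda}=(1,0)$, $K=\brL$ and $\dot{t}_{\mu'}=\sqrt{p}$, one finds $\widetilde{\kappa}_{T_K}(\widetilde{\lambda}(p)\dot{t}_{\mu'}^{-1})=(2,0)-(1,1)=(1,-1)\neq 0$. (Note also that you cannot apply $\widetilde{\kappa}_{T}$ over $\brQ$ to $\widetilde{\lambda}(p)$, since that element does not lie in $T(\brQ)$; and $\calT_0(\calO_K)$ does not act trivially modulo $\mathfrak{m}_K$, though that is a side issue.) Step (2) has a related defect: $\widetilde{\lambda}(p)^{-1}p(\bbD\otimes_{\brZ}\calO_K)\bmod p(\bbD\otimes_{\brZ}\calO_K)$ is an $\calO_K/p$-module, not a $K$-point of $\mathrm{Gr}(U)$, and when $K/\brQ$ is ramified the three lattices $\widetilde{\lambda}(p)^{-1}p(\bbD\otimes\calO_K)$, $\dot{t}_{\mu'}^{-1}p\bbD\otimes_{\brZ}\calO_K$ and the saturation of $\mathrm{Fil}_{\widetilde{\lambda}}$ in $\bbD\otimes_{\brZ}\calO_K$ are genuinely different; only their images in $\bbD\otimes_{\brZ}k$ coincide, and that coincidence is exactly what has to be proved.

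The paper supplies the missing specialization statement through the mixed-characteristic affine Grassmannian rather than by a direct comparison inside $T(K)$: the filtration induced by $\widetilde{\lambda}$ is the $K$-point $s_{\widetilde{\lambda}^{-1}}$ of $\mathrm{Gr}_{G'}$ lying in $\rmM^{\mathrm{loc}}_{\calG,\{\mu_y^{-1}\}}\subset \mathrm{Fl}^{Q(u)}$, and \cite[Proposition 4.2.8]{Levin} identifies the special fiber of its $\calO_{\brK}$-extension with the point $\dot{\underline{t}}_{\mu'}^{-1}$ of $\mathcal{FL}_{\underline{\calG}'_{k[[u]]}}$; the explicit construction of the embedding $\rmM^{\mathrm{loc}}_{\calG}(k)\hookrightarrow \GL_U(\brQ)/\GL_U(\brZ)$ (cf.\ Proposition \ref{prop: mu admissible mixed char}) then converts this into the reduction of $\dot{t}_{\mu'}^{-1}p\bbD=\sigma^{-1}(b^{-1})p\bbD$, which is your step (1). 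The degeneration from the cocharacter over the ramified field $K$ to the translation element over $k((u))$ is the substantive content of the lemma; your proposal treats it as formal and thereby misses it.
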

\begin{proof}  The cocharacter $\widetilde{\lambda}^{-1}$ determines a $K$-point $s_{\widetilde{\lambda}^{-1}}$ of $\bbM^{\mathrm{loc}}_{\calG,\{\mu_y^{-1}\}}$  and whose image in $\bbM^{\mathrm{loc}}=\Gr(U)\otimes_{\bbZ_p}\brZ$ corresponds to the filtration induced by $\widetilde{\lambda}$.
	
	By \cite[21.3.1]{SW2}, applied to the torus $T$, the point $s_{\tilde{\lambda}^{-1}}$ reduces to the point $\dot{t}^{-1}_{\mu'}\in \bbM^{\mathrm{loc}}_{\calG,\{\mu_y^{-1}\}}(k)\subset G(\breve\bbQ_p)/\calG({\breve\bbZ_p})$. By construction of the embedding $$\bbM^{\mathrm{loc}}_{\calG,\{\mu_y^{-1}\}}(k)\hookrightarrow \GL_U(\brQ)/\GL_{U}(\brZ)$$ in \S\ref{subsec: admissible set mixed char}, the filtration on $\bbD\otimes_{\brZ}k$ corresponding to the image of this element is given by the mod $p$ reduction of $\dot{t}_{\mu'}^{-1}p\bbD=\sigma^{-1}(b^{-1})p\bbD$. The proposition follows.
\end{proof}

\subsubsection{} \label{sec: M-adapted 2} We extend the tensors $s_{\alpha,0}\in U^\otimes$ to a set of  tensors $t_{\beta,0}\in U^\otimes$ whose stabilizer is $\calM$. 
Viewed in $\D \simeq U\otimes_{\Z_p}\brZ,$ the $t_{\beta,0}$ are $\varphi$-invariant as $b=\sigma(\dot t_{\mu'}) \in M(\brQ).$ 
Since $\widetilde\lambda$ is an $M$-valued cocharacter, we may apply the construction in \S\ref{sec: adapted liftings} to $M$ and the tensors $t_{\beta,0}$. In particular we have a notion of $(\mathcal{M},\widetilde\lambda)$-adapted liftings of $\pdiv_0$. It is clear from the definition that any $(\mathcal{M},\widetilde\lambda)$-adapted lifting is also a $(\G,\mu_y)$-adapted lifting.

Let $J_b$ denote the $\sigma$-centralizer group for $b$. It is a reductive group over $\bbQ_p$ such that $$J_b(R):=\{g\in G(\brQ\otimes_{\bbQ_p}R)|g^{-1}b\sigma(g)=b\}$$
for any $\bbQ_p$-algebra $R$. There is an action of $J_b(\Q_p)$ on $\pdiv_0$ in the isogeny category. Since $\nu_{g^{-1}b\sigma(g)}=g^{-1}\nu_bg$ for any $g\in G(\brQ)$, it follows that for $b=\sigma(\dot{t}_{\mu'})$, we have $J_b(\Q_p)\subset M(\brQ)$. 

\begin{thm}\label{thm: can-lift}Assume we are in the setting of \S\ref{subsec: mu ordinary setting} so that $b=\sigma(\dot{t}_{\mu'})$. Let $K/\brQ$ be an extension over which $\widetilde{\lambda}$ is defined, and suppose $\widetilde{\G}=\G$. There exists a $(\calG,\mu_y)$-adapted lifting ${\pdiv}$ to $\Ok_K$ such that the action of $J_b(\Q_p)$ on $\pdiv_0$ lifts to ${\pdiv}$ in the isogeny category.
\end{thm}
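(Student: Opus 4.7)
The plan is to construct $\scrG$ as an $(\calM,\widetilde{\lambda})$-adapted lifting of $\scrG_0$; since any $(\calM,\widetilde{\lambda})$-adapted lifting is automatically $(\calG,\mu_y)$-adapted (as noted just before the theorem), this gives the required deformation, and the lift of the $J_b(\Q_p)$-action will follow from $J_b(\Q_p) \subset M(\brQ)$ combined with the centrality of $\widetilde{\lambda}$ in $M$ established by Lemma \ref{lemma: cochar central}.

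First I would apply the machinery of \S\ref{sec: adapted liftings} (the analogue of Proposition \ref{prop: versal deformation space tensors} for the Levi) to the data $(M,\widetilde{\lambda},\calM)$ together with the tensors $t_{\beta,0} \in U^\otimes$ whose stabilizer is $\calM$. The key simplification is that, since $\widetilde{\lambda}$ is central in $M$, the parabolic $P_{\widetilde{\lambda}} \subset M$ coincides with $M$, so $M/P_{\widetilde{\lambda}}$ is a single point and the local model $\rmM^{\mathrm{loc}}_{\calM,\{\widetilde{\lambda}^{-1}\}}$ is $\Spec \calO_E$. Equivalently, one can write down the desired object directly as a display with underlying module $\bbD \otimes_{\brZ} \widehat{W}(\calO_K)$, Frobenius $b\sigma = \sigma(\dot{t}_{\mu'})\sigma$, and Hodge filtration induced by $\widetilde{\lambda}$: the compatibility between the $F$-structure, coming from $b \in M(\brQ)$, and the filtration, coming from a central cocharacter of $M$, is automatic, and Anschütz's theorem \cite{Anschutz} (as invoked in Proposition \ref{prop: versal deformation space tensors}) then yields the $p$-divisible group $\scrG$ over $\calO_K$. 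That $\scrG$ is $(\calG,\mu_y)$-adapted in the sense of Definition \ref{def: G-adapted} is immediate: the tensors $s_{\alpha,0} \subset \{t_{\beta,0}\}$ lift with the same trivialization (so condition (1) holds), and $\widetilde{\lambda} = w \cdot \widetilde{\mu}$ is $G$-conjugate to $\widetilde{\mu}$, hence to $\mu_y$ (so condition (2) holds).

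To lift the $J_b(\Q_p)$-action in the isogeny category, recall from \S\ref{sec: M-adapted 2} that $J_b(\Q_p) \subset M(\brQ)$. For any $g \in J_b(\Q_p)$, the action on $\bbD \otimes \brQ$ via the inclusion $M \hookrightarrow \GL(\bbD)$ commutes with the Frobenius $b\sigma$ by the defining relation $g^{-1} b \sigma(g) = b$, and preserves the filtration induced by $\widetilde{\lambda}$ because $g \in M(\brQ)$ and $\widetilde{\lambda}$ is central in $M$. Hence $g$ gives an endomorphism of the filtered $F$-isocrystal underlying $\scrG$, which by the crystalline Dieudonné correspondence (in the isogeny category) lifts the $g$-action on $\scrG_0$ to $\scrG$.

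The main technical obstacle is verifying cleanly that the construction of Proposition \ref{prop: versal deformation space tensors} applies to $(M,\{\widetilde{\lambda}^{-1}\},\calM)$ in place of $(G,\{\mu_y^{-1}\},\calG)$: one must check hypotheses (\ref{eqn: assumption 2})--(\ref{eqn: assumption 4}) for $M$. Regularity reduces to $R$-smoothness of the centralizer of a maximal $\brQ$-split torus of $M$, which coincides with $T$ and is $R$-smooth by assumption on $G$; a good embedding for $M$ can be produced from the one for $G$ via a Levi of the auxiliary group $G'$. Once these verifications are made, the triviality of the local model for central $\widetilde{\lambda}$ makes the rest of the construction essentially formal — the substance is the observation that $(\bbD \otimes \widehat{W}(\calO_K),\, b\sigma,\, \mathrm{Fil}^\bullet_{\widetilde{\lambda}})$ defines a canonical lift whenever $b \in M(\brQ)$ and $\widetilde{\lambda}$ is central in $M$.
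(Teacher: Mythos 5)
Your proposal is correct and follows essentially the same route as the paper: reduce to producing an $(\calM,\widetilde{\lambda})$-adapted lifting, observe that any such lifting is automatically $(\calG,\mu_y)$-adapted, and lift the $J_b(\Q_p)$-action using $J_b(\Q_p)\subset M(\brQ)$ together with the centrality of $\widetilde{\lambda}$ in $M$ (which forces the Hodge filtration to be induced by $\widetilde{\lambda}$ itself, not merely a conjugate). For the existence step the paper simply cites \cite[Proposition 4.9]{Z}, whose content is exactly your direct construction of the display with Frobenius $b\sigma$ and filtration given by the central cocharacter; note that for this step you do not actually need to re-verify the hypotheses (\ref{eqn: assumption 2})--(\ref{eqn: assumption 4}) for $(M,\{\widetilde{\lambda}^{-1}\},\calM)$, since Definition \ref{def: G-adapted} and the explicit display make sense without them (those hypotheses are only needed to identify the versal deformation space with a local model, which is trivial here), and the role of \cite{Anschutz} in this circle of ideas is to extend $\calG$-torsors rather than to produce the $p$-divisible group from the display, which is Zink's theory.
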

\begin{proof}Suppose there exists an $(\calM,\widetilde{\lambda})$-adapted lifting $\scrG$ of $\scrG_0$; from the above discussion, we have that $\scrG$ is also a $(\calG,\mu_y)$-adapted lifting.	By Definition \ref{def: G-adapted} (2), the filtration on the weakly admissible filtered $\varphi$-module associated to $T_p{\pdiv}^\vee$ is induced by an $M$-valued cocharacter conjugate to $\widetilde{\lambda}$, hence by $\widetilde{\lambda}$ itself since it is central in $M$. Since $J_b(\bbQ_p)\subset M(\brQ)$, the action of $J_b(\bbQ_p)$ respects the filtration and hence lifts to an action on $\scrG$ in the isogeny category.

	It suffices to show the existence of an $(\calM,\widetilde{\lambda})$-adapted lifting. This follows from the same argument as \cite[Proposition 4.9]{Z}; we briefly recall the construction for the convenience of the reader. 
	
	We set $\fkS:=\brZ[[u]]$ and we let $\sigma:\fkS\rightarrow \fkS$ be the map given by the usual Frobenius on $\brZ$ and $u\mapsto u^p$. We define $\fkM:=\bbD\otimes_{\sigma^{-1},\brZ}\fkS$, so that $\sigma^*(\fkM)\cong \bbD\otimes_{\brZ}\fkS$, and we let $\calF\subset \sigma^*(\fkM)$ denote the preimage of the filtration induced by $\widetilde{\lambda}$ on $\bbD\otimes_{\brZ}\calO_K$; here the map $\fkS\rightarrow \calO_K$ is induced by sending $u$ to a uniformizer $\varpi$ in $\calO_K$.  Then $\calF$ is a free $\fkS$-module and $t_{\beta,0}\in \calF^\otimes$; this follows from the argument in  \cite[Lemma 3.2.6]{KP}  using \cite[Proposition 10.3]{Anschutz} in place of \cite[Proposition 1.4.3]{KP}. Moreover the scheme of $\fkS$-linear isomorphisms $\underline{\mathrm{Isom}}_{t_{\beta,0}}(\calF,\sigma^*(\fkM))$ taking $t_{\beta,0}$ to $t_{\beta,0}$ is a trivial $\calM$-torsor. Then arguing as in \cite[Proposition 4.9]{Z}, we may construct a morphism $\varphi:\sigma^*(\fkM)\rightarrow \fkM$ satisfying the following properties:
	\begin{itemize}
		\item The map $\varphi$  gives $\fkM$ the structure of an element of $\mathrm{BT}^\varphi$ (see \cite[\S4.1]{Z} for the definition of $\mathrm{BT}^\varphi$).
		
		\item The canonical identification $\sigma^*(\fkM/u\fkM)\cong \bbD$ is an isomorphism of $F$-crystals.
		\item $\varphi$ preserves the tensors $t_{\beta,0}$.
		
	\end{itemize} By \cite[Theorem 1.4.2]{Ki2}, $\fkM$ corresponds to a $p$-divisible group $\scrG$ over $\calO_K$, and the argument of \cite[Proposition 4.9]{Z} shows that $\scrG$ is an $(\calM,\widetilde{\lambda})$-adapted lifting.
\end{proof}

\section{Integral models of Shimura varieties and canonical liftings}\label{sec: integral models for Shimura + canonical lifts}

In this section we establish the main geometric properties of  integral models for Shimura varieties  that are needed for later applications. These include a version of the local model diagram and the existence of canonical liftings over the $\mu$-ordinary locus which are proved in some special cases in \S\ref{subsec: integral models Hodge type}. In \S4.2--4.4, these results are extended to the main cases of interest, certain Shimura varieties which we term \emph{strongly acceptable}, see Definition \ref{def: strongly acceptable}.

\subsection{Integral models} \label{subsec: integral models Hodge type}
\subsubsection{}\label{subsec: integral models Hodge type preamble} For the rest of this paper we fix an algebraic closure $\overline{\Q}$, and for each place $v$ of $\Q$ (including $v=\infty$) an algebraic closure $\overline{\Q}_v$ together with an embedding $i_v:\overline{\Q}\rightarrow \overline{\Q}_v$ (here $\overline{\bbQ}_\infty\cong \bbC$).
Let $\bfG$ be a reductive group over $\Q$ and $X$ a $\bfG_{\bbR}$-conjugacy class of homomorphisms
$$h:\mathbb{S}:=\text{Res}_{\mathbb{C}/\R}\mathbb{G}_m\rightarrow \bfG_\mathbb{R}$$
such that $(\bfG,X)$ is a Shimura datum in the sense of \cite{De}. 

Let $c$ be complex conjugation. Then $\bbS(\C)=(\C\otimes_{\R}\C)^\times\cong \C^\times \times c^*(\C^\times)$ and we write $\mu_h$ for the cocharacter given by $$\C^\times\rightarrow \C^\times\times c^*(\C^\times)\xrightarrow h \bfG(\C).$$
We set $w_h:=\mu_h^{-1}\mu_h^{c-1}$.

For the rest of this section, we fix a prime $p>2$ and we set $G:=\bfG_{\bbQ_p}$. Let $\A_f$ denote the ring of finite adeles and $\A_f^p$ the ring of prime-to-$p$ adeles which we consider as the subgroup of $\A_f$ with trivial $p$-component. Let $\rmK_p\subset \bfG(\bbQ_p)$ and $\rmK^p\subset\bfG(\A_f)$ be  compact open subgroups  and write $\rmK:=\rmK_p\rmK^p$. Then if $\rmK^p$ is sufficiently small (in fact if $\rmK^p$ is neat, see \cite[p. 34]{Milne}), the set \begin{equation}\label{eqn: points of Shimura var}\Sh_{\rmK}(\bfG,X)_{\bbC}=\bfG(\bbQ)\backslash X\times \bfG(\bbA_f)/\rmK\end{equation}can be identified with the complex points of a smooth algebraic variety. The theory of canonical models implies that $\Sh_{\rmK}(\bfG,X)_{\bbC}$ has a model  $\Sh_{\rmK}(\bfG,X)$ over  the reflex field $\bfE\subset \bbC$, which is defined to be the field of definition of the conjugacy class $\{\mu_h\}$. We may consider $\bfE$ as a subfield of  $\overline{\bbQ}$ via the embedding $i_\infty:\overline{\bbQ}\hookrightarrow \bbC$ and we write $\calO_{\bfE}$ for the ring of integers of $\bfE$. For a general compact open subgroup $\rmK$, we take a sufficiently small  compact open subgroup $\rmK_1^p$  which is normal in $\rmK^p$ and define the Shimura stack $\mathrm{Sh}_{\rmK}(\bfG,X)$ to be the quotient $\mathrm{Sh}_{\rmK_p\rmK_1^p}(\bfG,X)/(\rmK^p/\rmK_1^p)$; it is a smooth algebraic stack over $\bfE$.

We also define $$\Sh_{\rmK_p}(\bfG,X):=\lim_{\leftarrow\rmK^p}\Sh_{\rmK_p\rmK^p}(\bfG,X)$$ $$ \Sh_{\rmK}(\bfG,X):=\lim_{\leftarrow\rmK}\Sh_{\rmK}(\bfG,X);$$ 
these are pro-varieties equipped with actions of $\bfG(\bbA_f^p)$ and $\bfG(\bbA_f)$ respectively.

\subsubsection{}\label{subsubsec: Hodge type assumptions} We now assume that there is an embedding of Shimura data $$\iota:(\bfG,X)\rightarrow (\mathbf{GSp}(V),S^{\pm}).$$Here $\mathbf{GSp}(V)$ is the group of symplectic similitudes of a $\Q$-vector space $V$ equipped with a perfect alternating bilinear
form $\Psi$, and $S^\pm$ is the Siegel double space. Such an $\iota$ is called a Hodge embedding and we say $(\bfG,X)$ is of Hodge type.

Let  $v|p$ be a prime of $\bfE$; upon modifying $i_p:\overline{\bbQ}\rightarrow \overline{\bbQ}_p$, we may assume $v$ is induced by this embedding. We let  $\calO_{\bfE_{(v)}}$ denote the localization of $\calO_{\bfE}$ at $v$, and we write $E$ for the completion of $\bfE$ at $v$. 
We let $k_E$ denote the residue field at $v$ and we fix an algebraic closure $k$ of $k_E$. We let 
$\widetilde{\calG}:=\widetilde{\calG}_x$  for some $x\in \calB(G,\bbQ_p)$  and we write $\calG$ for the associated parahoric. We obtain a local model triple $(G,\{\mu_h\},\calG)$ and the base change $\iota_{\bbQ_p}$ gives a local (symplectic) Hodge embedding (note that $\iota(\bfG)$ contains the scalars since it contains the image of $w_h$).  Thus $(G,\{\mu_h\},\calG)$ is acceptable and of local Hodge type (see \cite[Remark 3.1.5]{KPZ}). Then we have the attached local model $\bbM^{\mathrm{loc}}_{\calG,\{\mu_h\}}$ from \S\ref{sec: embedding into Grassmannian}. The Hodge embedding $\iota$ is said to be \emph{good}  if the corresponding local Hodge embedding $\iota_{\bbQ_p}:G\rightarrow \mathrm{GSp}(V_{\bbQ_p})$ is good. 

\subsubsection{} \label{subsub: integral model Hodge type construction}

For the rest of \S\ref{subsec: integral models Hodge type}, we make the following assumptions, cf.  \S\ref{sssec: very good}.
\begin{enumerate}\item[(A')]$(\bfG,X)$  of Hodge type and $\tG=\calG$.
\item[(B')]$\iota:(\bfG,X)\rightarrow (\mathbf{GSp}(V),S^\pm)$ extends to a good integral local Hodge embedding $\calG\rightarrow \GL(V_{\bbZ_p})$ where $V_{\bbZ_p}\subset V_{\bbQ_p}$ is a $\bbZ_p$-lattice.\end{enumerate}
The following lemma gives sufficient conditions for the existence of an $\iota$ as in (B').
\begin{lemma}\label{lem: v good Hodge embedding}Let $(\bfG,X)$ be a Shimura datum of Hodge type and $\calG$ a parahoric for $G$. Assume that $p\nmid|\pi_1(G^{\der})|$ and that the centralizer of a maximal $\brQ$-split torus in $G$ is $R$-smooth. 	Then $(\bfG,X,\calG)$ admits a good Hodge embedding.
\end{lemma}

\begin{proof}  Our assumptions imply that $\iota_{\bbQ_p}$ satisfies the conditions in Proposition \ref{prop: regular triples}. The construction there provides us with a good local symplectic Hodge embedding $\rho'$ which is easily seen to come from a global Hodge embedding.
\end{proof}
\subsubsection{}We set  $\rmK_p:=\calG(\bbZ_p)$, and we let  $\rmK:=\rmK_p\rmK^p$. Upon scaling, we may assume $V_{\bbZ_p}$ is contained in the dual lattice $V_{\bbZ_p}^\vee$. 
Let $V_{\Z_{(p)}}=V_{\Z_p}\cap V$. We write $G_{\Z_{(p)}}$ for the Zariski closure of $\bfG$ in $\GL(V_{\Z_{(p)}})$; then $G_{\Z_{(p)}}\otimes_{\Z_{(p)}}\Z_p\cong  \G$. Let $\rmK'=\rmK_p'\rmK'^p $ where $\rmK'_p$ is the stabilizer in $\GSp(V_{\bbQ_p})$ of the lattice $V_{\bbZ_p}$ and $\rmK'^p \subset \mathbf{GSp}(\A_f^p)$ is a compact open subgroup.
The choice of $V_{\Z_{(p)}}$ gives rise to an interpretation of $\Sh_{\rmK'}(\mathbf{GSp},S^\pm)$ as a moduli stack of abelian varieties up to prime-to-$p$ isogeny and hence an integral model $\mathscr{S}_{\rmK'}(\mathbf{GSp},S^\pm)$ over $\Z_{(p)}$, see \cite[\S 4]{KP} and \cite[\S6]{Z}.

Assume that  $\rmK^p$ is a neat compact open subgroup. By \cite[Lemma 2.1.2]{Ki2}, we can choose $\rmK'^p$ such that $\iota$ induces a closed immersion
$$\Sh_{{\rmK}}(\bfG,X)\hookrightarrow \Sh_{\rmK'}(\mathbf{GSp},S^\pm)\otimes_{\bbQ}\bfE.$$  Let $\mathscr{S}_{\rmK}(\bfG,X)^-$ be the Zariski closure of $\Sh_{\rmK}(\bfG,X)$ inside $\mathscr{S}_{\rmK'}(\mathbf{GSp},S^\pm)\otimes_{\Z_{(p)}}\Ok_{\bfE_{(v)}}$, and $\mathscr{S}_{\rmK}(\bfG,X)$  the normalization of $\mathscr{S}_{\rmK}(\bfG,X)^-$.  We also define the pro-scheme $$\scrS_{\rmK_p}(\bfG,X):=\lim_{\leftarrow\rmK^p}\scrS_{{\rmK}_p\rmK^p}(\bfG,X).$$
The $\bfG(\bbA_f^p)$-action on $\mathrm{Sh}_{{\rmK}_p}(\bfG,X)$ extends to $\scrS_{\rmK_p}(\bfG,X)$. Hence we may define $\scrS_{\rmK_p\rmK^p}(\bfG,X)$ for a general (not necessarily neat) compact open subgroup $\rmK^p\subset\bfG(\bbA_f)$ as the quotient stack $\scrS_{{\rmK_p}}(\bfG,X)/\rmK^p$. Alternatively, we may take a compact open subgroup $\rmK_1^p\subset \rmK^p$ which is neat and normal in $\rmK^p$, and define $\scrS_{\rmK}(\bfG,X)$ as the quotient of $\scrS_{{\rmK_p}\rmK^p_1}(\bfG,X)$ under the action of the finite group $\rmK^p/\rmK_1^p$.

\subsubsection{}\label{subsubsec:hodgecycles} In order to understand the local structure of $\scrS_{\rmK}(\bfG,X)$, we introduce Hodge cycles. By \cite[Proposition 1.3.2]{Ki2}, the subgroup $G_{\Z_{(p)}}$ is the stabilizer of a collection of tensors $s_\alpha\in V_{\Z_{(p)}}^\otimes$. Let $h:\mathcal{A}\rightarrow \mathscr{S}_{{\rmK}}(\bfG,X)$ denote the pullback of the universal abelian scheme on $\mathscr{S}_{\rmK'}(\mathbf{GSp},S^\pm)$ and let $V_{\mathrm{B}}:=R^1h_{\mathrm{an},*}\Z_{(p)}$, where $h_{\mathrm{an}}$ is the map of complex analytic spaces associated to $h$.  Since the tensors $s_\alpha$ are $\bfG$-invariant, they give rise to sections $s_{\alpha,\rmB}\in V_{\rmB}^\otimes$. We also let $\mathcal{V}=R^1h_*\Omega^\bullet$ be the relative de Rham cohomology of $\mathcal{A}$. Using the de Rham isomorphism, the $s_{\alpha,B}$ give rise to a collection of Hodge cycles $s_{\alpha,\mathrm{dR}}\in \mathcal{V}_\C^\otimes$, where $\mathcal{V}_\C$ is the complex analytic vector bundle associated to $\mathcal{V}$. By \cite[Corollary 2.2.2]{Ki2}, these tensors are defined over $\bfE$.

Similarly for a finite prime $\ell\neq p$, we let $\mathcal{V}_\ell = \mathcal{V}_\ell(\calA) =R^1h_{\mathrm{\acute{e}t}*}\Q_\ell$ and $\mathcal{V}_p = \mathcal{V}_p(\calA) = R^1h_{\eta,\mathrm{\acute{e}t}*}\Z_p$ where $h_\eta$ is the generic fiber of $h$. Using the \'etale-Betti comparison isomorphism, we obtain tensors $s_{\alpha,\ell}\in \mathcal{V}^\otimes_\ell$ and $s_{\alpha,p}\in\mathcal{V}_p^\otimes$.

For  $T$ an $\Ok_{\bfE_{(v)}}$-scheme and $x\in \mathscr{S}_{{\rmK}}(\bfG,X)(T)$, we write $\calA_x$ for the pullback of $\calA$ to $x$, and  for $*=\ell$ or $\mathrm{dR}$, we write $s_{\alpha,*,x}$ for the pullback of $s_{\alpha,*}$ to $x$. Similarly, for $T$ an $\mathbf{E}$-scheme (resp. $\mathbb{C}$-scheme) and $x\in \mathscr{S}_{{\rmK}}(\bfG,X)(T)$, we write $s_{\alpha,p,x}$ (resp. $s_{\alpha,B,x}$) for the pullback of $s_{\alpha,p}$ (resp. $s_{\alpha,B}$) to $x$.

For $T$ an $\Ok_{\bfE_{(v)}}$-scheme, an element $x\in\mathscr{S}_{{\rmK}}(\bfG,X)(T)$  corresponds to a triple $(\mathcal{A}_x,\lambda,\epsilon_{\rmK'}^p)$, where $\lambda$ is a weak polarization (cf. \cite[\S6.3]{Z}) and  $\epsilon^p_{\rmK'}$ is a section of the \'etale sheaf $\underline{\mathrm{Isom}}_{\lambda,\psi}(\widehat{V}(\mathcal{A}_x),V_{\A_f^p})/\rmK'^p$; here $$\widehat{V}(\calA_x) =\varprojlim_{p\nmid n}\calA_x[n] $$ 
is the  adelic prime-to-$p$ Tate module of $\calA_x.$  As in \cite[\S3.4.2]{Ki2}, $\epsilon^p_{\rmK'}$ can be promoted to a section
$$\epsilon_{\rmK}^p\in\Gamma(T,\underline{\mathrm{Isom}}_{\lambda,\psi}(\widehat{V}(\mathcal{A}_x),V_{\A_f^p})/\rmK^p)$$ which takes $s_{\alpha,\ell,x}$ to $s_{\alpha}$ for $\ell\neq p.$

\subsubsection{}\label{subsec: integral models formal nbd} Recall that $k$ is an algebraic closure of $k_E$ and $\brQ=W(k)[1/p]$. Let $\overline{x}\in\mathscr{S}_{\rmK}(\bfG,X)(k)$ and $\widetilde{x}\in\mathscr{S}_{\rmK}(\bfG,X)(\Ok_K)$ a point lifting $\overline{x}$, where $K/\brQ$ is a finite extension. 

Let $\pdiv_{\widetilde{x}}$ denote the $p$-divisible group associated to $\mathcal{A}_{\widetilde{x}}$ and $\pdiv_{\xbar}$ its special fiber; we let $\bbD:=\bbD(\pdiv_{\xbar})(\brZ)$. Then $T_p\pdiv_{\widetilde{x}}^\vee$ is identified with $\rmH^1_{\mathrm{\acute{e}t}}(\mathcal{A}_{\widetilde{x},\overline{K}},\Z_p)$ and we obtain $\mathrm{Gal}(\overline{K}/K)$-invariant tensors $s_{\alpha,p,\widetilde{x}}\in T_p\pdiv^{\vee\otimes}_{\widetilde x}$ whose stabilizer can be identified with ${\G}$. Let $s_{\alpha,0,\widetilde{x}}\in\D[\frac{1}{p}]^\otimes$ denote the tensors corresponding to $s_{\alpha,p,\widetilde{x}}$ via the $p$-adic comparison isomorphism. By \cite[Proposition 1.3.7]{KMS}, $s_{\alpha,0,\widetilde{x}}$ are independent of the choice of lifting $\widetilde{x}\in\scrS_{\rmK}(\bfG,X)(\calO_K)$. We may therefore denote them by $s_{\alpha,0,\overline{x}}$.

By Proposition \ref{prop: trivialization of Dieudonne}, we have $s_{\alpha,0,\overline{x}}\in\D^\otimes$ and there is a $\brZ$-linear bijection \begin{equation}\label{eqn: trivialization Dieudonne rational}V^\vee_{\Z_p}\otimes_{\bbZ_p}\brZ\cong T_p\scrG_{\widetilde{x}}^\vee\otimes_{\bbZ_p}\brZ\cong\bbD\otimes_{\brZ}\brZ\end{equation}  taking $s_{\alpha}$ to $s_{\alpha,0,\overline{x}}$. The filtration on $\D\otimes_{\brZ}K$ corresponding to $\pdiv_{\widetilde{x}}$ is induced by a $G$-valued cocharacter conjugate to $\mu_h^{-1}$.
By a result of Blasius and Wintenberger \cite{Bl}, $s_{\alpha,\mathrm{dR},\widetilde{x}}\in \widetilde{x}^*(\calV)^\otimes
\cong\bbD(\scrG_{\widetilde{x}})(\calO_K)^\otimes$ corresponds to $s_{\alpha,p,\widetilde{x}}$ via the $p$-adic comparison isomorphism. Hence $s_{\alpha,\mathrm{dR},\widetilde{x}}$ may be identified with the image of the elements  $\widetilde{s}_\alpha\in \bbD(\scrG_{\widetilde{x}})(\widehat{W}(
\calO_K))^\otimes$ of 
Proposition \ref{prop: trivialization of Dieudonne} inside $\bbD(\scrG_{\widetilde{x}})(
\calO_K)^\otimes$. The same Proposition implies that there is an $\calO_K$-linear bijection
$$\D(\pdiv_{\widetilde{x}})(\Ok_K)\cong\D\otimes_{\brZ}\Ok_K$$ taking $s_{\alpha,\mathrm{dR},\widetilde{x}}$ to $s_{\alpha,0,\overline{x}}$ and which lifts the identity over $k$. 
Thus there is a $G$-valued cocharacter $\mu_y$ which is $G$-conjugate to $\mu_h^{-1}$ and which induces a filtration on $\D\otimes_{\brZ}\calO_K$ lifting the filtration on $\D\otimes_{\brZ}k$. We may therefore define the notion of  $({\G},\mu_y)$-adapted liftings as in \S\ref{sec: deformation theory} and it follows from Proposition \ref{prop: trivialization of Dieudonne} that $\pdiv_{\widetilde{x}}$ is a $({\G},\mu_y)$-adapted lifting.

\subsubsection{}\label{subsubsec: formal nbd SV}Note that $G\subset \GL(V_{\bbQ_p})$ contains the scalars.  It follows that under our assumptions, conditions (A) and (B) of \S\ref{sssec: very good} are satisfied. We let $P\subset \GL(\D)$ be a parabolic lifting $P_0$ as in \S\ref{sec: adapted liftings}. We obtain formal local models $\widehat{\bbM}^{\mathrm{loc}}=\mbox{Spf}A$ and $\widehat{\bbM}^{\mathrm{loc}}_{\G}=\mbox{Spf}A_{{\G}}\cong \widehat{\bbM}^{\mathrm{loc}}_{\G,\{\mu_h\}}$, and the filtration corresponding to $\mu_y$ is given by a point $y:A_{{\G}}\rightarrow \Ok_K$. Let $\overline{y}\in \Mloc_{\calG,\{\mu_h\}}(k)$ correspond to the closed point of $\widehat{\bbM}^{\mathrm{loc}}_{\calG,\{\mu_h\}}(k)$.

\begin{prop}\label{prop: formal nbd Shimura} Assume $\rmK^p$ is neat and that the embedding ${\calG}\rightarrow \GL(V_{\bbZ_p})$ is very good at $\overline{y}\in \Mloc_{\calG,\{\mu_h\}}(k)$. Let $\widehat{U}_{\overline{x}}$ be the completion of $\mathscr{S}_{\rmK}(\bfG,X)^-$ at  the image of $\overline{x}$. 
\begin{enumerate}	
\item $\widehat{U}_{\overline{x}}$ can be identified with a closed subspace of $\mathrm{Spf}A\otimes_{\brZ}\calO_{\breve E}$ containing $\mathrm{Spf}A_{{\G}}$. 

\item  A deformation $\pdiv$ of $\pdiv_{\xbar}$ corresponds to a point on the  irreducible component of $\widehat{U}_{\overline{x}}$ containing $\widetilde{x}$  if and only if $\pdiv$ is $({\G},\mu_y)$-adapted.
\item Let $\overline{x}'\in\mathscr{S}_{\rmK}(\bfG,X)(k)$ whose image in $\mathscr{S}_{\rmK}(\bfG,X)^-(k)$ coincides with that of $\overline{x}$. Then $s_{\alpha,0,\overline{x}'}=s_{\alpha,0,\overline{x}}\in\D^\otimes$ if and only if $\overline{x}=\overline{x}'$.

\end{enumerate}
\end{prop}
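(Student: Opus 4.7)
The plan is to mirror the strategy of \cite[Proposition 4.2.2]{KP}, with the input being the deformation theory developed in Section \ref{sec: deformation theory} in place of \cite[\S3]{KP}.

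For part (1), I would first invoke Grothendieck--Messing and Serre--Tate applied to the universal abelian scheme on $\mathscr{S}_{\rmK'}(\mathbf{GSp},S^\pm)$: since this Siegel integral model is smooth and represents polarized abelian schemes up to prime-to-$p$ isogeny, its completion at the image of $\overline{x}$ is canonically identified with the formal Grassmannian $\Spf A$, the completion of $\GL(\bbD)/P$ at the Hodge filtration of $\scrG_{\overline{x}}$. Since $\mathscr{S}_{\widetilde\rmK}(\bfG,X)^-$ is the Zariski closure of $\Sh_{\widetilde\rmK}(\bfG,X)$ in the Siegel integral model base changed to $\calO_{\bfE_{(v)}}$, its completion $\widehat{U}_{\overline{x}}$ embeds as a closed formal subscheme of $\Spf A\otimes_{\brZ}\calO_{\brE}$. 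To obtain the further inclusion $\Spf A_{\widetilde{\G}} \hookrightarrow \widehat{U}_{\overline{x}}$, I would apply Proposition \ref{prop: versal deformation space tensors} to produce the versal $(\widetilde{\G},\mu_y)$-adapted $p$-divisible group over $\Spf A_{\widetilde{\G}}$, promote it to an abelian scheme via Serre--Tate using the polarization from $\iota$, equip it with the prime-to-$p$ level structure of $\overline{x}$, and check that the resulting $A_{\widetilde{\G}}$-point of $\mathscr{S}_{\rmK'}$ has generic fiber landing in $\Sh_{\widetilde\rmK}(\bfG,X)$: via Blasius--Wintenberger the tensors $s_{\alpha,0,A_{\widetilde{\G}}}$ produce $\ell$-adic Hodge cycles on the generic fiber, so the generic fiber point lies in the Shimura variety, and hence the integral point factors through the Zariski closure $\mathscr{S}_{\widetilde\rmK}(\bfG,X)^-$.

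For part (2), both $\Spf A_{\widetilde{\G}}$ and $\widehat{U}_{\overline{x}}$ are flat over $\calO_{\brE}$ with generic fibers of pure dimension $\dim X$: the former by Theorem \ref{thm: Levin} and the identification of the generic fiber of $\rmM^{\mathrm{loc}}_{\G,\{\mu_h\}}$ with $G/P_{\mu_h}$, the latter because $\Sh_{\widetilde\rmK}(\bfG,X)$ is smooth of dimension $\dim X$ over $\bfE$. Combined with the closed immersion from (1), this forces the irreducible component of $\widehat{U}_{\overline{x}}$ containing $\widetilde{x}$ to coincide with an irreducible component of $\Spf A_{\widetilde{\G}}$. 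The equivalence then follows from the universal property in Proposition \ref{prop: versal deformation space tensors}: an $\calO_K$-point of this component is the same as a $(\widetilde{\G},\mu_y)$-adapted lifting, and conversely any $(\widetilde{\G},\mu_y)$-adapted deformation factors through $\Spf A_{\widetilde{\G}}$ and hence through this component, since $\widetilde{x}$ itself is $(\widetilde{\G},\mu_y)$-adapted as explained in \S\ref{subsec: integral models formal nbd}.

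For part (3), the nontrivial implication assumes that $\overline{x},\overline{x}'$ share an image in $\mathscr{S}_{\widetilde\rmK}(\bfG,X)^-$ (so in particular share $\scrG_{\overline{x}}$ and $\bbD$) and that $s_{\alpha,0,\overline{x}}=s_{\alpha,0,\overline{x}'}$. Since $\mathscr{S}_{\widetilde\rmK}(\bfG,X)\to\mathscr{S}_{\widetilde\rmK}(\bfG,X)^-$ is the finite normalization map, the preimages of the common image correspond bijectively to the irreducible branches of $\widehat{U}_{\overline{x}}$ at that image. By (1)--(2) each such branch is an irreducible component of $\Spf A_{\widetilde{\G}}$ cut out inside $\Spf A\otimes_{\brZ}\calO_{\brE}$ by a specific trivialization of $\bbD$ as in Proposition \ref{prop: trivialization of Dieudonne}, and that trivialization is determined modulo $\widetilde{\G}$ precisely by the tensors $s_{\alpha,0}$. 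Equality of tensors must therefore force the same branch, and hence $\overline{x}=\overline{x}'$. I expect this last step to be the main obstacle: one must show carefully that the branch-to-tensor correspondence is injective, so that distinct preimages of the same image in $\mathscr{S}_{\widetilde\rmK}(\bfG,X)^-$ necessarily yield distinct tensors on $\bbD$.
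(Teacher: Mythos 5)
Your overall architecture (identify $\widehat{U}_{\overline{x}}$ inside the versal space $\mathrm{Spf}A\otimes_{\brZ}\calO_{\brE}$, compare with $\mathrm{Spf}A_{\widetilde{\G}}$, read off (2) from the universal property, deduce (3) from branches of the normalization) matches the paper, but the way you try to establish the two containments is where the argument breaks. For the inclusion $\mathrm{Spf}A_{\widetilde{\G}}\subset\widehat{U}_{\overline{x}}$ in (1) you propose to build an abelian scheme over $A_{\widetilde{\G}}$ from the versal adapted $p$-divisible group and then check that its generic fiber lands in $\Sh_{\widetilde{\rmK}}(\bfG,X)$ ``via Blasius--Wintenberger''. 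This step does not work: Blasius--Wintenberger compares the de Rham and $p$-adic \'etale components of an absolute Hodge cycle on an abelian variety that is already known to come from the Shimura variety; it does not let you conclude that a point of the Siegel moduli space whose \'etale or crystalline tensors ``look right'' lies on the sub-Shimura variety. That membership is a global statement about the Betti realization and is precisely what one cannot access from a formal/$p$-adic construction. The paper never argues in this direction. Instead it proves the \emph{reverse} containment first: by Proposition \ref{prop: G-adapted etale}, every finite-extension point $\widetilde{x}'$ of the component $Z\subset\widehat{U}_{\overline{x}}$ through $\widetilde{x}$ gives a $(\widetilde{\G},\mu_y)$-adapted lifting (its \'etale tensors match $s_{\alpha,0,\overline{x}}$ and its filtration is conjugate to $\mu_y$), so $\Theta|_Z$ factors through $\mathrm{Spf}A_{\widetilde{\G}}$; then equality of dimensions together with the irreducibility of $A_{\widetilde{\G}}$ (normal by Theorem \ref{thm: Levin}) forces $Z\cong\mathrm{Spf}A_{\widetilde{\G}}$, which yields both the containment in (1) and statement (2) simultaneously.

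This also exposes the gap in your part (2): from your version of (1) you only have $\mathrm{Spf}A_{\widetilde{\G}}\subseteq\widehat{U}_{\overline{x}}$, and equality of dimensions of two irreducible closed formal subschemes of $\mathrm{Spf}A\otimes_{\brZ}\calO_{\brE}$ does not force them to coincide without one containment between them; in particular you cannot rule out that the component through $\widetilde{x}$ is some other subscheme of the same dimension, which would break the ``only if'' direction of (2). The missing ingredient is exactly Proposition \ref{prop: G-adapted etale} applied to the points of $Z$. For (3), your instinct about branches of the normalization is right, and the ``obstacle'' you flag is resolved by the same mechanism rather than by an abstract injectivity of a branch-to-tensor correspondence: given a lift $\widetilde{x}'$ of $\overline{x}'$, Proposition \ref{prop: trivialization of Dieudonne} plus the hypothesis $s_{\alpha,0,\overline{x}'}=s_{\alpha,0,\overline{x}}$ shows that $\scrG_{\widetilde{x}'}$ is $(\widetilde{\G},\mu_y)$-adapted \emph{for the structure attached to $\overline{x}$}, hence $\widetilde{x}'$ factors through $\mathrm{Spf}A_{\widetilde{\G}}=Z$, i.e.\ lies on the branch of $\overline{x}$, giving $\overline{x}=\overline{x}'$ by (2).
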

\begin{proof}  Since  the conditions (A)--(C) of \S\ref{sssec: very good} are satisfied, we may apply the construction of Proposition \ref{prop: versal deformation space tensors}; this allows us to view $\mathrm{Spf}A$ as a versal deformation space for $\scrG_{\overline{x}}$ and hence we obtain a map $\Theta:\widehat{U}_{\overline{x}}\rightarrow \mathrm{Spf}A\otimes_{\brZ}\calO_{\brE}$ such that the universal $p$-divisible group over $\mathrm{Spf}A\otimes_{\brZ}\calO_{\brE}$ pulls back to the one over $\widehat{U}_{\overline{x}}$ arising from the universal abelian scheme over $\widehat U_{\overline{x}}$.  The map $\Theta$ is a closed immersion by the Serre--Tate theorem.

Let $Z\subset \widehat{U}_{\overline{x}}$ denote the irreducible component of $\widehat U_{\overline{x}}$ containing $\widetilde{x}$. 	Let $K'$ be  a finite extension of $\breve{E}$ and let $\widetilde{x}'\in Z(K')$. Then the tensors $s_{\alpha,p,\widetilde{x}'}$ correspond to $s_{\alpha,0,\overline{x}}$ under the $p$-adic comparison isomorphism. Moreover the filtration on $\bbD\otimes_{\brZ}K'$ corresponding to $\scrG_{\widetilde{x}'}$ is induced by a $G$-valued cocharacter conjugate to $\mu^{-1}_h$, and hence conjugate to $\mu_y$. By Proposition \ref{prop: G-adapted etale}, $\scrG_{\widetilde{x}'}$ is a $(\G,\mu_y)$-adapted deformation of $\scrG_{\overline{x}}$ and hence $\widetilde{x}'$ corresponds to a point of $\mathrm{Spf}A_{{\G}}$. Since this is true for any $\widetilde x'$, it follows that $\Theta|_{Z}$ factors through $\mathrm{Spf}A_{{\G}}$. Since $Z$ and $\mathrm{Spf}A_{{\G}}$ have the same dimension, it follows that $Z\cong \mathrm{Spf}A_{{\G}}$. We thus obtain (1) and (2).
	
	One direction of (3) is clear. For the other direction, let $\widetilde{x}'\in\scrS_{\rmK}(\bfG,X)(\calO_{K'})$ be a lift of $\overline{x}'$. Then by Proposition \ref{prop: trivialization of Dieudonne}, $s_{\alpha,0,\overline{x}'}$ 
	arises  from the specialization of tensors $\widetilde s_{\alpha} \in \bbD(\scrG_{\widetilde{x}'})(\widehat{W}(\Ok_K))$. By assumption, we have  $s_{\alpha,0,\overline{x}'}=s_{\alpha,0,\overline{x}}$. It follows that $\scrG_{\widetilde{x}'}$ corresponds to a $(\G,\mu_y)$-adapted lifting and hence to a point of $\mathrm{Spf}A_{{\G}}$. By what we have seen, $\widetilde{x}'$ corresponds to a point in the same irreducible component $Z\subset \widehat U_{\overline{x}}$ containing $\widetilde{x}$ and hence $\overline{x}=\overline{x}'$.
\end{proof}

\subsubsection{} \label{sec: mu ordinary locus HT}
We use the above to construct the analogue of the Serre--Tate canonical lift in this setting. Let $\xbar\in \mathscr{S}_{\rmK}(\bfG,X)(k)$ and we fix an isomorphism \begin{equation}\label{eqn: triv isocrystal}V^\vee_{\Z_p}\otimes_{\bbZ_p}\brQ\cong\D\otimes_{\brZ}\brQ,\end{equation} taking $s_{\alpha}$ to $ s_{\alpha,0,\overline{x}}$. Then the Frobenius on $\D\otimes_{\brZ}\brQ$ is given by $b\sigma$ for some $b\in G(\brQ)$.  By \cite[Lemma 1.3.9]{KMS}, we have $[b]\in B(G,\{\mu_h^{-1}\})$. We write $\calS_{\rmK}$ for the special fiber of  $\scrS_{\rmK}(\bfG,X)$ over the residue field $k_{E}$. The map $\calS_{{\rmK}}(k)\rightarrow B(G,\{\mu_h^{-1}\})$ sending $\overline{x}$ to the $\sigma$-conjugacy class $[b]$ of the associated element $b$  induces the Newton stratification of $\calS_{{\rmK},k}:=\calS_{\rmK}\otimes_{k_{E}}k$. For an element $[b]\in B(G,\{\mu_h^{-1}\})$, we write $\calS_{{\rmK},[b]}\subset\calS_{{\rmK},k}$ for the  corresponding stratum; if $\rmK^p$ is neat, it is a locally closed subscheme of  $\calS_{\rmK,k}$.  If there is a class $[b]_{\mu}\in B(G,\{\mu_h^{-1}\})$   as in Definition \ref{def: mu ordinary}, then we define the \emph{$\mu$-ordinary locus of} $\calS_{\rmK,k}$ to be $\calS_{\rmK,[b]_{\mu}}$.

For $\xbar\in\scrS_{\rmK}(\bfG,X)(k)$, define $\text{Aut}_\Q(\mathcal{A}_{\xbar})$ to be the $\Q$-group whose points in a $\Q$-algebra $R$ are given by
$$\mathrm{Aut}_{\Q}(\mathcal{A}_{\xbar})(R)=(\mathrm{End}(\mathcal{A}_{\xbar})\otimes_{\bbZ} R)^\times$$ 
By  functoriality, $\mathrm{Aut}_{\Q}(\mathcal{A}_{\xbar})$ acts on $T_\ell\mathcal{A}_{\xbar}\otimes_{\bbZ_\ell}\bbQ_\ell$ for $\ell\neq p$ and on $\D\otimes_{\brZ}\brQ$, and we write $I_{\xbar}$ for the closed subgroup of $\mathrm{Aut}_{\Q}(\mathcal{A}_{\xbar})$ consisting of automorphisms which preserve $s_{\alpha,\ell,\xbar}$ and $s_{\alpha,0,\xbar}$. There is a canonical inclusion $I_{\xbar}\otimes_\bbQ\Q_p\subset J_b$, where $J_b$ is the $\sigma$-centralizer group for  $b\in G(\brQ)$.

\begin{thm}\label{thm: canonical lift HT}
Let $\xbar\in \calS_{\rmK,[b]_\mu}(k)$ and assume $\calG\rightarrow \GL(V_{\bbZ_p})$ is very good at $\overline{y}\in \Mloc_{\calG,\{\mu_h\}}(k)$. Then $\xbar$ admits a lifting to a special point $\widetilde{x}\in\mathscr{S}_{\rmK}(\bfG,X)(K)$ for some $K/\brQ$ finite such that the action of $I_{\xbar}(\Q)$ on $\mathcal{A}_{\xbar}$ lifts to an action (in the isogeny category) on $\mathcal{A}_{\widetilde{x}}$.
\end{thm}
\begin{remark}

Recall that  $x\in \Sh_{\rmK}(\bfG,X)(\bbC)$ is said to be \emph{special} if there exists a torus $\bfT\subset \bfG$ such that under the identification $$\Sh_{\rmK}(\bfG,X)(\bbC)\cong \bfG(\bbQ)\backslash X\times \bfG(\bbA_f)/\rmK,$$ the point $x$ corresponds to an element $(h,g)\in \bfG(\bbQ)\backslash X\times \bfG(\bbA_f)/\rmK$, with $h(\bbC^\times)\subset \bfT(\bbR)$. More generally, if $K$ is a field of characteristic 0 which contains $\bfE$ and $x\in \Sh_{\rmK}(\bfG,X)(K)$, we say $x$ is a special point if for some (equivalently any) $\bfE_2$-algebra embedding $K\hookrightarrow \bbC$, the induced $\bbC$-point of $\Sh_{\rmK}(\bfG,X)$ is a special point. 
\end{remark}
\begin{proof}	Since the definition of $I_{\overline{x}}$  is independent of the prime-to-$p$ level, it suffices to consider the case of neat $\rmK^p$.
	Applying the construction in  \S\ref{sec: M-adapted}, we obtain a parahoric model $\mathcal{M}$ of a Levi subgroup $M \subset G,$ 
	and an $M$-valued cocharacter $\widetilde\lambda$ lying in the $G$-conjugacy class of $\mu_{h}$ and such that $\widetilde{\lambda}$ is central in $M$. Let $\pdiv$ be the $(\mathcal{M},\widetilde{\lambda})$-adapted deformation to $\Ok_K$ constructed in Theorem \ref{thm: can-lift}. By  Proposition \ref{prop: formal nbd Shimura}, $\pdiv$ corresponds to a point $\widetilde{x}\in \scrS_{\rmK}(\bfG,X)(\calO_K)$ lifting $\overline{x}$ and hence to an abelian variety $\mathcal{A}_{\widetilde{x}}$ over $K$.	By Theorem \ref{thm: can-lift}, the action of $J_b(\Q_p)$ on $\pdiv_{\xbar}$ lifts to $\pdiv$. Since $I_{\xbar}(\Q)\subset J_b(\Q_p)$, by the Serre--Tate theorem, the action of $I_{\xbar}$ lifts to $\mathcal{A}_{\widetilde{x}}$ in the isogeny category.
	
	We now show $\widetilde{x}$ is a special point. Since $I_{\xbar}$ fixes the tensors $s_{\alpha,0,\xbar}$, it also fixes $s_{\alpha,p,\widetilde{x}}$, and hence it fixes $s_{\alpha,B}$. Thus we may consider $I_\xbar$ as a   subgroup of $\mathbf{G}$. By \cite[Theorem 6]{KMS}, the absolute rank of $I_{\xbar}$ is equal to the absolute rank of $\bfG$. Let $\bfT$ be a maximal torus of $I_{\xbar}$, which is therefore a maximal torus of $\bfG$. The Mumford--Tate group of $\mathcal{A}_{\widetilde{x}}$ is a subgroup of $\bfG$ which commutes with $\bfT$ hence must be contained in $\bfT$. Therefore $\widetilde{x}$ is a special point.
	\end{proof}
\begin{remark}
For $\xbar\in \calS_{\rmK,[b]_\mu}(k)$, the corresponding $\overline{y}$ lies in the stratum of  $\Mloc_{\calG,\{\mu_h\}}$ corresponding to $t_{\mu'}\in \Adm(\{\mu\})_J$, with $\mu'\in W_0\cdot\mu$ as in \S\ref{subsec: mu ordinary setting}. It is possible to show that this stratum lies inside the smooth locus of $\Mloc_{\calG,\{\mu_h\}}\otimes_{\calO_E}k$ and hence the embedding $\calG\rightarrow \GL(V_{\bbZ_p})$ is automatically very good at $\overline{y}$ by \cite[Corollary 4.3.9]{KPZ}.
\end{remark}

\subsubsection{}\label{subsubsec: Assumption C'} The description of the local structure of $\scrS_{\rmK}(\bfG,X)$ in Theorem \ref{thm: local model diagram} can be globalized. In addition to assumptions (A') and (B') of \S\ref{subsub: integral model Hodge type construction}, we will need the following assumption, cf. \S\ref{sssec: very good}.
\begin{enumerate}
	\item[(C')] The embedding $\calG\rightarrow \GL(V_{\bbZ_p})$ is very good at \emph{all} points of $\Mloc_{\calG,\{\mu_h\}}(k)$.
\end{enumerate}

\begin{thm}\label{thm: local model diagram}
	Under the assumptions (A')--(C'), the schemes  $\scrS_{\rmK}(\bfG,X)$  satisfy the following properties.
	\begin{enumerate}\item For $R$ a discrete valuation ring of mixed characteristic $(0,p)$, we have a bijection 
		$$
		\varprojlim_{\rmK^p}\scrS_{\rmK_p\rmK^p}(\bfG,X)(R)=\Sh_{\rmK_p}(\bfG,X)(R[1/p]).
		$$
		
		\item There exists a local model  diagram 
	\[\xymatrix{ &\widetilde{\mathscr{S}}_{\rmK}(\bfG,X)_{\Ok_E}\ar[dr]^q\ar[dl]_\pi&\\
	\mathscr{S}_{\rmK}(\bfG,X)_{\Ok_E} & &\bbM^{\mathrm{loc}}_{\G,\{\mu_h\}}}\]
		where $\pi$ is a $\calG$-torsor and $q$ is  $\calG$-equivariant and smooth of relative dimension $\dim G$.

	\end{enumerate}
\end{thm}
\begin{proof} This follows from \cite[Theorem 7.1.3]{KPZ} which proves the result for neat level structure $\rmK^p$. In general, we take a normal neat compact open subgroup $\rmK_1^p\subset \rmK^p$, and take the quotient of the diagram by the finite group $\rmK^p/\rmK_1^p$.
	\end{proof}

\begin{remark}\begin{enumerate}
		\item By \cite[Theorem 4.5.2]{PRshtukas} and our assumption that $\tG=\calG$, the integral model $\scrS_{{\rmK}}(\bfG,X)$ is independent of the choice of Hodge embedding $\iota$.
		
		\item Formally,  Theorem \ref{thm: local model diagram} and Theorem  \ref{thm: canonical lift HT} are all that are needed to prove our main results on $\ell$-independence in \S5 and \S6. In the next three subsections, we will extend these theorems to the required generality needed for these applications.
	\end{enumerate}
\end{remark}

	\subsection{Strongly acceptable Shimura varieties}\label{sec: integral models abelian type}
For later applications, we need to consider integral models for certain Shimura varieties  of Hodge type with the conditions (A')--(C') of \S\ref{subsub: integral model Hodge type construction} and \S\ref{subsubsec: Assumption C'} relaxed. To do this we will view the Shimura variety as one of abelian type and we may construct an integral model using an auxiliary Shimura variety of Hodge type as in \cite[\S7.2]{KPZ}. 

\subsubsection{}

Let $(\bfG,X)$ be a Shimura datum of Hodge type. Then the center $\bfZ_{\bfG}$ of $\bfG$ splits over a CM field, and so  the maximal compact subtorus $\bfZ_{\bfG,0}$ is defined over $\bbQ$.  We let $\bfZ_{\bfG}^c$ denote the subgroup of $\bfG$ generated by $\bfZ_{\bfG,0}$ and the center $\bfZ_{\bfG^{\der}}$ of the derived group $\bfG^{\der}$. We let $Z_G^c$ denote the base change of this group to $\bbQ_p$. We will now focus on Shimura data satisfying the following property.

\begin{definition}\label{def: strongly acceptable}
Let $(\bfG_2,X_2)$ be a Shimura datum and set $G_2:=\bfG_{2,\bbQ_p}$.	Then $(\bfG_2,X_2)$ is said to be \emph{strongly acceptable} if the following conditions are satisfied:
	
	\begin{itemize}
		\item $(\bfG_2,X_2)$ is of Hodge type.
		\item$G_2^{\der}\cong \prod_{i=1}^r\Res_{F_i/\bbQ_p}H_i$, where $F_i/\bbQ_p$ is finite and $H_i$ is a \emph{split} reductive group over $F_i$.
		\item  $Z^c_{G_2}$ is a product of induced tori.
	\end{itemize}

	If $\calG_2$ is a  parahoric group scheme for $G_2$, we say the triple $(\bfG_2,X_2,\calG_2)$ is \emph{strongly acceptable} if $(\bfG_2,X_2)$ is strongly acceptable and $\calG_2$ is a very special parahoric (recall that a parahoric $\calG_2$ is very special if $\calG_2(\brZ)$ is a special parahoric of $G_2(\brQ)$, which exists by  \cite[Lemma 6.1]{Zhu2}).
\end{definition}
\begin{prop}\label{lemma: auxiliary Hodge type datum}
	
	Let $(\bfG_2,X_2,\calG_2)$ be a strongly acceptable triple. Then there exists  a  Shimura datum $(\bfG,X)$ together with a central isogeny  $\bfG^{\der}\rightarrow \bfG_2^{\der}$ which induces an isomorphism $(\bfG^{\ad},X^{\ad})\cong (\bfG_2^{\mathrm{ad}},X_2^{\mathrm{ad}})$. Moreover, $(\bfG,X)$ may be chosen to satisfy the following properties.
	
	\begin{enumerate}

		\item $\pi_1(G^{\der})$ is a 2-group and is trivial if $(\bfG_2^{\mathrm{ad}},X_2^{\mathrm{ad}})$ has no factors of type $D^{\bbH}$. 
		
		\item Any prime $v_2|p$ of $\bfE_2$  splits in the composite $\bfE':=\bfE.\bfE_2$.
		
		\item There exists a Hodge embedding $\iota:(\bfG,X)\rightarrow (\mathbf{GSp}(V),S^\pm)$ which extends to a good integral local Hodge embedding $\calG\rightarrow \GL(V_{\bbZ_p})$, which is very good at all points of $\Mloc_{\calG,\{\mu_h\}}(k)$. Here $\calG$ is the parahoric group scheme for $G$ which is associated to $\calG_2$.

		\item $Z_G$ is an $R$-smooth torus and $Z_G^c$ is a product of Weil restrictions of tame tori.
		
		\item $X_*(G^{\ab})_I$ is torsion.

	\end{enumerate}

\end{prop}
\begin{proof} We let $(\bfG,X)$ be the Shimura datum constructed in  \cite[Proposition 7.2.14]{KPZ} which is equipped with a central isogeny  $\bfG^{\der}\rightarrow \bfG_2^{\der}$ inducing an isomorphism $(\bfG^{\ad},X^{\ad})\cong (\bfG_2^{\mathrm{ad}},X_2^{\mathrm{ad}})$. Then $(\bfG,X)$  satisfies (1), (2), (4), and property (5) follows since  our assumptions imply that $G_2^{\ad}$  does not have a simple factor of the form $\Res_{F/\bbQ_p}\mathrm{PGL}_m(D)$, where $D$ is a division algebra over $F$ with index divisible by $p$. 
	
	Note that if $\bfG_2^{\ad}\cong \prod_{i=1}^s\Res_{\rmF_i/\bbQ}\bfH_i$ for some $\rmF_i/\bbQ$ totally real and $\bfH_i$ absolutely simple over $\rmF_i$, then $\bfG^{\der}\cong \prod_{i=1}^s\Res_{\rmF_i/\bbQ}\bfH_i^{\sharp}$, where $\bfH_i^{\sharp}$ is defined in \cite[\S7.2.1]{KPZ}. In particular $G$ satisfies the first assumption in Lemma \ref{lem: v special connected} below. Thus by that lemma, we have $\tG_x=\calG_x$ for any $x\in \calB(G,\bbQ_p)$ lifting the image of $x_2$ in $\calB(G_2^{\ad},\bbQ_p)=\calB(G^{\ad},\bbQ_p)$. Property (3) then follows from the corresponding property in \cite[Proposition 7.2.18]{KPZ} using this fact.
\end{proof}
\begin{lemma}\label{lem: v special connected}
	Let $G$ be a reductive group over $\bbQ_p$ and $\calG$ a parahoric of $G$ corresponding to $x\in \calB(G,\bbQ_p)$. We assume the following conditions are satisfied.
	\begin{itemize}
		\item $G^{\der}\cong \prod_{i=1}^r\Res_{ F_i/\bbQ_p}H_i$ where $H_i$ is a split group over $F_i$.
		\item $X_*(G^{\ab})_I$ is torsion-free
		\item $\calG$ is a very special parahoric. 
	\end{itemize} Then we have $\tG=\calG$.
\end{lemma}
\begin{proof}
	Let $x^{\ad}\in \calB(G^{\ad},\bbQ_p)$ denote the image of $x$ and $\tG^{\ad}$ (resp. $\G^{\ad}$) the associated stabillizer scheme (resp. parahoric group scheme). Then $\calG^{\ad}$ is of the form $\prod_{i=1}^r\Res_{\calO_{F_i}/\bbZ_p}\calH^{\ad}_i$ for $\calH^{\ad}_i$ a special (equivalently hyperspecial) parahoric of $H_i^{\ad}$, and hence we  have $\tG^{\ad}=\G^{\ad}$. 
	
	There is a natural map $\tG\rightarrow \tG^{\ad}=\G^{\ad}$ and a commutative diagram 
	\[\xymatrix{\tG(\brZ) \ar[r]\ar[d]_{\widetilde\kappa_{G}} & \G^{\ad}(\brZ)\ar[d]^{\widetilde\kappa_{G^{\ad}}}\\ \pi_1(G)_I \ar[r]&\pi_1(G^{\ad})_I.}\]
	It follows that $\tG(\brZ)$ maps to $\ker(\pi_1(G)_I\rightarrow \pi_1(G^{\ad})_I)$ and it suffices to show this group is torsion-free.

	We have a commutative diagram with exact rows.
	\[\xymatrix{ &\pi_1(G^{\der})_I\ar[r] \ar[d]&\pi_1(G)_I\ar[r]\ar[d]& X_*(G^{\ab})_I\ar[r]\ar[d]& 0\\
		0 \ar[r]& \pi_1(G^{\ad})_I \ar[r]^\sim&\pi_1(G^{\ad})_I\ar[r] &\{1\}\ar[r]& 0	}\]
	Since $\pi_1(G^{\der})\rightarrow \pi_1(G^{\ad})$ is injective and these are induced modules, it follows that $\pi_1(G^{\der})_I\rightarrow \pi_1(G^{\ad})_I$ is injective.
	Thus $\mathrm{ker}(\pi_1(G)_I\rightarrow \pi_1(G^{\ad})_I)$ is torsion-free by the snake Lemma.  	
\end{proof}

\subsubsection{}We use the previous proposition to extend the construction of integral models to strongly acceptable triples.
\begin{thm}\label{cor: strongly acceptable integral model}Let $(\bfG_2,X_2,\calG_2)$ be a strongly acceptable triple with reflex field $\bfE_2$ and set $\rmK_{2,p}=\calG_2(\bbZ_p)$.  Then  for any prime $v_2|p$ of $\bfE_2$ with corresponding completion $E_2$,  there is a $\bfG_2(\bbA_f^p)$-equivariant $\calO_{E_2}$-scheme $\mathscr{S}_{\rmK_{2,p}}(\bfG_2,X_2)$ extending $\Sh_{\rmK_{2,p}}(\bfG_2,X_2)_{{E_2}}$ satisfying the following properties:
	\begin{enumerate}
		\item $\mathscr{S}_{\rmK_{2,p}}(\bfG_2,X_2)$ is \'etale locally isomorphic to $\bbM^{\mathrm{loc}}_{\calG_2,\{\mu_{h_2}\}}$.
		\item For any discrete valuation ring  $R$ of mixed characteristic the map 
		$$\mathscr{S}_{\rmK_{2,p}}(\bfG_2,X_2)(R)\rightarrow\mathscr{S}_{\rmK_{2,p}}(\bfG_2,X)(R[1/p])$$ 
		is a bijection.
		
		\item There exists a diagram 
		\begin{equation}\label{eqn: local model diagram strongly acceptable}\xymatrix{ &\widetilde{\mathscr{S}}^{\mathrm{ad}}_{\rmK_{2,p}}(\bfG_2,X_2)\ar[dr]^q\ar[dl]_\pi&\\
			\mathscr{S}_{\rmK_{2,p}}(\bfG_2,X_2) & &\bbM^{\mathrm{loc}}_{\G_2,\{\mu_{h_2}\}}}\end{equation}
		where $\pi$ is a $\bfG_2(\A_f^p)$-equivariant ${\calG}_2^{\ad}$-torsor and $q$ is $\calG_{2}^{\ad}$-equivariant, smooth of relative dimension $\dim \bfG^{\ad},$ and 
		$\bfG_2(\A_f^p)$-equivariant, when $\bbM^{\mathrm{loc}}_{\G_2,\{\mu_{h_2}\}}$ is equipped with the trivial $\bfG_2(\A_f^p)$-action. Here $\calG_2^{\ad}$ is the parahoric group scheme for $G_2^{\ad}$ associated to $\calG_2$.
	\end{enumerate}
\end{thm}

\begin{proof} Let $(\bfG,X)$ be the Shimura datum of Hodge type from Proposition \ref{lemma: auxiliary Hodge type datum} which satisfies the assumptions in \cite[Proposition 7.1.14]{KPZ}. The result then follows from  \emph{loc. cit.} noting that we have $\tG=\calG$.
\end{proof}
\begin{remark}\begin{enumerate}
		\item
	The condition that $Z_{G_2}^c$ is a product of induced tori in the definition of strongly acceptable datum is not needed for this theorem. It is used in the next subsection to prove certain functoriality properties for integral modes.
	\item A key property in Theorem \ref{cor: strongly acceptable integral model} that we need is that  $\widetilde{\mathscr{S}}^{\mathrm{ad}}_{\rmK_{2,p}}(\bfG_2,X_2)$ in (3) is a torsor for a smooth group scheme with connected special fiber and  is one of the reasons we restrict to considering strongly acceptable triples.
	\end{enumerate}
\end{remark}

\subsubsection{}\label{subsubsec: construction of integral model} We recall some features of the construction in Theorem \ref{cor: strongly acceptable integral model} which will be needed in the next subsection. We let $(\bfG,X)$ denote the auxiliary Hodge type Shimura datum from Proposition \ref{lemma: auxiliary Hodge type datum}. This is equipped with a central isogeny $\bfG^{\der}\rightarrow \bfG_2^{\der}$ inducing an isomorphism $(\bfG^{\ad},X^{\ad})\cong (\bfG_2^{\ad},X_2^{\ad})$. There is a Hodge embedding $(\bfG,X)\rightarrow (\mathbf{GSp}(V),S^\pm)$ satisfying the assumptions (A')--(C') of \S\ref{subsub: integral model Hodge type construction} and \S\ref{subsubsec: Assumption C'}, and hence we may construct an integral model $\scrS_{\rmK}(\bfG,X)$ for $\Sh_{\rmK}(\bfG,X)$ as before by taking closure and normalization inside the Siegel Shimura variety.

Fix  a connected component $X^+\subset X$. By real approximation, upon modifying the isomorphism $\bfG^{\ad}\cong \bfG_2^{\mathrm{ad}}$ by an element of $\bfG^{\ad}(\bbQ)$,  we may assume that the image of  $X_2\subset X_{2}^{\mathrm{ad}}$ contains the image of $X^+.$ We write $X_2^+$ for $X^+$ viewed as a subset of $X_2.$  
We denote by $\Sh_{\rmK_p}(\bfG, X)^+ \subset \Sh_{\rmK_p}(\bfG, X)$ and 
$\Sh_{\rmK_{2,p}}(\bfG_2, X_2)^+  \subset \Sh_{\rmK_{2,p}}(\bfG_2, X_2)$ the geometrically connected components corresponding 
to $X^+$ and $X_2^+$. These are defined over extensions  of $\bfE$ and $\bf E'$ respectively, which are unramified at primes above $p$ by \cite[Proposition 7.1.11]{KPZ}.
We let $\scrS_{\rmK_p}(\bfG,X)^+$ denote the connected component of $\scrS_{\rmK_p}(\bfG,X)$ corresponding to $\Sh_{\rmK_p}(\bfG, X)^+$.

  For a subgroup $H\subset \bfG(\bbR)$, we write $H_+$ for the preimage of $\bfG^{\ad}(\bbR)^+$, the connected component of the identity in $\bfG^{\ad}(\bbR)$.  We write $\bfG^{\ad}(\bbQ)^+$ (resp. $\bfG^{\ad}(\bbZ_{(p)})^+$)  for $\bfG^{\ad}(\bbQ)\cap \bfG^{\ad}(\bbR)^+$ (resp. $\bfG^{\ad}_{\bbZ_{(p)}}(\bbZ_{(p)})\cap \bfG^{\ad}(\bbR)^+$) and we write $\bfZ=\bfZ_{\bfG}$ for the center of $\bfG$. We let $\bfZ(\bbQ)^-$ and $\bfG(\bbQ)_+^-$ denote the closures of $\bfZ(\bbQ)$ and $\bfG(\bbQ)_+$  in $\bfG(\bbA_f)$, respectively. We let $\bfZ(\bbZ_{(p)})^-$ and $\bfG(\bbZ_{(p)})_+^-$ denote  the closures of $\bfZ_{\bbZ_{(p)}}(\bbZ_{(p)})$ and $\bfG_{\bbZ_{(p)}}(\bbZ_{(p)})_+$  in $\bfG(\bbA_f^p)$, respectively.  As in \cite[\S4.5.6]{KP}, we set
$$\scrA(\bfG):=\bfG(\bbA_f)/\bfZ(\bbQ)^-*_{\bfG(\bbQ)_+/\bfZ(\bbQ)}\bfG^{\ad}(
\bbQ)^+$$ $$\scrA(\bfG_{\Z_{(p)}}):=\bfG(\bbA_f^p)/\bfZ({\Z_{(p)})^-}*_{\bfG(\Z_{(p)})_+/\bfZ(\bbZ_{(p)})}\bfG^{\ad}(\Z_{(p)})^+,$$
and  as in \cite[\S4.6.3]{KP}, we set 
$$\scrA(\bfG)^\circ:=\bfG(\bbQ)_+^-/\bfZ(\bbQ)^-*_{\bfG(\bbQ)_+/\bfZ(\bbQ)}\bfG^{\ad}(\Q)^+$$
$$\scrA(\bfG_{\bbZ_{(p)}})^\circ:=\bfG(\bbZ_{(p)})_+^-/\bfZ(\bbZ_{(p)})^-*_{\bfG(\bbZ_{(p)})_+/\bfZ(\bbZ_{(p)})}\bfG^{\ad}(\bbZ_{(p)})^+.$$
We refer to \emph{loc. cit.} \S4.5.6 for the definition of the $*$ product.
We obtain an  $\scrA(\bfG)$-action (resp. $\scrA(\bfG_{\bbZ_{(p)}})$-action) on $\Sh(\bfG,X)$ (resp. $\Sh_{\rmK_p}(\bfG,X)$).
Here, 
the fact that the center of $G$ is an $R$-smooth torus implies that the $\scrA(\bfG_{\bbZ_{(p)}})$-action on $\Sh_{\rmK_p}(\bfG,X)$ extends to an $\scrA(\bfG_{\Z_{(p)}})$-action on $\scrS_{\rmK_p}(\bfG,X)$. As in \cite[\S4.6.12]{KP}, the natural map 
\begin{equation}\label{eqn: injection A groups}
\scrA(\bfG_{\bbZ_{(p)}})^\circ\backslash\scrA(\bfG_{2,\bbZ_{(p)}})\rightarrow \scrA(\bfG)^\circ\backslash\scrA(\bfG_2)/\rmK_{2,p}
\end{equation} 
is an injection. We fix a set $J\subset \bfG_2(\bbQ_p)$ which maps bijectively to a set of coset representatives for the image of $\scrA(\bfG_{2,\bbZ_{(p)}})$ in $\scrA(\bfG)^\circ\backslash\scrA(\bfG_2)/\rmK_{2,p}$. A calculation  shows that $J$ is a finite set. 
Then $\scrS_{\rmK_{2,p}}(\bfG_2,X_2)$ is constructed as 
\begin{equation}\label{eqn: abelian type construction}\scrS_{\rmK_{2,p}}(\bfG_2,X_2)=\left[[\scrS_{\rmK_p}(\bfG,X)^+\times\scrA(\bfG_{2,\bbZ_{(p)}})]/\scrA(\bfG_{\bbZ_{(p)}})^\circ\right]^{|J|}.
\end{equation}

\subsection{Some functorial properties of integral models}\label{sec: functorial properties of integral models}

In this subsection we prove some functorial properties of the integral models. The main result is Proposition \ref{prop: morphism integral models} which will be used to define the $\mu$-ordinary locus in the next subsection.

 \subsubsection{}Let $f:(\bfG,X)\rightarrow(\bfG',X')$ be a morphism of Shimura data and let  $\calG$ and $\calG'$ be parahorics of $G$ and $G'$ respectively.   We assume that $\tG=\G$ and $\tG'=\G'$, and that $G\rightarrow G'$ extends to a morphism $\calG\rightarrow \calG'$.
Let $\rmK = \rmK_p\rmK^p $, $\rmK' = \rmK'_p\rmK^{\prime p},$ be compact open subgroups of $\bfG(\mathbb A_f)$ 
and $\bfG'(\mathbb A_f)$ respectively, 
with $\rmK_p = \G(\Z_p)$ and $\rmK'_p = \G'(\Z_p)$. We fix a prime $v|p$ of the reflex field $\bfE$ of $(\bfG,X)$, 
and write $E = \bfE_v.$

We assume there are Hodge embeddings 
$$ \iota: (\bfG,X) \rightarrow (\mathbf{GSp}(V),S^\pm) \quad \text{\rm and} \quad \iota': (\bfG',X') \rightarrow (\mathbf{GSp}(V'),S'^\pm),$$
and $\Z_p$-lattices $V_{\Z_p} \subset V_{\Q_p}$ and $V'_{\Z_p} \subset V'_{\Q_p}$ such that $\iota$ and $\iota'$ 
extend to good integral local Hodge embeddings  $\calG\rightarrow \GL(V_{\bbZ_p})$, $\calG'\rightarrow \GL(V'_{\bbZ_p})$.
Thus $(\bfG,X)$ and $(\bfG',X')$ both satisfy assumptions (A') and (B') of \S\ref{subsub: integral model Hodge type construction}.

\begin{prop}\label{prop: morphism SV iso completions}  The morphism  $\Sh_{\rmK}(\bfG,X)\rightarrow \Sh_{\rmK'}(\bfG',X')_{\bfE}$ induced by $f$ 
	extends to a morphism of integral models over $\calO_E$
	$$f_{\scrS}:\scrS_{\rmK}(\bfG,X)_{\calO_E}\rightarrow \scrS_{\rmK'}(\bfG',X')_{\calO_E},$$
	associated to $\iota$ and $\iota'$. 
	
	Moreover, if $f$ induces an isomorphism of derived groups $\bfG^{\der}\cong \bfG'^{\der}$ and the parahorics $\G$ and $\G'$ are associated, then for $\rmK$ and $\rmK'$ neat and  $\xbar\in \scrS_{\rmK}(\bfG,X)_{\calO_E}(k)$ with image $\xbar'\in \scrS_{\rmK'}(\bfG',X')_{\calO_E}(k)$, the morphism $f_{\scrS}$ induces an isomorphism of  completions $\widehat{U}_{\xbar}\cong \widehat{U}_{\xbar'}$ at $\xbar$ and $\xbar'$.	\end{prop}
\begin{proof}We set $(\bfG'',X'')=(\bfG\times \bfG',X\times X')$ and $\rmK''=\rmK\times \rmK'$. Then the product $$\scrS'':=\scrS_{\rmK}(\bfG,X)_{\calO_E}\times_{\calO_E}\scrS_{\rmK}(\bfG',X')_{\calO_E}$$ is an integral model for the Shimura variety $\Sh_{\rmK''}(\bfG'',X'')$ which satisfies the conditions in \cite[Conjecture 4.2.2]{PRshtukas}. Therefore there exists a unique map $\scrS_{\rmK}(\bfG,X)\rightarrow \scrS''$ extending the diagonal morphism on the generic fiber by \cite[Theorem 4.3.1]{PRshtukas}, and its composition with the projection $\scrS''\rightarrow \scrS_{\rmK'}(\bfG',X')_{\calO_E}$ gives the desired morphism $f_{\scrS}$.
	
	Now assume that $\bfG^{\der}\cong \bfG'^{\der}$ and that $\calG$ and $\calG'$ are associated. To show that $f_{\scrS}$ induces isomorphisms on completions, we follow the proof of \cite[Theorem 4.2.4]{PRshtukas}. We let $\scrP$ and $\scrP'$ denote the shtukas  over the $p$-adic completions $\widehat{\scrS}_{\rmK}(\bfG,X)$ and $\widehat{\scrS}_{\rmK'}(\bfG',X')$ constructed in \cite[Theorem 4.5.2]{PRshtukas}. Then by \cite[Theorem 2.7.7]{PRshtukas}, we have an isomorphism of $\calG'$-shtukas $$\scrP\times^{\calG}\calG'\cong f_{\scrS}^*\scrP'$$ over $\widehat{\scrS}_{\rmK}(\bfG,X)$, since they extend the same $\calG'$-shtuka over the generic fiber. Let $\xbar\in \scrS_{\rmK}(\bfG,X)(k)$ with image $\xbar'\in \scrS_{\rmK}(\bfG',X')(k)$.	 Since $\widehat{U}_{\xbar}$ and $\widehat{U}_{\xbar'}$ are normal, it suffices to show $f_{\scrS}$ induces an isomorphism of  $\widehat{U}_{\xbar}^\lozenge\cong\widehat{U}^{\lozenge}_{\xbar'}$ of the associated $v$-sheaves.  
	
	We set $\mu=\mu_h$ and $\mu'=\mu_{h'}$. Let $b_{\xbar}\in G(\brQ)$ the element corresponding to the $\calG$-shtuka $\scrP_{\xbar}$ at $\xbar$, which is well-defined up to $\sigma$-conjugacy by $\calG(\brZ)$. By construction, this is the element corresponding to Frobenius on $\bbD$ given by the choice of a tensor preserving isomorphism $V_{\bbZ_p}^\vee\otimes_{\bbZ_p}\brZ\cong \bbD$. Then $b_{\xbar'}=f(b_{\xbar})\in G'(\brQ)$ corresponds to the $\calG'$-shtuka $\scrP_{\xbar'}$. Let $\calM^{\mathrm{int}}_{\calG,b_{\xbar},\mu}$ be the integral model for the local Shimura variety associated to the local Shimura datum $(G,b_{\xbar},\mu)$ and the parahoric $\calG$, cf. \cite[Definition  25.1.1]{SW2}, and let $\widehat{\calM^{\mathrm{int}}_{\calG,b_{\xbar},\mu,
		\xbar_0}}$ denote the $v$-sheaf completion at the base point $\xbar_0$ . By \cite[Theorem 4.5.2]{PRshtukas}, there is an isomorphism:
	$$\Theta_{\xbar}: \widehat{\calM^{\mathrm{int}}_{\calG,b_{\xbar},\mu,
			\xbar_0}}\xrightarrow{\sim}\widehat{U}_{\xbar}^\lozenge$$ such that $\Theta_{\xbar}^*(\scrP)$ is isomorphic to the universal $\calG$-shtuka on $\widehat{\calM^{\mathrm{int}}_{\calG,b_{\xbar},\mu,
			\xbar_0}}$. There is a similar isomorphism $\Theta_{\xbar'}: \widehat{\calM^{\mathrm{int}}_{\calG',b_{\xbar'},\mu',
			\xbar'_0}}\xrightarrow{\sim}\widehat{U}_{\xbar'}^\lozenge$ for $(G',b_{\xbar'},\mu')$.
	
	For $r>\!>0$, we let $\widetilde{\scrS}_{\xbar}$ be the $v$-sheaf over $\widehat{U}_{\xbar}$, classifying trivializations of $\scrP$ as in \cite[Proof of Theorem 4.2.4]{PRshtukas}. Explicitly, for $S=\mathrm{Spa}(R,R^+)$ a perfectoid space over $k$ and $y$ an $S$-point of $\widehat{U}_{\xbar}^\lozenge$, $\widetilde{\scrS}_{\xbar}$ classifies trivializations
	$$i_r:\calG_{\calY_{[r,\infty)(S)}}\xrightarrow{\sim}y^*(\scrP)|_{\calY_{[r,\infty)(S)}},$$
	where $\calY_{[r,\infty)}$ is as in \cite[\S2.1]{PRshtukas}. Then  there is a natural map $$\mathrm{nat}:\widetilde{\scrS}_{\xbar}\rightarrow  \calM^{\mathrm{int}}_{\calG,b_{\xbar},\mu}.$$
		We define $\widetilde{\scrS}_{\xbar}'$ over $\widehat{U}_{\xbar'}$ similarly as trivializations of the $\calG'$-shtuka $\scrP'$. Then we have a commutative diagram
		\[\xymatrix{\widetilde{\scrS}_{\xbar}\ar[r]^{\!\!\!\!\!\!\!\mathrm{nat}}\ar[d] &\calM^{\mathrm{int}}_{\calG,b_{\xbar},\mu}\ar[d]\\
\widetilde{\scrS}_{\xbar'}\ar[r]^{\!\!\!\!\!\!\!\!\!\!\mathrm{nat}}&	\calM^{\mathrm{int}}_{\calG',b_{\xbar'},\mu'}	}\]
	where the vertical  maps are obtained via pushout along $\calG\rightarrow\calG'$.
	As in  the proof of
	\cite[Theorem 4.2.4]{PRshtukas}, upon modifying $\Theta_{\xbar}$ by an element of the group $G_{b_{\xbar}}(\bbQ_p)=\{g\in G(\brQ)|g^{-1}b_{\xbar}\sigma(g)=b_{\xbar}\}$, we have a commutative diagram
	\[\xymatrix{\widetilde{\scrS}_{\xbar}\ar[r]\ar[dr] \ar[d]_{\mathrm{nat}}&  \widehat{U}^\lozenge_{\xbar}\ar[d]\ar[d]^{\simeq} \\
 \calM^{\mathrm{int}}_{\calG,b_{\xbar},\mu} \ar[r] & \widehat{\calM^{\mathrm{int}}_{\calG,b_{\xbar},\mu,\xbar_0}},	}\]  and the right side of the diagram is determined by the left. Similarly, we obtain a diagram for $\widehat{\scrS}_{\xbar'}$ and $\widehat{U}_{\xbar'}^\lozenge$. It follows that the following diagram is commutative\[\xymatrix{ \widehat{U}^{\lozenge}_{\xbar}\ar[r]^{\!\!\!\!\!\!\!\!\!\!\Theta_{\xbar}} \ar[d] &  \widehat{\calM^{\mathrm{int}}_{\calG,b_{\xbar},\mu,
 		\xbar_0}}\ar[d]\\
 \widehat{U}^\lozenge_{\xbar'}\ar[r]^{\!\!\!\!\!\!\!\!\!\!\!\!\Theta_{\xbar'}}  &  \widehat{\calM^{\mathrm{int}}_{\calG',b_{\xbar'},\mu',
 		\xbar'_0}}.}\] Since the right vertical morphism is an isomorphism by \cite[Theorem 5.2]{PRLSV}, it follows that $\widehat{U}^{\lozenge}_{\xbar}\rightarrow\widehat{U}_{\xbar'}^\lozenge$ is an isomorphism as desired.
	\end{proof}

\subsubsection{}We now assume that $f:(\bfG,X)\rightarrow(\bfG',X')$ induces an isomorphism of derived groups and that the parahorics $\G$ and $\G'$ are associated. As in \S\ref{subsubsec: construction of integral model}, we fix a connected component $X^+\subset X$ which determines  neutral connected components $\scrS_{\rmK_p}(\bfG,X)^+$ and $\scrS_{\rmK'_p}(\bfG',X')^+$; for notational convenience we assume these are base changed to $\calO_{E^{\ur}}$.
\begin{cor}\label{cor: iso connected components}
	The morphism $f_{\scrS}$ induces an isomorphism of $\calO_{E^{\ur}}$-schemes.
	$$\scrS_{\rmK_p}(\bfG,X)^+\rightarrow \scrS_{\rmK'_p}(\bfG',X')^+.$$
\end{cor}
\begin{proof} We will consider neat compact open subgroups $\rmK^{1,p}, \rmK^{2,p} \subset \bfG'(\bbA_f^p),$ and we write 
	$\rmK^1= \rmK'_{p}\rmK^{1,p}$ and $\rmK^2= \rmK'_p\rmK^{2,p}.$ 
	Since the morphism $\bfG \rightarrow \bfG'$ induces an isomorphism of derived groups, the map 
	$ \Sh_{\rmK_p}(\bfG, X)^+ \rightarrow  \Sh_{\rmK'_p}(\bfG', X')^+ $
	is an isomorphism. Thus for any sufficiently small neat compact open $\rmK^{p} \subset \bfG(\bbA_f^p),$ there exist 
	$\rmK^{1,p}, \rmK^{2,p} \subset \bfG'(\bbA_f^p)$  such that $f$ induces maps 
	\begin{equation}\label{eqn:compositegenshims}
	\Sh_{\rmK^2}(\bfG', X')^+ \rightarrow  \Sh_{\rmK}(\bfG, X)^+ \rightarrow 
	\Sh_{\rmK^1}(\bfG', X')^+.
	\end{equation}
	Let $\scrS^\dagger_{\rmK}(\bfG,X)^+$ be the normalization of $\scrS_{\rmK^1}(\bfG',X')^+$ in 
	$\Sh_{\rmK}(\bfG,X)^+$. Then \eqref{eqn:compositegenshims} extends to a sequence of morphisms 
	$$\scrS_{\rmK^2}(\bfG',X')^+\rightarrow\scrS^\dagger_{\rmK}(\bfG,X)^+\rightarrow 
	\scrS_{\rmK^1}(\bfG',X')^+$$  
	whose composite is finite \'etale. It follows that  both maps in the sequence are finite, and since all the schemes are normal, 
	both maps are finite \'etale. Passing to the limit with $\rmK^{2,p}$ and $\rmK^p$ we obtain a commutative diagram
	$$ \xymatrix{ \scrS_{\rmK^2_p}(\bfG',X')^+ \ar[r]\ar[d] &\scrS^\dagger_{\rmK_p}(\bfG,X)^+ \ar[d] \\
		\scrS_{\rmK^2}(\bfG',X')^+ \ar[r] & \scrS^\dagger_{\rmK}(\bfG,X)^+}
	$$
	Since the map on the left is pro-finite \'etale, and the bottom map is finite \'etale, the 
	map on the right is pro-finite \'etale. 
	
	By Proposition \ref{prop: morphism SV iso completions} and the normality of $\scrS_{\rmK}(\bfG,X)^+$, 
	there is also a morphism  
	$\alpha:\scrS_{\rmK}(\bfG,X)^+\rightarrow \scrS^\dagger_{\rmK}(\bfG,X)^+$, whose composite with $\scrS^\dagger_{\rmK}(\bfG,X)^+\rightarrow \scrS_{\rmK^1}(\bfG',X')^+$ is \'etale, 
	and hence $\alpha$ is \'etale. Again, passing to the limit with $\rmK^p,$ we obtain a commutative diagram 
	$$ \xymatrix{ \scrS_{\rmK_p}(\bfG,X)^+ \ar[r]\ar[d] &\scrS^\dagger_{\rmK_p}(\bfG,X)^+ \ar[d] \\
		\scrS_{\rmK}(\bfG,X)^+ \ar[r]^{\alpha} & \scrS^\dagger_{\rmK}(\bfG,X)^+}
	$$
	where the vertical maps are pro-finite \'etale. For any finite extension $K$ of $W(k)[1/p],$ a point $x^\dagger \in\scrS^\dagger_{\rmK}(\bfG,X)^+(\calO_K)$ lifts to a point of $\tilde x^\dagger \in\scrS^\dagger_{\rmK_p}(\bfG,X)^+(\calO_K),$ 
	and hence to a point $\tilde x \in\scrS_{\rmK_p}(\bfG,X)^+(K).$ By Theorem \ref{thm: local model diagram} (2), 
	$\tilde x$ extends to a point in $\scrS_{\rmK_p}(\bfG,X)^+(\O_K).$ This implies that $\alpha$ is surjective. 
	
	Thus $\alpha$ is a surjective \'etale birational morphism between normal schemes, hence an isomorphism.  We thus obtain a morphism 
	$\scrS_{\rmK^2}(\bfG',X')^+\rightarrow\scrS_{\rmK}(\bfG,X)^+$ which, 
	after taking the inverse limit, gives an inverse for the morphism 
	$$\scrS_{\rmK_{p}}(\bfG,X)^+\rightarrow \scrS_{\rmK'_p}(\bfG',X')^+$$ induced by $f_{\scrS}$.
\end{proof}

\subsubsection{}\label{subsec: choice of i_2} 
We now use the notation of \S\ref{sec: integral models abelian type}. We let $(\bfG_2,X_2,\calG_2)$ be a  strongly acceptable triple   and  we write $\rmK_{2,p}=\calG_2(\bbZ_p)$ a very special parahoric. Fix $\rmK_2^p\subset \bfG_2(\bbA_f^p)$ a compact open subgroup and set $\rmK_2=\rmK_{2,p}\rmK_2^p$.  By Theorem \ref{cor: strongly acceptable integral model}, we may construct an integral model $\scrS_{\rmK_2}(\bfG_2,X_2)=\scrS_{\rmK_{2,p}}(\bfG_2,X_2)/\rmK_2^p$ over $\calO_{E_2}$ for $\Sh_{\rmK_2}(\bfG_2,X_2)$ by viewing $(\bfG_2,X_2)$ as a Shimura datum of abelian type  and using an auxiliary Shimura datum 
$(\bfG,X)$ from Proposition \ref{lemma: auxiliary Hodge type datum} together with a choice of Hodge embedding $\iota:(\bfG,X)\rightarrow (\mathbf{GSp}(V),S^\pm)$ satisfying the assumptions (A')--(C') in \S\ref{subsub: integral model Hodge type construction} and  \S\ref{subsubsec: Assumption C'}. We fix such a $(\bfG,X)$ and $\iota$ for the rest of this section.

Now let $\iota_2:(\bfG_2,X_2)\rightarrow (\mathbf{GSp}(V_2),S_2^\pm)$ be \emph{any} Hodge embedding. By the main theorem of \cite{Landvogt}, 
$\iota_2$ induces a $G_2(\bbQ_p^{\ur})$ and $\mathrm{Gal}(\bbQ_p^{\ur}/\bbQ_p)$-equivariant  embedding of buildings. Upon replacing $\iota_2$ with a new Hodge embedding and applying Zarhin's trick we may assume there is a $\bbZ_p$-lattice  $V_{2,\bbZ_p}\subset V_{2,\bbQ_p}$   with 
$V_{2,\bbZ_p}=V_{2,\bbZ_p}^\vee$ such that $G_2\rightarrow \GSp(V_{2,\bbQ_p})$ extends to a morphism of Bruhat--Tits stabilizer schemes $\widetilde{\calG}_2\rightarrow \mathcal{GSP}$, where $\mathcal{GSP}$  is the group scheme stabilizer  of $V_{2,\bbZ_p}$ in $\mathrm{GSp}(V_{2,\bbQ_p})$.
We set $\rmK'_{2,p}:=\mathcal{GSP}(\bbZ_p)$ and $\rmK_2'=\rmK_{2,p}'\rmK_2'^p$ where  $\rmK_2'^p\subset \mathbf{GSp}(V_{2,\bbA_f^p})$ is a compact open subgroup containing $\rmK_2^p$.

\begin{prop}\label{prop: morphism integral models} There is a map of $\calO_{E_2}$-stacks 
	\begin{equation}\label{eqn: Hodge type SV map}\scrS_{\rmK_2}(\bfG_2,X_2)\rightarrow \scrS_{\rmK_2'}(\mathbf{GSp}(V_2),S_2^\pm)_{\calO_{E_2}}\end{equation}
	extending the natural map on the generic fiber.  
\end{prop}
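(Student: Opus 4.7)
The plan is to use the valuative criterion, leveraging the normality of $\scrS_{\rmK_2}(\bfG_2, X_2)$ (which follows from the normalization-based construction via Theorem \ref{thm: integral models abelian type}) and the separatedness of the Siegel integral model over $\calO_{E_2}$. Since the map on generic fibers is given, it suffices to show that for every DVR $R$ of mixed characteristic with fraction field $L$ and every $x\in \scrS_{\rmK_2}(\bfG_2,X_2)(R)$, the $L$-valued point of $\scrS_{\rmK_2'}(\mathbf{GSp}(V_2),S_2^\pm)$ produced from $x_\eta$ by the generic fibre map extends to an $R$-valued point. By property (2) of Theorem \ref{thm: integral models abelian type}, $x$ is determined uniquely by its generic fiber $x_\eta$, so I start from a given $x_\eta \in \Sh_{\rmK_2}(\bfG_2,X_2)(L)$ admitting an $R$-valued extension $x$.

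Next, I reinterpret the problem in terms of abelian schemes. The $L$-valued point of Siegel space corresponds to a polarized abelian variety $\calA_{2,\eta}$ over $L$ with prime-to-$p$ level structure, arising from the Hodge embedding $\iota_2$. Extending the map to an $R$-point of $\scrS_{\rmK_2'}(\mathbf{GSp}(V_2),S_2^\pm)_{\calO_{E_2}}$ is equivalent to showing that $\calA_{2,\eta}$ has good reduction over $R$, the polarization and level structure then extending automatically by the N\'eron mapping property. So the heart of the matter is proving good reduction of $\calA_{2,\eta}$.

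To prove this, I would exploit the construction (\ref{eqn: abelian type construction}). After a finite unramified extension of $R$, the point $x$ is represented by a point $y\in \scrS_{\rmK_p}(\bfG,X)^+(R^{\mathrm{ur}})$ paired with an element of $\scrA(\bfG_{2,\bbZ_{(p)}})$. The auxiliary Hodge embedding $\iota:(\bfG,X)\to (\mathbf{GSp}(V),S^\pm)$, with respect to which $\scrS_{\rmK_p}(\bfG,X)$ was originally built as a normalization inside the Siegel integral model, yields a polarized abelian scheme $\calA$ over $R^{\mathrm{ur}}$ at $y$, automatically with good reduction. The representations $V$ of $\bfG$ and $V_2$ of $\bfG_2$ both restrict to representations of $\bfG_{\mathrm{der}}\cong \bfG_{2,\mathrm{der}}^\sharp$, and every representation of a semisimple group occurs as a direct summand of tensor constructions on any faithful one. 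Via the theory of absolute Hodge cycles (using the tensors $s_\alpha$ on $\scrS_{\rmK_p}(\bfG,X)$ recalled in \S\ref{subsubsec:hodgecycles}), this algebraic relation lifts at the generic fiber to an isogeny-theoretic identification of $\calA_{2,\eta}$, pulled back along $\Sh_{\rmK_p}(\bfG,X)^+ \to \Sh_{\rmK_{2,p}}(\bfG_2,X_2)^+$, as an isogeny factor of a suitable tensor construction of $\calA_\eta$. Good reduction of $\calA$ therefore forces good reduction of $\calA_{2,\eta}$ via the N\'eron--Ogg--Shafarevich criterion applied to the associated $\ell$-adic Galois representations for some $\ell \neq p$.

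The main obstacle will be handling the descent through the $\scrA(\bfG_{\bbZ_{(p)}})^\circ$-quotient and the $|J|$-copies in (\ref{eqn: abelian type construction}) so as to produce a genuine morphism of stacks, not merely a map on points. One must verify that the resulting $R$-point of the Siegel integral model is independent of the choices of $y$ and of the element of $\scrA(\bfG_{2,\bbZ_{(p)}})$ representing $x$ modulo $\scrA(\bfG_{\bbZ_{(p)}})^\circ$, that the assignment $x\mapsto (\calA_2, \lambda_2, \epsilon^p_{\rmK_2'})$ is functorial in $R$ and $\bfG_2(\bbA_f^p)$-equivariant, and that the polarization and prime-to-$p$ level structure produced by the construction agree with those classified by the Siegel integral model. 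These compatibilities are the nontrivial content, the rest being essentially an assembly of the Hodge-type integral model theory already developed in \S\ref{subsec: integral models Hodge type} with the abelian-type twisting of \S\ref{sec: integral models abelian type}.
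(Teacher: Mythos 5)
Your proposal diverges substantially from the paper's proof, and it has a genuine gap at the crucial step. The paper does not argue via good reduction and N\'eron--Ogg--Shafarevich at all: it introduces a third Shimura datum $(\bfG_3,X_3)$ of Hodge type, with $\bfG_3\subset \mathbf{GL}(V)\times\mathbf{GL}(V_2)$ generated by $\bfZ^{\mathrm{sp}}\times 1$, the image of $\bfG_{\mathrm{der}}$, and the diagonal $\bbG_m$, together with a \emph{good} Hodge embedding into $\mathbf{GSp}(V_3)$, $V_3=V\oplus V_2$. Its integral model is built by normalization inside the Siegel model for $V_3$, so it carries the product abelian scheme $\calA\times\calA_2$ by construction; the identification of connected components $\scrS_{\rmK_p}(\bfG,X)^+\simeq \scrS_{\rmK_{3,p}}(\bfG_3,X_3)^+$ and the finite map to $\scrS_{\rmK_{2,p}}(\bfG_2,X_2)^+$ (as in Zhang) then produce the morphism to $\scrS_{\rmK_2'}(\mathbf{GSp}(V_2),S_2^\pm)$ by projecting to the $V_2$-factor and descending through the $\scrA(\cdot)$-twisting. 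The auxiliary group $\bfG_3$ is the missing idea in your write-up.

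Concretely, two steps of your argument do not go through as stated. First, your claim that $\calA_{2,\eta}$ is an isogeny factor of a tensor construction on $\calA_\eta$ cannot be extracted from the relation between $V|_{\bfG_{\mathrm{der}}}$ and $V_2|_{\bfG_{2,\mathrm{der}}}$ alone: the Galois representations on $T_\ell\calA$ and $T_\ell\calA_2$ factor through $\bfG(\bbQ_\ell)$ and $\bfG_2(\bbQ_\ell)$ respectively, and these groups have different abelianizations, so there is no single group through which both factor until one introduces precisely $\bfG_3$; moreover, at a point of $\Sh_{\rmK_{2,p}}(\bfG_2,X_2)^+$ not in the image of $\Sh_{\rmK_p}(\bfG,X)^+$ the abelian variety $\calA_2$ is obtained by the twisting construction (\ref{eqn: abelian type construction}), and your "absolute Hodge cycles" appeal does not supply the required comparison across that twist. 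Second, even granting good reduction at every mixed-characteristic DVR point, a pointwise statement does not yield a morphism of stacks to the non-proper Siegel model; you would need to extend the abelian scheme, polarization and level structure over the whole normal integral model (e.g.\ via the Faltings--Chai extension criterion at codimension-one points of the special fiber, after passing to neat level), which you defer to "compatibilities" but which is in fact the content of the proposition. In the paper's route both difficulties evaporate because the Hodge-type theory for $(\bfG_3,X_3)$ already furnishes the extended abelian scheme $\calA\times\calA_2$ together with its tensors.
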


\subsubsection{} To prove this proposition, we make use of the following auxiliary construction. Let 
$\bfG_3$ be the identity component of  $\bfG_2 \times_{\mathbf{ G}^{\ad}, \mathbb{G}_m} \bfG,$ where the projections 
onto $\mathbb G_m$ are given by composing $\iota, \iota_2$ with the symplectic multipliers.  	There are natural morphisms $\bfG_3\rightarrow\bfG_2$  and $\bfG_3\rightarrow\bfG$, the latter of which induces an isomorphism $\bfG_{3}^{\der}\xrightarrow{\sim}\bfG^{\der}$. Let $h\in X^+$. As in \S\ref{subsubsec: construction of integral model}, we may choose the
	isomorphism $\bfG^{\ad}\cong \bfG_2^{\ad}$ in such a way that we may  view $X^+$ as a subset of $X_2$, and we let $h_2\in X_2$ denote the element determined by $h$. The homomorphism $$h_3:=(h_2, h):\bbS\rightarrow \bfG_2\times \bfG$$ factors through $\bfG_3$, and we denote by $X_3$ the $\bfG_{3,\bbR}$-orbit of $h_3$. The pair $(\bfG_3,X_3)$ forms a Shimura datum which is equipped with a Hodge embedding $\iota_3:(\bfG_3,X_3)\rightarrow (\mathbf{GSp}(V_3),S_3^\pm)$  induced from $(\iota,\iota_2)$; here $V_3=V\oplus V_2$. The lemma below ensures $(\bfG_3,X_3)$ satisfies the assumptions in Proposition \ref{prop: morphism SV iso completions}. We set $G_3=\bfG_{3,\bbQ_p}$.

\begin{lemma}\label{lem: G_3 torsion free}Let $\calG_3$ be the very special parahoric for $G_3$ associated to $\calG_2$. \begin{enumerate}
	\item 	We have $\tG_3=\G_3$.
		\item $(\bfG_3,X_3)$ admits a good Hodge embedding (with respect to $\calG_3$).
	\end{enumerate}
\end{lemma}
\begin{proof} 
		
	For (1), it suffices by Lemma \ref{lem: v special connected} to show that $X_*(G_3^{\ab})_I$ is torsion-free.  By \cite[Lemma 7.2.5]{KPZ}, we have an sequence \[\xymatrix{1 \ar[r] &Z_{G_2}^c\times Z_{G}^c\ar[r]&Z_{G_3}\ar[r] & \bbG_m\ar[r] &1.}\] By Proposition \ref{lemma: auxiliary Hodge type datum} (4) and Definition \ref{def: strongly acceptable}, $Z_G^c$ and $Z_{G_2}^c$ are both tori. It follows that $Z_{G_3}$ is a torus with $Z_{G_3}^{c}\cong Z_{G_2}^{c}\times Z_G^{c}$ and  the map $Z_{G_3^{\der}}=Z_{G^{\der}}\rightarrow Z^c_{G_2}\times Z_{G}^c$ is given by the diagonal embedding. Thus we have an exact sequence  of tori
	\[\xymatrix{1\ar[r] &Z_{G_2}^c\ar[r] & Z_{G_3}^c/Z_{G_3^{\der}}\ar[r]& Z_G^c/Z_{G^{\der}}\ar[r]& 1.} \]
	Note that $X_*(G^{\ab})_I$ is an extension of $\bbZ$ by $X_*(Z_G^c/Z_{G^{\der}})_I$; hence $X_*(Z_G^c/Z_{G^{\der}})_I$ is torsion-free by Proposition \ref{lemma: auxiliary Hodge type datum} (5). By assumption (see Definition \ref{def: strongly acceptable}), $X_*(Z^c_{G_2})_I$ is  torsion-free. It follows that $X_*(Z_{G_3}^c/Z_{G_3^{\der}})_I$ is torsion-free, and hence $X_*(G^{\ab}_3)_I$, which is an extension of $\bbZ$ by $X_*(Z_{G_3}^c/Z_{G_3^{\der}})_I$ is torsion-free.
	
	For (2), note that $p\nmid|\pi_1(G_3^{\der})|=|\pi_1(G^{\der})|$ since $p>2$. Thus by Lemma \ref{lem: v good Hodge embedding}, it suffices to show that the centralizer of a maximal $\brQ$-split torus in $G_3$ is $R$-smooth.
The isomorphism $Z_{G_3}^{c}\cong Z_{G_2}^{c}\times Z_G^{c}$ implies that $Z_{G_3}^c$ is a product of Weil-restrictions of tame tori, and hence is $R$-smooth by Proposition \ref{prop: examples of R smooth torus}. Then $Z_{G_3}$ is an extension of $\bbG_m$ by an $R$-smooth torus and  $Z_{G_3}$ is $R$-smooth. The result then follows from \cite[Lemma 7.2.6]{KPZ}.
	\end{proof}

\begin{proof}[Proof of Proposition \ref{prop: morphism integral models}] It suffices to construct a map 
	\begin{equation}\label{eqn: Hodge type SV mapII}
	\scrS_{\rmK_{2,p}}(\bfG_2,X_2)\rightarrow \scrS_{\rmK_{2,p}'}(\mathbf{GSp}(V_2),S_2^\pm)_{\calO_{E_2}}\end{equation}
	which is $\bfG_2(\mathbb A^p_f)$-equivariant. Let $\scrS_{\rmK_{2,p}}(\bfG_2,X_2)'$ be the closure of 
	\begin{equation}
	\Sh_{\rmK_{2,p}}(\bfG_2,X_2)\rightarrow \scrS_{\rmK_{2,p}}(\bfG_2,X_2) \times 
	\scrS_{\rmK_{2,p}'}(\mathbf{GSp}(V_2),S_2^\pm)_{\calO_{E_2}}\end{equation}
	Then the existence of \eqref{eqn: Hodge type SV mapII} is equivalent to requiring that 
	$$ \scrS_{\rmK_{2,p}}(\bfG_2,X_2)' \rightarrow \scrS_{\rmK_{2,p}}(\bfG_2,X_2) $$
	is an isomorphism. We may check this over $\O_{E'},$ where $E' \supset E_2,$ is any complete, discretely valued extension of $E_2.$ 
	In particular, we may assume that the connected components of $\scrS_{\rmK_{2,p}}(\bfG_2,X_2)$ are defined over $\O_{E'}.$ 
	
	Let $\scrS \rightarrow \scrS'$ be a map of connected components induced by (\ref{eqn: Hodge type SV mapII}). 
	Then the explicit description given by 
	\eqref{eqn: abelian type construction} shows that one may identify the diagrams  
	\[\xymatrix{\scrS[1/p] \ar[r]\ar[d] & \scrS'[1/p] \ar[d] \\
		\scrS & \scrS'}\]
	coming from different choices of $\scrS.$ Thus, it suffices to construct the map 
	\begin{equation}\label{eqn: Hodge type SV mapIII}
	\scrS_{\rmK_{2,p}}(\bfG_2,X_2)^+_{\O_{E'}}\rightarrow \scrS_{\rmK_{2,p}'}(\mathbf{GSp}(V_2),S_2^\pm)^+_{\calO_{E'}}\end{equation}
	where $\scrS_{\rmK'_{2,p}}(\mathbf{GSp}(V_2),S_2^\pm)_{\calO_{E'}}^+$ is the connected component corresponding to the  
	connected component of $S_2^{\pm}$ containing the image of $X_2^+$.

	To do this we make use of the Shimura datum $(\bfG_3,X_3)$ constructed above. This is equipped with morphisms of Shimura data	\[\xymatrix{(\bfG,X)&(\bfG_3,X_3) \ar[r]\ar[l]& (\bfG_2, X_2) \ar[r] & (\mathbf{GSp}(V_2),S_2^\pm),}\]where the leftmost morphism induces an isomorphism on derived groups. Let $\calG$ and $\calG_3$ denote the parahoric group schemes for $G$ and $G_3$ associated to 
	$\calG_2$, and set $\rmK_p=\calG(\bbZ_p)$, $\rmK_{3,p}=\calG_3(\bbZ_p)$. We may construct an integral model $\scrS_{\rmK_{3,p}}(\bfG_3,X_3)$ for $\Sh_{\rmK_{3,p}}(\bfG_3,X_3)$ as in \S\ref{subsec: integral models Hodge type} using a good Hodge embedding provided by Lemma \ref{lem: G_3 torsion free}.
	
	By Lemma \ref{lem: v special connected} and Lemma \ref{lem: G_3 torsion free}, we have $\calG_3=\tilde{\calG}_3$ and similarly $\tilde{\calG}=\calG$. Thus we may apply Corollary \ref{cor: iso connected components} to the morphism $(\bfG_3,X_3)\rightarrow (\bfG,X)$. We assume that $E'$ is large enough that the connected components of $\Sh_{\rmK_3}(\bfG_3,X_3)$ are defined over $E'.$
	Thus  we  have an isomorphism 
	\begin{equation}\label{eqn: G_2 to G}
	\scrS_{\rmK_{3,p}}(\bfG_3,X_3)_{\calO_{E'}}^+\xrightarrow{\sim} \scrS_{\rmK_{p}}(\bfG,X)_{\calO_{E'}}^+. \end{equation}
	
	By Proposition \ref{prop: morphism SV iso completions} and taking inverse limits, there is a morphism of integral models 
	$$ \scrS_{\rmK_{3,p}}(\bfG_3,X_3)_{\calO_{E'}}\rightarrow \scrS_{\rmK_{2,p}'}(\mathbf{GSp}(V_2), S_2^\pm)_{\calO_{E'}}.$$
Restricting to neutral connected components, we obtain a morphism
	\begin{equation}\label{eqn: G_2 to GSp}\scrS_{\rmK_{3,p}}(\bfG_3,X_3)_{\calO_{E'}}^+ \rightarrow \scrS_{\rmK'_{2,p}}(\mathbf{GSp}(V_2),S_2^\pm)_{\calO_{E'}}^+.\end{equation}
	By the construction of $\scrS_{\rmK_{2,p}}(\bfG_2,X_2)_{\calO_{E'}}$ (cf. \eqref{eqn: abelian type construction}), we have 
	$$\scrS_{\rmK_{2,p}}(\bfG_2,X_2)_{\calO_{E'}}^+=\scrS_{\rmK_p}(\bfG,X)_{\calO_{E'}}^+/\Delta(\bfG,\bfG_2)\cong \scrS_{\rmK_{3,p}}(\bfG_3,X_3)_{\calO_{E'}}^+/\Delta(\bfG,\bfG_2),$$
	where 
	$\Delta(\bfG,\bfG_2):=\ker(\scrA(\bfG_{\bbZ_{(p)}})^\circ \rightarrow \scrA(\bfG_{2,\bbZ_{(p)}}))$. 
	The  map \eqref{eqn: G_2 to GSp} factors through the action of $\Delta(\bfG,\bfG_2)$, since it does so on the generic fiber. 
	We thus obtain a map 
	$\scrS_{\rmK_{2,p}}(\bfG_2,X_2)_{\calO_{E'}}^+\rightarrow \scrS_{\rmK'_{2,p}}(\mathbf{GSp}(V_2),S_2^\pm)_{\calO_{E'}}^+$ 
	as desired.
\end{proof}

\subsection{$\mu$-ordinary locus and canonical liftings}\label{sec: canonical liftings}  
\subsubsection{}\label{sec: canonical liftings 1}

 In this subsection, we study the $\mu$-ordinary locus in the strongly acceptable case and prove the existence of canonical liftings. As in the construction of the local model diagram, the result will be deduced from the corresponding result in the special Hodge type case given by Theorem \ref{thm: canonical lift HT}. 

Let $(\bfG_2,X_2,\calG_2)$ be a strongly acceptable triple and $\rmK_2^p\subset \bfG_2(\bbA_f^p$) a compact open subgroup. We have the integral model $\scrS_{\rmK_2}(\bfG_2,X_2)$ over $\calO_{E_2}$ which is constructed from an auxiliary Hodge-type Shimura datum $(\bfG,X)$ and a choice of good Hodge embedding $ \iota$ satisfying assumptions (A')--(C') of \S\ref{subsub: integral model Hodge type construction} and \S\ref{subsubsec: Assumption C'}. Let $\iota_2:(\bfG_2,X_2)\rightarrow (\mathbf{GSp}(V_2),S_2^\pm)$  be a Hodge embedding and $V_{2,\bbZ_p}\subset V_{2,\bbQ_p}$ a self-dual lattice as in \S\ref{subsec: choice of i_2}. Then by Proposition \ref{prop: morphism integral models}, there is a morphism of integral models \begin{equation}\label{eqn: morphism integral models}\scrS_{\rmK_2}(\bfG_2,X_2)\rightarrow \scrS_{\rmK_2'}(\mathbf{GSp}(V_2),S_2^\pm)_{\calO_{E_2}}.\end{equation}

Let $h:\calA_2\rightarrow \scrS_{\rmK_2}(\bfG_2,X_2)$ denote the pullback of the universal abelian variety along (\ref{eqn: morphism integral models}). Let $s_{\alpha}\in V_2^\otimes$ be a collection of tensors whose stabilizer is $\bfG_2$. Then as in \S\ref{subsubsec:hodgecycles}, these give rise to tensors $s_{\alpha,B}\in V_B:=R^1h_{\mathrm{an*}}\bbQ$, $s_{\alpha,\ell}\in\calV_\ell(\calA_2):=R^1h_{\mathrm{\acute{e}t*}}\bbQ_\ell$ for all $\ell\neq p$ and $s_{\alpha,p}\in\calV_p(\calA_2):=R^1h_{\eta,\mathrm{\acute{e}t*}}\bbQ_p$. For any $\calO_{E_2}$-scheme $T$ and $x\in \scrS_{\rmK_2}(\bfG_2,X_2)(T)$, we write $\calA_{2,x}$ for the pullback of $\calA_2$ to $x$.

For $K/\brQ$ finite and $\widetilde{x}\in \scrS_{\rmK_2}(\bfG_2,X_2)(\calO_K)$ with special fiber $\overline{x}$, we let $s_{\alpha,0,\widetilde{x}}\in \bbD(\calA_{2,\overline{x}}[p^\infty])[1/p]^\otimes$ denote the images of $s_{\alpha,p,\widetilde{x}}$ under the $p$-adic comparison isomorphism. As in \S\ref{subsec: integral models formal nbd}, these tensors depend only on $\overline{x}$ and not on $\widetilde{x}$; we thus write $s_{\alpha,0,\overline{x}}$ for these tensors.  Note that \cite[Prop. 1.3.7]{KMS} applies here  since the morphism   $\scrS_{\rmK_2}(\bfG_2,X_2)\rightarrow \scrS_{\rmK_2'}(\mathbf{GSp}(V_2),S_2^\pm)_{\calO_{E_2}}$ factors through the normalization of its scheme theoretic image, and all objects are pulled back from this.

\subsubsection{}Let $\xbar\in \mathscr{S}_{\rmK_2}(\bfG_2,X_2)(k)$, and set $\bbD:=\bbD(\calA_{2,\overline{x}}[p^\infty])$. We fix an isomorphism $$V^\vee_{2,\Z_p}\otimes_{\bbZ_p}\brQ\cong\D\otimes_{\brZ}\brQ,$$ taking $s_{\alpha}$ to $ s_{\alpha,0,\overline{x}}$; such an isomorphism exists by Steinberg's theorem (cf. \cite[1.3.8]{KMS}). Then as in \S\ref{sec: mu ordinary locus HT}, we obtain an element $b\in G_2(\brQ)$ with $[b]\in B(G_2,\{\mu_2\})$ where $\{\mu_2\}=\{\mu_{h_2}^{-1}\}$. This induces the Newton stratification on the geometric special fiber $\calS_{\rmK_2,k}$  (resp. $\calS_{\rmK_{2,p},k}$) of  $\scrS_{\rmK_2}(\bfG_2,X_2)$ (resp. $\scrS_{\rmK_{2,p}}(\bfG_2,X_2)$). 
We write $\calS_{{\rmK_2},[b]}\subset\calS_{{\rmK_2},k}$ for the strata corresponding to $[b]\in B(G,\{\mu_2\})$. We also write
$$ \calS_{{\rmK_{2,p}},[b]} = \underset{\leftarrow\rmK_2^p}{\lim} \calS_{{\rmK_{2,p}\rmK_2^p},[b]},$$ which makes sense since  $\calS_{{\rmK_{2}},[b]}$ is compatible with the prime-to-$p$ level. For the rest of \S\ref{sec: canonical liftings} we assume the existence of the class $[b]_{\mu_2}\in B(G_2,\{\mu_2\})$ in Definition \ref{def: mu ordinary}.

\begin{definition}
	We define the $\mu_2$-ordinary locus of $\calS_{\rmK_2,k}$ to be $\calS_{\rmK_2,[b]_{\mu_2}}$.
\end{definition}

 The following is deduced easily from \cite[Corollary 1.3.16]{KMS}.
\begin{thm}\label{thm: density}
	Assume $\rmK_2^p$ is neat.  Then
	\begin{enumerate}
		\item $\calS_{\rmK_2}$ is normal.
		
		\item The $\mu_2$-ordinary locus $\calS_{\rmK_{2},[b]_{\mu_2}}$ is Zariski open and dense in $\calS_{\rmK_2,k}$.
	\end{enumerate}
\end{thm}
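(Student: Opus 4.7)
The plan is to leverage the \'etale local structure of the integral model, supplied by Theorem \ref{thm: integral models abelian type}, together with the cited result of Kim--Madapusi Pera--Shin.

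For part (1), since $(\bfG_2,X_2)$ is Hodge type I would first choose an auxiliary Hodge type datum $(\bfG,X)$ as in Proposition \ref{lemma: auxiliary Hodge type datum} whose associated parahoric $\calG$ of $G=\bfG_{\bbQ_p}$ is connected; this is harmless because when $\rmK_{2,p}$ is very special, one can arrange the corresponding parahoric of $G$ to remain special. Theorem \ref{thm: integral models abelian type}(1) then gives an \'etale local isomorphism of $\scrS_{\rmK_2}(\bfG_2,X_2)$ with the local model $\rmM^{\mathrm{loc}}_{\calG,\{\mu_h\}}$, and Theorem \ref{thm: Levin} supplies normality with reduced special fiber whose geometric irreducible components are normal and Cohen--Macaulay. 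To promote this to normality of the full geometric special fiber, the key combinatorial input is that when $\rmK_{2,p}$ is very special, the subset $J\subset\bbS$ equals the set of simple reflections of the finite Weyl group $W_0$, and $\Adm(\{\mu_h\})_J$ collapses to a single double coset (this is the structural feature of the very special parahoric case alluded to in \S\ref{subsec: KR stratification very special} of the introduction). By Theorem \ref{thm: special fiber of local models and admissible set}, the geometric special fiber of the local model is then a single Schubert variety, in particular irreducible and (by Theorem \ref{thm: Levin}) normal. Transferring this along the \'etale local isomorphism yields (1).

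For part (2), I would invoke \cite[Corollary 1.3.16]{KMS} essentially directly: that result establishes density of the $\mu$-ordinary Newton stratum in the special fiber of Hodge type Shimura varieties with very special parahoric level, under hypotheses that our $(\bfG_2,X_2,\rmK_{2,p})$ satisfies. The only verification required is the identification of $\scrS_{\rmK_2}(\bfG_2,X_2)$ with the integral model used in \emph{loc.\ cit.}, which is routine given the extension property recorded in Theorem \ref{thm: integral models abelian type}(2): both models are normal and flat over $\calO_{E_2}$, extend $\mathrm{Sh}_{\rmK_2}(\bfG_2,X_2)$, and satisfy lifting for discrete valuation rings of mixed characteristic, which pins them down up to unique isomorphism.

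The only genuinely substantive point in the plan is the combinatorial statement that $\Adm(\{\mu_h\})_J$ is a single double coset in the very special setting, needed for (1). The non-emptiness and openness of the $\mu_2$-ordinary stratum, which is what makes density in (2) work (it is automatically dense once open and non-empty, because $[b]_{\mu_2}$ is the maximal element of $B(G_2,\{\mu_2\})$ when $G_2$ is quasi-split and the closure of a Newton stratum is a union of Newton strata), is the main content of the KMS corollary and is simply imported. Beyond these two inputs the deduction is formal.
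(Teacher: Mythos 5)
Your overall route is the paper's: reduce (1) to the special fiber of $\rmM^{\mathrm{loc}}_{\calG,\{\mu_h\}}$ via the \'etale local isomorphism of Theorem \ref{thm: integral models abelian type}, use Theorem \ref{thm: Levin} to reduce normality to integrality of the special fiber, settle that by the combinatorics of $\Adm(\{\mu\})_J$ in the very special case, and then import \cite[Corollary 1.3.16]{KMS} for (2). However, your key combinatorial claim is stated incorrectly: it is \emph{not} true that $\Adm(\{\mu_h\})_J$ collapses to a single double coset when $\rmK_{2,p}$ is very special. As the paper records in \S\ref{subsec: KR stratification very special}, in that case $\Adm_{G'}(\{\mu\})_{J'}=\{t_\lambda \mid \lambda\in X_*(T')_I^+,\ \lambda\curlyeqprec\mu\}$, which in general has many elements ($\mu$ need not be minuscule for the \'echelonnage root system $\Sigma'$); indeed the entire induction on Kottwitz--Rapoport strata in \S 6 would be vacuous if your claim held. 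The correct statement, and the one the paper uses (via the argument of \cite[Corollary 9.4]{PZ}), is that $\Adm(\{\mu\})_J$ has a unique \emph{maximal} element, namely $t_\mu$. Since the closure relations among the Schubert varieties $\underline{S}_w$ are governed by the Bruhat order, Theorem \ref{thm: special fiber of local models and admissible set} then gives $\bigcup_{w\in\Adm(\{\mu\})_J}\underline{S}_w=\underline{S}_{t_\mu}$, a single Schubert variety, so the (reduced) special fiber is integral and Theorem \ref{thm: Levin} yields normality. With that one correction your argument for (1) is the paper's argument.

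For (2) your plan matches the paper, which simply deduces it from (1) and \cite[Corollary 1.3.16]{KMS}. One caution on your parenthetical: openness plus non-emptiness of $\calS_{\rmK_2,[b]_{\mu_2}}$ does not by itself give density, since $\calS_{\rmK_2,k}$ need not be irreducible; one needs the generic Newton stratum to meet every irreducible component, which is exactly what the cited corollary provides and why normality from (1) enters as a hypothesis. Since you are importing that result wholesale this is not a gap, but the justification you sketch in parentheses is not sufficient on its own.
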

\begin{proof} To show (1), it suffices by Theorem \ref{cor: strongly acceptable integral model} to show that the  special fiber of $\bbM^{\mathrm{loc}}_{\calG_2,\{\mu_{h_2}\}}$ is normal. Note that the geometric irreducible components of this special fiber are normal (see \S\ref{sec: construction local model}), and hence it suffices to show that  $\bbM^{\mathrm{loc}}_{\calG_2,\{\mu_{h_2}\}}\otimes_{\calO_E}k$ is integral.
	This follows from the argument in \cite[Corollary 9.4]{PZ}, 
	noting that as in {\em loc.~cit.} the $\mu$-admissible set $\Adm(\{\mu\})_J$ has a single extremal element when $J\subset \bbS$ corresponds to a very special standard parahoric of $G(\brQ)$.
	
	(2) follows from (1) by \cite[Corollary 1.3.16]{KMS}.
\end{proof}

\subsubsection{}\label{subsec: construction of I groups} Let $\xbar\in \mathscr{S}_{\rmK_2}(\bfG_2,X_2)(k)$. Then we can define $I_{\xbar}\subset \text{Aut}_\Q(\mathcal{A}_{2,\xbar})$ to be the subgroup preserving $s_{\alpha,0,\ell}$ and $s_{\alpha,0,\xbar}$ as in \S\ref{sec: mu ordinary locus HT}.
The goal of the rest of this section is to prove the following generalization of Theorem \ref{thm: canonical lift HT}.
\begin{thm}\label{thm: can lift for Shimura var}
	Let $(\bfG_2,X_2,\calG_2)$ be a strongly  acceptable triple. Let  $\xbar\in \calS_{\rmK_2,[b]_{\mu_2}}(k)$. Then $\xbar$ admits a lifting to a special point $\widetilde{x}\in\mathscr{S}_{\rmK_2}(\bfG_2,X_2)(K)$ for some $K/\brQ$ finite such that the action of $I_{\xbar}(\Q)$ on $\mathcal{A}_{2,\xbar}$ lifts to an action (in the isogeny category) on $\mathcal{A}_{2,\widetilde{x}}$.
\end{thm}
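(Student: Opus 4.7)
The plan is to reduce to the Hodge type case (via acceptability), construct the lift using the $\mu$-ordinary canonical lifting of \S\ref{sec: canonical liftings}, verify it is a special point by exploiting that the $\sigma$-straight translation element is central in the Levi $M$, and finally transfer to $\scrS_{\rmK_2}(\bfG_2,X_2)$ via the construction (\ref{eqn: abelian type construction}). Since $(\bfG_2,X_2,\calG_2)$ is acceptable, Theorem \ref{thm: integral models abelian type} provides an auxiliary Hodge type datum $(\bfG,X)$ with connected parahoric $\calG$, and $\scrS_{\rmK_{2,p}}(\bfG_2,X_2)$ is built from $\scrS_{\rmK_p}(\bfG,X)^+$. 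Thus $\xbar$ lifts to a point $\ybar \in \scrS_{\rmK_p}(\bfG,X)(k)$ (after possibly changing the connected component we work with). By Lemma \ref{lemma: mu-ordinary class change of groups} applied to the central isogeny $\bfG_{\mathrm{der}}\to \bfG_{2,\mathrm{der}}$, the point $\ybar$ lies in the $\mu$-ordinary locus for $G$.

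Next, we apply the canonical lifting construction of \S\ref{sec: M-adapted}. Since $\calG$ is connected, Theorem \ref{thm: can-lift} produces a $(\calG,\mu_y)$-adapted lifting $\scrG$ of the $p$-divisible group of $\calA_\ybar$ to $\calO_{K'}$ for some finite $K'/\brQ$. By Proposition \ref{prop: versal deformation space tensors} and Proposition \ref{prop: formal nbd Shimura}, $\scrG$ corresponds to an $\calO_{K'}$-valued point of the formal completion $\widehat U_\ybar \simeq \mbox{Spf}A_{\widetilde\calG}$, which algebraizes to a point $\widetilde y \in \scrS_{\rmK_p}(\bfG,X)(\calO_{K'})$. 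Since $\scrS_{\rmK_p}(\bfG,X)$ is of finite type and its generic fiber is the Shimura variety over the number field $\bfE$, the $K'$-valued generic point of $\widetilde y$ is algebraic, and after enlarging yields a lift $\widetilde y\in \mathrm{Sh}_{\rmK_p}(\bfG,X)(K)$ for some number field $K\subset \overline{\bbQ}$.

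The crucial step is showing $\widetilde y$ is a special point. By construction of the $(\calM,\widetilde\lambda)$-adapted lift (Theorem \ref{thm: can-lift}), the Frobenius $b = \sigma(\dot t_{\mu'})$ lies in $M(\brQ)$, and the Hodge filtration on $\D\otimes_{\brZ}K$ is induced by the cocharacter $\widetilde\lambda$, which by the Lemma following Lemma \ref{lemma: cochar central} is central in $M$. Thus both the $p$-adic realization of Frobenius and the Hodge filtration factor through a single torus $\bfT_{\bbQ_p}\subset M$ containing the image of $\widetilde\lambda$. Using a Serre--Tate/Moonen-style argument (as in \cite{Mo} and \cite{SZ}), one constructs a $\Q$-rational maximal torus $\bfT\subset \bfG$ whose base change to $\bbQ_p$ is $\bfT_{\bbQ_p}$ (using that $\widetilde\lambda$ factors through $\bfT_{\bbQ_p}$) and such that the Deligne homomorphism of $\widetilde y$ factors through $\bfT_{\bbR}$; concretely, one takes the Mumford--Tate group of the $\Q$-Hodge structure on $\rmH^1_B(\mathcal A_{\widetilde y}(\bbC),\Q)$ and shows, via the $p$-adic comparison isomorphism and the centrality of $\widetilde\lambda$ in $M$, that its $p$-adic completion is contained in $\bfT_{\bbQ_p}$, forcing the Mumford--Tate group itself to be a torus. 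This is where I expect the main technical work to lie, since it requires combining the group-theoretic input from \S\ref{sec: mu admissible set}--\S\ref{sec: M-adapted} with the comparison isomorphisms and rationality properties of Hodge cycles.

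Finally, the $I_\ybar(\Q)$-action lifts because $I_\ybar(\Q)\otimes_\Q\Q_p \subset J_b(\Q_p)\subset M(\brQ)$ (by the discussion in \S\ref{sec: M-adapted 2}), which preserves the filtration induced by the central cocharacter $\widetilde\lambda$; combined with the $\ell$-adic Tate module action for $\ell\neq p$ (which lifts formally since we work in the isogeny category), this gives an action of $I_\ybar(\Q)$ on $\mathcal A_{\widetilde y}$ up to isogeny. To transfer back to the abelian type setting, we use the construction (\ref{eqn: abelian type construction}) of $\scrS_{\rmK_{2,p}}(\bfG_2,X_2)$ from $\scrS_{\rmK_p}(\bfG,X)^+$: special points on the Hodge type model map to special points on the abelian type model, and there is a natural identification of isogeny-class automorphism groups making the lift of the $I_\xbar(\Q)$-action on $\mathcal A^2_\xbar$ follow from the lift of the $I_\ybar(\Q)$-action. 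After enlarging $K$ so that all necessary data descend, we obtain the required special lift $\widetilde x$.
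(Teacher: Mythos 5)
Your overall strategy matches the paper's: reduce via acceptability to a Hodge type datum $(\bfG,X)$ with connected parahoric, apply the $(\calM,\widetilde\lambda)$-adapted canonical lift of Theorem \ref{thm: can-lift}, and transfer back through the construction (\ref{eqn: abelian type construction}). However, there are two genuine gaps at exactly the points carrying the technical weight.

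First, your argument for specialness is not completable as sketched. You propose to produce a $\bbQ$-rational maximal torus from the centrality of $\widetilde\lambda$ in $M$ via the $p$-adic comparison isomorphism, and you explicitly defer this as ``the main technical work.'' But $\widetilde\lambda$ and $M$ only live over ($p$-adic extensions of) $\bbQ_p$, and there is no descent mechanism from this data alone to a $\bbQ$-rational torus containing the Mumford--Tate group. The paper's actual argument runs in the opposite order: one \emph{first} lifts the $I_{\xbar}(\bbQ)$-action (using $I_{\xbar}(\bbQ)\subset J_b(\bbQ_p)\subset M(\brQ)$ and the centrality of $\widetilde\lambda$, plus Serre--Tate), and \emph{then} uses the deep input \cite[Theorem 6]{KMS} that the $\bbQ$-group $I_{\xbar}$ has the same absolute rank as $\bfG_2$; the Mumford--Tate group of $\calA^2_{\widetilde x}$ commutes with a maximal torus $\bfT$ of $I_{\xbar}$, which is therefore a maximal torus of $\bfG_2$, forcing the Mumford--Tate group into $\bfT$. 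Without invoking this full-rank statement your route has no $\bbQ$-rational source of a torus, and the claim that the $p$-adic comparison ``forces'' $\mathrm{MT}\otimes\bbQ_p\subset\bfT_{\bbQ_p}$ does not follow from what you have established.

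Second, the transfer of the $I$-group action back to $(\bfG_2,X_2)$ is not a ``natural identification of isogeny-class automorphism groups.'' The groups $I_{\overline{y}}$ (for $\bfG=\bfG_1$) and $I_{\xbar}$ (for $\bfG_2$) are automorphism groups of \emph{different} abelian varieties, $\calA^1_{\overline{x}_1}$ and $\calA^2_{\overline{x}_2}$, cut out by different tensors, and there is no direct map between them. The paper constructs the auxiliary datum $(\bfG_3,X_3)$ with $\bfG_3\subset(\bfG_1\times_{\bfG_{2,\ad}}\bfG_2)\times_{\bbG_m\times\bbG_m}\bbG_m$, identifies $\calA^3_{\overline{x}_3}\cong\calA^1_{\overline{x}_1}\times\calA^2_{\overline{x}_2}$, and proves the exact sequences of Proposition \ref{prop: relation between I groups} (with central kernels $\bfC_1,\bfC_2$, using \cite[Theorem 6]{KMS} again to identify the $I$-groups with centralizers of a semisimple element). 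Only then does the containment of the Mumford--Tate group in $\bfT_2$ and of $I_{\overline{x}_2}$ in the centralizer of $\bfT_2$ follow. Your final paragraph also compresses the reduction of a general $\xbar$ to one in the image of $j_2$, which in the paper requires modifying by an element of $\bfG_2(\bbA_f^p)$ and by a coset representative $j\in J$, together with the induced quasi-isogeny identifying $I_{\xbar}\cong I_{\xbar'}$; this is routine but should be spelled out.
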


We will deduce this theorem from \ref{thm: canonical lift HT} using the auxiliary construction from Proposition \ref{prop: morphism integral models}.  For notational convenience, we write $(\bfG_1,X_1)$ for $(\bfG,X)$ and $\iota_1:(\bfG_1,X_1)\rightarrow (\mathbf{GSp}(V_1),S_1^\pm)$ for the Hodge embedding $\iota$. Then $\bfG_3$ is defined  to be  the identity component of $\bfG_1\times_{\bfG_1^{\ad},\bbG_m}\bfG_2$.
We obtain a Shimura datum $(\bfG_3,X_3)$ together with morphisms  $(\bfG_1,X_1)\leftarrow (\bfG_3,X_3)\rightarrow(\bfG_2,X_2)$ and a Hodge embedding $\iota_3:(\bfG_3,X_3)\rightarrow (\mathbf{GSp}(V_3),S_3^\pm)$, where $V_3=V_1\oplus V_2$.

For $i=1,2,3$, let $\bfE_i$ denote the reflex field of $(\bfG_i,X_i)$; then  we have $\bfE_3\subset\bfE':=\bfE_1\bfE_2$.  We let $v_i$  (resp. $v'$) denote the place of $\bfE_i$ (resp. $\bfE'$) induced by the embedding $i_p$ and we let $E_i$ (resp. $E'$) denote  the completion. By construction, we have $E'=E_2$. 
Set $G_i:=\bfG_{i,\bbQ_p}$, and let $\calG_1$ (resp. $\calG_3$) denote the parahoric group scheme of $G_1$ (resp. $G_3$) determined by $\calG_2$. For $i=1,2,3$, we set $\rmK_{i,p}:=\calG_i(\bbZ_p)$ and we fix compact open subgroups $\rmK_i^p\subset\bfG_i(\bbA_f^p)$ such that $\rmK_3^p$ maps to $\rmK_1^p$ and $\rmK_2^p$. We set $\rmK_i:=\rmK_{i,p}\rmK_i^p$. We then have integral models $\scrS_{\rmK_i}(\bfG_i,X_i)$, where $\scrS_{\rmK_1}(\bfG_1,X_1)$ is constructed from the (very) good Hodge embedding $\iota_1$, $\scrS_{\rmK_2}(\bfG_2,X_2)$ is constructed from $\scrS_{\rmK_1}(\bfG_1,X_1)$  by viewing $(\bfG_2,X_2)$ as a Shimura data of abelian type, and $\scrS_{\rmK_3}(\bfG_3,X_3)$ is constructed from a good Hodge embedding as in Lemma \ref{lem: G_3 torsion free}.

\subsubsection{}  Let $\bfH$   denote the subgroup of $ \mathbf{GSp}(V_1)\times\mathbf{GSp}(V_2)$ consisting of elements $(g_1,g_2)$ such that $c_{1}(g_1)=c_{2}(g_2)$. Then the natural map $\bfG_3\rightarrow \mathbf{GSp}(V_1)\times\mathbf{GSp}(V_2)$ factors through $\bfH$ and we let $S_H$ denote the $\bfH_{\bbR}$-conjugacy class of homomorphisms $\bbS\rightarrow \bfH_{\bbR}$ induced by $X_3$.
There  are natural morphisms of Shimura data $(\bfH,S_H)\rightarrow (\mathbf{GSp}(V_i),S_i^\pm)$ for $i=1,2,3$.

We let $V_{1,\bbZ_p}\subset V_{1,\bbQ_p}$ be a $\bbZ_p$-lattice such that $\iota_1$ extends to a good local integral Hodge embedding $\calG_1\rightarrow V_{1,\bbZ_p}$ which is very good at all points of $\Mloc_{\calG_1,\mu_{h_1}}(k)$, and we set $V_{3,\bbZ_p}:=V_{1,\bbZ_p}\oplus V_{2,\bbZ_p}\subset V_{3,\bbQ_p}$.  For $i=1,2,3$, we let $\rmK'_{i,p}$ denote the stabilizer of $V_{i,\bbZ_p}$ inside $\mathbf{GSp}(V_{i,\bbQ_p})$ and  let $\rmH_p$ denote the stabilizer of $V_{3,\bbZ_p}$ inside $\bfH(\bbQ_p)$.  We also fix compact open subgroups  $\rmK_i'^p\subset \mathbf{GSp}(V_{i,\bbA_f^p})$ containing the image of $\rmK_i^p$ for $i=1,2,3$, $\rmH^p\subset \bfH(\bbA_f^p)$ containing the image of $\rmK_3^p$, and we set $\rmK_i'=\rmK'_{i,p}\rmK_i'^p$, $\rmH=\rmH_p\rmH^p$.
Then the Shimura variety $\Sh_{\rmH}(\bfH,S_H)$ admits a moduli interpretation as pairs of tuples $(A_i,\lambda_i,\epsilon_i)$, $i=1,2$, where $A_i$ are abelian varieties of $\dim(V_i)/2$, $\lambda_i$ is a weak polarization, and $\epsilon_i$ are level $\mathrm{Im}(\rmH^p\rightarrow \mathbf{GSp}(V_{i,\bbA_f^p}))$-structures which preserve symplective pairings up to the same $\bbA_f^{p\times}$-scalar  (cf. \cite[7.2]{Z}). This  moduli problem extends to $\bbZ_{(p)}$,  hence we obtain   an integral model $\scrS_{\rmH}(\bfH,S_H)/\bbZ_{(p)}$. 

\begin{prop}
	There is a commutative diagram of $\calO_{E'}$-stacks
	\begin{equation}\label{eqn: product SV diagram}\xymatrix{\scrS_{\rmK_1}(\bfG_1,X_1)_{\calO_{E'}}\ar[d]^{i_1}&\scrS_{\rmK_3}(\bfG_3,X_3)_{\calO_{E'}}\ar[r]^{j_2}\ar[d]^{i_3}\ar[l]_{j_1}&\scrS_{\rmK_2}(\bfG_2,X_2)_{\calO_{E'}}\ar[d]^{i_2}\\
		\scrS_{\rmK_1'}(\mathbf{GSp}(V_1),S_1^\pm)_{\calO_{E'}}&\scrS_{\rmK'}(\bfH,S_H)_{\calO_{E'}}\ar[r]\ar[l]&\scrS_{\rmK'}(\mathbf{GSp}(V_2),S_2^\pm)_{\calO_{E'}}}.\end{equation}

\end{prop}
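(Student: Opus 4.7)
The proof proceeds by constructing each map in the diagram and verifying commutativity. The vertical maps $i_1$ and $i_2$ are provided directly by Proposition \ref{prop: morphism integral models} applied to $(\bfG_1,X_1,\iota_1)$ and $(\bfG_2,X_2,\iota_2)$. The horizontal maps in the bottom row come from the moduli interpretation of $\scrS_{\rmK'}(\bfH,S')$ as pairs of polarized abelian schemes $(A^1,\lambda_1,\epsilon^p_1;A^2,\lambda_2,\epsilon^p_2)$ with compatible similitude factors: forgetting $A^2$ yields the morphism to $\scrS_{\rmK'_1}(\mathbf{GSp}(V_1),S_1^\pm)_{\calO_{E'}}$, and analogously for the other factor. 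These extend the natural morphisms on generic fibers coming from the projections $\bfH\rightarrow \mathbf{GSp}(V_i)$.

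The key step is to construct $\scrS_{\rmK_3}(\bfG_3,X_3)$ and the map $i_3$ simultaneously. Since $(\bfG_3,X_3)$ is of Hodge type via the composition $(\bfG_3,X_3)\hookrightarrow (\bfH,S')\rightarrow (\mathbf{GSp}(V_3),S_3^\pm)$, I would define $\scrS_{\rmK_3}(\bfG_3,X_3)$ to be the normalization of the Zariski closure of $\Sh_{\rmK_3}(\bfG_3,X_3)$ inside $\scrS_{\rmK'}(\bfH,S')_{\calO_{E'}}$; this yields $i_3$ tautologically. For the top horizontal arrows $j_1,j_2$, I would then use functoriality: for $i=1,2$, composing $i_3$ with the lower horizontal map to $\scrS_{\rmK'_i}(\mathbf{GSp}(V_i),S_i^\pm)$ produces a morphism whose generic fiber factors through $\Sh_{\rmK_i}(\bfG_i,X_i)$ (by the morphism of Shimura data $(\bfG_3,X_3)\rightarrow(\bfG_i,X_i)$). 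Hence the image of $\scrS_{\rmK_3}(\bfG_3,X_3)$ lies in the Zariski closure $\scrS_{\rmK_i}(\bfG_i,X_i)^{-}$, and since $\scrS_{\rmK_3}(\bfG_3,X_3)$ is normal, the universal property of normalization provides the lift $j_i:\scrS_{\rmK_3}(\bfG_3,X_3)\rightarrow \scrS_{\rmK_i}(\bfG_i,X_i)_{\calO_{E'}}$.

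Commutativity of the two squares is then immediate: in each case both compositions agree on the dense open generic fiber by functoriality of morphisms of Shimura data, and both land in the separated target $\scrS_{\rmK'_i}(\mathbf{GSp}(V_i),S_i^\pm)_{\calO_{E'}}$, so they coincide globally. The main technical point to be careful about is the very first step, namely that $\scrS_{\rmK_3}(\bfG_3,X_3)$ should be defined as a closure inside $\scrS_{\rmK'}(\bfH,S')$ rather than inside $\scrS_{\rmK'_3}(\mathbf{GSp}(V_3),S_3^\pm)$ via the direct Hodge embedding of $(\bfG_3,X_3)$; if one started with the latter one would need to verify separately that the morphism factors through the subfunctor $\scrS_{\rmK'}(\bfH,S')$. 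By anchoring the construction to the middle of the bottom row from the outset, we sidestep this issue and reduce everything else to applications of universal properties of closures and normalizations, plus the already-established Proposition \ref{prop: morphism integral models}.
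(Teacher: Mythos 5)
Your construction of the bottom row and of $i_1$, $i_2$ matches the paper, but there is a genuine gap in how you obtain $\scrS_{\rmK_3}(\bfG_3,X_3)$, $j_1$ and $j_2$. The objects in the top row are not free for you to define: $\scrS_{\rmK_2}(\bfG_2,X_2)$ and $\scrS_{\rmK_3}(\bfG_3,X_3)$ have already been constructed by the abelian-type machinery of Theorem \ref{thm: integral models abelian type}, i.e.\ as twists of the connected component $\scrS_{\rmK_{1,p}}(\bfG_1,X_1)^+$ via the $\scrA(\cdot)$-group construction (\ref{eqn: abelian type construction}); they are \emph{not} defined as normalizations of Zariski closures inside a Siegel model. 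Replacing $\scrS_{\rmK_3}$ by the normalization of the closure of $\Sh_{\rmK_3}(\bfG_3,X_3)$ in $\scrS_{\rmK'}(\bfH,S')$ proves a proposition about a different object; the later applications (the fact that $j_1$ and $j_2$ induce isomorphisms on complete local rings at geometric points of the special fiber, and the identification $\calA^3\cong j_1^*\calA^1\times j_2^*\calA^2$) depend on all three models in the top row being the abelian-type ones built from the same connected piece. You would have to prove your model agrees with that one, which is not automatic.

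The same issue defeats your construction of $j_2$. The universal property of normalization lets you lift a map from a normal scheme into $\scrS_{\rmK_i}(\bfG_i,X_i)^-$ to its normalization, but for $i=2$ there is no such object: $\scrS_{\rmK_2}(\bfG_2,X_2)$ is not the normalization of its image in $\scrS_{\rmK_2'}(\mathbf{GSp}(V_2),S_2^\pm)$ --- indeed the mere existence of a map $i_2$ to the Siegel model is the nontrivial content of Proposition \ref{prop: morphism integral models}, and that map is not known (nor needed) to be finite or a closed immersion. So one cannot produce $j_2$ by normality plus a factorization on the generic fiber. The paper instead builds $j_1$ and $j_2$ directly on connected components, using that $\scrS_{\rmK_1}$, $\scrS_{\rmK_3}$ and $\scrS_{\rmK_2}$ are all obtained from $\scrS_{\rmK_{1,p}}(\bfG_1,X_1)^+$ by compatible $\scrA$-group quotients (the argument of \cite[Proposition 5.4]{Zhang}); commutativity then follows from the generic fiber together with flatness and separatedness, as you say. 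If you want to salvage your route, the missing ingredient is precisely a comparison of your ad hoc model of $\scrS_{\rmK_3}$ with the abelian-type one, together with a direct construction of $j_2$ that does not pass through the Siegel model.
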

\begin{proof}It suffices to consider the case of neat prime-to-$p$ level structure so that we may assume all objects are schemes. The existence of the bottom row follows from the moduli interpretations of the integral models. 
The morphism $j_1$ is constructed in Proposition \ref{prop: morphism SV iso completions} and $j_2$ is constructed  in a similar way to Proposition \ref{prop: morphism integral models}.
	
	The morphism $i_1$ exists by construction of $\scrS_{\rmK_1}(\bfG_1,X_1)_{\calO_{E'}}$ and $i_2$  is  constructed in Proposition \ref{prop: morphism integral models}. For $\iota_3$, note that there is a finite morphism $$\scrS_{\rmK'}(\bfH,S_H)_{\calO_{E'}}\rightarrow \scrS_{\rmK_1'}(\mathbf{GSp}(V_1),S_1^\pm)_{\calO_{E'}}\times\scrS_{\rmK_2'}(\mathbf{GSp}(V_2),S_2^\pm)_{\calO_{E'}}.$$ The morphism $$\scrS_{\rmK_3}(\bfG_3,X_3)_{\calO_{E'}}\rightarrow \scrS_{\rmK_1'}(\mathbf{GSp}(V_1),S_1^\pm)_{\calO_{E'}}\times\scrS_{\rmK_2'}(\mathbf{GSp}(V_2),S_2^\pm)_{\calO_{E'}}$$  induced by  $(i_1\circ j_1,i_2,\circ j_2)$ factors through $\scrS_{\rmK'}(\bfH,S_H)_{\calO_{E'}}$ on the generic fiber, and hence lifts to a morphism $i_3:\scrS_{\rmK_3}(\bfG_3,X_3)_{\calO_{E'}}\rightarrow \scrS_{\rmK'}(\bfH,S_H)_{\calO_{E'}}$ as desired.
\end{proof} 

\subsubsection{}

Let  $\calA_{i}\rightarrow \scrS_{\rmK_i}(\bfG_i,X_i)_{\calO_{E'}}$,  denote the pullback of the universal abelian variety along $\scrS_{\rmK_i}(\bfG_i,X_i)_{\calO_{E'}}\rightarrow \scrS_{\rmK_i'}(\mathbf{GSp}(V_i,S_i^\pm)_{\calO_{E'}}$. For $i=3$, this map factors through $\scrS_{\rmH}(\bfH,S_H)_{\calO_{E'}}$ and there is an identification \begin{equation}
\label{eqn: product abelian variety}
\calA_3\cong j_1^*\calA_1\times j_2^*\calA_2.\end{equation}

Let $\overline{x}_3\in\scrS_{\rmK_3}(\bfG_3,X_3)(k)$ and write  $\overline{x}_1\in\scrS_{\rmK_1}(\bfG_1,X_1)(k)$, $\overline{x}_2\in\scrS_{\rmK_2}(\bfG_2,X_2)(k)$ for the image of $\overline{x}_3$ under $j_1$ and $j_2$.  The isomorphism (\ref{eqn: product abelian variety}) implies we have an isomorphism $\calA_{3,\overline{x}_3}\cong \calA_{1,\overline{x}_1}\times\calA_{2,\overline{x}_2}$.  We let $I_{\overline{x}_3}\subset \mathrm{Aut}_{\bbQ}(\calA_{3,\overline{x}_3})$, $I_{\overline{x}_1}\subset \mathrm{Aut}_{\bbQ}(\calA_{1,\overline{x}_1})$ denote the groups constructed in the same way as \S\ref{subsec: construction of I groups}.

\begin{prop}\label{prop: relation between I groups}
	There are natural exact sequences:\[\xymatrix{0\ar[r] & \bfC_1 \ar[r]&I_{\overline{x}_3}\ar[r]&I_{\overline{x}_1}\ar[r]&0}\]
	\[\xymatrix{0\ar[r] & \bfC_2 \ar[r]&I_{\overline{x}_3}\ar[r]&I_{\overline{x}_2}\ar[r]&0}\]
	where $\bfC_1$ (resp. $\bfC_2$)  is the kernel of the map $f:\bfG_3\rightarrow \bfG_1$ (resp. $g:\bfG_3\rightarrow \bfG_2$).
\end{prop}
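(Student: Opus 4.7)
The plan is to analyze $I_{\overline{x}_3}$ through the isogeny decomposition $\calA^3_{\overline{x}_3} \sim \calA^1_{\overline{x}_1} \times \calA^2_{\overline{x}_2}$ coming from (\ref{eqn: product abelian variety}), and compare the resulting block structure with the fiber product description of $\bfG_3$ inside $\bfG_1 \times \bfG_2$. First I would note that the projections $V_3 = V_1 \oplus V_2 \to V_i$ are $\bfG_3$-equivariant, since $\bfG_3$ acts on $V_3$ through its maps $f,g$ to $\bfG_1,\bfG_2$, so the associated idempotents $e_i \in \mathrm{End}(V_3)$ define $\bfG_3$-invariant tensors in $V_3^\otimes$; their Betti, \'etale, and crystalline realizations identify with the idempotents giving the decomposition (\ref{eqn: product abelian variety}) in the respective cohomologies of $\calA^3_{\overline{x}_3}$. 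Any element of $I_{\overline{x}_3}$ commutes with each $e_i$, and so is block diagonal: $\alpha = (\alpha_1,\alpha_2)$ with $\alpha_i \in \mathrm{Aut}_{\bbQ}(\calA^i_{\overline{x}_i})$. Similarly, any $\bfG_i$-invariant tensor in $V_i^\otimes$ pulls back along the projection to a $\bfG_3$-invariant tensor in $V_3^\otimes$, whose realizations agree on $\calA^3_{\overline{x}_3}$ with the pullback of $s_{\alpha,*,\overline{x}_i}$, so preservation by $\alpha$ forces $\alpha_i \in I_{\overline{x}_i}$. This produces natural maps $f_*\colon I_{\overline{x}_3} \to I_{\overline{x}_1}$ and $g_*\colon I_{\overline{x}_3}\to I_{\overline{x}_2}$.

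Next I would identify the kernels. Via the canonical embedding $I_{\overline{x}_3}\hookrightarrow \bfG_3$ induced by (say) the Betti realization, the map $f_*$ is compatible with $f \colon \bfG_3 \to \bfG_1$, so $\ker f_* = I_{\overline{x}_3}\cap \bfC_1$. The fiber product presentation of $\bfG_3$ forces any element of $\bfC_1$ to be of the form $(1,z)$ with $z\in \ker(\bfG_2 \to \bfG_{2,\mathrm{ad}})\cap \ker(c_2)$; in particular $\bfC_1$ is contained in the center of $\bfG_3$. Central elements commute with the Mumford--Tate group of $\calA^3_{\widetilde{x}_3}$ for any characteristic zero lift $\widetilde{x}_3$ of $\overline{x}_3$ (such a lift exists by the $\calO_{E'}$-flatness of $\scrS_{\rmK_3}(\bfG_3,X_3)$), and therefore act on $\calA^3_{\widetilde{x}_3}$ by rational endomorphisms preserving all tensors. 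Reducing modulo $p$ produces the required inclusion $\bfC_1 \subseteq I_{\overline{x}_3}$, whence $\ker f_* = \bfC_1$.

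Finally, for surjectivity of $f_*$ as a map of $\bbQ$-group schemes, I would combine the kernel computation with a dimension comparison: by \cite[Theorem 6]{KMS}, $I_{\overline{x}_3}$ and $I_{\overline{x}_1}$ are inner forms of reductive subgroups of $\bfG_3$ and $\bfG_1$ of matching rank, and the short exact sequence $1\to \bfC_1 \to \bfG_3 \xrightarrow{f} \bfG_1 \to 1$ gives $\dim I_{\overline{x}_3} = \dim I_{\overline{x}_1} + \dim \bfC_1$, forcing schematic surjectivity of $f_*$. The symmetric argument with $g$ in place of $f$ produces the second exact sequence. The hard part is the containment $\bfC_1 \subseteq I_{\overline{x}_3}$: the abstract algebraic group $\bfC_1$ a priori only acts on $V_3$, and one must exhibit this action via actual rational endomorphisms of $\calA^3_{\overline{x}_3}$, for which the lift-to-characteristic-zero argument above seems the cleanest route.
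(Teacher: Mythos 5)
Your construction of the maps $I_{\overline{x}_3}\to I_{\overline{x}_1}$ and $I_{\overline{x}_3}\to I_{\overline{x}_2}$ (including the projection idempotents among the tensors cutting out $\bfG_3\subset\mathbf{GL}(V_3)$, then pulling back the tensors cutting out $\bfG_1$), and your proof that $\bfC_1\subseteq I_{\overline{x}_3}$ (centrality of $\bfC_1$ in $\bfG_3$, hence an action on a characteristic-zero lift $\calA^3_{\widetilde{x}_3}$ commuting with the Mumford--Tate group, then reduction), coincide with the paper's argument. The identification $\ker f_* = I_{\overline{x}_3}\cap\bfC_1$ is also fine.

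The gap is in the surjectivity step. \cite[Theorem 6]{KMS} does not merely assert that $I_{\overline{x}_3}$ and $I_{\overline{x}_1}$ have the same rank as $\bfG_3$ and $\bfG_1$; rank equality alone cannot yield $\dim I_{\overline{x}_3}=\dim I_{\overline{x}_1}+\dim\bfC_1$ (a maximal torus already has full rank and in general strictly smaller dimension than a Frobenius centralizer, so ``matching rank'' leaves the dimensions of the two groups completely uncorrelated). What the theorem actually provides, and what the paper uses, is a single semisimple element $\gamma_\ell\in\bfG_3(\bbQ_\ell)$ such that $I_{\overline{x}_3}\otimes_{\bbQ}\bbQ_\ell$ is the centralizer of $\gamma_\ell$ in $\bfG_{3,\bbQ_\ell}$ \emph{and} $I_{\overline{x}_1}\otimes_{\bbQ}\bbQ_\ell$ is the centralizer of $f(\gamma_\ell)$ in $\bfG_{1,\bbQ_\ell}$; the compatibility of the two Frobenius elements under $f$ is the crux, and it is exactly what is missing from your write-up. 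With it, exactness after $\otimes_{\bbQ}\bbQ_\ell$ follows from the relation between the centralizer of $\gamma_\ell$ and that of its image under the central quotient $f$ (and this also recovers your dimension formula, making the separate count unnecessary). If you do insist on the dimension route, you would further need to address connectedness of $I_{\overline{x}_1}$, since a closed subgroup of full dimension is only guaranteed to contain the identity component.
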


\begin{proof} 
	Since $\bfG_3\subset\bfH$, we may assume that the set of tensors  defining $\bfG_3\subset \mathbf{GL}(V_3)$ includes tensors   corresponding to the projections of $V_{3,\bbZ_{(p)}}$ onto the direct summands $V_{i,\bbZ_{(p)}}\subset V_{3,\bbZ_{(p)}}$ for $i=1,2$.  
	It follows that $I_{\overline{x}_3}$ respects the product decomposition $\calA_{3,\overline{x}_3}\cong \calA_{1,\overline{x}_1}\times\calA_{2,\overline{x}_2}$ and hence we obtain a natural map $I_{\overline{x}_3}\rightarrow \mathrm{Aut}_{\bbQ}(\calA_{1,\overline{x}_1})$.
	Similarly, by considering the pullback to $V_{3}$ of tensors defining $\bfG_{1}$, one can show that $I_{\overline{x}_3}\rightarrow \mathrm{Aut}_{\bbQ}(\calA_{1,\overline{x}_1})$ factors through $I_{\overline{x}_1}$. We obtain a natural map $I_{\overline{x}_3}\rightarrow I_{\overline{x}_1}$.  
	
	Let $\widetilde{x}_3\in\scrS_{\rmK_3}(\bfG_3,X_3)(\calO_K)$ denote a lift of $\overline{x}_3$.
	Since $\bfC_1$ lies in the center of $\bfG_3$, we have natural maps $$\bfC_1\rightarrow \mathrm{Aut}_{\bbQ}(\calA_{3,\widetilde{x}_3}\otimes_K\overline{K})\rightarrow \mathrm{Aut}_{\bbQ}(\calA_{3,\overline{x}_3,k})$$
	whose image lies in $I_{\overline{x}_3}$. 
	
	We thus obtain a sequence $\bfC_1\rightarrow I_{\overline{x}_3}\rightarrow I_{\overline{x}_1}$ and it suffices to check the exactness upon base changing to $\bbQ_{\ell}$ for some prime $\ell\neq p$. By \cite[Theorem 6]{KMS} there is a semisimple element $\gamma_{\ell}\in \bfG_3(\bbQ_{\ell})$ such that the natural inclusion $I_{\overline{x}_3}\otimes_{\bbQ}\bbQ_\ell\subset \bfG_{3,\bbQ_\ell}$ (resp.  $I_{\overline{x}_1}\otimes_{\bbQ}\bbQ_\ell\subset \bfG_{1,\bbQ_\ell}$) identifies $I_{\overline{x}_3}\otimes_{\bbQ}\bbQ_\ell$ (resp. $I_{\overline{x}_1}\otimes_{\bbQ}\bbQ_\ell)$ with the centralizer of $\gamma_{\ell}$ in $\bfG_{3,\bbQ_\ell}$ (resp. $f(\gamma_\ell)$ in $\bfG_{1,\bbQ_\ell}$).  We thus obtain the first exact sequence and  the argument for $I_{\overline{x}_2}$ is analogous.
\end{proof}

\subsubsection{Proof of Theorem \ref{thm: can lift for Shimura var}}
	It suffices to consider the case of neat prime-to-$p$ level structure. For $i=1,2,3$, we write $\calS_{\rmK_i}$ for the special fiber of the integral model $\scrS_{\rmK_{i}}(\bfG_i,X_i)$.	Let $\overline{x}_2\in\calS_{\rmK_2,[b]_{\mu_2}}(k)$. We first assume $\overline{x}_2=j_2(\overline{x}_3)$ for some  $\overline{x}_3\in\calS_{\rmK_3}(k)$; by Lemma \ref{lemma: mu-ordinary class change of groups} we have $\overline{x}_3\in\calS_{\rmK_3,[b]_{\mu_3}}(k)$.
	Let $\overline{x}_1\in \calS_{\rmK_1,[b]_{\mu_1}}(k)$ denote the image of $\overline{x}_3$. By Theorem \ref{thm: canonical lift HT} there exists $K/\brQ$ finite and  $\widetilde{x}_1\in\Sh_{\rmK_1}(\bfG_1,X_1)(K)$ lifting $\overline{x}_1$ such that the action of $I_{\overline{x}_1}(\bbQ)$  lifts to $\calA_{1,\widetilde{x}_1}$. Then we may consider $I_{\overline{x}_1}$ as a subgroup of $\bfG_1$ and we let $\bfT_1$ denote the connected component of the center of $I_{\overline{x}_1}$. The Mumford--Tate group of $\calA_{1,\widetilde{x}_1}$ is a connected subgroup of $\bfG_1$ which commutes with $I_{\overline{x}_1},$ hence is contained in $\bfT_1,$ as 
	$I_{\overline{x}_1}$ and $\bfG_1$ have the same rank.
	
	Let $\bfT_3\subset\bfG_3$ denote the identity component of the preimage of $\bfT_1$ in $\bfG_3$ and $\bfT_2$ the image of $\bfT_3$ in $\bfG_2$. By construction, the morphisms of integral models $$\scrS_{\rmK_1}(\bfG_1,X_1)_{\calO_{E'}}\leftarrow
	\scrS_{\rmK_3}(\bfG_3,X_3)_{\calO_{E'}}\rightarrow \scrS_{\rmK_2}(\bfG_2,X_2)_{\calO_{E'}}$$ induce isomorphisms of completions at geometric points in the special fiber. Thus let $\widetilde{x}_3$ (resp. $\widetilde{x}_2$) denote the point lifting $\overline{x}_3$ (resp. $\overline{x}_2$) corresponding to $\widetilde{x}_1$.
	Then the Mumford--Tate group for $\calA_{3,\widetilde{x}_3}$ (resp. $\calA_{2,\widetilde{x}_2}$) is contained in $\bfT_3$ (resp. $\bfT_2$). It follows from Proposition \ref{prop: relation between I groups} that $I_{\overline{x}_3}$ (resp. $I_{\overline{x}_2}$) is contained in the centralizer of $\bfT_3$ in $\bfG_3$ (resp. $\bfT_2$ in $\bfG_2$), and hence the action of $I_{\overline{x}_2}(\bbQ)$ lifts to one  on $\calA_{\widetilde{x}_2}$. 
	
	Now let $\overline{x}_2\in\calS_{\rmK_2,[b]_{\mu_2}}(k)$ be any point. It suffices to prove the result with 
	$\scrS_{\rmK_{2,p}}(\bfG_2,X_2)$ in place of $\scrS_{\rmK_2}(\bfG_2,X_2),$ and with $\overline{x}_2$ 
	replaced by a lift to a point of $\calS_{\rmK_{2,p},[b]_{\mu_2}}(k),$ which we will again denote 
	$\overline{x}_2$. Recall $J\subset \bfG_2(\bbQ_p)$ is a set mapping bijectively to a set of coset representatives for the image of (\ref{eqn: injection A groups}). Then by the construction of $\scrS_{\rmK_2,p}(\bfG_2,X_2)$ via $\scrS_{\rmK_1,p}(\bfG_1,X_1)$ in \S\ref{subsubsec: construction of integral model}, there exists $j\in J$ such that $\overline{x}_2\in [\scrS_{\rmK_{1,p}}(\bfG_1,X_1)^+\times\scrA(\bfG_{2,\bbZ_{(p)}})j]/\scrA(\bfG_{1,\bbZ_{(p)}})^\circ$. We let $\overline{x}_2'\in [\scrS_{\rmK_{1,p}}(\bfG_1,X_1)^+\times\scrA(\bfG_{2,\bbZ_{(p)}})]/\scrA(\bfG_{1,\bbZ_{(p)}})^\circ$ be the point corresponding to $\overline{x}_2$ under the isomorphism induced by $j$. Then upon modifying $\overline{x}_2$ by an element of $\bfG_2(\bbA_f^p)$ which only changes the abelian variety $\calA_{2,\overline{x}_2}$ up to prime-to-$p$ isogeny, we may assume $\overline{x}_2'=j_2(\overline{x}_3')$ for some $\overline{x}_3'\in \scrS_{\rmK_{3,p}}(\bfG_3,X_3)(k)$.  
	
	Let $\widetilde{x}_2'\in\scrS_{\rmK_{2,p}}(\bfG_2,X_2)(\calO_K)$ be a lift of of $\overline{x}_2',$ for some finite extension 
	$K/\breve\Q_p.$ 
	By construction, corresponding to the element $j,$ there is (after possibly increasing $K$) a point $\widetilde{x}_2 \in \scrS_{\rmK_{2,p}}(\bfG_2,X_2)(\calO_K)$ lifting $\overline{x}_2,$  and 
	a $p$-power quasi-isogeny $\calA_{2,\widetilde{x}_2}\rightarrow \calA_{2,\widetilde{x}_2'}$ taking $s_{\alpha,0,\overline{x}_2}$ to $s_{\alpha,0,\overline{x}_2'}$ (resp. $s_{\alpha,\ell,\overline{x}_2}$ to $s_{\alpha,\ell,\overline{x}_2'}$ for $\ell\neq p$). 
	By considering the reduction of this quasi-isogeny one sees that $\overline{x}'_2\in\calS_{\rmK_{2,p},[b]_{\mu}}(k),$ 
	and one also obtains an induced isomorphism $I_{\overline{x}_2}\cong I_{\overline{x}_2'}.$ 
	From what we saw above, it follows that we may choose $\widetilde{x}_2'$ such that the action of 
	$I_{\overline{x}_2'}$ lifts to $\calA_{2,\widetilde{x}'_2}$. Then the action of $I_{\overline{x}_2}\cong I_{\overline{x}_2'}$ 
	lifts to $\calA_{2,\widetilde{x}_2}$.
\qed

\subsubsection{}\label{sec: indep mu ordinary}We will use the above to deduce properties about the conjugacy class of Frobenius as in \cite[\S2.3]{Ki3}.   Assume $\xbar\in \mathcal{S}_{\rmK_2,[b]_{\mu_2}}(k)$ arises from an $\F_{q}$-point $x\in\mathscr{S}_{\rmK_2}(\bfG_2,X_2)(\bbF_q)$ where $\bbF_q$ is a finite extension of $k_{E_2}$. For $\ell \neq p$ a prime, let $\gamma_\ell$ denote the geometric $q$-Frobenius in $\mathrm{Gal}(\overline{\mathbb{F}}_q/\bbF_{q})$ acting on the dual of the $\ell$-adic Tate module $T_\ell\mathcal{A}_{2,\xbar}^{\vee}$. Since the tensors $s_{\alpha,\ell,\overline{x}}\in T_\ell\calA_{2,\xbar}^{\otimes} $  are Galois-invariant, we may consider $\gamma_\ell$ as an element of $\bfG_2(\bbQ_\ell)$ via the level structure $V_{\bbQ_\ell}\cong T_\ell\calA_{2,\overline{x}}\otimes_{\bbZ_\ell}\bbQ_\ell$.

\begin{cor}\label{cor: l indep mu ordinary}Assume $(\bfG_2,X_2,\calG_2)$ is a strongly acceptable triple. Suppose $\overline x\in\mathcal{S}_{\rmK_2,[b]_{\mu_2}}(k)$ arises from $x\in\mathscr{S}_{\rmK_2}( \bfG_2,X_2)(\bbF_q)$.
	There exists an element $\gamma_0\in \bfG_2(\Q)$, such that 
	
	\begin{enumerate}\item For $\ell\neq p$, $\gamma_0$ is conjugate to $\gamma_\ell$ in $\bfG_2(\Q_\ell)$.
		
		\item $\gamma_0$ is elliptic in $\bfG_2(\R)$.
	\end{enumerate}
\end{cor}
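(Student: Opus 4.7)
The plan is to take $\gamma_0$ to be the $q$-power relative Frobenius endomorphism $\pi_x \in \mathrm{End}^0(\calA^2_x)$, transported into $\bfG_2(\bbQ)$ via the canonical lift provided by Theorem \ref{thm: can lift for Shimura var}. First I observe that since $\calA^2_x$ is defined over $\bbF_q$, the Galois group $\mathrm{Gal}(\overline{\bbF}_q/\bbF_q)$ fixes the tensors $s_{\alpha,\ell,\overline{x}}$ for $\ell \neq p$ and the crystalline tensors $s_{\alpha,0,\overline{x}}$; equivalently, the action of $\pi_x$ on $H^1_{\et}(\calA^2_{\overline{x}}, \bbQ_\ell)$ (which coincides with the geometric Frobenius $\gamma_\ell$) and on $\bbD[1/p]$ preserves all these tensors, placing $\pi_x \in I_{\overline{x}}(\bbQ)$.

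Since $(\bfG_2, X_2, \calG_2)$ is acceptable and $\overline{x}$ lies in the $\mu$-ordinary locus, Theorem \ref{thm: can lift for Shimura var} produces a special point $\widetilde{x} \in \scrS_{\rmK_2}(\bfG_2, X_2)(K)$ lifting $\overline{x}$, for some finite $K/\brQ$, such that the $I_{\overline{x}}(\bbQ)$-action lifts to an action on $\calA^2_{\widetilde{x}}$ in the isogeny category. Via the Betti realization at $\widetilde{x}$, $I_{\overline{x}}$ preserves the Hodge tensors $s_{\alpha, B, \widetilde{x}}$ and hence embeds into $\bfG_2$; I set $\gamma_0 \in \bfG_2(\bbQ)$ equal to the image of $\pi_x$ under this embedding.

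To verify (1), I will combine the $I_{\overline{x}}$-equivariant Artin comparison isomorphism $V_{B,\widetilde{x}} \otimes \bbQ_\ell \cong \calV_\ell(\calA^2)_{\widetilde{x}}$, which carries $s_{\alpha, B, \widetilde{x}}$ to $s_{\alpha, \ell, \widetilde{x}}$, with smooth-proper base change to $\overline{x}$ and the level structure $\epsilon^p_{\rmK, x}$. Together these give two isomorphisms of $T_\ell \calA^{2\vee}_{\overline{x}} \otimes \bbQ_\ell$ with $V_{2, \bbQ_\ell}$ that differ by an element of $\bfG_2(\bbQ_\ell)$; under one, $\pi_x$ acts as $\gamma_0$, while under the other it acts as $\gamma_\ell$, yielding $\gamma_0 \sim \gamma_\ell$ in $\bfG_2(\bbQ_\ell)$.

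For (2), since $\widetilde{x}$ is a special point, the Mumford--Tate group $\mathrm{MT}$ of $\calA^2_{\widetilde{x}}$ is a torus contained in $\bfG_2$ which, by the CM nature of $\calA^2_{\widetilde{x}}$, embeds into $\mathrm{End}^0(\calA^2_{\widetilde{x}})$; the images of $\pi_x$ and $\mathrm{MT}$ in $\bfG_2$ therefore commute. Since $\pi_x$ is semisimple by the Weil conjectures and $I_{\overline{x}}$ has the same rank as $\bfG_2$ (by \cite[Theorem 6]{KMS}), a maximal torus $\bfT \subset \bfG_2$ defined over $\bbQ$ containing both $\pi_x$ and $\mathrm{MT}$ can be chosen, and then $h_{\widetilde{x}}$ factors through $\bfT_{\bbR}$. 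The Cartan involution condition in the axioms for Shimura data forces $\bfT^{\mathrm{ad}}_{\bbR}$ to be compact, so $\bfT_{\bbR}$ is an elliptic maximal torus and $\gamma_0 \in \bfT(\bbQ)$ is elliptic in $\bfG_2(\bbR)$. The main technical obstacle I anticipate is making the inner-twist comparison in (1) explicit enough to obtain genuine conjugacy in $\bfG_2(\bbQ_\ell)$ rather than merely equality of semisimple conjugacy classes.
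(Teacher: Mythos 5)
Your proposal is correct and follows essentially the same route as the paper: place the $q$-Frobenius $\pi_x$ in $I_{\overline{x}}(\bbQ)$, use the canonical special lifting of Theorem \ref{thm: can lift for Shimura var} to embed $I_{\overline{x}}$ into $\bfG_2$ via Betti cohomology, and obtain (1) from the Betti--\'etale comparison together with the level structure (your worry about getting genuine conjugacy rather than mere equality in $\Conj_{\bfG_2}$ is unfounded -- the two tensor-preserving trivializations of $T_\ell\calA^{2\vee}_{\overline{x}}\otimes\bbQ_\ell$ differ by an element of $\bfG_2(\bbQ_\ell)$, which is exactly what the paper uses). The only divergence is in (2): the paper deduces ellipticity directly from the positivity of the Rosati involution, which gives compactness of $\bfT(\bbR)/w_{h_2}(\bbR^\times)$ for \emph{any} torus $\bfT\subset I_{\overline{x}}$ containing $\gamma_0$, whereas you route through the special point, the containment of the Mumford--Tate group in a maximal torus $\bfT$, and the Cartan involution axiom to show $\bfT^{\ad}_{\bbR}$ is compact; both arguments are standard and valid, though the paper's is marginally more economical since it does not require choosing $\bfT$ compatibly with the Mumford--Tate group.
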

\begin{proof} The proof is the same as in \cite[Corollary 2.3.1]{Ki3}. Since $\mathcal{A}_{2,x}$ is defined over $\mathbb{F}_q$, the $q$-Frobenius $\gamma$ lies in $ I_{\xbar}(\Q)$. Let $\widetilde{x}\in\scrS_{\rmK_2}(\bfG_2,X_2)(K)$ denote the lifting constructed in Theorem \ref{thm: can lift for Shimura var}. Then by considering the action of $I_{\overline{x}}(\bbQ)$ on the Betti cohomology of  $\calA_{2,\widetilde{x}}$, we may consider $I_{\overline{x}}(\bbQ)$ as a subgroup of $\bfG_2(\bbQ)$. Defining $\gamma_0$ to be the image of $\gamma$ inside $\bfG_2(\bbQ)$, we have that $\gamma_0$  is conjugate to $\gamma_\ell$ in $\bfG_2(\bbQ_\ell)$ by the Betti-\'etale comparison isomorphism. 
	If $\bfT$ is any torus in $I_{\overline{x}}$ containing $\gamma_0$, the positivity of the Rosati involution implies $\bfT(\bbR)/w_{h_2}(\bbR^\times)$ is compact. Hence $\gamma_0\in \bfT(\bbQ)$ is elliptic in $\bfG_2(\bbR)$.
\end{proof}

\begin{remark}
	The elements $\gamma_\ell$ arise as the local Frobenii acting on the stalk of a $\bfG_2(\bbQ_\ell)$-local system $\bbL_\ell$ over $\calS_{\rmK_2}$; see \S\ref{sssec: local system}. Thus even though the proof of Corollary \ref{cor: l indep mu ordinary} uses the Hodge embedding $\iota_2$ in order to define the abelian variety $\calA_{2,\widetilde{x}}$, one can view it as proving a property of the local systems $\bbL_\ell$ over $\calS_{\rmK_2,[b]_{\mu_2}}$, which is intrinsic to $\scrS_{\rmK_2}(\bfG_2,X_2)$. In particular, the image of $\gamma_0$ in $\Conj_{\bfG}(\bbQ)$ is independent of $\iota_2$.
\end{remark}

\section{Independence of $\ell$ for Shimura varieties}\label{sec: l indep for Shimura var} 
We now apply the results of the previous section to prove $\ell$-independence  for the conjugacy class of Frobenius at all points on the special fiber of Shimura varieties. 

\subsection{Frobenius conjugacy classes}\label{sec: Frob conj classes}

\subsubsection{}\label{sssec: local system}

Let $p>2$ be a prime. We fix a strongly acceptable triple $(\bfG,X,\calG)$ and set $\rmK_p=\calG(\bbZ_p)$.  The associated Shimura variety has an integral model $\scrS_{\rmK}(\bfG,X)$  over $\calO_E$ constructed from an auxiliary  triple $(\bfG_1,X_1,\calG_1)$ and a (very good) Hodge embedding $\iota_1$ as in Proposition \ref{lemma: auxiliary Hodge type datum}. The auxiliary Shimura datum $(\bfG_1,X_1)$ plays a minor role in what follows.

Let  $\ell\neq p$ be a prime and suppose that in addition the compact open subgroup $\rmK\subset \bfG(\bbA_f)$ is of the form $\rmK_\ell\rmK^\ell$, with $\rmK_\ell\subset \bfG(\bbQ_\ell)$ and $\rmK^\ell\subset \bfG(\bbA_f^\ell)$. We let $\widetilde{\bbL}_\ell$ denote  the  $\bfG(\bbQ_\ell)$-local system on $\scrS_{\rmK}(\bfG,X)$ arising from the pro-\'etale covering $$\scrS_{\rmK^\ell}(\bfG,X):=\lim_{\underset{\rmK_\ell'\subset \rmK_\ell}\leftarrow}\scrS_{\rmK'_\ell\rmK^\ell}(\bfG,X)\rightarrow \scrS_{\rmK}(\bfG,X)$$ and we write $\bbL_\ell$ for the induced local system on the special fiber $\calS_{\rmK}$ over $k_E$.
 If $\iota:(\bfG,X)\rightarrow (\mathbf{GSp}(V),S^\pm)$ is a Hodge embedding as in \S\ref{sec: canonical liftings 1}  then we have an identification \begin{equation}\label{eqn: id of local systems}\bbL_\ell=\underline{\mathrm{Isom}}_{(s_\alpha,s_{\alpha,\ell})}(V_{\Q_\ell},\calV_\ell^\vee)\end{equation} where the scheme classifies  $\Q_\ell$-linear isomorphisms taking $s_{\alpha}$ to $s_{\alpha,\ell}$; here the notation is as in \S\ref{sec: canonical liftings 1}.

\subsubsection{}Let $y\in \calS_{\rmK}(\bbF_q)$ and we write $\overline{y}$ for the induced geometric point of $\calS_{\rmK}$. We let $\calS_{\rmK}^0$ denote the connected component of $\calS_{\rmK}$ containing $y$ and $\overline{x}\in \calS_{\rmK}^0(k)$ a fixed geometric point.  Over $\calS_{\rmK}^0$, the $\bfG(\bbQ_\ell)$-local system $\bbL_{\ell}$ corresponds to a homomorphism $$\rho^0_{\ell}:\pi_1(\calS_{\rmK}^0,\overline{x})\rightarrow \bfG(\bbQ_{\ell}).$$

We have a map
$$\text{Gal}(\overline{\bbF}_q/\bbF_{q})\rightarrow \pi_1(\mathcal{S}^0_{\rmK},\overline{y})\xrightarrow{\sim} \pi_1(\calS_{\rmK}^0,\overline{x}),$$ where the isomorphism $\pi_1(\mathcal{S}^0_{\rmK},\overline{y})\xrightarrow{\sim}\pi_1(\calS_{\rmK}^0,\overline{x})$
 is well-defined up to conjugation. We thus obtain a well defined conjugacy class in $\pi_1(\calS_{\rmK}^0,\overline{x})$  corresponding to the image of the geometric $q$-Frobenius and we write $\mathrm{Frob}_y$ for a representative of this conjugacy class.

\subsubsection{}\label{subsec: Conj_H} For a reductive group $H$ over a field $F$ of characteristic $0$, we write $\mathrm{Conj}_{H}$ for the variety of conjugacy classes in $H$. Explicitly, if $H=\mathrm{Spec}\  R$, the action of $H$ on itself via conjugation induces an action of $H$ on $R$, and we have $\mathrm{Conj}_{H}=\mathrm{Spec}\ R^{H}$. Then $\Conj_{H}$ is an $F$-variety which is a universal categorical quotient for this action, and the set $\mathrm{\Conj}_{H}(\overline{F})$ can be identified with the set of semisimple $H(\overline{F})$  conjugacy classes in $H(\overline{F})$ (see \cite[Chapters 0,1]{GIT}).  We write $\chi_H:H\rightarrow \mathrm{Conj}_{H}$ for the projection map. For example if $H=\GL_n$, $\Conj_{\GL_n}$ is the variety $\bbA^{n-1}_F\times\bbG_{m,F}$ and the map $\chi$ takes an element of $\GL_n$ to its associated characteristic polynomial. 

 In our setting, we thus obtain for each prime $\ell\neq p$, a well-defined element $\gamma_{y,\ell}\in\Conj_{\bfG}(\bbQ_\ell)$ corresponding to $\chi_{\bfG}(\rho_\ell^0(\mathrm{Frob}_y))$. Our main theorem concerning the $\ell$-independence property of Shimura varieties is the following.

\begin{thm}\label{thm: l indep full}
Let $p>2$ and $(\bfG,X,\calG)$ a strongly acceptable triple. Let $y\in \calS_{\rmK}(\bbF_{q})$ where $\bbF_q/k_E$ is a finite extension. Then there exists an element $\gamma_0\in \Conj_{\bfG}(\bbQ)$  such that $\gamma_0=\gamma_{y,\ell}\in\Conj_{\bfG}(\bbQ_\ell)$ for all $\ell\neq p$.
\end{thm}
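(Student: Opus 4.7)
The plan is to reduce the statement to a combination of the canonical lifting theorem (Theorem \ref{thm: can lift for Shimura var}) applied to the $\mu$-ordinary locus, and a curve/compatible-local-system argument that propagates the $\ell$-independence property from a dense open to the rest of $\calS_{\rmK}$. First, I observe that the $\mu$-ordinary case is essentially handled: under the hypotheses of the theorem, $(\bfG,X,\calG)$ is an acceptable triple, $\calS_{\rmK,[b]_\mu}$ is Zariski open and dense in $\calS_{\rmK}$ by Theorem \ref{thm: density}(2), and for any $y\in\calS_{\rmK,[b]_\mu}(\bbF_q)$ Corollary \ref{cor: l indep mu ordinary} already produces an element $\gamma_0\in\bfG(\bbQ)$ conjugate to $\gamma_{y,\ell}$ in $\bfG(\bbQ_\ell)$ for every $\ell\neq p$. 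Passing from $\bfG(\bbQ)$-conjugacy to $\Conj_{\bfG}(\bbQ)$, this shows that ($\ell$-ind) holds on the $\mu$-ordinary locus.

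Next, I would pass from the dense $\mu$-ordinary open set to arbitrary points by a curve argument. Given $y\in \calS_{\rmK}(\bbF_q)$, the goal is to find a smooth geometrically connected curve $\calC$ over $\bbF_q$ with a map $\pi:\calC\to\calS_{\rmK}$ and a point $y'\in\calC(\bbF_q)$ with $\pi(y')=y$ and $\kappa(y')\cong\kappa(y)$, such that $\pi(\calC)$ meets $\calS_{\rmK,[b]_\mu}$. Pulling back $\bbL_\ell$ to $\calC$ gives a compatible system of $\bfG(\bbQ_\ell)$-local systems on $\calC$ (characteristic polynomial data being $\ell$-independent on the $\mu$-ordinary open locus by the first step). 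Lafforgue's theorem \cite[Th\'eor\`eme VII.6]{Laf}, together with standard Chebotarev/Brauer--Nesbitt considerations applied to all finite-dimensional representations of $\bfG$, then upgrades this to the statement that the conjugacy class $\gamma_{\pi(y'),\ell}\in\Conj_{\bfG}(\bbQ_\ell)$ is $\ell$-independent and rational for \emph{every} closed point of $\calC$; hence ($\ell$-ind) holds at $y$.

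To construct such curves I would induct on the codimension of the Kottwitz--Rapoport stratum containing $y$, which by Theorem \ref{thm: special fiber of local models and admissible set} is combinatorially indexed by $\Adm(\{\mu_h\})_J$. Since $\rmK_p$ is very special, the top-dimensional KR stratum is the smooth locus and the combinatorics of $\Adm(\{\mu_h\})_J$ is particularly simple (cf. \S\ref{subsec: KR stratification very special}). For $y$ in the smooth stratum, a direct application of Poonen's Bertini-type theorem \cite{Poonen} inside a projective compactification gives a smooth curve through $y$ meeting $\calS_{\rmK,[b]_\mu}$, and Poonen's theorem also lets one arrange $y'\in\calC(\bbF_q)$ with $\kappa(y')=\kappa(y)$. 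For $y$ in a lower stratum, I would construct smooth curves \emph{inside} the local model $\rmM^{\mathrm{loc}}_{\calG,\{\mu_h\}}$ using the explicit group-theoretic description of its KR stratification, arranged to pass through the point corresponding to $y$ and to meet a chosen higher-dimensional stratum; these are then pulled back through the local model diagram of Theorem \ref{thm: integral models abelian type}(3) to produce a curve $\pi:\calC\to\calS_{\rmK}$ with the required properties. The inductive hypothesis (that ($\ell$-ind) holds on strata of smaller codimension than that of $y$) combined with the density result from Step~1 provides a dense open of $\calC$ on which ($\ell$-ind) is known, so Lafforgue's theorem applies.

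The main obstacle is the final step: ensuring that the curve $\calC$ admits a point $y'$ with $\pi(y')=y$ and $\kappa(y')\cong\kappa(y)$ (not merely a point whose residue field is some extension of $\kappa(y)$), because otherwise one only obtains $\ell$-independence for a power of $\mathrm{Frob}_y$, not for $\mathrm{Frob}_y$ itself. This is precisely what forces the use of the local model diagram and the induction on KR strata rather than a one-shot Bertini construction through~$y$ meeting $\calS_{\rmK,[b]_\mu}$, and it is where the very special hypothesis on $\rmK_p$ enters in an essential way, via the explicit description of the KR stratification of $\rmM^{\mathrm{loc}}_{\calG,\{\mu_h\}}$ needed to build the auxiliary curves inside the local model.
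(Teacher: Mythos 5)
Your proposal is correct and follows essentially the same route as the paper: the $\mu$-ordinary base case via canonical liftings (Corollary \ref{cor: l indep mu ordinary} and Theorem \ref{thm: density}), propagation along smooth curves via Lafforgue/Chin compatible local systems and Chebotarev, and a descending induction on Kottwitz--Rapoport strata driven by explicit curves built inside $\rmM^{\mathrm{loc}}_{\calG,\{\mu_h\}}$ and pulled back through the local model diagram, precisely to secure a rational point $y'$ with $\kappa(y')\cong\kappa(y)$. The only details you compress are the use of Chin's refinement of Lafforgue (requiring the Frobenius eigenvalues to be $\ell$-adic units) and Steinberg's theorem to pass from $\ell$-independence of $\vartheta(\gamma_{y,\ell})$ for all representations $\vartheta$ to equality in $\Conj_{\bfG}$, but these are exactly the ingredients the paper supplies.
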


\begin{remark}A group theoretic argument shows that if we assume in addition that $\bfG^{\der}$ is simply-connected,  $\gamma$ can be lifted to an element of $\bfG(\bbQ)$ (cf. Corollary \ref{cormainthm}). See also  Remark \ref{rem: refinements} about the expectations surrounding liftability of $\gamma$.
\end{remark}

The rest of \S\ref{sec: l indep for Shimura var} will be devoted to the proof of Theorem \ref{thm: l indep full}.

 \subsection{Explicit curves in the special fiber of local models} 
 
 \subsubsection{}\label{sec: KR stratification v special}
 We begin by recalling the local model diagram and  properties of the Kottwitz--Rapoport stratification. 
  By Theorem \ref{cor: strongly acceptable integral model} (3), there exists a diagram  of stacks
\begin{equation}\label{eqn: local model diagram scheme}
\xymatrix{ & \widetilde{\scrS}^{\mathrm{ad}}_{\rmK}(\bfG,X)\ar[dr]^{q} \ar[dl]_{\pi}& \\
 \scrS_{\rmK}(\bfG,X)& & \bbM^{\mathrm{loc}}_{\G,\{\mu_{h}\}}} \end{equation}
where $\pi:\widetilde{\scrS}^{\mathrm{ad}}_{\rmK}(\bfG,X)\rightarrow \scrS_{\rmK}(\bfG,X)$ is a $\G^{\ad}$-torsor. 

Let $\calM$ denote the special fiber of $\bbM^{\mathrm{loc}}_{\G,\{\mu_{h}\}}$; it is a scheme over $k_E$. By the construction of $\Mloc_{\calG,\{\mu_h\}}$ in \cite[\S3]{KPZ} (cf. \cite{Levin}), there is a reductive group scheme $\underline{G}$ over $\bbF_p((t))$ and a parahoric group scheme $\underline{\calG}$ for $\underline{G}$ such that $\calM$ is identified with a union of Schubert varieties inside the partial  affine flag variety $\mathcal{FL}_{\ucalG}$. By definition, $\mathcal{FL}_{\ucalG}=L\uG/L^+\ucalG$ is the fpqc quotient of the loop group $L\uG$ by the positive group $L^+\ucalG$ (see \cite{PR}). Let $W$  denote the Iwahori Weyl group for $G$. Fix an alcove $\fka$ such that $\calG$ is a standard parahoric  and let $J\subset \bbS$ be the subset of simple affine reflections corresponding to $\calG$. Then $\fka$ determines an alcove $\underline{\fka}$ for $\uG$, and we have an identification of simple reflections $\bbS\cong \underline{\bbS}$ in $\fka$ and $\underline{\fka}$ respectively  (see \cite[\S3.3]{Levin}). The parahoric $\ucalG$ corresponds to the image $\underline{J}$ of $J$ under this identification.

Let $W_{\underline{G}}$ denote the Iwahori Weyl group for $\uG$ and let $\{\mu\}=\{\mu_h\}$. Then the union of Schubert varieties appearing in $\calM$ is naturally indexed by $\Adm(\{\mu\})_J$, under an order preserving embedding $$\Adm(\{\mu\})_J\rightarrow W_{\underline{J}}\backslash W_{\underline{G}}/W_{\underline{J}}.$$ In particular the closure relations are given by the Bruhat order on $\Adm(\{\mu\})_J$. Under our assumption that $\calG$ is very special,  this ordering has the following alternative description. 

We let $\fks\in \calB(G,\brQ)$ denote the special vertex associated to $\calG$. Let $S$  be  a maximal $\brQ$-split torus of $G$ defined over $\bbQ_p$ such that $\fks\in\calA(G,S,\brQ)$ and $T$ the centralizer of $S$. Fix a Borel subgroup of $G$ defined over $\bbQ_p$ and assume we have identified $X_*(T)_I\otimes_{\bbZ}\bbR$ with $\calA(G,S,\brQ)$ via the choice of special vertex $\fks$.  We let $\mu\in X_*(T)_I$ be the image of a dominant representative of $\{\mu\}$ in $X_*(T)$. For $\lambda,\lambda'\in X_*(T)_I^+$, we write $\lambda\curlyeqprec\lambda'$ if $\lambda' -\lambda$ is an \textit{integral} linear combination of positive coroots in the reduced root system $\Sigma$ associated to $G$; we write $\lambda\prec \lambda'$ if in addition $\lambda\neq\lambda'$.
Then there is an identification $$W_{J}\backslash W/W_{J}\cong X_*(T)_I^+,$$ and the ordering $\curlyeqprec$ agrees with the Bruhat order on $W_{J}\backslash W/W_{J}$ under this identification (cf. \cite{Lu}). 
It follows that we have an identification $$\Adm_{G}(\{\mu\})_{J}=\{t_\lambda|\lambda\in X_*(T)_I^+,\ \lambda\curlyeqprec {\mu}\}.$$

For $\lambda\in X_*(T)_I$, we  write $\calM_k^\lambda$ for the open stratum corresponding to $t_\lambda\in \Adm_{G}(\{\mu\})_{J}$.  Then $\calM_k^\lambda$ is the $\underline{\calG}(k[[t]])$-orbit of an element $\underline{\dot{t}}_\lambda\in \underline{G}(k((t)))$ representing the image of $t_\lambda$ in $W_{\underline{J}}\backslash W_{\underline{G}}/W_{\underline{J}}$.
 It follows formally from the existence of the diagram (\ref{eqn: local model diagram scheme}) and the fact that $\calG^{\ad}$-orbits on $\calM_k$ and $\calG$-orbits on $\calM_k$ agree, that $\calS_{\rmK,k}$ admits a stratification by $\Adm_{G}(\{\mu\})_{J}$. This is known as the Kottwitz--Rapoport stratification and we write $\calS_{\rmK,k}^\lambda$ for the stratum corresponding to $t_{
\lambda}\in\Adm_{G}(\{\mu\})_{J}$. From the definition of this stratification, for $\overline{x}\in\calS_{\rmK}(k)$ the complete local ring of $\calS_{\rmK,k}^\lambda$ at $\overline x$ is identified with the complete local ring at a point $\overline x'\in\calM_k^\lambda(k)$.  Thus under our assumptions $\calM_k$ and $\calS_{\rmK,k}$ are normal schemes; cf. Theorem \ref{thm: density}.  Since $t_\mu\in\Adm_{G}(\{\mu\})_{J}$ is the unique maximal element, it follows that $\calM^{{\mu}}_k$ is contained in the smooth locus of $\calM$ and hence $\calS_{\rmK,k}^{{\mu}}$ is contained in the smooth locus of $\calS_{\rmK,k}.$

The strata $\calM_k^\lambda$ and $\calS_{\rmK,k}^\lambda$ are both defined over the field of definition of $\lambda\in W_{J}\backslash W/W_{J}$. In other words, if $n$ is the smallest positive integer such that $\sigma^n(\lambda)=\lambda$, then $\calM^\lambda_k$ and $\calS^\lambda_{\rmK,k}$ are both defined over $\bbF_{p^n}$; we write $\calM^\lambda $ and $\calS_{\rmK}^\lambda$ for the models over $\bbF_{p^n}$.

\subsubsection{} The key geometric property of the Kottwitz--Rapoport stratification  on $\calM_k$ that we will need is the following.

\begin{prop}\label{prop: map from curve}
	Let $y\in\calM^\lambda(\bbF_{q})$ with $\lambda\in \Adm_{G}(\{\mu\})_{J}$ and $\lambda \neq {\mu}$. There  exists a smooth, geometrically connected curve $C$ over $\bbF_{q}$ and a map $\phi:C\rightarrow \calM_{\bbF_q}$ such that 
	\begin{enumerate}[label=(\roman*)]
	\item There exists $y'\in C(\bbF_{q})$ such that $\phi(y')=y$.
	\item $\phi^{-1}(\calM_k^{\lambda'})$ is open and dense in $C$  for some $\lambda'\in\Adm_{G}(\{\mu\})_{J}$ with $\lambda\prec\lambda'.$
	\end{enumerate}
\end{prop}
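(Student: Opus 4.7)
The first move is to take $\lambda' = \mu$, the unique maximal element of $\Adm_{G'}(\{\mu\})_{J'}$ under $\preccurlyeq$. Since $y \in \calM^\lambda(\bbF_q)$, the residue field $k_E$ of the reflex field embeds into $\bbF_q$; combined with the fact that $\mu$ is already defined over $E$, this forces the stratum $\calM^\mu$ to be defined over $\bbF_q$ and (by Theorem \ref{thm: density}) to be open and dense in $\calM_{\bbF_q}$. It therefore suffices to exhibit an $\bbF_q$-rational non-constant morphism $\psi : \bbA^1_{\bbF_q} \to \calM_{\bbF_q}$ with $\psi(0) = y$ whose generic point lies in $\calM^\mu$; normalizing the closure of the image of $\psi$ and restricting to the geometrically connected component containing the canonical lift of $0$ then produces the smooth geometrically connected curve $C$, with $y' \in C(\bbF_q)$ coming from $0 \in \bbA^1(\bbF_q)$.

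The construction of $\psi$ uses the identification of $\calM_k$ with a union of Schubert varieties inside the twisted affine flag variety $\mathcal{FL}_{\underline{\calG}}$ given by Theorem \ref{thm: special fiber of local models and admissible set}. In the very special parahoric case, the Schubert stratification on $\calM_k$ is governed by the dominance order on coweights $\nu \preccurlyeq \mu$ (\S\ref{subsec: KR stratification very special}), and for each simple coroot $\alpha^\vee$ of $\Sigma'$ with $\lambda + \alpha^\vee \preccurlyeq \mu$, the corresponding $\mathrm{SL}_2$-subgroup of the loop group $\underline{G}(k((u)))$ furnishes a rational curve in $\calM_k$---concretely, the closure of a suitable one-parameter unipotent orbit through $\underline{\dot{t}}_\lambda$---that meets both $\calM^\lambda_k$ and $\calM^{\lambda + \alpha^\vee}_k$. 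Iterating along a chain $\lambda \prec \lambda + \alpha_1^\vee \prec \cdots \prec \mu$ produces a rational curve inside $\calM_k$ connecting $\underline{\dot{t}}_\lambda$ to $\calM^\mu_k$, and translating by an element of $\underline{\calG}(k[[u]])$ that moves $\underline{\dot{t}}_\lambda$ to $y$ yields the geometric version of $\psi$ over $k$.

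The main obstacle will be descending this construction to $\bbF_q$: both the translation moving $\underline{\dot{t}}_\lambda$ to $y$ and the chain of simple coroots are a priori only defined over $\bar{\bbF}_q$. The plan is to replace each of these ingredients by its full $\mathrm{Gal}(\bar{\bbF}_q/\bbF_q)$-orbit, obtaining an $\bbF_q$-rational multi-parameter family $\Psi : \bbA^n_{\bbF_q} \to \calM_{\bbF_q}$ with $\Psi(0) = y$, and then to restrict $\Psi$ to a sufficiently general $\bbF_q$-rational line through the origin. The existence of such a line whose general member still lands in $\calM^\mu$ reduces to a Bertini-type counting argument over finite fields, using that $\calM \setminus \calM^\mu$ is a proper closed $\bbF_q$-rational subscheme of $\calM_{\bbF_q}$ and that the parameter space has enough $\bbF_q$-rational lines through $0$; the rationality of $y'$ on the normalization of the image is then automatic from the rationality of $0 \in \bbA^1_{\bbF_q}$.
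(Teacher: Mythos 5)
There is a genuine gap, and it sits exactly at the point the proposition is designed to address. Your plan aims for $\lambda'=\mu$ by ``iterating along a chain $\lambda\prec\lambda+\alpha_1^\vee\prec\cdots\prec\mu$'' of one-parameter root-subgroup curves, but iteration does not produce a single irreducible curve: the curve attached to $\alpha_1$ passes through $\underline{\dot t}_\lambda$ and meets $\calM^{\lambda+\alpha_1^\vee}_k$ at its \emph{generic} point, not at $\underline{\dot t}_{\lambda+\alpha_1^\vee}$, so the next step starts from a different point and you only obtain a connected chain of curves. The multi-parameter repair $\Psi:\bbA^n\to\calM$ (a product $\underline{\dot t}_\lambda\, u_{-\alpha_1}(x_1)\cdots u_{-\alpha_r}(x_r)$) requires showing that this product generically lands in the top cell $\calM^\mu_k$; that is a nontrivial loop-group/Bruhat computation involving commutators of affine root subgroups, which you have not done and which is not automatic. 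Relatedly, your Bertini step as stated (restricting to an $\bbF_q$-rational \emph{line} through $0$) is unsound: a proper closed $\bbF_q$-subscheme of $\bbA^n$ can contain every $\bbF_q$-rational line through the origin (e.g. $V(x^qy-xy^q)$ in $\bbA^2$), so a counting argument on lines does not suffice; one would need Poonen's Bertini on the smooth source $\bbA^n$, and even then only after $\Psi$ is constructed. Finally, the reduction of $y$ to $\underline{\dot t}_\lambda$ cannot be handled by ``taking the Galois orbit'' of the translating element: the paper does this via Lang's theorem applied to the smooth connected stabilizer $\underline{\calK}_\lambda=\underline{\calG}\cap\underline{\dot t}_\lambda\underline{\calG}\underline{\dot t}_\lambda^{-1}$ (Lemma \ref{lemma: rational G orbit}), which produces a translating element already in $\underline{\calG}(\bbF_q[[t]])$.

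The paper sidesteps all of this by being much less ambitious inside this proposition: it climbs only \emph{one} Galois-equivariant step, to $\lambda'=\lambda+\sum_{i=0}^{m-1}\sigma_q^i(\alpha^\vee)$ (which is $\preccurlyeq\mu$ but usually $\neq\mu$), using a single positive root $\alpha$ from Stembridge's lemma. The point is that the Galois orbit of $\alpha$ defines a \emph{relative} root $\widetilde\alpha$ of $\underline G$ over $\bbF_q((t))$, whose associated rank-one subgroup $\underline G_{\widetilde\alpha}$ (a Weil restriction of $\SL_2$ or $\SU_3$) is automatically defined over $\bbF_q((t))$; an explicit matrix computation in each of the three cases produces an $\bbF_q$-rational map $f:\bbA^1\to\mathcal{FL}_{\underline\calG_{\widetilde\alpha}}$ with $f(0)=\dot e$ and generic image in the $t_{\alpha^\vee}$-cell, and translating by $\underline{\dot t}_\lambda$ (rational since $\sigma_q(\lambda)=\lambda$, and using dominance of $\lambda$) gives the curve. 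The remaining climb from $\lambda'$ to $\mu$ is deliberately deferred to the descending induction on strata in the proof of Proposition \ref{prop: indep in GL}; this is why the statement only asks for \emph{some} $\lambda'\succ\lambda$. If you want to salvage your approach you should either prove the loop-group statement that $\underline{\dot t}_\lambda\prod_i U_{-\alpha_i}$ generically meets the open cell, or retreat to the one-step version as the paper does.
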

\begin{remark}
Using an ampleness argument, it is easy to show that such a map always exists if we replace $\bbF_{q}$ by its algebraic closure $k$. The key property is that for $\calM$, this map exists without extending the residue field. By \cite[\S6]{Drinfeld}, there are normal and Cohen--Macaulay  schemes where this property fails.
\end{remark}
\begin{proof}[Proof of Proposition \ref{prop: map from curve}]
We first show using the $\underline{\G}$-action on $\calM$ that it suffices to consider the case 
$$y=\underline{\dot{t}}_\lambda\in \underline{G}(k((t)))/\underline{\G}(k[[t]]).$$ 

Let $\sigma_q$ denote the $q$-Frobenius; then since $y\in \calM^\lambda(\bbF_{q})$, we have $\sigma_q(\lambda)=\lambda$.  Therefore we may choose the lift $\underline{\dot{t}}_\lambda\in\underline{G}(\bbF_{q}((t)))$ so that $\underline{\dot{t}}_\lambda\in \calM^\lambda(\bbF_q)$. By Lemma \ref{lemma: rational G orbit} below, there exists $g\in \underline{\G}(\bbF_q[[t]])$ such that $g\underline{\dot{t}}_\lambda=y$ in $\mathcal{FL}_{\underline{\G}}$. Therefore if $C$ satisfies the conditions (i) and (ii) for the point $\underline{\dot{t}}_\lambda$, $gC$ satisfies (i) and (ii) for the point $y$. It therefore suffices to prove the case $y=\underline{\dot{t}}_\lambda$; we make this assumption from now on.
	
	Now since $\lambda\prec{\mu}$, by Stembridge's Lemma \cite[Lemma 2.3]{Ra1}, there exists a positive root $\alpha\in \Sigma$ such that $\lambda+\alpha^\vee\curlyeqprec{\mu}$. Since $\lambda,{\mu}\in X_*(T)_I^{\sigma_q}$, it follows that $$\lambda+\sigma^i_q(\alpha^\vee)\curlyeqprec {\mu}$$ for all $i$. If $\{\alpha, \sigma_q(\alpha),\dotsc,\sigma_q^{m-1}(\alpha)\}$ denotes the orbit of $\alpha$ under $\sigma_q$, it follows that $$\lambda':=\lambda+\sum_{i=0}^{m-1}\sigma_q^i(\alpha^\vee)\curlyeqprec{\mu},$$ 
	and hence $\lambda'\in\Adm_{G}(\{\mu\})_{J}$.	Now $\alpha$ determines a relative root $\widetilde{\alpha}$  of $\underline{G}$ over $\bbF_q((t))$ which we always take to be the short root; then either $2\widetilde{\alpha}$ is a relative root, or no rational multiple of $\ta$ is a relative root. We let  $U_{\widetilde{\alpha}}$ denote the relative root subgroup corresponding to $\widetilde{\alpha}$ and $\uG_{\widetilde{\alpha}}$ the simply connected covering of the (semi-simple) group generated by $U_{\widetilde{\alpha}}$ and $U_{-\widetilde{\alpha}}$; it is a reductive group over $\bbF_{q}((t))$. We will identify $U_{\widetilde{\alpha}}$ with the corresponding unipotent subgroup of $\uG_{\widetilde{\alpha}}$. The parahoric $\underline{\G}$ determines a parahoric model $\ucalG_{\widetilde{\alpha}}$ of $\underline{G}_{\widetilde \alpha}$ and there is a morphism $$\iota_{\widetilde{\alpha}}:\mathcal{FL}_{\ucalG_{\widetilde{\alpha}}}\rightarrow \mathcal{FL}_{\ucalG,\bbF_q}$$
	defined over $\bbF_q$, where $\mathcal{FL}_{\ucalG_{\widetilde{\alpha}}}$ is the partial affine flag variety associated to $\ucalG_{\widetilde{\alpha}}$. Then $\iota_{\widetilde{\alpha}}$ factors as $\mathcal{FL}_{\ucalG_{\widetilde{\alpha}}}\rightarrow \mathcal{FL}_{\ucalG'_{\widetilde{\alpha}}}\rightarrow \mathcal{FL}_{\ucalG,\bbF_q}$, where $\mathcal{FL}_{\ucalG'_{\widetilde{\alpha}}}$ is the corresponding  partial affine flag variety for the group generated by $U_{\widetilde{\alpha}}$ and $U_{-\widetilde{\alpha}}$. The first map identifies $\mathcal{FL}_{\ucalG_{\widetilde{\alpha}}}$ with the neutral connected component of $\mathcal{FL}_{\ucalG'_{\widetilde{\alpha}}}$ by \cite[\S6.a.1]{PR} and the  second is a proper monomorphism when restricted to a connected component.  It follows that $\iota_{\widetilde{\alpha}}$ is a closed immersion. We write $\calU_{\widetilde{\alpha}}$  (resp. $ \calU_{-\widetilde{\alpha}}
	$) for the group schemes over $\bbF_q[[t]]$ corresponding to $U_{\widetilde{\alpha}}(\bbF_q((t)))\cap \underline{\G}(\bbF_q[[t]]) $  (resp. $U_{-\widetilde{\alpha}}(\bbF_q((t)))\cap \underline{\G}(\bbF_q[[t]])$). Then we claim that for each positive $\alpha$, there exists a morphism $$f:\bbA^1_{\bbF_q}\rightarrow \mathcal{FL}_{\ucalG_{\widetilde{\alpha}}}$$ defined over $\bbF_q$ satisfying the following two conditions
	
	\begin{enumerate}[label=(\roman*')]
	\item $f(0)=\dot{e}$, where $\dot{e}$ is the base point in $\mathcal{FL}_{\ucalG_{\widetilde{\alpha}}}$.
	\item $f(\bbA^1_{\bbF_q}\backslash\{0\})\subset L^+\calU_{\widetilde{\alpha}}\underline{\dot{t}}_{\alpha^\vee}L^+\ucalG_{\widetilde{\alpha}}/L^+\ucalG_{\widetilde{\alpha}}$.
	\end{enumerate}

	Assuming the claim we may prove the proposition as follows. We consider the morphism $$\phi:\bbA_{\bbF_q}^1\rightarrow \mathcal{FL}_{\ucalG},\ \ \ x\mapsto \underline{\dot{t}}_{{\lambda}}(\iota_{\widetilde{\alpha}}\circ f)(x),$$ in other words we translate the composition $\iota_{\widetilde{\alpha}}\circ f$ by $\underline{\dot{t}}_\lambda$. Then condition (i) follows from (i') and condition (ii) follows from (ii') using the fact that $\lambda$ is dominant.
	
	It remains to prove the existence of $f$ satisfying (i') and (ii'). We will construct $f$ explicitly using a presentation of the group $\uG_{\widetilde{\alpha}}$; it turns out that by \cite[\S4.1.4]{BT2} there are essentially three distinct cases to consider which we now describe. 
	
	If $2\widetilde{\alpha}$ is not a relative root then there is an identification $$\uG_{\widetilde{\alpha}}\cong \mathrm{Res}_{K/\bbF_q((t))}\SL_2$$ where $K$ is some finite separable extension of $\bbF_q((t))$ and the parahoric $\ucalG_{\widetilde{\alpha}}$ is characterized by the property $$\ucalG_{\widetilde{\alpha}}(k[[t]])=\SL_2(\calO_K\otimes_{\bbF_q[[t]]}k[[t]]).$$ 
	
	If $2\widetilde{\alpha}$ is also a relative root, then there is an identification $$\uG_{\widetilde{\alpha}}\cong \mathrm{Res}_{K/\bbF_q((t))}\SU_3$$where $K/\bbF_{q}((t))$ is finite separable and $\SU_3$ is the special unitary group associated to a hermitian space over a (separable)\footnote{Since we have assumed $p>2$, this is automatic.} quadratic extension $K'/K.$ We recall the presentation of the $K$-group $\SU_3$ in \cite[Example 1.15]{Ti1}. We let $\tau\in \mathrm{Gal}(K'/K)$ denote the non-trivial element and we consider the hermitian form on $K'^3$ given by $$\langle(x_{-1},x_0,x_1),(y_{-1},y_0,y_1)\rangle=\tau(x_{-1})y_1+\tau(x_0)y_0+\tau(x_1)y_{-1}.$$
	The group $\SU_3$  is the special unitary group attached to this form. For $c,d\in K'$ such that $\tau(c)c+d+\tau(d)=0$, we define $\bfu_+(c,d),\bfu_{-}(c,d)\in \mathrm{SU}_3(K)$ by $$\bfu_{\pm}(c,d)=I_3+(g_{rs})$$ where $I_3$ is the identity matrix and $(g_{rs})$ is the matrix with entries $g_{\mp1,0}=-\tau(c)$, $g_{0,\pm 1}=c$, $g_{\mp1,\pm 1}=d$ and $g_{rs}=0$ otherwise. The root subgroups $U_{\pm\ta}$ are then given by $$U_{\pm\widetilde{\alpha}}(K)=\{\bfu_{\pm}(c,d)|c,d\in K', \tau(c)c+\tau(d)+d=0\}.$$

	Then we may consider the parahoric $$\ucalG_{\widetilde{\alpha}}(\bbF_q[[t]])= \SU_3(K)\cap \GL_3(\calO_{K'});$$ we call this the standard parahoric. 
	
	When $K'/K$ is unramified this is the only very special parahoric (up to conjugacy). When $K'/K$ is ramified, there is another conjugacy class of very special parahorics in addition to the standard parahoric which we shall call the non-standard parahoric. We let $u'$ be a uniformizer of $K'$  such that $\tau(u')=-u'$ and we define $s\in \GL_3(K')$ to be the element $\mathrm{diag}(1,1,u')$. Then the non-standard parahoric $\ucalG_{\widetilde{\alpha}}$ is given by $$\ucalG_{\widetilde{\alpha}}(\bbF_q[[t]])=\SU_3(K)\cap s\GL_3(\calO_{K'})s^{-1}.$$
	
	We label the cases as follows.
	
	Case (1): $2\widetilde{\alpha}$ is not a root, $\uG_{\widetilde{\alpha}}\cong \mathrm{Res}_{K/\bbF_q((t))}\SL_2$ and $\ucalG_{\widetilde{\alpha}}(\bbF_q[[t]])=\SL_2(\Ok_K)$.
	
	Case (2): $2\widetilde{\alpha}$ is  a root, $ \uG_{\widetilde{\alpha}}\cong \mathrm{Res}_{K/\bbF_q((t))}\SU_3$ and $\ucalG_{\widetilde{\alpha}}$ is the standard parahoric.
	
	Case (3): $2\widetilde{\alpha}$ is a root, $ \uG_{\widetilde{\alpha}}\cong \mathrm{Res}_{K/\bbF_q((t))}\SU_3$ with $K'/K$ ramified and $\ucalG_{\widetilde{\alpha}}$ is the non-standard parahoric.
	
	We now proceed with the construction of $f$ in each of the three cases.
	
	Case (1). In this case the isomorphism
	$\uG_{\widetilde{\alpha}}\cong\mathrm{Res}_{K/\bbF_{q}((t))}\SL_2$  	induces  identifications $$\bfu_{\pm}:\mathrm{Res}_{K/\bbF_{q}((t))}\bbG_a\xrightarrow{\sim}U_{\pm\widetilde{\alpha}}.$$
	Let $u$ be a uniformizer of $K$; then we may define a map $$f:\bbA^1_{\bbF_{q}}\rightarrow \mathcal{FL}_{\underline{\G}_{\widetilde{\alpha}}},\ \ \ \ x\mapsto \bfu_{-}(u^{-1}x).$$
	Clearly (i') is satisfied, and a simple calculation in $\SL_2$ shows that for $0\neq x$, we have $$\bfu_{-}(u^{-1}x)\in \bfu_{+}(ux^{-1})\underline{\dot{t}}_{\alpha^\vee}L^+\ucalG_{\widetilde{\alpha}}$$
	so that (ii')  also holds.

	Case (2). Recall in this case,  the parahoric $\ucalG_{\widetilde{\alpha}}$ is characterized by $\ucalG_{\widetilde{\alpha}}(\bbF_q[[t]])=\SU_3(K)\cap \GL_3(\calO_{K'})$. We define $$f:\bbA_{\bbF_q}^1\rightarrow \mathcal{FL}_{\underline{\G}_{\widetilde{\alpha}}}, \ \ x\mapsto \bfu_{-}(0,u'^{-1}x),$$ where we recall that $u'\in K'$ is a uniformizer with $\tau(u')=-u'$.
	A calculation using the presentation recalled above shows that for $x\neq0$, we have $$\bfu_{-}(0,u'^{-1}x)\in \bfu_+(0,u'x^{-1})\underline{\dot{t}}_{\alpha^\vee}L^+\ucalG_{\widetilde{\alpha}};$$ as in Case (1), it follows that (i') and (ii') are satisfied.
	
  Case (3). Recall $K'/K$ is ramified and $\ucalG_{\widetilde{\alpha}}(\bbF_q[[t]])=\SU_3(K)\cap s\GL_3(\calO_{K'})s^{-1}$. We consider the map $$\bbA_{\bbF_q}^1\rightarrow \mathcal{FL}_{\ucalG_{\widetilde{\alpha}}}, \ \ x\mapsto \bfu_{-}(x,-\frac{x^2}{2}).$$
	Then in the presentation  above, we have  that $\dot{t}_{\alpha^\vee}^{-1}\mathbf{u}_{+}(-2x^{-1},2x^{-2})^{-1}\mathbf{u}_-(x,-x^2/2)$ is equal to 
	$$\left(\begin{smallmatrix}u'^{-1} & 0&0\\0&-1 & 0\\ 0 & 0 & -u'
	\end{smallmatrix}\right)\left(\begin{smallmatrix} 1&-2x^{-1} & -2x^{-2}\\0&1 & 2x^{-1}\\ 0 & 0 & 1
	\end{smallmatrix}\right)\left(\begin{smallmatrix}1 & 0 &0\\x&1 & 0\\ -x^2/2 & -x & 1
	\end{smallmatrix}\right) =\left(\begin{smallmatrix}0 & 0&-u'^{-1} 2x^{-2}\\0 &1 & -2x^{-1}\\ u'x^2/2 & u'x & -u'
	\end{smallmatrix}\right).$$	
	This lies in the parahoric $\underline{\calG}_{\widetilde{\alpha}}$, and hence we have
$$\bfu_{-}(x,-\frac{x^2}{2})\in \bfu_+(-2x^{-1},2x^{-2})\dot{t}_{\alpha^\vee}L^+\ucalG_{\widetilde{\alpha}}.$$
As in the previous two cases it follows that (i') and (ii') are satisfied.
\end{proof}

\begin{lemma}\label{lemma: rational G orbit}
	Let $y\in \calM^{\lambda}(\bbF_{q})$ and assume $\underline{\dot{t}}_\lambda\in\underline{G}(\bbF_q[[t]])$. Then there exists $g\in \underline{\G}(\bbF_{q}[[t]])$ such that $g\underline{\dot{t}}_\lambda L^+\underline{\G}=y$ in $\mathcal{FL}_{\underline{\G}}$.
\end{lemma}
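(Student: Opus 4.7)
The plan is to prove the statement by a Lang's theorem argument. The claim amounts to saying that the $\underline{\calG}(\bbF_q[[t]])$-orbit of the base point $\underline{\dot t}_\lambda$ on the $\bbF_q$-points of $\calM^\lambda$ is already all of $\calM^\lambda(\bbF_q)$, given that $\underline{\dot t}_\lambda$ itself is $\bbF_q$-rational.

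First I would reduce to a finite-level statement. Since $\calM^\lambda$ is a finite type scheme over $\bbF_q$, the action of the arc group $L^+\underline{\calG}$ on $\calM^\lambda$ factors through a truncation $L^+_N\underline{\calG} := \mathrm{Res}_{(\bbF_q[t]/t^{N+1})/\bbF_q}(\underline{\calG}\otimes\bbF_q[t]/t^{N+1})$ for $N$ sufficiently large. Because $\underline{\calG}$ is a smooth affine $\bbF_q[[t]]$-group scheme with geometrically connected fibers (being the $\bbF_q[[u]]$-group scheme of \S\ref{sec: identification of Iwahori Weyl group} attached to the connected parahoric $\calG$), the Weil restriction $L^+_N\underline{\calG}$ is a smooth connected algebraic group over $\bbF_q$. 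Using the rational base point $\underline{\dot t}_\lambda$, the orbit map identifies $\calM^\lambda \simeq L^+_N\underline{\calG}/P$ as $\bbF_q$-schemes, where $P\subseteq L^+_N\underline{\calG}$ is the stabilizer of $\underline{\dot t}_\lambda$. Given $y\in \calM^\lambda(\bbF_q)$, the fiber of the orbit map above $y$ is a $P$-torsor over $\bbF_q$; if $P$ is smooth and connected then Lang's theorem gives $H^1(\bbF_q,P)=0$, so this torsor is trivial and yields $g_N\in L^+_N\underline{\calG}(\bbF_q)$ with $g_N\underline{\dot t}_\lambda=y$. Smoothness of $\underline{\calG}$ together with completeness of $\bbF_q[[t]]$ then lifts $g_N$ to $g\in \underline{\calG}(\bbF_q[[t]])$ with $g\underline{\dot t}_\lambda = y$, as desired.

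The only nontrivial point is thus the smoothness and connectedness of $P$. Smoothness is essentially automatic: $L^+_N\underline{\calG}$ is smooth, and $\calM^\lambda$ is smooth at $\underline{\dot t}_\lambda$ because the stratum $\calM^\lambda$ is an orbit of a smooth group, hence smooth as a variety. The \textbf{main obstacle} is the connectedness of $P$. This I would handle using the standard description of $P$ (or rather of its lift $L^+\underline{\calG}\cap\underline{\dot t}_\lambda L^+\underline{\calG}\,\underline{\dot t}_\lambda^{-1}$ in the arc group) coming from the affine root group datum for $\underline{\calG}$: after fixing a suitable ordering of the affine roots, this intersection admits an Iwahori-type decomposition as an extension of a parabolic subgroup $P_\lambda$ in the reductive quotient of the special fiber $\underline{\calG}_{\bbF_q}$ by a pro-unipotent group, the latter being a direct product of affine root subgroups $\calU_{\tilde\alpha,r}$ with valuations $r$ bounded below by $\langle\lambda,\tilde\alpha\rangle$. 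Both factors are visibly connected, so the truncation $P$ is connected as well, and the Lang argument of the previous paragraph goes through.
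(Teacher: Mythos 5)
Your proposal is correct and follows essentially the same route as the paper: both arguments reduce the statement to applying Lang's theorem to the stabilizer of the rational base point $\underline{\dot t}_\lambda$, the only real content being smoothness and connectedness of that stabilizer. The paper obtains connectedness by observing that $\underline{\calG}(k[[t]])\cap \underline{\dot t}_\lambda\underline{\calG}(k[[t]])\underline{\dot t}_\lambda^{-1}$ is the intersection of $\ker\widetilde\kappa_{\underline G}$ with the stabilizer of a bounded subset of the building, hence arises as the $k$-points of a smooth connected group scheme over $\bbF_q[[t]]$ by a result of Haines--Rapoport, whereas you verify it via the affine-root-group decomposition; either works, and the paper phrases Lang's theorem as the cocycle statement $\sigma_q(h)=hk$ for the pro-group rather than as triviality of the fiber torsor at a finite truncation, which sidesteps your reduction to finite level.
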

\begin{proof} By definition, there exists $h\in \ucalG(k[[t]])$ such that $h\underline{\dot{t}}_\lambda=y$. We consider the subgroup $$\ucalG(k[[t]])\cap \underline{\dot{t}}_\lambda\ucalG(k[[t]])\underline{\dot{t}}_\lambda^{-1}\subset \uG(k((t)));$$
	it is the intersection of the kernel of the Kottwitz homomorphism $\widetilde{\kappa}_{\underline{G}}$ and the stabilizer of a bounded subset of the building $\calB(\underline{G},k((t)))$. Thus by \cite[Prop. 3 and Remark 4]{HaRa}, it arises  as the $k$-points of a smooth group scheme $\underline{\calK}_{\lambda}$ defined over $\bbF_{q}[[t]]$ with connected special fiber.
	
	The element $h$ is defined up to right multiplication by $\underline{\calK}_\lambda(k[[t]])$; hence since $\sigma_q(y)=y$, we have $\sigma_q(h)=hk$ for some $k\in \underline{\calK}_\lambda(k[[t]])$. By Lang's theorem applied to $\underline{\calK}_\lambda$, there exists $k_1\in\underline{\calK}_\lambda(k[[t]])$ such that $g:=hk_1$ is fixed by $\sigma_q$, and we have $g\dot{t}_\lambda=y$ in $\mathcal{FL}_{\ucalG}$.
\end{proof}

\subsubsection{}Using Theorem \ref{eqn: local model diagram scheme}, we may deduce the following result about the local structure of the Shimura stack $\calS_{\rmK}$.

\begin{cor}\label{cor: map from smooth stack to Shimura var}
		Let $x\in\calS_{\rmK}^\lambda(\bbF_{q})$ with $\lambda\in \Adm_{G}(\{\mu\})_{J}$ and $\lambda \neq {\mu}$. There  exists a  smooth, geometrically connected curve $C'$ over $\bbF_{q}$ and a map $\phi':C'\rightarrow \calS_{\bbF_q}$ such that 
	\begin{enumerate}[label=(\roman*)]
	\item There exists $x'\in C'(\bbF_{q})$ such that $\phi'(x')=x$.
	\item $ \phi'^{-1}(\calS_{\rmK,k}^{\lambda'})\subset C'$ is an open dense subscheme for some $\lambda'$
	$\in\Adm_{G}(\{\mu\})_{J}$ with $\lambda\prec\lambda'$.
	\end{enumerate}
\end{cor}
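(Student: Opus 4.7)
My plan is to deduce this corollary from Proposition \ref{prop: map from curve} by transporting the curve on the special fiber $\calM$ of the local model up to the Shimura stack $\calS_{\rmK}$ via the local model diagram \ref{eqn: local model diagram scheme}. The three ingredients I would use are (a) Lang's theorem to produce an $\bbF_q$-rational lift of $x$ to $\widetilde{\scrS}^{\ad}_{\rmK}(\bfG,X)$, (b) the fact that smooth morphisms admit étale-local sections through any prescribed point, and (c) the standard observation that on a smooth $\bbF_q$-scheme the connected component of any $\bbF_q$-rational point is automatically geometrically connected (since the Galois group must act transitively on the geometric components of a connected scheme, and any component containing a rational point is Galois-fixed).

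First I would apply (a) to the smooth connected group $\calG_{\ad,\bbF_q}$ acting on the fiber $\pi^{-1}(x)$, obtaining $\tilde{x}\in\widetilde{\scrS}^{\ad}_{\rmK}(\bfG,X)(\bbF_q)$ with $\pi(\tilde{x})=x$. Set $y:=q(\tilde{x})$; compatibility of $q$ with the Kottwitz--Rapoport stratification forces $y\in\calM^{\lambda}(\bbF_q)$. Applying Proposition \ref{prop: map from curve} to $y$ yields a smooth geometrically connected curve $\phi\colon C\to\calM_{\bbF_q}$, a point $y'\in C(\bbF_q)$ with $\phi(y')=y$, and some $\lambda'\in\Adm_{G'}(\{\mu\})_{J'}$ with $\lambda\prec\lambda'$ such that $\phi^{-1}(\calM_k^{\lambda'})\subset C$ is open and dense.

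Next, using (b) applied to the smooth morphism $q$ at the point $\tilde{x}$, I would produce an étale neighborhood $(W,w)\to(\calM,y)$ together with a section $s\colon W\to\widetilde{\scrS}^{\ad}_{\rmK}(\bfG,X)$ of $q$ satisfying $s(w)=\tilde{x}$. Setting $D:=C\times_{\calM}W$, the first projection $D\to C$ is étale and $D$ carries the $\bbF_q$-point $z_0:=(y',w)$ mapping to $(y,\tilde{x})$. Let $C'$ be the connected component of $z_0$ in $D$; by (c) it is geometrically connected, and it is smooth over $\bbF_q$ since $D$ is. The desired map is $\phi':=\pi\circ s\circ\mathrm{pr}_W\colon C'\to\calS_{\rmK,\bbF_q}$. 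Condition (i) is immediate since $z_0\mapsto\tilde{x}\mapsto x$; condition (ii) follows because the equality $\pi^{-1}(\calS_{\rmK,k}^{\lambda'})=q^{-1}(\calM_k^{\lambda'})$ of locally closed substacks of $\widetilde{\scrS}^{\ad}_{\rmK,k}$ (both describe the preimage of the $\lambda'$-stratum under the local model diagram) implies that $\phi'^{-1}(\calS_{\rmK,k}^{\lambda'})$ is the preimage of $\phi^{-1}(\calM_k^{\lambda'})$ under the étale map $C'\to C$, hence is open and dense in $C'$.

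There is no real obstacle here: the serious geometric work is Proposition \ref{prop: map from curve} itself, which rests on the explicit Bruhat--Tits classification of rank one groups; once that is in hand, the present corollary is essentially a formal transfer through the local model diagram using smoothness of $q$, triviality of $\calG_{\ad}$-torsors over $\bbF_q$, and the geometric connectedness observation.
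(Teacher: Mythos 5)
Your first half — Lang's theorem to lift $x$ to $\widetilde{x}\in\widetilde{\scrS}^{\ad}_{\rmK}(\bbF_q)$, the observation that $y:=q(\widetilde{x})$ lies in $\calM^{\lambda}(\bbF_q)$ by the definition of the Kottwitz--Rapoport stratification, and the appeal to Proposition \ref{prop: map from curve} — coincides with the paper's proof. The second half genuinely differs. The paper forms the smooth stack $\widetilde{\calS}_{\rmK,\bbF_q}\times_{\calM_{\bbF_q}}C$, presents it by a smooth scheme $Y$ carrying an $\bbF_q$-point over $\widetilde{x}$ via \cite[Th\'eor\`eme 6.3]{LMB}, and then uses Poonen's Bertini theorem over finite fields to cut out a geometrically connected curve $C'\subset Y$ through that point meeting the dense open preimage of $\calM_k^{\lambda'}$. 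You instead take an \'etale-local section of $q$ through $\widetilde{x}$ and base-change $C$ along the resulting \'etale neighborhood, so that $D\to C$ is \'etale of relative dimension $0$; no Bertini-type input is needed, and geometric connectedness of the component $C'$ is automatic from the rational point $z_0$ (your observation (c) is the standard fact that a connected finite-type $\bbF_q$-scheme with a rational point is geometrically connected — smoothness is not even required). Your verification of (ii) via $\pi^{-1}(\calS_{\rmK,k}^{\lambda'})=q^{-1}(\calM_k^{\lambda'})$, which is essentially the definition of the stratification on $\calS_{\rmK,k}$, and the transfer of density along the \'etale map $C'\to C$ (open image, generic point to generic point) are both correct. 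This is a legitimate simplification of this particular corollary, though note that Poonen's theorem is still indispensable elsewhere in the paper, e.g.\ in Step (1) of Proposition \ref{prop: indep in GL}, where there is no curve on the local model to pull back.

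The one point you must repair is step (b): the source of $q$ is in general an algebraic \emph{stack}, not a scheme, since $\rmK^p$ is not assumed neat and $\widetilde{\scrS}^{\ad}_{\rmK}$ is a $\calG_{\ad}$-torsor over the quotient stack $\scrS_{\rmK}$. The \'etale-local-section theorem for smooth morphisms (EGA IV 17.16.3) therefore does not literally apply, and lifting $\widetilde{x}$ to an $\bbF_q$-point of an arbitrary smooth presentation is not automatic: a nonempty smooth $\bbF_q$-scheme need not have an $\bbF_q$-point. The fix is exactly the ingredient the paper uses: first invoke \cite[Th\'eor\`eme 6.3]{LMB} to produce a smooth scheme $P$ with a smooth map $P\to\widetilde{\scrS}^{\ad}_{\rmK,\bbF_q}$ and an $\bbF_q$-point of $P$ over $\widetilde{x}$, then take the \'etale-local section of the smooth composite $P\to\calM_{\bbF_q}$ through that point, and run your construction with $s$ landing in $P$ and $\phi'$ defined by composing with $P\to\widetilde{\scrS}^{\ad}_{\rmK,\bbF_q}\rightarrow\calS_{\rmK,\bbF_q}$. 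With that insertion the rest of your argument goes through unchanged.
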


\begin{proof} We write
	\[\xymatrix{ & \widetilde{\calS_{\rmK}}\ar[dr]^{q_{k_E}}\ar[dl]_{\pi_{k_E}}& \\ 
	\calS_{\rmK} & & \calM}\] for the special fiber of (\ref{eqn: local model diagram scheme}). Since $\pi_{k_E}$ is  a torsor for the smooth  group scheme $\G_{\mathrm{ad},k_E}$ with connected special fiber, the point $x$ lifts to a point $\widetilde{x}\in\widetilde{\calS_{\rmK}}(\bbF_q)$ and we write $y$ for its image in $\calM(\bbF_q)$. By definition of the stratification on $\calS_{\rmK}$, we have $y\in\calM^\lambda(\bbF_q)$. We apply Proposition \ref{prop: map from curve} to $y$ to obtain a map $\phi':C\rightarrow \calM_{\bbF_q}$ satisfying (i) and (ii) in Proposition \ref{prop: map from curve} for some $\lambda'\in\Adm_{G}(\{\mu\})_{J}$ with $\lambda\prec\lambda'$; we let $y'\in C(\bbF_q)$ mapping to $y$. 

Consider the pullback $\widetilde{\calS}_{\rmK,\bbF_q}{\times}_{\calM_{\bbF_q}} C$ which is a smooth stack over $\bbF_q$.  By \cite[Th\'eor\`eme 6.3]{LMB}, there exists  a smooth scheme $Y/\bbF_q$ and a smooth map $Y\rightarrow \widetilde{\calS}_{\rmK,\bbF_q}{\times}_{\calM_{\bbF_q}} C$ defined over $\bbF_q$ such that $\widetilde{x}$ lies in the image of a point $\widetilde{y}\in Y(\bbF_q)$. Now let $Y^{\lambda'}$ denote the preimage of $\calM^{\lambda'}$ in $Y$; by the assumption on $C$, it is a dense open subscheme of $Y$. By \cite[Corollary 3.4]{Poonen}, there exists a smooth geometrically connected curve $C'\subset Y$ such that $\widetilde{y}\in C'(\bbF_q)$ and $C'\cap Y^{\lambda'}\neq\emptyset$ so that the preimage of $Y^{\lambda'}$ in $C'$ is open and dense. We write $\phi':C'\rightarrow \calS_{\rmK,\bbF_q}$ for the composition $$C'\rightarrow Y\rightarrow \widetilde{\calS}_{\rmK,\bbF_q}{\times}_{\calM_{\bbF_q}} C\rightarrow \widetilde{\calS}_{\rmK,\bbF_q}\rightarrow \calS_{\rmK,\bbF_q} .$$
Then setting $x'=\widetilde{y}\in C'(\bbF_q)$, we have $ \phi'(x')=x$, so (i) is satisfied, and property (ii) follows by the construction.
	\end{proof}

\subsection{Compatible local systems and $\ell$-independence}

\subsubsection{}We recall the theory of compatible local systems. Let $X$ be a normal scheme  over $\bbF_q$ where $q$ is a power of $p$  and let $\calL_\ell$ be a $\overline{\bbQ}_\ell$-local system (lisse sheaf) on $X$. For $x\in X(\bbF_{q^n})$, we write $\mathrm{Frob}_x$ for the local Frobenius automorphism acting on the stalk $\mathcal{L}_{\ell,\overline{x}}$ of $\mathcal{L}_\ell$ at a geometric point $\overline{x}$ lying over $x$. Suppose that for every closed point $x\in X(\bbF_{q^n})$ the characteristic polynomial $\det(1-\mathrm{Frob}_xt|\mathcal{L}_{\ell,\overline{x}}),$ has coefficients in a number field $E\subset \overline{\bbQ}_\ell$ (this is conjectured to be the case if $\calL_{\ell}$ has determinant of finite order).  Let $\ell'$ be a prime not equal to $p$ and $\lambda':E\hookrightarrow\overline{\bbQ}_{\ell'}$ an embedding of fields. A  $\overline{\bbQ}_{\ell'}$-local system $\mathcal{K}_{\ell'}$ is said to be \emph{$\lambda'$-compatible} for $\mathcal{L}_\ell$ if for  every closed point $x\in X(\bbF_{q^n})$, the characteristic polynomial  $\det(1-\mathrm{Frob}_xt|\mathcal{K}_{,\ell',\overline{x}})$ has coefficients in $E$ and there is an equality 
$$\det(1-\mathrm{Frob}_xt|\mathcal{L}_{\ell,\overline{x}})=\det(1-\mathrm{Frob}_xt|\mathcal{K}_{\ell',\overline{x}})\in E[t].$$

The existence of $\lambda'$-compatible local systems over smooth curves is due to   Lafforgue \cite[Th\'eor\`eme VII.6]{Laf} (under the assumption of finite determinant), and the case of  smooth schemes is due to Drinfeld \cite[Theorem 1.1]{Drinfeld}.

\subsubsection{} We now continue with the notations of \S\ref{sec: Frob conj classes}. For the rest of this section, it will be convenient to fix a Hodge embedding $\iota: (\bfG,X) \rightarrow (\bGSp(V),S^\pm)$ as in \S\ref{sec: canonical liftings 1}.

The element $\gamma_{y,\ell}\in\Conj_{\bfG}(\bbQ_\ell)$ arises as an element of $\mathrm{Conj}_{\bfG}(\overline{\bbQ})$. 
Indeed the image of $\gamma_{y,\ell}$ in 
$\mathrm{Conj}_{{\bGL(V)}}(\bbQ_\ell)$ under the map induced by $\iota$ lies in $\mathrm{Conj}_{\bGL(V)}(\bbQ)$ 
since it corresponds to the action of Frobenius on the $\ell$-adic Tate module of an abelian variety.  
Since $\Conj_{\bfG}\rightarrow \Conj_{\bGL(V)}$ is a finite map, $\gamma_{y,\ell}\in \Conj_{\bfG}(\overline{\bbQ})$. 
Similarly if $\ell'\nmid p\ell$ is another prime, $\gamma_{y,\ell'}$ arises as an element of $\Conj_{\bfG}(\overline{\bbQ})$. 

We let $F$ be a finite extension of $\bbQ$ such that $\gamma_{y,\ell},\gamma_{y,\ell'}\in\Conj_{\bfG}(F)$; such an extension exists since $\Conj_{\bfG}$ is a $\bbQ$-variety. Let $\lambda,\lambda'$ be the two places over $F$ induced by the fixed embeddings $i_\ell:\overline{\bbQ}\rightarrow\overline{\bbQ}_\ell$ and $i_{\ell'}:\overline{\bbQ}\rightarrow\overline{\bbQ}_{\ell'}$. We take $\vartheta:\bfG_F\rightarrow \mathbf{GL_n}_F$ to be an arbitrary representation over $F$ (not necessarily coming from the Hodge embedding $\iota$); then the $\bfG(\bbQ_\ell)$-local system $\bbL_\ell$ induces an $F_\lambda$-adic local system $\calL_\ell$ over $\calS_{\rmK}$. Similarly we obtain an $F_{\lambda'}$-adic local system $\calL_{\ell'}$. 

\begin{lemma}\label{lemma: Frob l adic unit}
	For any closed point $x\in \calS_{\rmK}(\bbF_{q})$, the eigenvalues of $\mathrm{Frob}_x$ acting on $\mathcal{L}_{\ell,\overline{x}}$ are  $\ell$-adic units.
\end{lemma}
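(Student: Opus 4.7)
The plan is to reduce the statement to the classical fact that Frobenius eigenvalues on the $\ell$-adic Tate module of an abelian variety over $\bbF_q$ are $\ell$-adic units. Via the Hodge embedding $\iota:(\bfG,X)\hookrightarrow(\bGSp(V),S^\pm)$ and the identification (\ref{eqn: id of local systems}), the $\bfG(\bbQ_\ell)$-local system $\bbL_\ell$ is a reduction of structure group to $\bfG_{\bbQ_\ell}$ of the $\bGL(V_{\bbQ_\ell})$-torsor attached to $\calV_\ell^\vee=R^1 h_{\et,*}\bbQ_\ell$, where $h:\calA\to\scrS_{\rmK}(\bfG,X)$ is the universal abelian scheme pulled back along the map of integral models in Proposition \ref{prop: morphism integral models} (applied with $(\bfG_2,X_2)=(\bfG,X)$). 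For a closed point $x\in\calS_{\rmK}(\bbF_q)$, the stalk $\calV_{\ell,\overline{x}}^\vee$ is therefore canonically identified, as a module for $\mathrm{Gal}(\overline{\bbF}_q/\bbF_q)$, with $\rmH^1_{\et}(\calA_{x,\overline{\bbF}_q},\bbQ_\ell)$.

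Next, because $V_F$ is a faithful algebraic representation of the reductive group $\bfG_F$, Tannakian formalism implies that every finite-dimensional algebraic representation $\vartheta:\bfG_F\to\mathbf{GL}_{n,F}$ arises as a subquotient of a direct sum of tensor products of copies of $V_F$ and $V_F^\vee$. Consequently $\calL_\ell$ is a subquotient of a corresponding tensor construction on $\calV_\ell^\vee$, and since the property ``geometric Frobenius acts with $\ell$-adic unit eigenvalues'' is preserved under duals, tensor products, direct sums, and subquotients, it suffices to verify this for the single local system $\calV_\ell^\vee$; equivalently, for the action of geometric Frobenius on $\rmH^1_{\et}(\calA_{x,\overline{\bbF}_q},\bbQ_\ell)$.

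Finally, by Weil's theorem for abelian varieties over finite fields, the characteristic polynomial of geometric Frobenius on $\rmH^1_{\et}(\calA_{x,\overline{\bbF}_q},\bbQ_\ell)$ has integer coefficients (independent of $\ell\neq p$) with constant term $\pm q^g$, where $g=\dim\calA_x$, and its roots are algebraic integers of complex absolute value $\sqrt{q}$. Since these roots are algebraic integers with non-negative $\ell$-adic valuations whose sum equals $v_\ell(q^g)=0$, every root must have $\ell$-adic valuation zero, i.e.\ be an $\ell$-adic unit, as required. The only point needing care is the compatibility of the Galois action on $\calV_{\ell,\overline{x}}^\vee$ with the action on $\rmH^1_{\et}(\calA_{x,\overline{\bbF}_q},\bbQ_\ell)$, which is immediate from the construction of $h$ and Proposition \ref{prop: morphism integral models}; everything else is either classical or formal.
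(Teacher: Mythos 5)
Your proof is correct and follows essentially the same route as the paper: reduce, via the tensor/Tannakian formalism and the identification $\bbL_\ell=\underline{\mathrm{Isom}}_{(s_\alpha,s_{\alpha,\ell})}(V_{\bbQ_\ell},\calV_\ell^\vee)$, to the single faithful representation $\iota:\bfG\to\mathbf{GL}(V)$, where $\mathrm{Frob}_x$ acts as Frobenius on the $\ell$-adic Tate module of the abelian variety $\calA_x$ and the classical Weil argument applies. The paper states this in two sentences without spelling out the subquotient-of-tensor-constructions reduction or the valuation computation; your version simply makes those standard steps explicit.
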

\begin{proof} It suffices to prove this for a single faithful representation of $\bfG$. For the representation $\bfG\rightarrow \mathbf{GL}(V)$ induced by $\iota$, the action of $\mathrm{Frob}_x$ on $\mathcal{L}_{\ell,\overline{x}}$ corresponds to the action of Frobenius on the $\ell$-adic Tate  module of an abelian variety and hence its eigenvalues are all $\ell$-adic units.
	\end{proof}

\subsubsection{}We let $\vartheta(\gamma_{y,\ell})\in\Conj_{\mathbf{GL_n}}(F)\subset\Conj_{\mathbf{GL_n}}(F_\lambda)$ denote the image of the conjugacy class of $\mathrm{Frob}_y$ under $\vartheta$  and we similarly define $\vartheta(\gamma_{y,\ell'})\in\Conj_{\mathbf{GL_n}}(F)\subset\Conj_{\mathbf{GL_n}}(F_{\lambda'})$.

\begin{prop}\label{prop: indep in GL} For any representation $\vartheta:\bfG_F\rightarrow \mathbf{GL}_{nF}$, we have
 $$\vartheta(\gamma_{y,\ell})=\vartheta(\gamma_{y,\ell'})$$ in $\mathrm{Conj}_{\mathbf{GL_n}}(F).$
\end{prop}

\begin{proof} Let $C$ be a smooth geometrically connected curve and $\psi:C\rightarrow \calS_{\rmK,\bbF_q}$ a morphism defined over $\bbF_q$ such that there exists a point $x\in C(\bbF_q)$ with $\psi(x)=y$. We first show that if the proposition holds for the image  under $\psi$ of a Zariski open and dense set $U\subset C$, then it holds for $y$. 
	
	We write $\calL_{\ell}^{C}$ (resp. $\calL_{\ell'}^{C}$) for the pullback $\psi^*\calL_\ell$ of $\calL_\ell$ (resp. $\psi^*\calL_{\ell'}$ of $\calL_{\ell'}$) to ${C}$.
	By  Lemma \ref{lemma: Frob l adic unit}, $\calL^C_{\ell}$ satisfies the conditions in Chin's refinement of Lafforgue's Theorem  \cite[Theorem 4.6]{Chin}. Thus upon enlarging $F$, there exists a  $\overline{\bbQ}_{\ell'}$-local system $\calK^{C}_{\ell'}$  over $C$ which is $\lambda'$-compatible for $\calL^{C}_{\ell}$.

For any  closed point  $x\in C(\bbF_{q^s})$, 	
	$$\det(1-\mathrm{Frob}_xt|\calL_{\ell,\bar x}^{C})=\det(1-\mathrm{Frob}_xt|\calK_{\ell',\bar x}^{C}) \in F[t].$$	
By assumption, for any closed point $x\in U(\bbF_{q^s})$, we have 
	$$\det(1-\mathrm{Frob}_xt|\calL_{\ell',\bar x}^{C})=\det(1-\mathrm{Frob}_xt|\calL_{\ell,\bar x}^{C}) =\det(1-\mathrm{Frob}_xt|\calK_{\ell', \bar x}^{C}).$$
	Therefore, by the Chebotarev density Theorem, the semisimplifications of $\calK^{C}_{\ell'}$ and  $\calL^{C}_{\ell'}$ are isomorphic, and hence 
	$$\vartheta(\gamma_{y,\ell})=\det(1-\mathrm{Frob}_yt|\calL_{\ell,\bar y}^{C})=\det(1-\mathrm{Frob}_yt|\calL_{\ell',\bar y}^{C})=\vartheta(\gamma_{y,\ell'})$$ as desired.
	
	We now show that the Proposition holds for $y\in \calS_{\rmK}^\mu(\bbF_q)$; we recall that $\calS_{\rmK}^\mu$ is the open Kottwitz--Rapoport stratum and is smooth. Using the same argument as in the proof of \ref{cor: map from smooth stack to Shimura var} (i.e. applying \cite[Th\'eor\`eme 6.3]{LMB} and \cite[Corollary 3.4]{Poonen}), we may find a smooth geometrically connected curve $C$ over $\bbF_q$ and a map $\psi:C\rightarrow \calS_{\rmK,\bbF_q}^\mu$ defined over $\bbF_q$ such that there exists a point $x\in C(\bbF_q)$ with $\psi(x)=y$ and such that the preimage $U:=\psi^{-1}(\calS_{\rmK,[b]_\mu})\subset C$ of the $\mu$-ordinary locus is open and dense. By Corollary \ref{cor: l indep mu ordinary}, the Proposition holds for points $y'$ lying in the image of $U$, and hence it holds for $y$ by the above argument.

	Finally we show that the Proposition holds for all $y\in \calS_{\rmK}(\bbF_q)$. We assume $y\in \calS_{\rmK}^\nu(\bbF_q)$ and we proceed by descending induction on $\nu$; the case of the maximal element $\nu=\mu$ was proved above. Now suppose the result is true for all $\nu'\succ\nu$.
 	Let $\psi:C\rightarrow \calS_{\rmK,\bbF_q}$ be a map as in Corollary \ref{cor: map from smooth stack to Shimura var} where $C$ is a smooth geometrically connected curve over $\bbF_q$. We let $U\subset C$ denote the preimage of $\bigcup_{\nu\prec\nu'}\calS^{\nu'}_{\rmK,\bbF_q}$ which is Zariski open and dense. By induction hypothesis, the proposition holds for the image of $U$, hence it holds for $y$.
\end{proof}

\subsubsection{}We may now prove Theorem \ref{thm: l indep full}.

\begin{proof}[Proof of Theorem \ref{thm: l indep full}] For all $\ell,\ell' \neq p,$ and $\vartheta$ as above, we have 
$\vartheta(\gamma_{y,\ell})=\vartheta(\gamma_{y,\ell'})$ by Proposition \ref{prop: indep in GL}. This implies that 
$\gamma_{y,\ell}=\gamma_{y,\ell'}\in\Conj_{\bfG}(\overline{\bbQ}),$ by a result of Steinberg \cite[6.6]{Steinberg:regular}.
Hence, there exists $\gamma_y\in \Conj_{\bfG}(\overline{\bbQ})$ such that $\gamma_{y}=\gamma_{y,\ell}$ for all $\ell\neq p$. It suffices to show $\gamma_y$ is defined over $\bbQ$.

Since $\Conj_{\bfG}$ is a $\bbQ$-variety, the residue field of the point $\gamma_y$ is a finite extension $F/\bbQ$. Since $\gamma_y\in \Conj_{\bfG}(\bbQ_\ell)$ for all $\ell$, each finite prime of $\bbQ$ has a split prime in $F$ above it; hence the Chebotarev density theorem implies $\gamma_y\in \Conj_{\bfG}(\bbQ)$. Indeed let $F'/\bbQ$ be the Galois closure of $F.$ 
Then for every prime $\ell\neq p$, there exists $l$ a prime of $F'$ above $\ell$ such that the Frobenius $\mathrm{Frob}_{l}$ lies in 
$\mathrm{Gal}(F'/F) \subset \mathrm{Gal}(F'/\bbQ).$
It follows that $\mathrm{Gal}(F'/F)$ intersects every conjugacy class of $\mathrm{Gal}(F'/\bbQ)$ and hence these groups are equal.
	\end{proof}
\begin{remark}The application of \cite[6.6]{Steinberg:regular} in the previous theorem is one of the reasons we  obtain $\gamma$ as an element of $\Conj_{\bfG}(\bbQ)$, as opposed to an element of $\bfG(\bbQ)$.
	
	\end{remark}

\section{Conjugacy class of Frobenius for abelian varieties}
We apply the results of \S\ref{sec: l indep for Shimura var} to prove our main result concerning abelian varieties.

\subsection{Mumford--Tate groups}\label{subsec:Mumford-Tate groups}

\subsubsection{} Let $A$ be an abelian variety over a number field $\rmE$. Recall we have fixed an embedding $i_\infty:\overline{\bbQ}\rightarrow\bbC$; using this we may consider $\rmE$ as a subfield of $\bbC$.  We write $V_B$ for the Betti cohomology $\rmH^1_B(A(\C),\Q)$ which is equipped with a Hodge structure of type $((0,-1),(-1,0))$. This Hodge structure is induced by a morphism $$h:\mathbb{S}:=\text{Res}_{\C/\R}\mathbb{G}_m\rightarrow \GL(V_B)$$
We write $$\mu:\C^\times\xrightarrow{z\mapsto (z,1)}\C^\times\times c^*(\C^\times)\xrightarrow{h} {\GL}(V_B\otimes\C)$$ for the Hodge cocharacter. 

\begin{definition} 
	The Mumford--Tate group $\bfG$ of $A$ is  the smallest algebraic subgroup of $\GL(V_B)$ defined over $\Q$ such that  $\bfG(\bbC)$ contains the image of $\mu$. \end{definition}
The group $\bfG$ can also be characterized as the algebraic subgroup of $\GL(V_B)$ that stabilizes all Hodge cycles of type (0,0)  on the tensor spaces $V_B^{\otimes r}\otimes (V_B^\vee)^{\otimes r}$ for $r\in \bbZ_{\geq0}$;  it is known that $\bfG$ is  a connected  reductive group.  

We remark that $\bfG$ depends on the embedding $\rmE\hookrightarrow \bbC$; if $\bfG_1$ is the group defined by a different embedding then there is a canonical inner twisting $\bfG_{\overline{\bbQ}}\cong\bfG_{1,\overline{\bbQ}}$ induced by the  torsor of tensor preserving isomorphisms between the Betti cohomology groups (see \cite[Proof of Theorem 3.8]{De1} for the construction of this torsor).

\subsubsection{} For a prime number $\ell$, we write $T_\ell A$ for the Tate module of $A$. The action of the absolute Galois group 
$\Gamma_{\rmE}:=\mathrm{Gal}(\overline{\rmE}/\rmE)$ on $T_\ell A^\vee$ gives rise to a representation $\rho_\ell:\Gamma_{\rmE}\rightarrow \mathbf{GL}(T_\ell A^\vee)$ and the Betti-\'etale comparison gives us a canonical isomorphism 
$$\rmH^1_B(A(\C),\Q)\otimes_{\bbQ}\Q_\ell§\cong T_\ell A^\vee\otimes_{\bbZ_\ell}\Q_\ell.$$
Deligne's theorem that Hodge cycles are absolutely Hodge \cite{De1}, implies that upon replacing $\rmE$ by a finite extension, the map 
$\rho_\ell$ factors through $\bfG(\Q_\ell)$; see \cite[Remarque 1.9]{Noot}. In fact this condition does not depend on $\ell.$

\begin{lemma}\label{indepoflfac} The representation $\rho_\ell$ factors through $\bfG(\Q_\ell)$ for some prime $\ell,$ if and only if it factors through 
$\bfG(\Q_\ell)$ for all primes $\ell.$
\end{lemma}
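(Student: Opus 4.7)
The plan is to reduce the question to a property of the Hodge cycles that cut out $\bfG$ and then invoke Deligne's theorem that Hodge cycles on abelian varieties are absolute Hodge. Concretely, since $\bfG$ is a closed algebraic subgroup of $\mathbf{GL}(V_B)$, one can find a finite collection of Hodge tensors $s_\alpha \in V_B^{\otimes}$ whose common stabilizer is $\bfG$. Via the Betti--\'etale comparison $V_B \otimes_{\bbQ} \bbQ_\ell \simeq T_\ell A^\vee \otimes_{\bbZ_\ell} \bbQ_\ell$ (which depends on the embedding $\rmE \hookrightarrow \bbC$), each $s_\alpha$ corresponds to a tensor $s_{\alpha,\ell}$. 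Unwinding the definitions, $\rho_\ell$ factors through $\bfG(\bbQ_\ell)$ if and only if every $s_{\alpha,\ell}$ is $\Gamma_\rmE$-invariant; so it suffices to show that the $\Gamma_\rmE$-invariance of the family $\{s_{\alpha,\ell}\}_\alpha$ is independent of the prime $\ell$.

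The main input will be Deligne's theorem \cite{De1}: each $s_\alpha$ is an absolute Hodge cycle on $A_\bbC$. Spelled out in the form needed here, this gives a de Rham realization $s_{\alpha,\mathrm{dR}} \in \rmH^1_{\mathrm{dR}}(A/\overline{\rmE})^\otimes$ and, for every $\ell$, an \'etale realization $s_{\alpha,\ell}$, which are all compatible under the relevant comparison isomorphisms and--this is the key point--transform equivariantly under $\mathrm{Aut}(\bbC/\rmE)$. That is, for $\sigma \in \Gamma_\rmE$ and any lift $\tilde\sigma \in \mathrm{Aut}(\bbC/\rmE)$, one has $\sigma \cdot s_{\alpha,\ell} = (\tilde\sigma s_\alpha)_\ell$ and $\sigma \cdot s_{\alpha,\mathrm{dR}} = (\tilde\sigma s_\alpha)_{\mathrm{dR}}$, where $\tilde\sigma s_\alpha$ is the transported absolute Hodge cycle on $A_\bbC^{\tilde\sigma} = A_\bbC$ (using that $A$ descends to $\rmE$).

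Given this compatibility, the argument finishes quickly: the condition $\sigma \cdot s_{\alpha,\ell} = s_{\alpha,\ell}$ for all $\alpha$ translates into $\tilde\sigma s_\alpha = s_\alpha$ as absolute Hodge cycles, a condition on an $\overline{\rmE}$-rational object which makes no reference to $\ell$. Equivalently one can phrase the conclusion in terms of the de Rham realization: $s_{\alpha,\ell}$ is $\Gamma_\rmE$-fixed if and only if $s_{\alpha,\mathrm{dR}} \in \rmH^1_{\mathrm{dR}}(A/\overline\rmE)^\otimes$ descends to $\rmH^1_{\mathrm{dR}}(A/\rmE)^\otimes$, a condition manifestly independent of $\ell$.

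The main obstacle, such as it is, lies in correctly invoking the absolute Hodge formalism: one must verify that the Galois action on the $\ell$-adic realization really is induced by the $\mathrm{Aut}(\bbC/\rmE)$-action on the absolute Hodge cycle (rather than, say, some twist of it). This is standard and is worked out in the references following Deligne's original paper, and a compact form of the argument is already indicated in \cite[Remarque~1.9]{Noot}; once this equivariance is in hand, the $\ell$-independence is immediate.
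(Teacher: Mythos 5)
Your proof is correct and follows essentially the same route as the paper: both arguments rest on Deligne's theorem that the $s_\alpha$ are absolute Hodge, so that $\sigma(s_{\alpha,\ell})$ is the $\ell$-adic component of the transported absolute Hodge cycle and the fixedness condition becomes a condition on an object independent of $\ell$. The paper phrases this slightly more tersely (an absolute Hodge cycle is determined by its component at a single prime, so equality of components at one $\ell$ forces equality at all $\ell$), but the substance is identical.
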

\begin{proof} The subgroup $\bfG \subset \GL(V_B)$ is the stabilizer of a collection of Hodge cycles $(s_{\alpha})_{\alpha}.$ 
We consider the $\ell$-adic components $(s_{\alpha,\ell})_{\ell},$ as in \S\ref{subsubsec:hodgecycles}. 
For $\sigma \in \Gamma_{\rmE},$ $(\sigma(s_{\alpha,\ell}))_{\ell},$ is again a Hodge cycle, by Deligne's theorem \cite[Theorem 2.11]{De1}. 
In particular, if $(\sigma(s_{\alpha,\ell}))_{\ell},$ and $(s_{\alpha,\ell})_{\ell}$ have equal components at some prime $\ell,$ then 
they are equal. 
\end{proof}
The lemma shows that the condition that $\Gamma_{\rmE}$ fixes $(s_{\alpha,\ell})_{\alpha}$ pointwise does not depend on $\ell.$ This condition is equivalent to asking that $\Gamma_{\rmE}$ maps to $\bfG(\Q_\ell).$
\subsubsection{}\label{subsubsec:notation} We replace $\rmE$ by the smallest extension such that $\Gamma_{\rmE}$ maps to $\bfG(\Q_\ell),$ and 
we write $\rho_\ell^{\bfG}$ for the induced map $\Gamma_{\rmE}\rightarrow \bfG(\Q_\ell)$ and $\iota_\ell$ for the inclusion  $\bfG(\Q_\ell)\rightarrow \mathbf{GL}(T_\ell A^\vee)$.

Let $v$ be a prime of $\rmE$ lying above a prime $p$ such that  $A$ has good reduction at $v.$ Upon modifying the embedding $i_p:\overline{\bbQ}\rightarrow \overline{\bbQ}_p$ fixed in \S\ref{subsec: integral models Hodge type preamble}, we may assume that $v$ is induced by $i_p$. We write $E = \rmE_v,$ and we let $\bbF_q$ denote the residue field of $\rmE$ at $v.$  For $\ell\neq p$ a prime, $\rho_\ell$ is unramified at $v$. Let $\Fr_v$ be a geometric Frobenius element at $v$, we write $\gamma_\ell(v)=\chi_{\bfG}(\rho_\ell^\bfG(\Fr_v))\in \Conj_{\bfG}(\Q_\ell)$ for the conjugacy class of $\rho_\ell^\bfG(\mathrm{Fr}_v)$ which only depends on $v$ and not the choice of Frobenius element. We write $P_{v,\ell}(t)$ for the characteristic polynomial of $\Fr_v$ acting on $T_\ell A^\vee$, which has coefficients in $\Z$ and is independent of $\ell$.

\subsubsection{}\label{subsubsec: restriction of scalars} We will make use of the following auxiliary construction. 
Let $\rmF/\bbQ$ be a totally real field, and let $\bfH':=\text{Res}_{\rmF/\Q}\bfG_{\mathrm{F}}$. 
There is a canonical inclusion $\bfG\hookrightarrow \bfH'$. 	We let $(V,\psi)$ be the symplectic space corresponding to $\rmH_1(A(\bbC),\Q)$ where $\psi$ is a Riemann form for $A$ and $\bfG\rightarrow \mathbf{GSp}(V)$ is  the natural map. We let $W$ denote the symplectic space over $\Q$ whose underlying vector space is $V\otimes_{\Q}\rmF$ and whose alternating form $\psi'$ is given by the composition
$$W\times W\xrightarrow{\psi\otimes_{\Q}\rmF}\rmF\xrightarrow{\text{Tr}_{\rmF/\Q}}\Q.$$

Let $c_{\bfG}:\bfG\rightarrow \mathbb{G}_m$ denote the restriction of the multiplier homomorphism $c:\mathbf{GSp}(V)\rightarrow \mathbb{G}_m$ to $\bfG$. We  form the fiber product

$$\xymatrix{\bfH \ar[rr]\ar[d] && \mathbb{G}_m \ar[d]_{\Delta}\\
	\bfH' \ar[rr]^{\!\!\!\!!\!\!\mathrm{Res}_{\rmF/\Q}c_{\bfG}} &&  \mathrm{Res}_{\rmF/\Q}\mathbb{G}_m
}$$
where the map $\Delta$ is the diagonal map. Then $\bfH$ is an extension of $\bbG_m$ by the group $\Res_{\rmF/\bbQ}\bfG^c_{\rmF}$, where $\bfG^c\subset \bfG$ is the subgroup generated by $\bfG^{\der}$ and the largest compact subtorus of the center of $\bfG$; see \cite[Lemma 7.2.5]{KPZ}. Thus $\bfH$ is a connected reductive group over $\bbQ$. The inclusion $\bfG\hookrightarrow \bfH'$ factors through $\bfH$ and we let $h'$ denote the composition $$\mathbb{S}\xrightarrow{h} \bfG_{\R}\rightarrow \bfH_{\R}.$$ 
Write $X$ for the $\bfG(\R)$-conjugacy class of $h$ and $X_{\bfH}$ for the $\bfH(\R)$-conjugacy class of $h'$. 

Consider the composition $$\iota':\bfH'\xrightarrow{\mathrm{Res}_{\rmF/\Q}\iota}\mathrm{Res}_{\rmF/\Q}\mathbf{GSp}(V)\xrightarrow{f} \mathbf{GL}(W)$$
where $f$ is induced by the forgetful functor from $\rmF$-vector spaces to $\Q$-vector spaces. It is easy to see that the restriction of $\iota'$ to $\bfH$ factors through $\mathbf{GSp}(W)$, and we also denote by $\iota'$ the induced map. We write $S'^\pm$ for the Siegel half space corresponding to $W$. One checks easily that $(\bfG, X),$ and $(\bfH,X_{\bfH})$ are Shimura data, and that we have embeddings of Shimura data
$$ (\bfG, X) \hookrightarrow (\bfH, X_{\bfH}) \hookrightarrow (\mathbf{GSp}(W), S'^{\pm}).$$

\subsection{The main theorem} \label{sec: main thm}
We now prove our main theorem (cf. Theorem \ref{introthm: main}).  We need the following  preliminary result.

\begin{lemma}\label{lem:elementlevelstr} Let $G$ be a connected reductive group over $\Q_p.$ 
If $g \in G(\Q_p)$ lies in some compact open subgroup of $G(\Q_p),$ then there exists a finite extension $F/\Q_p$  over which $G$ splits and such that 
$g$ lies in the parahoric subgroup of $G(F)$ associated to a
 special vertex in the building $\calB(G,F).$
\end{lemma}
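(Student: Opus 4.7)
The plan is to apply the Jordan decomposition of $g,$ then place both factors into the integral points of a Chevalley model after enlarging $\Q_p.$ Since $\Q_p$ is perfect, the decomposition $g = g_s g_u$ into commuting semisimple and unipotent parts is defined over $\Q_p.$ In any faithful representation the eigenvalues of $g$ are $p$-adic units, as $g$ lies in a compact subgroup; hence $g_s$ (which has the same eigenvalues) is bounded, and then $g_u = g_s^{-1}g$ is bounded as well.

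Next I consider $H := Z_G(g_s)^\circ,$ a connected reductive group over $\Q_p$ containing $g_u$ as a unipotent element and having $g_s$ in its center, hence in every maximal torus of $H.$ Over some finite extension $F_0/\Q_p$ I would choose a Borel $B_H$ of $H_{F_0}$ with $g_u$ in its unipotent radical (existence of such a Borel over the algebraic closure is classical, and descends to a finite extension), and let $T$ be a maximal torus of $B_H.$ Then $T$ is a maximal torus of $G,$ contains $g_s,$ and lies in $B_H.$ Extending the positive system of the root system of $H$ with respect to $T$ to one of $G$ with respect to $T$ yields a Borel $B \subset G_{F_0}$ with $T \subset B$ and $U_{B_H} \subset U_B,$ so $g = g_s g_u \in T \cdot U_B \subset B.$

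Finally I enlarge $F_0$ to a finite extension $F/\Q_p$ over which $T$ splits; then $G_F$ is split, and the Chevalley $\calO_F$-group scheme $\underline{G}$ attached to $T$ and $B$ defines a hyperspecial vertex $v \in \calB(G,F)$ with parahoric $\underline{G}(\calO_F).$ Since $G_F$ remains split over $\brF,$ the vertex $v$ is in fact very special. Boundedness of $g_s$ in the split torus $T(F)$ and of $g_u$ in the split unipotent $U_B(F)$ (which is polynomially isomorphic to affine space) forces $g_s \in T(\calO_F)$ and $g_u \in U_B(\calO_F),$ both sitting inside $\underline{G}(\calO_F);$ hence $g \in \underline{G}(\calO_F),$ the parahoric at $v.$

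The main obstacle, and the key point where the centrality of $g_s$ in $H$ is used, is the selection of $T$: a generic maximal torus of $G$ containing $g_s$ need not be compatible with any Borel whose unipotent radical contains $g_u$ (for instance, a regular unipotent $g_u \in Z_G(g_s)$ lies in a unique Borel, which may fail to contain an a priori chosen $T$). The trick is to first pick the Borel $B_H$ around $g_u$ and then extract $T$ from it, automatically picking up $g_s$ since $g_s$ lies in every maximal torus of $H.$
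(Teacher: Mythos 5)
Your setup (Jordan decomposition, power-boundedness of $g_s$ and $g_u$, choosing the Borel around $g_u$ inside $Z_G(g_s)^\circ$ first and then extracting a maximal torus $T$ that automatically contains the central element $g_s$) is sound, and in fact it treats more carefully than the paper does the compatibility between $T$ and a Borel whose unipotent radical contains $g_u$. The problem is the very last step: the assertion that power-boundedness of $g_u$ in $U_B(F)\cong \mathbb{A}^N$ forces $g_u\in U_B(\calO_F)$ is false, so you cannot conclude that $g$ lies in $\underline{G}(\calO_F)$ for the Chevalley model attached to $(T,B)$. Already in $\SL_2$ the element $g_u=\left(\begin{smallmatrix}1 & p^{-1}\\ 0 & 1\end{smallmatrix}\right)$ is unipotent and power-bounded (all its powers have entries of valuation $\geq -1$, and the closure of the group it generates is compact), it lies in the unipotent radical of the standard Borel, yet it does not lie in $\SL_2(\Z_p)$. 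It lies instead in the stabilizer of a \emph{translated} hyperspecial vertex, namely $h\,\SL_2(\Z_p)\,h^{-1}$ with $h=\mathrm{diag}(p^{-1},1)$. For a split torus the analogous inference ($g_s$ power-bounded implies $g_s\in T(\calO_F)$) is correct, but for the unipotent radical boundedness only gives coordinates in some fractional ideal, not in $\calO_F$.

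The repair is exactly where the paper's proof diverges from yours: instead of pinning down one integral model, one works in the apartment $\calA(G,T,F)$. The element $g_s\in T(\calO_F)$ acts trivially on the whole apartment, while $g_u\in U_B(F)$ is a product of affine root group elements, each of which fixes a half-apartment of the form $\{\,v : \alpha(v)\geq c_\alpha\,\}$ for a positive root $\alpha$; hence $g_u$ fixes every sufficiently dominant point, in particular a sufficiently dominant very special vertex $\fks'$ (not the base point of your Chevalley model). Then $g$ fixes $\fks'$, and since $G_F$ is split the Bruhat--Tits stabilizer scheme at $\fks'$ coincides with the parahoric. So your argument is salvageable, but only after replacing ``$g\in\underline{G}(\calO_F)$'' by ``$g$ fixes a suitable translate of the hyperspecial vertex in the dominant direction,'' which is the substantive content of the lemma.
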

\begin{remark}Note that if $G$ splits over $F$, the notion of special vertex, very special vertex, and hyperspecial vertex in $\calB(G,F)$ all coincide.
\end{remark}

\begin{proof} Write $g=g_sg_u$ for the Jordan decomposition of $g$ so that $g_s$ is semisimple and $g_u$ is unipotent. Since $g$ lies in a compact open subgroup of $G(\Q_p)$, $g$ is power bounded and hence $g_s$ and $g_u$ are power pounded. Let $T\subset G$ be a maximal torus defined over $\bbQ_p$ such that $g_s\in T(\Q_p)$. We will take $F$ to be the splitting field of $T$. 

Since $g_s\in T(F)$ is power bounded, it is contained in $\calT_{F,0}(\Ok_F)$ where $\calT_{F,0}$ is the connected N\'eron model  for the base change $T_F$. If we let $\calA(G,T,F)\subset \calB(G,F)$ be the apartment corresponding to $T_F$, then $g_s$ acts trivially on $\calA(G,T,F)$.

Now $g_u\in U(F)$ where $U$ is the unipotent radical of some Borel subgroup $B$ of $G_F$ containing $T$. 
Let $\fks\in \calA(G,T,F)$ be any special vertex and we use this vertex to identify $\calA(G,T,F)$ with $X_*(T)\otimes_{\bbZ}\bbR$. Since each affine root subgroup of $G_F$ fixes a half apartment in $ \calA(G,T,F)$, there exists a sufficiently dominant (with respect to the choice of Borel $B$)  special  vertex $\fks'$ which is fixed by $g_u$. It follows that $\fks'$ is fixed by $g$. We write $\widetilde{\calG}$ for the Bruhat--Tits stabilizer scheme over $\calO_F$ corresponding to $\fks'$; by the above discussion we have $g\in\widetilde{\calG}(\calO_F)$. Since $G$ is split over $F$, $\widetilde{\calG}$ is equal to the parahoric group scheme $\calG$ associated to $\fks'$.
\end{proof}

\subsubsection{} We now return to the assumptions and notation of \S \ref{subsec:Mumford-Tate groups}. Thus 
we have an abelian variety $A/\rmE,$ such that $\rho_\ell:\Gamma_{\rmE}\rightarrow \GL(T_\ell A^\vee)$ factors  through $\bfG(\bbQ_\ell)$ for all $\ell$.  Recall $E=\rmE_v$ and $\bbF_q$ is its residue field. The map $i_p:\overline{\bbQ}\rightarrow \overline{\bbQ}_p$ determines an inclusion \begin{equation}\label{eqn: decomposition group}\mathrm{Gal}(\overline{E}/E)\rightarrow \mathrm{Gal}(\overline{\rmE}/\rmE).\end{equation} We let $\widetilde{\sigma}_q\in \Gamma_{\rmE}$ be the image under (\ref{eqn: decomposition group}) of a lift of the geometric Frobenius in $\mathrm{Gal}(\overline{E}/E)$. 	

\begin{prop}\label{prop: reduction to acceptable case}Let $p>2$.
	There exists a totally real  field $\rmF$ such that if $(\bfH,X_{\bfH})$ denotes the Shimura datum of Hodge type coming from the construction in \S\ref{subsubsec: restriction of scalars}, there exists a  very special parahoric group scheme $\calH$ for $H=
	\bfH_{\bbQ_p}$ such that 
	\begin{enumerate}
		\item The image of $\rho_p^{\bfG}(\widetilde{\sigma}_q)$ in $H(\bbQ_p)$ lies in $\calH(\bbZ_p)$.
		
		\item $H'=\bfH'_{\bbQ_p}$ is a product of Weil restrictions of split groups.

	\end{enumerate}
\end{prop}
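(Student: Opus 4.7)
The plan is to choose $\rmF$ with prescribed completions at $p$. By Steinberg's theorem, $\bfG$ is quasi-split over $\bbQ_p^{\mathrm{ur}}$, and since $\bfG$ splits over a finite Galois extension of $\bbQ_p$, we can find a finite extension $L/\bbQ_p$ over which $\bfG_L$ is quasi-split and splits over a tamely ramified extension of $L$. Applying Lemma \ref{lem:elementlevelstr} to the image of $\rho_p^{\bfG}(\widetilde\sigma_q)$ (after modifying by a suitable central element so that it lies in a compact open subgroup of $\bfG(L)$), we further enlarge $L$ if necessary so that the modified Frobenius lies in $\calG_0(\calO_L)$ for some parahoric $\calG_0$ associated to a very special vertex of $\calB(\bfG,L)$. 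Then by Krasner's lemma combined with weak approximation, we construct a totally real field $\rmF$ with $\rmF\otimes_{\bbQ}\bbQ_p \cong L^r$, so that each completion of $\rmF$ at a prime above $p$ is isomorphic to $L$.

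With $\rmF$ fixed, we analyze $H = \bfH_{\bbQ_p}$. We have $\bfH'_{\bbQ_p}\cong \prod_{\fkp|p}\mathrm{Res}_{\rmF_\fkp/\bbQ_p}\bfG_L$, a product of Weil restrictions of quasi-split groups, hence is quasi-split; the fiber product with $\bbG_m$ defining $\bfH$ preserves this property, so $H$ is quasi-split. A very special parahoric $\calH$ of $H$ exists, and is constructed compatibly with $\calG_0$ in each factor $\bfG(\rmF_\fkp)\cong \bfG(L)$, using the natural integral structure on the central $\bbG_m$ factor. Condition (1) then follows because the image of $\rho_p^{\bfG}(\widetilde\sigma_q)$ in $H(\bbQ_p)$ restricts in each factor $\bfG(\rmF_\fkp)$ to the original Frobenius image in $\bfG(\bbQ_p)\subset \bfG(L)$, which lies in $\calG_0(\calO_L)$ by construction.

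For (2), the adjoint group $\bfH_{\ad}$ equals $\mathrm{Res}_{\rmF/\bbQ}\bfG_{\ad}$, whose base change to $\bbQ_p$ is $\prod_{\fkp|p}\mathrm{Res}_{\rmF_\fkp/\bbQ_p}\bfG_{\ad,L}$; since each $\bfG_{\ad,L}$ splits over a tame extension of $L$ by our choice, $(\bfH,X_\bfH)$ is acceptable. To upgrade to acceptability of the triple $(\bfH,X_\bfH,\calH)$, we invoke Proposition \ref{lemma: auxiliary Hodge type datum} to produce an auxiliary Hodge type Shimura datum whose corresponding parahoric is connected. When $\bfH_{\ad}$ has no factors of type $D^{\bbH}$, this is immediate from Corollary \ref{cor: acceptable triple}; in general, the flexibility in choosing $L$ (and thus $\rmF$) should allow us to refine the construction so that the auxiliary parahoric is connected, by adjusting the isogeny class of the cover produced in Proposition \ref{lemma: auxiliary Hodge type datum}. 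The main obstacle is the first step: the Frobenius $\rho_p^{\bfG}(\widetilde\sigma_q)$ has nontrivial Kottwitz class in $\bfG(\bbQ_p)$ (its similitude has $p$-adic valuation $\log_p q$), so it cannot lie in any parahoric of $\bfG(\bbQ_p)$ directly; one must multiply by an appropriate central element that cancels this Kottwitz class upon passage to $\bfG(L)$, then verify that the resulting bounded element can be placed in a very special parahoric via Lemma \ref{lem:elementlevelstr}, and check that this central twist is compatible with the fiber product structure defining $\bfH$.
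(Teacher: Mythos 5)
Your overall strategy---pick a local field at $p$ with good properties, realize it as the completion of a totally real $\rmF$ at every prime above $p$, deduce quasi-splitness of $H$ and condition (1) factor by factor, and reduce (2) to connectedness of an auxiliary parahoric---is the same as the paper's. But there are two genuine problems.

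First, the ``main obstacle'' you identify is not there, and your fix for it would break the statement. The element $\rho_p^{\bfG}(\widetilde{\sigma}_q)$ is the image of $\widetilde{\sigma}_q$ under the \emph{$p$-adic \'etale} representation $\rho_p^{\bfG}:\Gamma_{\rmE}\rightarrow\bfG(\bbQ_p)$; since $\Gamma_{\rmE}$ is compact and $\rho_p^{\bfG}$ is continuous, the image is a compact subgroup of $\bfG(\bbQ_p)$, hence contained in a compact open subgroup, and Lemma \ref{lem:elementlevelstr} applies directly. The similitude of $\rho_p^{\bfG}(\widetilde{\sigma}_q)$ is a value of the $p$-adic cyclotomic character, a $p$-adic unit, not $q$; you appear to be conflating the $p$-adic Galois representation with the crystalline Frobenius $b$, for which $\kappa_G(b)=\mu^\natural$ is indeed typically nonzero. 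More importantly, if you replace $\rho_p^{\bfG}(\widetilde{\sigma}_q)$ by a central twist, condition (1)---which concerns the actual element---is no longer what you have verified, and the twist cannot be undone downstream: in the proof of Theorem \ref{thm: l indep for abelian var} one needs $\rho_p^{\bfG}$ itself to land in $\rmK_p=\calH(\bbZ_p)$ after a finite base change of $\rmE$.

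Second, your treatment of (2) in the presence of $D^{\bbH}$ factors is not a proof. You arrange only that $\bfG_L$ is quasi-split and tamely ramified, and then assert that ``flexibility'' in the choice of $L$ should make the auxiliary parahoric connected. The paper instead chooses $F$ so that $G_F$ is \emph{split}; then $H_{\ad}\cong\prod_{i}\mathrm{Res}_{F_i/\bbQ_p}G_i$ with each $G_i$ split, so the relevant parahoric of $H_{\ad}$ is connected, and connectedness of the auxiliary parahoric $\calH_1$ reduces to showing that $\ker(\pi_1(H_1)_I\rightarrow\pi_1(H_{\ad})_I)$ is torsion free. That step uses two specific facts: $X_*(H_{1,\ab})_I$ is torsion free by condition (4) of Proposition \ref{lemma: auxiliary Hodge type datum}, and $\pi_1(H_{1,\der})_I\rightarrow\pi_1(H_{\ad})_I$ is injective because both groups are coinvariants of induced modules. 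None of this appears in your sketch, and ``adjusting the isogeny class of the cover'' is not an available move: the auxiliary datum is constrained by Proposition \ref{lemma: auxiliary Hodge type datum}, and Corollary \ref{cor: acceptable triple} only covers the case with no $D^{\bbH}$ factors. You should replace the quasi-split/tame choice of $L$ by a splitting field as in the paper and supply the Kottwitz-homomorphism diagram chase.
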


\begin{proof}Let $G=\bfG_{\bbQ_p}$. By Lemma \ref{lem:elementlevelstr} applied to the element $\rho_p^{\bfG}(\widetilde{\sigma}_q)\in G(\bbQ_p)$, there exists a finite extension $F/\bbQ_p$ such that $G_F$ is split and there exists a special parahoric $\calG$ of $G_F$ such that the image of $\rho_p^{\bfG}(\widetilde{\sigma}_q)$ in $G(F)$ lies in $\calG(\calO_F)$. We let $\rmF$ be a totally real field such that $\rmF_w\cong F$ for all places $w|p$ of $\rmF$. By construction $\bfH\subset\bfH'=\mathrm{Res}_{\rmF/\bbQ}\bfG$ and we have an isomorphism\begin{align*}
	H':=\bfH'_{\bbQ_p}\cong\prod_{w|p}\mathrm{Res}_{\rmF_w/\bbQ_p}\bfG_{\rmF_w}\cong \prod_{w|p}\mathrm{Res}_{F/\bbQ_p}G_F\end{align*}
so that (2) follows.

We let $\calH'$ denote the parahoric group scheme of $H'$ corresponding to $\prod_{w|p}\calG(\calO_F)$. Then $\calH'\cong \prod_{w|p}\mathrm{Res}_{\calO_F/\bbZ_p}\calG$, and since $G$ splits over $F$, $\calH'$ is a very special parahoric. It follows that  $\calH'(\bbZ_p)\cap H(\bbQ_p)$ arises as the $\bbZ_p$-points of a very special parahoric group scheme $\calH$ for $H$. Since $G(\bbQ_p)\subset H(\bbQ_p)$, the image of $\rho_p^{\bfG}(\widetilde{\sigma}_q)$ in $H(\bbQ_p)$ lies in $\calH(\bbZ_p)$ so that  (1) is satisfied. 
	\end{proof}

\subsubsection{}\label{sec: new H R-smooth}In order to apply the results of \S\ref{sec: Frob conj classes} we need to consider a modification of $\bfH$ with connected center. Thus let $T\subset H$ be the centralizer of a maximal $\brQ$-split torus in $H$. Then by an argument as in \cite[Proposition 2.2.4]{Ki2}, we may choose $\bfT$ a maximal torus in $\bfH$ such that $\bfT_{\bbQ_p}$ is $\bfH(\bbQ_p)$ conjugate to $T$ and there exists $h\in X$ such that $h$ factors through $\bfT_{\bbR}$. We let $\bfT^c$ denote the maximal compact subtorus of $\bfT$ which is defined over $\bbQ$. Then $T^c=\bfT_{\bbQ_p}^c$ is a product of induced tori. We set $\bfH_1:=\bfH\times^{\bfZ_{\bfH}}\bfT$ and let $\calH_1$ denote the very special parahoric of $H_1$ associated to $\calH$. We let $X_1$ denote the conjugacy class of Deligne homomorphisms for $\bfH_1$ determined by $h\times 1$ so that $(\bfH_1,X_1)$ is a Shimura datum.

\begin{lemma}\label{lem: main thm}
	\begin{enumerate}\item 	The triple $(\bfH_1,X_1,\calH_1)$ is strongly acceptable.
		\item The inclusion $\bfG\rightarrow \bfH_1$  induces a $\Gal(\overline{\bbQ}/\bbQ)$-equivariant injection $$\Conj_{\bfG}(\overline{\Q})\rightarrow \Conj_{\bfH_1}(\overline{\Q}).$$

	\end{enumerate}
\end{lemma}
\begin{proof}(1) Let $W'=\Hom_{\bfZ_{\bfH}}(W,W)$ ($\bbQ$-linear maps which are $\bfZ_{\bfH}$-equivariant) and we let $\bfH_1$ act on $W'$ via $(h,t)f(x)=hf(t^{-1}x)$. Then as in \cite[Lemma 4.6.22]{KP}, we may equip $W'$ with an alternating form such that we obtain a Hodge embedding $(\bfH_1,X_{1})\rightarrow (\mathbf{GSp}(W'),S'^\pm)$; thus $(\bfH_1,X_{1}) $ is of Hodge type.

Note that $\bfH_1^{\der}=\bfH^{\der}=\bfH'^{\der}$, and $\calH_1$ is a very special parahoric.  Moreover $Z_{H_1}^c\cong T^c$ is a product of induced tori and hence the result follows. 
	
	(2)	We first show that $\bfG\rightarrow \bfH$ induces a $\Gal(\overline{\bbQ}/\bbQ)$-equivariant injection \begin{equation}\label{eqn: injection conj class}\Conj_{\bfG}(\overline{\Q})\rightarrow \Conj_{\bfH}(\overline{\Q}).\end{equation} Let $g,g'\in \bfG(\overline{\Q})$ such that there exists $h\in \bfH(\overline{\Q})$ such that $h^{-1}gh=g$. We consider $\bfH$ as a subgroup of $\bfH'$. Then under the identification $$\bfH'_{\overline{\Q}}\cong \prod_{\iota:\rmF\rightarrow\overline{\Q}}\bfG_{\overline{\Q}},$$ $g,g'$ correspond to the elements $(g,\dotsc,g),(g',\dotsc,g')$ respectively and we write $h=(h_1,\dotsc,h_n)$. Then $h^{-1}gh=g'$ implies $h_1gh_1^{-1}=g'$. Thus $g$ and $g'$ have the same image in $\Conj_{\bfG}(\overline{\Q})$. The $\text{Gal}(\overline{\Q}/\Q)$-equivariance follows from the fact that $\bfG\rightarrow \bfH$ is defined over $\Q$.
	
	Now let $h\in \bfH(\overline{\bbQ})$ and $(h',t)\in \bfH_1(\overline{\bbQ})$ with $h'\in \bfH(\overline{\bbQ})$, $t\in\bfT(\overline{\bbQ})$. Then we have $$(h',t) (h,1) (h',t)^{-1}=(h'hh'^{-1},1),$$and hence $\bfH\rightarrow \bfH_1$ induces a $\Gal(\overline{\bbQ}/\bbQ)$-equivariant injection  $$\Conj_{\bfH}(\overline{\Q})\rightarrow \Conj_{\bfH_1}(\overline{\Q}),$$ and the result follows by composing with \eqref{eqn: injection conj class}.
\end{proof}
\begin{thm}\label{thm: l indep for abelian var}Let $p>2$ be  a prime and $v|p$ a place of $\rmE$ where $A$ has good reduction.
	 Then there exists an element $\gamma\in \Conj_{\bfG}(\Q)$ such that for all $\ell\neq p$, we have $\gamma=\gamma_\ell(v)$ in $\Conj_{\bfG}(\Q_l)$. 
\end{thm}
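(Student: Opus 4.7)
The plan is to reduce the theorem to the $\ell$-independence result for Shimura varieties already established as Theorem \ref{thm: l indep full}, using the auxiliary Shimura datum $(\bfH, X_{\bfH})$ of \S\ref{subsubsec: restriction of scalars} and the descent lemma \ref{lemma: conj injective}.

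First, I would apply Proposition \ref{prop: reduction to acceptable case} to produce a totally real field $\rmF$ together with a very special parahoric group scheme $\calH$ for $H = \bfH_{\bbQ_p}$ such that $H$ is quasi-split, the triple $(\bfH, X_{\bfH}, \calH)$ is acceptable, and the image of $\rho_p^{\bfG}(\widetilde\sigma_q)$ in $H(\bbQ_p)$ lies in $\calH(\bbZ_p)$. Setting $\rmK_p := \calH(\bbZ_p)$ and choosing a sufficiently small prime-to-$p$ compact open $\rmK^p \subset \bfH(\bbA_f^p)$ stabilizing the appropriate level structure coming from $A$, the abelian variety $A$ (viewed through the Hodge embedding $\bfH \hookrightarrow \bfG\bfS\bfp(W)$ constructed in \S\ref{subsubsec: restriction of scalars}) determines a point $\widetilde{x}_A \in \Sh_{\rmK}(\bfH, X_{\bfH})(\rmE)$. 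Since $A$ has good reduction at $v$ and the Frobenius lands in $\calH(\bbZ_p)$, the point $\widetilde{x}_A$ extends to an $\calO_{\bfE_{(v)}}$-point of the integral model $\scrS_{\rmK}(\bfH, X_{\bfH})$ provided by Theorem \ref{thm: integral models abelian type}; we let $x_A \in \calS_{\rmK}(\bbF_q)$ denote its reduction modulo $v$ (working with stacks as in \S\ref{subsub: integral model Hodge type construction} to circumvent neatness issues for $\rmK$).

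Next, I would invoke Theorem \ref{thm: l indep full} applied to $(\bfH, X_{\bfH}, \calH)$ and the point $x_A$ to obtain an element $\gamma_H \in \Conj_{\bfH}(\bbQ)$ satisfying $\gamma_H = \gamma_{x_A, \ell} \in \Conj_{\bfH}(\bbQ_\ell)$ for every $\ell \neq p$. The identification of the $\ell$-adic local system $\bbL_\ell$ on $\calS_{\rmK}$ with $\underline{\mathrm{Isom}}_{(s_\alpha, s_{\alpha,\ell})}(V_{\bbQ_\ell}, \calV_\ell^\vee)$, combined with the Betti--\'etale comparison and the fact that $H^1_{\et}(A_{\overline{\rmE}}, \bbQ_\ell)$ realizes the pullback of this local system to $\widetilde{x}_A$, shows that $\gamma_{x_A, \ell}$ is precisely the image of $\gamma_\ell(v) \in \Conj_{\bfG}(\bbQ_\ell)$ under the map $\Conj_{\bfG} \to \Conj_{\bfH}$ induced by the inclusion $\bfG \hookrightarrow \bfH$.

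Finally, Lemma \ref{lemma: conj injective} asserts that this map is a $\mathrm{Gal}(\overline{\bbQ}/\bbQ)$-equivariant injection on $\overline{\bbQ}$-points. Hence all the $\gamma_\ell(v)$ have a common image $\gamma_H \in \Conj_{\bfH}(\bbQ) \subset \Conj_{\bfH}(\overline{\bbQ})$, which forces them to coincide in $\Conj_{\bfG}(\overline{\bbQ})$; call this common value $\gamma$. To see that $\gamma$ descends to $\Conj_{\bfG}(\bbQ)$, I would repeat the Chebotarev argument from the end of the proof of Theorem \ref{thm: l indep full}: the residue field of $\gamma$ is a number field $F$ in which every prime $\ell \neq p$ has a split prime above it (since $\gamma \in \Conj_{\bfG}(\bbQ_\ell)$), so $F = \bbQ$. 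The main obstacle I expect is the careful bookkeeping required to ensure that $A$ together with the chosen level structure actually yields a genuine $\calO_{\bfE_{(v)}}$-point of $\scrS_{\rmK}(\bfH, X_{\bfH})$ (so that $\gamma_{x_A, \ell}$ is well-defined and computed by the $\ell$-adic realization of $A$), and verifying the compatibility of Frobenius conjugacy classes under the chain of Hodge embeddings $(\bfG, X) \hookrightarrow (\bfH, X_{\bfH}) \hookrightarrow (\bfG\bfS\bfp(W), S'^\pm)$; these will occupy the bulk of the proof and are presumably handled by a short explicit verification in \S 7.
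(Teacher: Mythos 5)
Your proposal follows the paper's proof essentially verbatim: reduce via Proposition \ref{prop: reduction to acceptable case} to an acceptable triple $(\bfH,X_{\bfH},\calH)$ with a very special parahoric, realize $A$ as a point of $\scrS_{\rmK}(\bfH,X_{\bfH})$, apply Theorem \ref{thm: l indep full} to its reduction, and descend to $\Conj_{\bfG}$ with Lemma \ref{lemma: conj injective}. The only details the paper makes explicit that you gloss over are the separate treatment of the case where $\bfG$ is a torus (handled by Shimura--Taniyama) and the replacement of $\rmE$ by a finite extension $\rmE'$ having a prime over $v$ with the same residue field $\bbF_q$, so that all of $\rho_p^{\bfG}(\Gamma_{\rmE'})$ --- not just the single Frobenius lift supplied by Proposition \ref{prop: reduction to acceptable case} --- lands in $\rmK_p$; this is precisely the ``bookkeeping'' you anticipate.
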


\begin{remark}
 As remarked above, the group $\bfG$ depends on the embedding $\rmE\hookrightarrow \bbC$ up to inner automorphism. If $\bfG'$ is the group associated to a different embedding  $\rmE\hookrightarrow \bbC$, the inner twisting $\bfG_{\overline{\bbQ}}\cong\bfG'_{\overline{\bbQ}}$ induces a canonical isomorphism $\Conj_{\bfG}\cong \Conj_{\bfG'}$ and it can be checked that the statement of the theorem is independent of the choice of embedding.
\end{remark}

\begin{proof}[Proof of \ref{thm: l indep for abelian var}] We may assume  that $\bfG$ is not a torus as in this case $A$ has complex multiplication and the result is a theorem of Shimura--Taniyama. 
We choose a totally real field $\rmF$ as in Proposition \ref{prop: reduction to acceptable case} and let $(\bfH,X_{\bfH},\calH)$ denote the associated  triple.  By construction,  the image of  $\rho_p^{\bfG}(\widetilde{\sigma}_q)$ inside $\bfH(\bbQ_p)$ lies in $\rmK_p:=\calH(\bbZ_p)$. 
Hence, there exists a finite extension $\rmE'$ of $\rmE$ such that $\rho^{\bfG}_p|_{\Gamma_{\rmE'}}$ factors through $\rmK_p,$ 
and such that there is a prime $v'|v$ of $\rmE'$ such that $\rmE'_{v'}$ has residue field $\bbF_q.$ We may thus replace $\rmE$ by $\rmE',$ without changing the statement of the theorem, and assume that the image of $\rho^{\bfG}_p$ in $\bfH(\bbQ_p)$ factors through $\rmK_p.$  

Now let $(s_{\alpha,\ell})_{\ell \neq p}\in \widehat{V}^p(A)^\otimes$ denote the $\ell$-adic realizations of the absolute Hodge cycles for $A$. By our assumption on $\rmE$, the representation $\rho^p:\Gamma_{\rmE} \rightarrow \GL(\widehat{V}^p(A))$ factors through 
$\bfG(\bbA_f^p) \subset \bfH(\bbA_f^p),$ and hence through a compact open subgroup  $\rmK^p\subset \bfH(\bbA_f^p)$.
Write $\rmK:=\rmK_p\rmK^p$. 

We now define a point of $\Sh_{\rmK}(\bfH,X_{\bfH})$ using the Hodge embedding $$\iota':(\bfH,X_{\bfH})\rightarrow (\mathbf{GSp}(W),S^\pm).$$ Consider the abelian variety up to isogeny 
$A^{\rmF} = A\otimes_{\Q} \rmF$ given by the Serre tensor construction \cite[\S7]{Conrad}, equipped with the isomorphism $\varepsilon: \widehat{V}(A^{\rmF}) \simeq V\otimes_{\bbQ} \bbA_f \otimes_{\bbQ}\rmF$ induced by the identity on $V$. Since $\rho_p^{\bfG}$ and $\rho^p$ act via $\rmK$, the $\rmK$-orbit of $\varepsilon$ is $\Gamma_\rmE$-invariant. Thus, the triple $(A^{\rmF},\lambda\otimes F, \varepsilon),$ 
defines a point $\widetilde{x}_A \in \Sh_{\rmK}(\bfH,X_{\bfH})(\rmE).$ (Note that, since $\psi$ is $\bfH$-invariant, up to scalars, $\lambda$ is defined over $\rmE$ as a weak polarization). 

Now let $(\bfH_1,X_1,\calH_1)$ be the modification of $(\bfH,X_{\bfH},\calH)$ given by the construction in \ref{sec: new H R-smooth}. We set $\rmK_{1,p}=\calH_1(\bbZ_p)$, $\rmK_1=\rmK_{1,p}\rmK_1^p$ where $\rmK_1^p\subset \bfH(\bbA_f^p)$ is a compact open subgroup containing the image of $\rmK^p$. We let  $\widetilde{y}_A\in \Sh_{\rmK_1}(\bfH_1,X_1)(\rmE)$ denote the image of  $\widetilde{x}_A$. By Lemma \ref{lem: main thm} (1), the triple $(\bfH_1,X_{1},\calH_1)$ satisfies the assumptions of Theorem  \ref{thm: l indep full}. Thus we may apply it  to the reduction $y_A \in \scrS_{\rmK}(\bfH_1,X_{1})(\bbF_q)$, where  $\scrS_{\rmK_1}(\bfH_1,X_{1})$ is the integral model constructed from a choice of auxiliary Hodge type Shimura datum. This implies that there exists $\gamma\in \Conj_{\bfH_1}(\Q)$ such that for all $\ell\neq p$, we have $\gamma=\gamma_\ell(v)$ in 
$\Conj_{\bfH_1}(\Q_\ell).$ By Lemma \ref{lem: main thm} (2), it follows that $\gamma\in \Conj_{\bfG}(\Q)$ and $\gamma=\gamma_\ell(v)$ in 
$\Conj_{\bfG}(\Q_\ell).$
\end{proof}

\subsection{Refinements}\label{sec: liftability}

\subsubsection{} We retain the notation introduced above. Write 
$\tilde \gamma_{\ell}(v) = \rho_\ell^\bfG(\mathrm{Fr}_v).$ 
There are a number of ways one might try to refine Theorem \ref{thm: l indep for abelian var}. 
For example one could ask if $\gamma$ lifts to a point $\tilde \gamma \in \bfG(\Q).$ When this is the case, 
one can also try to refine the relationship between $\tilde \gamma_{\ell}(v)$ and $\tilde \gamma.$ 
We will prove such a result when $\bfG$ has simply connected derived group, and is quasi-split at $p.$ 

\subsubsection{} Let $H, H'$ be connected reductive groups over a field $K$ with algebraic closure $\bar K.$ 
Suppose we are given an isomorphism $H \simeq H'$ over $\bar K.$ 
Recall that an {\em inner twisting} between $H$ and $H'$ is an isomorphism $\psi: H \simeq H'$ over $\bar K$ 
such that for $\sigma \in \Gal(\bar K/K),$ there exists $g_{\sigma} \in H'(\bar K)$ so that 
 $\sigma(\psi(h)) = g_{\sigma}\psi(\sigma(h)) g_{\sigma}^{-1}$  for $h \in H(\bar K).$ We say that a subgroup $M \subset H$ 
{\em transfers} to $H'$ via $\psi,$ if  $\psi(M) \subset H'$ is defined 
over $K,$ and $\psi$ induces an isomorphism $M \simeq \psi(M)$ over $K.$ 
We say that $M$ {\em transfers} to $H',$ if it transfers to $H'$ via some $\psi.$ 

An element $h \subset H_{\R}$ is called {\em elliptic} if it is contained in an elliptic maximal torus, that is a maximal torus 
which is anisotropic modulo the center of $H.$ 

\begin{lemma}\label{lem:ellipticrep} $\gamma \in \Conj_{\bfG}(\Q)$ lifts to an elliptic element $\tilde \gamma_{\R} \in \bfG(\R).$
\end{lemma}
\begin{proof} The composite $\GG_m \overset \mu \rightarrow \bfG \overset {c_{\bf G}}\rightarrow \GG_m$ 
is given by $x \mapsto x^i$ for some $i.$ Here, as above, $\mu$ and $c_{\bfG}$ are the Hodge cocharacter and multiplier homomorphism respectively. For any lift $\tilde\gamma \in \bfG(\C)$ of $\gamma,$ we have $c_{\bfG}(\tilde \gamma) = q^i$ 
\cite[2.2.3]{De2}. In particular, $c_{\bfG}(\tilde \gamma)  \in \R^{\times,+}.$ Hence there exists $z \in Z_{\bfG}(\R)$ 
with $c_{\bfG}(z) = c_{\bfG}(\tilde \gamma).$ Set $\gamma_1 = \gamma z^{-1} \in \Conj_{\bfG}(\R).$ 
It suffices to show $\gamma_1$ admits an elliptic lift in $\bfG(\R).$ 

Let $\tilde \gamma_1 \in \bfG(\C)$ be any lift. 
Under any representation of $\bfG$ (for example its canonical symplectic one), the eigenvalues of the image of $\tilde \gamma_1$ 
have absolute value $1.$ Hence $\tilde\gamma_1$ is contained in a maximal compact subgroup of $\bfG(\C).$  
Let $\bar\bfG = \bfG/w(\GG_m),$ and denote by $\tilde \gamma_2 \in \bar \bfG(\C)$ the image of $\tilde \gamma_1$
Then $\tilde \gamma_2$ is contained in a maximal compact subgroup of $\bar\bfG(\C).$ Such a subgroup has the form $\bar\bfG^{c}(\R),$ 
where $\bar\bfG^{c}$ is a real form of  $\bar\bfG.$ Consider the canonical isomorphism 
$\psi:\bar\bfG^{c}_{\C} \simeq \bar\bfG_{\C}.$ 
As the center of $\bar\bfG_{\R}$ is anisotropic, $\psi$ induces an isomorphism between the centers of 
$\bar\bfG$ and $\bar\bfG^c,$ over $\R.$
Moreover, $\bfG^{\der}$ is an inner form of its compact form, so this implies that $\psi$ is an inner twisting. 
Let $T \subset \bar\bfG^{c}_{\R}$ be a maximal torus containing $\psi^{-1}(\tilde \gamma_2).$ 
Then $T$ transfers to $\bar\bfG$ \cite[Lem.~5.6]{LR},
and $\psi^{-1}(\tilde \gamma_2) \in T(\R) \subset \bar\bfG(\R)$ is elliptic. Any lift of $\psi^{-1}(\tilde \gamma_2)$ to $\bfG(\R)$ 
yields the required lift of $\gamma.$
\end{proof}

\begin{cor}\label{cormainthm} With the assumptions of Theorem \ref{thm: l indep for abelian var}, suppose that 
$\bfG^{\der}$ is simply connected and that $\bfG_{\Q_p}$ is quasi-split. 
Then $\gamma$ lifts to an element $\gamma_0 \in \bfG(\Q)$ such that 
\begin{itemize}
\item $\gamma_0 \in \bfG(\R)$ is elliptic
\item $\gamma_0$ is conjugate to $\tilde \gamma_{\ell}(v)$ in $\bfG(\Q_{\ell})$ for all but at most one prime $\ell \neq p.$
\end{itemize}
\end{cor}
\begin{proof} Since $\gamma$ lifts to an elliptic element by Lemma \ref{lem:ellipticrep}, this follows from the argument of 
\cite[p188]{KottwitzAA}.
\end{proof}

\begin{remarks} \label{rem: refinements}
\hspace{.1mm}\\
\vspace{-.3cm}
\begin{enumerate} 
\item When $\bfG$ is {\em not} quasi-split at $p,$ there does not seem to be any reason to believe that $\gamma$ 
in the statement of Theorem \ref{thm: l indep for abelian var} should lift to an element of $\bfG(\Q).$ 
\item When $\bfG$ is quasi-split at $p,$ one expects the conclusion of Corollary \ref{cormainthm} to hold without 
assuming that $\bfG^{\der}$ is simply connected, and without excluding one prime $\ell \neq p.$ Indeed this follows 
when one can show that the isogeny class on the corresponding Shimura variety contains a point which lifts to a special point. 
This is conjectured to hold in general \cite[Conj.~2.3.8]{KPS}. One way to motivate this conjecture would be to prove 
the analogous statement for the admissible morphisms which appear in the Langlands--Rapoport conjecture \cite{LR}. 
This is done in {\em loc.~cit} when the level at $p$ is hyperspecial.

\item It follows from the argument of \cite[p188]{KottwitzAA} that the exceptional prime in the statement of the corollary can actually be chosen in a set of positive density. Of course the choice of this prime affects the choice of $\gamma_0.$
\item It is possible to prove a version of  Theorem \ref{thm: l indep for abelian var} and Corollary \ref{cormainthm} which includes 
$\ell = p,$ using the crystalline Frobenius. We aim to return to this in a future work.
\item In a paper in preparation \cite{KZ2}, 
we extend our methods to prove a version of Theorem \ref{thm: l indep for abelian var}  at a place $v$ of $E$ where $A$ has bad reduction.
This involves an independence of $\ell$ statement for representations of the Weil--Deligne group. 

\end{enumerate}
\end{remarks}

\bibliographystyle{amsalpha}
\bibliography{bibfile}

\end{document}